\definecolor{darkred}{rgb}{0.5,0,0}
\definecolor{darkgreen}{rgb}{0,0.5,0}
\definecolor{darkblue}{rgb}{0,0,0.5} \hypersetup{ colorlinks,
\newtheorem{theorem}{Theorem}[section]
\newtheorem{corollary}[theorem]{Corollary}
\newtheorem{proposition}[theorem]{Proposition}
\newtheorem{lemma}[theorem]{Lemma}
\newtheorem{lem}[theorem]{}
\theoremstyle{definition}
\newtheorem{definition}[theorem]{Definition}
\theoremstyle{remark}
\newtheorem{remark}[theorem]{Remark}
\newtheorem{example}[theorem]{Example}
\newcommand{\blem}{\begin{lem} \rm}
\newcommand{\elem}{\end{lem}}
\newcommand\B{\mathcal{B}}
\newcommand\M{\mathcal{M}}
\renewcommand\M{\mathcal{M}}
\renewcommand\sharp{\setlength{\unitlength}{0.00013333in}
\begin{picture}(688,703)(0,-10)
\path(244,644)(244,44)
\path(444,644)(444,44)
\path(44,444)(644,444)
\path(644,244)(44,244)
\end{picture}
}
\newcommand{\J}{\mathcal{J}}
\newcommand{\R}{\mathbb{R}}
\newcommand{\C}{\mathbb{C}}
\newcommand{\cC}{\mathcal{C}}
\newcommand{\cT}{\mathcal{T}}
\newcommand{\Z}{\mathbb{Z}}
\newcommand{\ddt}{\frac{d}{dt}}
\renewcommand{\P}{\mathbb{P}}
\newcommand\lie[1]{\mathfrak{#1}}
\newcommand{\g}{\lie{g}}
\renewcommand{\t}{\lie{t}}
\newcommand{\Alc}{\lie{A}}
\renewcommand{\u}{\lie{u}}
\newcommand{\su}{\lie{su}}
\newcommand{\s}{\lie{s}}
\newcommand{\on}{\operatorname}
\newcommand{\ainfty}{{$A_\infty$\ }}
\newcommand{\dual}{\vee}
\newcommand{\Edge}{\on{Edge}}
\newcommand{\Symp}{\on{Symp}}
\newcommand{\Lag}{\on{Lag}}
\newcommand{\Ver}{\on{Vert}}
\newcommand{\Aut}{ \on{Aut} }
\newcommand{\Ad}{ \on{Ad} }
\newcommand{\Hom}{ \on{Hom}}
\newcommand{\ind}{ \on{ind}}
\newcommand{\diag}{  \on{diag}}
\newcommand{\ssm}{\kern-.5ex \smallsetminus \kern-.5ex}
\newcommand\dirac{/\kern-1.2ex\partial} 
\newcommand\qu{/\kern-.7ex/} 
\newcommand\lqu{\backslash \kern-.7ex \backslash} 
\newcommand\dr{r_+ \kern-.7ex - \kern-.7ex r_-}
\def\pd{\partial}
\newcommand\quo[2]{
                \text{\raise.8ex\hbox{$\scriptstyle#1\!$}/\lower.8ex\hbox{$\!\scriptstyle#2$}}
}
\renewcommand{\d}{{\mbox{d}}}
\newcommand{\ol}{\overline}
\newcommand\Phinv{\Phi^{-1}}
\newcommand\eps{\epsilon}
\newcommand{\f}{\frac}
\newcommand{\lan}{\langle}
\newcommand{\ran}{\rangle}
\newcommand{\qq}{\text{\tiny $1/4$}}
\newcommand{\ti}{\tilde}
\newcommand\Map{\on{Map}}
\newcommand\rank{\on{rank}}
\newcommand\Vect{\on{Vect}}
\newcommand\ul{\underline}
\newcommand\reg{{\on{reg}}}
\newcommand\bdefn{\begin{definition}}
\newcommand\edefn{\end{definition}}
\newcommand\bea{\begin{eqnarray*}}
\newcommand\eea{\end{eqnarray*}}
\newcommand\bcv{\left[ \begin{array}{r} }
\newcommand\ecv{\end{array} \right] }
\newcommand\bma{\left[ \begin{array} }
\newcommand\ema{\end{array} \right]}
\newcommand\ben{\begin{enumerate}}
\newcommand\een{\end{enumerate}}
\newcommand\beq{\begin{equation}}
\newcommand\eeq{\end{equation}}
\newcommand\bex{\begin{example}}
\newcommand\bsj{\left\{ \begin{array}{rrr} }
\newcommand\esj{\end{array} \right\}}
\newcommand\Id{\on{Id}}
\newcommand\cI{\mathcal{I}}
\newcommand\eex{\end{example}}
\newcommand\Crit{{\on{Crit}}}
\newcommand\sx{*\kern-.5ex_X}
\def\mathunderaccent#1{\let\theaccent#1\mathpalette\putaccentunder}
\def\putaccentunder#1#2{\oalign{$#1#2$\crcr\hidewidth \vbox
to.2ex{\hbox{$#1\theaccent{}$}\vss}\hidewidth}}
\newcommand{\Tan}{\on{Tan}}
\newcommand{\Graph}{\on{Graph}}
\newcommand{\ff}{{\f{1}{5}}}
\newcommand{\Diff}{\on{Diff}}
\newcommand\GFuk{{\on{Fuk}^{\sharp}}}
\newcommand\Fuk{{\on{Fuk}}}
\renewcommand{\bigcirc}{\includegraphics[width=.1in]{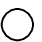}}
\begin{document}

\title[Floer field theory for tangles]{Floer field theory for tangles}

\author{Katrin Wehrheim} 
\address{
Department of Mathematics,
University of California, Berkeley, 
Berkeley, CA 94720.
{\em E-mail address: katrin@math.berkeley.edu}}

\author{Chris Woodward}
\address{Department of Mathematics, 
Rutgers University,
Piscataway, NJ 08854.
{\em E-mail address: ctw@math.rutgers.edu}}

\thanks{Partially supported by NSF grants CAREER 0844188 and DMS
  0904358}

\begin{abstract}  
We use quilted Floer theory to construct functor-valued invariants of
tangles arising from moduli spaces of flat bundles on punctured
surfaces.  As an application, we show the non-triviality of certain
elements in the symplectic mapping class groups of moduli spaces of
flat bundles on punctured spheres.
\end{abstract} 
 
\maketitle

\tableofcontents

\section{Introduction} 
\label{field}

In this paper we apply quilt theory in Lagrangian Floer cohomology,
developed in Wehrheim-Woodward \cite{we:co} and Ma'u-Wehrheim-Woodward
\cite{Ainfty}, to produce functor-valued invariants of tangles via
moduli spaces of flat bundles with traceless holonomies.  In the
gauge-theoretic interpretation of Jones polynomial provided by Witten
\cite{wi:jo}, ``quantizing'' moduli spaces of flat bundles gives rise
to knot invariants.  In particular any tangle gives rise to a map
between the spaces of quantum states by ``quantization'' of the
corresponding Lagrangian correspondence.  Several mathematicians and
physicists (in particular Kronheimer-Mrowka \cite{km:kh} and Witten
\cite{wi:kh}) have investigated whether ``categorifying'' the moduli
spaces of flat bundles lead to group-valued knot or tangle invariants.
One naturally expects, according to a suggestion of Fukaya
\cite{fuk:bo}, that Lagrangian correspondences associated to tangles
give rise to functors between Fukaya categories.  The goal here is the
modest one of constructing functor-valued invariants for tangles via
Lagrangian Floer theory.

Our starting point is the observation that given a three-dimensional
bordism containing a tangle whose components are labelled by conjugacy
classes of a special unitary group, the set of flat bundles that
extend over the bordism defines a {\em formal} Lagrangian
correspondence
$$ L(Y,K) \subset M(X_-,\ul{x}_-) \times M(X_+,\ul{x}_+) $$
between the moduli spaces $M(X_\pm,\ul{x}_\pm)$ of flat bundles
associated to the incoming and outgoing marked, labelled boundary
components.  In good situations, our previous work \cite{we:co} and
work together with Ma'u \cite{Ainfty} associates to such a
correspondence a functor between the generalized Fukaya categories of
the symplectic moduli spaces associated to the boundary components:
$$ \Phi(L(Y,K)) : \GFuk(M(X_-,\ul{x}_-),w) \to \GFuk(M(X_+,\ul{x}_+),w) $$
for any integer $w$.  Here $\GFuk(M,w)$ is the category whose objects
are generalized simply-connected monotone Lagrangians submanifolds of
a symplectic manifold $M$ with disk invariant $w$, and morphisms are
Floer cochains.  The disk invariant $w$ is the number of Maslov two
index disks passing through a generic point in the Lagrangian, see
Definition \ref{prop:disk number} below.

One problem with this naive construction is that the moduli spaces of
flat bundles over surfaces are in general not even smooth, let alone
monotone as required for quilt invariants without Novikov coefficients
or figure eight correction terms.  We resolve this problem by making
admissibility assumptions on the number of labels and conjugacy
classes.  By Proposition~\ref{smooth} and Theorem~\ref{monthm} this
assumption guarantees smooth monotone symplectic manifolds. A second
problem with the construction is that the Lagrangian correspondence is
in general a singular subset of the product.  To solve this we
decompose the bordism into {\em elementary bordisms-with-tangles}
$$(Y,K) = (Y_1,K_1) \cup \ldots \cup (Y_m,K_m) .$$
 as in Figure \ref{dtangle}.

\begin{figure}[h]
\includegraphics[height=2in]{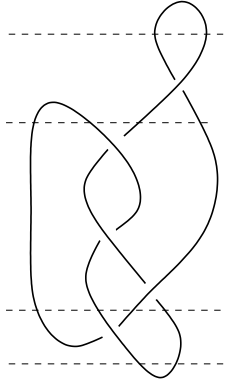}
\caption{Decomposition of a tangle (in this case, a knot) into
  elementary pieces}
\label{dtangle}  \end{figure}

\noindent Each elementary piece $(Y_i,K_i)$ admits a Morse function
with at most one critical point either on the bordism or the tangle.
For such pieces the associated Lagrangian correspondences $L(Y_i,K_i)$
are smooth and even monotone.  A decomposition into elementary
bordisms-with-tangles is obtained by choosing a Morse function on the
bordism such that the maxima resp.\ minima are the outgoing
resp.\ incoming surfaces, and all critical points have different
values.  Then decomposition at level sets between the critical values
yields a sequence of Lagrangian correspondences giving rise to our
functor-valued invariant
$$ \Phi(Y,K) = \Phi(L(Y_m,K_m)) \circ \ldots \circ \Phi(L(Y_1,K_1))
.$$

A precise version of our main result is stated in the language of
category-valued field theories.  Given a compact oriented surface $X$
and coprime integers $r,d$ let $\Tan(X,r,d)$ denote the {\em tangle
  category} whose objects are finite oriented subsets $\ul{x}$ of $X$
with with admissible labels $\ul{\mu}$ as in Proposition \ref{smooth},
and whose morphisms are isotopy classes of tangles $K$ in $[-1,1]
\times X$.  

\begin{theorem} \label{main}  {\rm (Floer field theory for tangles)} 
Let $X$ be a compact oriented surface as above and $w$ an integer.
There exists a functor from $\Tan(X,r,d)$ to the category of (small
\ainfty categories, homotopy classes of \ainfty functors) that assigns
to any finite subset $\ul{x} \subset X$ with labels $\ul{\mu}$ the
generalized Fukaya category $\GFuk(M(X,\ul{x}),w)$.
\end{theorem}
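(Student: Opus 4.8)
The plan is to construct a functor $\Phi$ extending the prescribed assignment on objects, obtained by applying the quilt functor of \cite{we:co,Ainfty} to a Morse decomposition of each tangle into elementary pieces, and then to verify independence of the decomposition and functoriality. On objects there is nothing to choose: to $(\ul{x},\ul{\mu})$ I assign $\GFuk(M(X,\ul{x}),w)$, which is a legitimate object of the target category by Proposition~\ref{smooth} (admissibility of $\ul{\mu}$ makes $M(X,\ul{x})$ a smooth symplectic manifold) together with Theorem~\ref{monthm} (it is monotone), so that the generalized Fukaya category with its disk invariant $w$ of Definition~\ref{prop:disk number} is defined.

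\emph{Construction on morphisms.} Given a tangle $K\subset[-1,1]\times X$ from $(\ul{x}_-,\ul{\mu}_-)$ to $(\ul{x}_+,\ul{\mu}_+)$, I isotope $K$ rel boundary so that the height function $[-1,1]\times X\to[-1,1]$ restricts to a Morse function on $K$ with pairwise distinct critical values --- the ambient cobordism is always the product $[-1,1]\times X$, so only $K$ contributes critical points --- and cut at regular levels between consecutive critical values to obtain $(Y,K)=(Y_1,K_1)\cup\cdots\cup(Y_m,K_m)$, each $(Y_i,K_i)$ a product cobordism containing at most one cup, cap, or no critical point of $K$ (in the last case $K_i$ being the trace of an isotopy of $\ul{x}_{i-1}$ inside $X$). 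By the analysis of elementary bordisms-with-tangles preceding Theorem~\ref{main}, each $L(Y_i,K_i)\subset M(X,\ul{x}_{i-1})^-\times M(X,\ul{x}_i)$ is a smooth monotone Lagrangian correspondence, so \cite{we:co,Ainfty} assigns to it an $A_\infty$ functor, and I set
\[ \Phi(K):=\Phi(L(Y_m,K_m))\circ\cdots\circ\Phi(L(Y_1,K_1)). \]

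\emph{Invariance and functoriality.} Any two Morse presentations of isotopic tangles are connected by a finite sequence of elementary moves: an isotopy supported in a single piece (including Reidemeister-type moves and mapping-class twists from moving $\ul{x}$ inside a regular level), interchange of the heights of two critical points with adjacent values, and birth or death of a cancelling cup--cap pair. For the first two kinds of move one either replaces a piece by a Lagrangian-isotopic monotone correspondence, or replaces two consecutive pieces by two others with the same embedded geometric composition; then the geometric composition theorem of \cite{we:co,Ainfty} --- identifying $\Phi$ of an embedded geometric composition with the composite of the two functors --- together with invariance of $\Phi$ under monotone Lagrangian isotopy keeps the homotopy class of $\Phi(K)$ fixed. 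For the birth/death move one must show that $\Phi(L_{\on{cap}})\circ\Phi(L_{\on{cup}})$ is homotopic to the identity; I would do this by computing in a local model the geometric composition $L_{\on{cup}}\circ L_{\on{cap}}$ --- adjoining and immediately cancelling a pair of marked points labelled by the relevant conjugacy class --- identifying it with the diagonal $\Delta_{M(X,\ul{x})}$, and applying the identity axiom $\Phi(\Delta_{M(X,\ul{x})})\simeq\Id$ of \cite{we:co,Ainfty}. Hence $\Phi(K)$ is a well-defined homotopy class of $A_\infty$ functors depending only on the isotopy class of $K$. Finally, composition of tangles is stacking of products $[-1,1]\times X$, hence concatenation of Morse presentations, so $\Phi(K'\circ K)=\Phi(K')\circ\Phi(K)$ on chosen representatives and therefore as homotopy classes; the identity tangle has empty critical set and correspondence $\Delta_{M(X,\ul{x})}$, with $\Phi(\Delta_{M(X,\ul{x})})\simeq\Id$. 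This yields the asserted functor.

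\emph{Main obstacle.} The real work sits in the birth/death invariance: constructing an explicit model of the moduli spaces near a cancelling cup--cap pair, verifying that the two correspondences have embedded and monotone geometric composition Lagrangian-isotopic to the diagonal, and --- throughout the whole argument --- checking that the admissibility hypothesis of Proposition~\ref{smooth} and the monotonicity of Theorem~\ref{monthm} persist for every intermediate configuration and every geometric composition invoked. The reordering and Reidemeister moves are then routine applications of the geometric composition theorem once monotonicity is under control.
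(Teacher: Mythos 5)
Your proposal follows essentially the same route as the paper: decompose the cylindrical tangle by a Morse function into elementary pieces, assign the flat-moduli Lagrangian correspondences $L(Y_i,K_i)$, apply the quilt functors of \cite{we:co,Ainfty}, and prove independence via the Cerf moves, with the key computation being that a cancelling cup--cap pair has embedded geometric composition equal to the diagonal. The paper merely organizes this in two stages (a $\Symp_{1/2r}$-valued field theory, Theorem~\ref{extendt}, followed by the categorification functor of Corollary~\ref{categorify}) and additionally records the relative spin structures on the correspondences (Lemma~\ref{Kbrane}) needed for the brane structures, a bookkeeping point your outline leaves implicit.
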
 

\noindent Gauge-theoretic invariants of knots were constructed using
instantons by Collin-Steer \cite{co:in} and Kronheimer-Mrowka
\cite{km:kh}.  See also Jacobsson-Rubinsztein \cite{jac:st} for the
similarities with Khovanov homology.  One expects the functors defined
in this paper to be related to the instanton knot invariants by a
version of the Atiyah-Floer conjecture.

The computation of these invariants is rather difficult since
generators for the corresponding Fukaya categories are not presently
known.  However, one particular computation by Seidel \cite{se:le}
gives some information in the case of a five-punctured two-sphere with
equal labels.  In the last section we leverage Seidel's computation
to make a computation in the symplectic mapping class group
$$\Map(M(X,\ul{x}),\omega) = \pi_0(\Diff (M(X,\ul{x}),\omega)) $$
of the moduli spaces of flat bundles:

\begin{theorem} \label{braid} {\rm (Non-triviality of twists on 
moduli spaces of bundles on punctured spheres)} Let $X$ be a
  two-sphere and $\ul{x} \subset X$ an odd number of at least five
  marked points.  Let $M(X,\ul{x})$ be the moduli space of flat
  $SU(2)$-bundles with traceless holonomies on $X - \ul{x}$ and
  $\varphi: M(X,\ul{x}) \to M(X,\ul{x})$ the symplectomorphism induced
  by a full twist around two markings.  Then $\varphi$ is not
  Hamiltonian isotopic to the identity but is smoothly isotopic to the
  identity:
$$ [\varphi] \neq [\on{Id}] \in \Map(M(X,\ul{x}),\omega), \quad
  [\varphi] = [\on{Id}] \in \Map(M(X,\ul{x})) .$$
\end{theorem}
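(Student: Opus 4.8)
The plan is to treat the smooth and the symplectic assertions separately, dispose of the five--marked--point case by importing Seidel's computation, and then propagate non--triviality to all odd $n\ge 5$ using functoriality of the Floer field theory of Theorem~\ref{main}.

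\emph{Smooth triviality.} A full twist around two markings $x_1,x_2$ carrying equal (here traceless) labels acts on $M(X,\ul{x})$ as the square $\tau_V^2$ of the generalized Dehn twist along the vanishing cycle $V\cong S^{\,n-3}$ of the collision $x_1\to x_2$, via the standard identification of a half--twist of equal--weight markings with a Dehn twist along the corresponding Lagrangian sphere. Since $n$ is odd, $V$ has even dimension $n-3$, and the square of a Dehn twist along an even--dimensional Lagrangian sphere is smoothly (though not symplectically) isotopic to the identity: for $n=5$ this is Seidel's observation on a neighbourhood of $V\cong S^2$, and the general even case is the same handle--body computation (extend $\tau_V^2$ over the disk--bundle neighbourhood and use that it is smoothly isotopic rel boundary to $\Id$). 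This gives $[\varphi]=[\Id]\in\Map(M(X,\ul{x}))$, the elementary half of the theorem.

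\emph{The case $n=5$.} For $|\ul{x}|=5$ with equal traceless labels, $M(X,\ul{x})$ is the monotone symplectic four--manifold analyzed in Seidel \cite{se:le}, and $\varphi=\tau_V^2$ is the symplectomorphism whose effect on the Fukaya category he computes. His computation (the exact triangle of the Dehn twist) shows that $\tau_V$, hence $\tau_V^2$, does not fix up to quasi--isomorphism a suitable Lagrangian; for an appropriate value of $w$ this Lagrangian lies in $\GFuk(M(X,\ul{x}),w)$, so $\Phi(\varphi)\not\simeq\Id$ and in particular $\varphi$ is not Hamiltonian isotopic to the identity. This is the base case.

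\emph{Propagation to odd $n>5$.} Since $n-5$ is even, pair up $n-5$ of the markings other than the two being twisted and let $\ul{x}'$ denote the remaining five. Let $T_{\mathrm{birth}}\in\Tan(X,r,d)$ be the tangle from $\ul{x}'$ to $\ul{x}$ given by five vertical strands together with $(n-5)/2$ cups creating the extra pairs, and $T_{\mathrm{cap}}$ the reverse tangle. Each is built from elementary bordisms--with--tangles whose labels remain admissible in the sense of Proposition~\ref{smooth} (equal traceless labels on an odd number of points), so Theorem~\ref{main} produces functors $\Phi(T_{\mathrm{birth}})$ and $\Phi(T_{\mathrm{cap}})$ between $\GFuk(M(X,\ul{x}'),w)$ and $\GFuk(M(X,\ul{x}),w)$. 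The composite $T_{\mathrm{cap}}\circ T_{\mathrm{birth}}$ is five vertical strands together with $(n-5)/2$ unknotted, unlinked circles, so $\Phi(T_{\mathrm{cap}})\circ\Phi(T_{\mathrm{birth}})\simeq\delta^{(n-5)/2}\cdot\Id$, where $\delta$ is the value of the unknot; evaluating $\delta$ on the two elementary circle pieces identifies it with the Floer cohomology of the one--point moduli space of flat bundles on a twice--marked sphere with traceless labels, so $\delta$ is a unit and $\Phi(T_{\mathrm{birth}})$ reflects quasi--isomorphism classes of objects. Finally, since the full twist about the two chosen markings is supported away from the cups, stacking the full--twist cylinder on $T_{\mathrm{birth}}$ is isotopic to stacking $T_{\mathrm{birth}}$ on the five--marked full--twist cylinder, so $\Phi(\varphi)\circ\Phi(T_{\mathrm{birth}})\simeq\Phi(T_{\mathrm{birth}})\circ\Phi(\varphi')$ with $\varphi'$ the five--marked full twist. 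Applying $\Phi(T_{\mathrm{birth}})$ to a Lagrangian $L$ in $M(X,\ul{x}')$ with $\varphi'(L)\not\simeq L$ (base case) yields an object of $\GFuk(M(X,\ul{x}),w)$ not fixed by $\Phi(\varphi)$, so $\Phi(\varphi)\not\simeq\Id$ and hence $[\varphi]\neq[\Id]\in\Map(M(X,\ul{x}),\omega)$.

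\emph{Main obstacle.} The crux is the base case: matching $M(X,\ul{x})$ for five equal labels with exactly the symplectic manifold and symplectomorphism of \cite{se:le}, and verifying that Seidel's non--triviality is witnessed inside the specific category $\GFuk(M(X,\ul{x}),w)$ (correct $w$, monotonicity, and the test Lagrangian being an object there). The propagation step is formal given Theorem~\ref{main}, but relies on two points that need checking: the admissibility (hence smoothness and monotonicity, via Proposition~\ref{smooth} and Theorem~\ref{monthm}) of all the elementary capping pieces for equal traceless labels, and the non--vanishing of the unknot value $\delta$; the smooth--triviality half is routine modulo the standard fact on squared Dehn twists along even--dimensional Lagrangian spheres.
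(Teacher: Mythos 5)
Your reduction to Seidel's five--point case via cup/cap tangles is in the spirit of the paper's argument, but both halves of your proof have genuine gaps. On the smooth side, the identification of the full twist with the square of a Dehn twist along a Lagrangian sphere $V\cong S^{\,n-3}$ is only valid for $n=5$. In general the twist is supported near a level set of the Goldman function $h_{12}$, and that level set is a coisotropic $\cC_{1/4}\cong S^2$--bundle over the moduli space with the two twisted markings removed; only when this base is a point (three remaining traceless markings, i.e.\ $n=5$) does one get a Lagrangian two--sphere. For odd $n>5$ the map is a \emph{fibered} Dehn twist along this coisotropic, so your ``same handle--body computation'' has nothing to attach to, and the blanket claim that squared Dehn twists along arbitrary even--dimensional Lagrangian spheres are smoothly trivial is not a fact you can simply quote. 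The paper instead proves smooth triviality in Proposition \ref{sisot}: by Remark \ref{smoothrem} the squared twist is the Hamiltonian flow of $2h_{jk}^2$, one deforms the label $\mu$ within its smoothness chamber and deforms the Hamiltonian so that its variation is supported near $h_{jk}=1/2$, and since the image of $h_{jk}$ lies in $[0,2\mu]$ the flow becomes the identity once $\mu<1/4$.

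On the symplectic side the propagation step does not work as written. The composite $\Phi(T_{\mathrm{cap}})\circ\Phi(T_{\mathrm{birth}})$ is not ``$\delta^{(n-5)/2}\cdot\Id$ with $\delta$ a unit'': the geometric composition of the cup and cap correspondences is the diagonal only set--theoretically and is \emph{not} embedded (the $\cC_{1/4}\cong S^2$ fibers collapse), so Theorem \ref{composethm} does not apply, and the honest composite behaves like tensoring with $H^*(S^2)$; concluding that $\Phi(T_{\mathrm{birth}})$ reflects quasi--isomorphism classes would then still require a cancellation argument you do not supply. More seriously, your base case asks for a Lagrangian $L$ in the five--pointed moduli space with $\varphi'(L)\not\simeq L$, and Seidel's computation does not provide such an object -- the obvious candidate, the vanishing sphere, is preserved by its own twist. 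What \cite{se:le} gives, and what Theorem \ref{sq5} extracts, is that the graph $\Gamma((\sigma_{12}^{(5)})^2)$ is not quasi--isomorphic to the diagonal, via compatibility with the $QH$--module structure after composing with $\Gamma((\sigma_{12}^{(5)})^{-1})$. Accordingly the paper's induction (Theorem \ref{nontrivthm}) runs at the level of graphs: sandwiching the $(2n+3)$--strand full twist between a cup and a cap yields the $(2n+1)$--strand full twist, these compositions of correspondences are embedded by Lemma \ref{Lex}, and \cite[Theorem 8.6]{we:co} turns a hypothetical quasi--isomorphism $\Delta_{2n+3}\simeq\Gamma((\sigma_{12}^{(2n+3)})^2)$ into one with two fewer strands, contradicting the inductive hypothesis. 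That route avoids both the non--embedded cup--cap composition and the need for an object--level witness, and you would need to repair your argument along those lines (or supply the missing cancellation and base--case object) for the proof to stand.
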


The structure of the paper is as follows.  In Section 2 we describe
our strategy for defining tangle invariants via Cerf decompositions.
In Section 3 we then show that the moduli spaces of flat connections
with admissible holonomy labels fit into this blueprint. In
particular, the sequence of Lagrangian correspondences obtained as
sketched above is independent of the choice of decomposition up to an
equivalence relation generated by embedded composition of Lagrangian
correspondences.  In Section 4 we introduce a suitable notion of
Fukaya category adapted to the moduli spaces of flat bundles under
consideration.  In Section 5 we combine the constructions of Sections
2,3,4 to obtain a category-valued field theory, or rather, a functor
from our tangle categories to (small \ainfty categories, homotopy
classes of \ainfty functors).  The equivalence of generalized
Lagrangian correspondences proved in Section 3 combines with the
results of \cite{we:co} to show that the resulting functor is
independent up to isomorphism of the decomposition into elementary
pieces.  This section also contains an extension to graphs, needed for
a surgery exact triangle.

We thank P. Seidel for encouragement and for sharing his ideas.  We
also thank R. Rezazadegan for helpful comments.  The present paper is
an updated and more detailed version of a paper the authors have
circulated since 2007. The authors have unreconciled differences over
the exposition in the paper, and explain their points of view at
{\small
  \href{https://math.berkeley.edu/~katrin/wwpapers/}{math.berkeley.edu/$\sim$katrin/wwpapers}}
resp.  {\small
  \href{http://christwoodwardmath.blogspot.com/}{christwoodwardmath.blogspot.com}}. The
publication in the current form is the result of a mediation.

\section{Field theory for tangles} 

In this section we introduce various notions and constructions of
(topological) field theories for tangles.  Roughly speaking a field
theory is a functor from a bordism category to some other category.
In Section~\ref{ss:tangle} we use embedded bordisms in cylinders to
construct a category of tangles. Section~\ref{ss:cerf} discusses Cerf
decompositions in this category and shows how to use them in the
construction of general field theories.  Section~\ref{ss:symp} then
specializes this construction to a symplectic target category.

\subsection{The tangle category} \label{ss:tangle}

Our language for topological field theories for tangles adapts that in
Lurie \cite{lurie:class}, rephrasing the earlier definition of Atiyah.
Roughly speaking a tangle is a between marked surfaces, defined as
follows.

\begin{definition} \label{tancat}
\begin{enumerate}
\item {\rm (Marked surfaces)} A {\em marking} of a compact oriented
  surface $X$ is a collection
$$\ul{x} = \{ x_1,\ldots, x_n \} \subset X$$ 
of distinct, oriented points for some non-negative integer $n$
equipped with an orientation given by a function
$$ \eps :\ul{x} \to \{ \pm 1 \} .$$  
A {\em marked surface} is a tuple $(X,\ul{x})$ of a compact, oriented
surface $X$ equipped with a marking $\ul{x}$.
\item {\rm (Tangles)} A {\em tangle}
from $(X_-,\ul{x}_-)$ to $(X_+,\ul{x}_+)$ is a tuple $(Y,K,\phi)$
consisting of
\begin{enumerate} 
\item a compact oriented $3$-manifold-with-boundary $Y$;
\item an orientation-preserving diffeomorphism $\phi: \partial Y
%
%
\to \ol{X}_- \cup X_+$
  where $\ol{X}_-$ denotes the manifold $X_-$ with reversed
  orientation;
\item a compact oriented $1$-dimensional submanifold $K \subset Y$
  meeting the boundary transversally in $\partial K = K\cap\partial
  Y$, so that $\phi$ restricts to an orientation preserving
  identification 
$$ \phi |_{\partial K}: \ \partial K \cong {\ul{\ol{x}}_-} \cup
  \ul{x}_+$$ 
where ${\ul{\ol{x}}_-}$ denotes the marking $\ul{x}_-$ with reversed
orientation.
\end{enumerate} 
An {\em equivalence} between two tangles $(Y_0,K_0,\phi_0)$ and $
(Y_1,K_1,\phi_1)$, both from $(X_-,\ul{x}_-)$ to $(X_+,\ul{x}_+)$, is
an orientation-preserving diffeomorphism inducing the identity on the
boundary surfaces:
$$\psi: Y_0 \to Y_1, \quad \psi(K_0)=K_1, \quad
\phi_1\circ\psi|_{\partial Y_0} = \phi_0 .$$
\item {\rm (Labelled tangles)} Let $\B$ be a set, which we call a set
  of {\em labels}.  A {\em decorated surface resp. tangle } is a
  marked surface $(X,\ul{x})$ resp. tangle $(Y,K,\phi)$ equipped with
  a {\em labelling} of the components $\ul{x} \to \B$ resp. $\pi_0(K)
  \to \B$.
\item {\rm (Cylindrical tangles)} Let $X$ be a fixed compact, oriented
  $2$-manifold.  A {\em $X$-cylindrical tangle} is a tangle in a
  bordism $Y$ from $X$ to itself diffeomorphic to $[-1,1] \times X $.
\end{enumerate} 
\end{definition} 

\begin{remark}
A weaker version of equivalence of tangles is {\em isotopy
  invariance}.  In particular, suppose we fix a bordism $Y$ and
suppose that $K_t, t \in [0,1]$ is an isotopy of tangles in $Y$ with
fixed endpoints.  By a relative version of the isotopy extension
theorem, whose absolute version is \cite[Theorem 1.6, Chapter
  8]{hirsch}, the pairs $(Y,K_t)$ are all diffeomorphic by
diffeomorphism equal to the identity on the boundary; the relative
version is proved in the way way as the absolute version.  So
$(Y,K_t)$ are equivalent for $t \in [0,1]$.  The converse (that
diffeomorphism equivalence implies isotopy equivalence) does not hold
in general since the mapping class group of the pair could be
non-trivial.
\end{remark}

Our field theories fit into the language of {\em topological field
  theories}. These are functors from bordism categories equipped with
additional data.

\begin{definition}   {\rm (Tangle category)} The {\em tangle category} $\Tan$ is the
  category whose
\begin{enumerate}
\item objects are marked surfaces;
\item morphisms are equivalence classes of tangles 
$[Y,K,\phi]$;
\item composition is defined by gluing: Let
  $(Y_{01},K_{01},\phi_{01})$ be a tangle from $(X_0,\ul{x}_0)$ to
  $(X_1,\ul{x}_1)$ and let $(Y_{12},K_{12},\phi_{12})$ be a tangle
  from $(X_1,\ul{x}_1)$ to $(X_2,\ul{x}_2)$.  Choose collar
  neighborhoods
$$ \kappa_1 : (X_1 \times
  (-\eps,0),\ul{x}_1 \times (-\eps,0)) \to (Y_{01},K_{01}) $$ 
resp.  
$$ \kappa_2: (X_1 \times (0,\eps), \ul{x}_1 \times (0,\eps)) \to
(Y_{12},K_{12}).$$ 
Define the composition $(Y_{01},K_{01},\phi_{01}) \circ
(Y_{12},K_{12},\phi_{12})$ to be the union
\begin{equation} \label{composeY} ((Y_{01},K_{01}) \sqcup (X_1 \times (-\eps,\eps) \sqcup
(Y_{12},K_{12}))/ \sim \end{equation}
where $\sim$ is the natural equivalence relation defined by
$\kappa_1,\kappa_2$, and equipped with the diffeomorphism of the
boundary to $(X_0,\ul{x}_0) \sqcup (X_2,\ul{x}_2)$ induced by
$\phi_{01}$ and $\phi_{12}$;
\item the identity for $(X,\ul{x})$ is the equivalence class of the
  cylindrical bordism $\bigl[ [-1,1]\times X, [-1,1]\times \ul{x}
    \bigr]$ equipped with the obvious identification of the boundary
  $\{ -1,1\} \times (X,\ul{x})$ with two copies of $(X,\ul{x})$.
\end{enumerate}
Composition is independent, up to equivalence, of the choice of collar
neighborhood and representatives, since any two collar neighborhoods
are isotopic.  The equivalence class of a composition of
representatives is denoted
$$[(Y_{01},K_{01},\phi_{01})] \circ [(Y_{12},K_{12},\phi_{12})] =
[(Y_{01},K_{01},\phi_{01}) \circ (Y_{12},K_{12},\phi_{12})] .$$
\label{cobtangles}
\label{sec:tangle}
\noindent Equivalence classes of cylindrical tangles with fixed $X$
form a category $\Tan(X)$ by using the composition law described
above, since the composition of two bordisms equivalent to $[-1,1]
\times X$ is again equivalent to $[-1,1] \times X$. 
\end{definition} 

\begin{definition}\label{field theory}  {\rm (Field theories)}  
Let $X$ be a compact oriented surface and let $\cC$ be a category.  A
{\em $\cC$-valued field theory for cylindrical tangles} in $X$ is a
functor $\Phi: \Tan(X) \to \cC$.
\end{definition} 

\subsection{Cerf theory for tangles}  \label{ss:cerf}

Field theories for tangles can be constructed by decomposition into
elementary tangles as follows.

\begin{definition}  \label{handletangle}
\begin{enumerate} 
\item {\rm (Morse datum)} A {\em Morse datum} for a tangle
  $(Y,K,\phi)$ from $(X_-,\ul{x}_-)$ to $(X_+,\ul{x}_+)$ consists of a
  pair $(f,\ul{b})$ of
\begin{enumerate} 
\item a Morse function $f:Y\to\R$ that restricts to a Morse function
  $f|_K:K\to\R$, and
\item an ordered tuple $\ul{b}= ( b_0 < b_1 < \ldots < b_{m} ) \in
  \R^{m+1}$
\end{enumerate} 
such that the following hold:
\begin{enumerate}
\item \label{first} 
The sets of minima resp.\ maxima of $f$ are
$$\phi(X_-) \cong f^{-1}(b_0), \quad \phi(X_+) \cong
  f^{-1}(b_m) .$$
\item \label{second} Each level set $f^{-1}(b)$ for $b\in\R$ is
  connected, or equivalently $f$ has no critical points of index $0$
  or $3$.
\item \label{third} The function $f$ has distinct values at the
  critical points of $f$ and $f|_K$, i.e.\ it induces a bijection
$$\Crit(f)\cup\Crit(f|_K)\to f(\Crit(f) \cup \Crit(f|_K))$$ 
between critical points and critical values.
\item \label{fourth} The values $b_0,\ldots, b_{m} \in \R \setminus
  f(\Crit(f) \cup \Crit(f|_K) )$ are regular values of $f$ and $f|_K$
  such that each interval $(b_{i-1},b_i)$ contains at most one
  critical value of either $f$ or $f|_K$:
$$ \# \Crit(f) \cap f^{-1}(b_{i-1},b_i) + \# \Crit(f_K) \cap
  f^{-1}(b_{i-1},b_i) \leq 1 .$$
\end{enumerate}
In the special case $Y = [b_-,b_+] \times X$, we say that $(f,\ul{b})$
is a {\em cylindrical Morse datum} for a tangle $(Y,K,\phi)$ if
$$\partial_t f (t,x) > 0, \quad \forall (t,x)\in Y .$$ 
This assumption implies that each level set $f^{-1}(t)$ is
diffeomorphic to $X$, by normalized gradient flow of $f$.
\item 
\label{cerfdecomp} {\rm (Cerf decomposition)} 
The {\em Cerf decomposition} of a tangle $(Y,K,\phi)$ induced by a
Morse datum $(f,\ul{b})$ is the sequence
$$( Y_i := f^{-1}( [b_{i-1},b_i]), \quad K_i :=Y_i \cap K, \phi_i),
\quad i=1,\ldots m$$
of elementary tangles between the connected level sets
$$X_i:=Y_{i}\cap Y_{i+1} = f^{-1}(b_i), \quad \ul{x}_i = K_i \cap
K_{i+1} = f^{-1}(b_i) \cap K $$
and obvious identifications of the boundary $\phi_i$.  Here we have
$X_0 \cong X_-$ and $X_m \cong X_+$ via the restriction of $\phi$,
$\partial Y_i=X_{i-1}\sqcup X_i$. The sequence $( Y_i,K_i,\phi_i
)_{i=1,\ldots m}$ corresponds to the decomposition
\begin{equation} \label{decompYK}
Y= Y_1 \cup_{X_1} Y_2 \cup_{X_2}  \ldots \cup_{X_{m-1}} Y_{m}, \ \ \
K= K_1 \cup_{\ul{x}_1} K_2 \cup_{\ul{x}_2}  \ldots \cup_{\ul{x}_{m-1}} K_{m} .
\end{equation}
In the special case $Y= [b_-,b_+] \times X$, a {\em cylindrical Cerf
  decomposition} of the tangle $K$ is a Cerf decomposition induced by
a cylindrical Morse datum.

\item {\rm (Elementary tangles)} A tangle $(Y,K,\phi)$ is a
\begin{enumerate} 
\item {\em elementary tangle} if $(Y,K,\phi)$ admits a Cerf
  decomposition with a single piece, and
\item an {\em elementary cylindrical tangle} if $(Y,K,\phi)$ admits a
  Cerf decomposition with a single piece and no critical points on
  $Y$.  That is, $Y$ is a cylindrical bordism and $f: Y \to \R$ is a
  Morse function without critical points and the restriction $f|K$ has
  at most one critical point on $K$:
$$ \# \Crit(f) = 0, \quad \# \Crit(f_K) \leq 1 .$$
\end{enumerate}
Thus a cylindrical Cerf decomposition is a decomposition of the
trivial bordism $Y=[b_-,b_+]\times X$ into cylindrical bordisms
$Y_1\cup_{X_1}\ldots\cup_{X_{m-1}}Y_m$, with the property that taking
intersections with the tangle gives a decomposition
$K=K_1\cup_{\ul{x_1}} \ldots \cup_{\ul{x}_{m-1}} K_m$ into elementary
cylindrical tangles $(Y_j,K_j,\phi_j)$.  The equivalence class
$[(Y_j,K_j,\phi_j)]$ of an elementary tangle $(Y_j,K_j,\phi_j)$ is an
{\em elementary morphism}.  An cylindrical Cerf decomposition of an
equivalence class $[(Y,K,\phi)]$ is an expression as a composition of
elementary morphisms
$$ [(Y,K,\phi)] = [(Y_1,K_1,\phi_1)] \circ \ldots \circ [(Y_m,K_m,\phi_m)] $$
corresponding to a cylindrical Cerf decomposition of a representative.
We say that two cylindrical Cerf decompositions
\begin{eqnarray*} [(Y,K,\phi)] &=& [(Y_1,K_1,\phi_1)] \circ \ldots \circ
[(Y_m,K_m,\phi_m)] \\
&=& [(Y_1,K_1,\phi_1)'] \circ \ldots \circ
[(Y_m,K_m,\phi_m)'] \end{eqnarray*}
are {\em equivalent} if there exist orientation-preserving
diffeomorphisms
$$ \delta_0 = \on{Id}_{X_0}, \quad \delta_1: X_1 \to X_1',\quad \ldots \quad,
\delta_{m-1}: X_{m-1} \to X_{m-1}', \quad \delta_m = \on{Id}_{X_m}
$$
such that for each $i = 1,\ldots, m$,
$$ [(Y_i,K_i,\phi_i)] = [(Y_i', K_i', ( \delta_{i-1} \sqcup \delta_i)
  \circ \phi_i )] .$$
\end{enumerate} 
\end{definition} 

The following is a special case of Cerf theory, for the special case
of cylindrical Cerf decompositions. 

\begin{theorem} \label{cerftangle} {\rm (Cerf theory for tangles)} Let
  $(Y = [-1,1] \times X,K,\phi)$ be a cylindrical tangle. Then any two
  cylindrical Cerf decompositions of $[(Y,K,\phi)]$ are related up to
  equivalence by a finite sequence of the following moves:

\begin{enumerate} 
\item {\rm (Critical point cancellation)} Two elementary morphisms
  $[(Y_i,K_i,\phi_i)]$ and $[(Y_{i+1},K_{i+1},\phi_{i+1})]$, that
  carry Morse functions $f_i$ resp. $f_{i+1}$ with a local minimum
  $y_i \in K_i$ resp.\ local maximum $y_{i+1} \in K_{i+1}$,
$$ \d^2_{y_i} f |_{K_i}  > 0 , \quad d^2_{y_{i+1}} f|_{K_{i+1}} < 0  $$
both of which lie on the same strand of $K\cap (Y_i\cup Y_{i+1})$, are
replaced by the elementary morphism $[(Y_i,K_i,\phi_i)] \circ
[(Y_{i+1},K_{i+1},\phi_{i+1})]$ that admits a Morse function with no
critical point;
\item {\rm (Critical point reversal)} Two elementary morphisms
  $[(Y_i,K_i,\phi_i)], [(Y_{i+1},K_{i+1},\phi_{i+1})]$ that carry
  Morse functions with critical points
$$y_i \in K_i, \ y_{i+1} \in K_{i+1}, \quad \d_{y_i} f = \d_{y_{i+1}}
  f = 0, \quad k = \ind(y_i), \quad l = \ind(y_{i+1})$$
on strands whose intersection with $(X_i,\ul{x}_i)$ is disjoint, are
replaced by two elementary morphisms that carry Morse functions with
critical points of index $l$ and $k$ such that $[(Y_i,K_i,\phi_i)]
\circ [(Y_{i+1},K_{i+1},\phi_{i+1})]$ is equal to
$[(Y_i',K_i',\phi_i')]\circ [(Y_{i+1}',K_{i+1}',\phi_{i+1}')]$
\item {\rm (Cylinder gluing)} Two elementary morphisms
  $[(Y_i,K_i,\phi_i)], [(Y_{i+1},K_{i+1},\phi_{i+1})]$, one of which is
  cylindrical, are replaced by the composition 
  $[(Y_i,K_i,\phi_i)] \circ [(Y_{i+1},K_{i+1},\phi_{i+1})]$.
\end{enumerate}

\end{theorem}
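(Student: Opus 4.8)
The plan is to deduce the statement from classical Cerf theory applied to the restriction of the height function to the one-manifold $K$. First I would fix a representative $(Y=[-1,1]\times X,K,\phi)$; changing the representative acts on a cylindrical Cerf decomposition only through orientation-preserving diffeomorphisms of the intermediate level sets, which is an equivalence of cylindrical Cerf decompositions in the sense of Definition~\ref{handletangle}, so it is enough to compare two cylindrical Morse data $(f_0,\ul{b}_0)$, $(f_1,\ul{b}_1)$ on the fixed representative. The next step is to dispose of the regular values: for a fixed cylindrical Morse function $f$, moving a value $b_i$ within the open interval between two consecutive critical values of $f|_K$ changes the level set $f^{-1}(b_i)$ only by the normalized gradient flow of $f$ and therefore yields an equivalent Cerf decomposition, while deleting a $b_i$ that is not needed to separate critical values is a cylinder-gluing move. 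Hence, after cylinder gluings and equivalences, I may assume every datum is \emph{normalized}, i.e.\ each slab contains exactly one critical value of $f|_K$ (or there is no critical value and the decomposition is a single cylindrical piece).

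Next I would connect $f_0$ to $f_1$ by the straight-line homotopy $f_t:=(1-t)f_0+tf_1$. This is cylindrical for every $t\in[0,1]$ because the derivative of $f_t$ in the $[-1,1]$-direction is a convex combination of two everywhere-positive functions; in particular $f_t$ has no critical points on $Y$, so every condition for a Morse datum except genericity of $f_t|_K$ holds automatically and all level sets remain diffeomorphic to $X$. Since any smooth function on $K$ extends to a cylindrical function on $Y$ supported near $K$, a $C^1$-small perturbation of the path $t\mapsto f_t$, fixed at $t=0,1$ and supported near $K$, stays cylindrical and makes $t\mapsto f_t|_K$ a generic one-parameter family of functions on the compact one-manifold $K$: for all but finitely many $t$ the function $f_t|_K$ is Morse with distinct critical values, and at the exceptional parameters either two critical values of a Morse $f_t|_K$ coincide (a crossing) or $f_t|_K$ has one cubic degeneracy in which a local minimum and a local maximum merge on a single strand (a birth/death). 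This is nothing but the one-dimensional case of Cerf's theorem; the ambient cylinder contributes only the gradient-flow identifications of the level sets.

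Then I would read off the three moves along the path. On any subinterval of $[0,1]$ free of exceptional parameters the ordering of the critical values of $f_t|_K$ is constant, so, choosing the separating values to vary continuously, the normalized Cerf decompositions at nearby parameters differ only by gradient-flow equivalences. At a birth/death parameter a minimum--maximum pair on one strand, with the minimum below the maximum, appears or disappears; on the side where it is absent the two adjacent pieces fuse to one cylindrical piece, which is exactly a critical-point-cancellation move (or its inverse, possibly followed by a cylinder gluing to reach a normalized decomposition). At a crossing parameter the two elementary pieces carrying the colliding critical points exchange order, and this is a critical-point-reversal move; the key observation is that in the normalized setting the two colliding critical points cannot lie on a common strand of $K\cap(Y_i\cup Y_{i+1})$, because an arc of $K$ joining them inside a two-slab region containing no other critical point would be $f_t|_K$-monotone and so could not have its two endpoint critical values cross. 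Being on distinct strands, which are disjoint subsets of $K$, the two critical points sit on pieces of $K$ meeting $(X_i,\ul{x}_i)$ in disjoint subsets of $\ul{x}_i$, as the reversal move requires, and the move records the exchange of the two indices $k,l\leftrightarrow l,k$. Concatenating these finitely many moves and equivalences carries the normalized decomposition of $(f_0,\ul{b}_0)$ to that of $(f_1,\ul{b}_1)$, and by the first step both normalized decompositions are equivalent to the original two.

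The main obstacle is not the abstract bifurcation theory, which is genuinely one-dimensional here, but the faithful bookkeeping in the translation step. One has to verify that the choice of regular values $\ul{b}$ is immaterial up to cylinder gluing and gradient-flow equivalence (handled in the first step), that a generic crossing always satisfies the disjoint-strand hypothesis of the reversal move (the monotone-arc argument above), and that the perturbation producing the generic path can be taken cylindrical --- that is, that extending a $C^1$-small perturbation of $f_t|_K$ to a perturbation of $f_t$ on $Y$ does not destroy positivity of the derivative in the $[-1,1]$-direction, which holds because this derivative has a positive lower bound on the compact set $Y$. A secondary point is to make the dependence on the representative precise, i.e.\ to check that an equivalence of tangle representatives indeed acts on cylindrical Cerf decompositions through the allowed equivalences.
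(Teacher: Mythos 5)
Your proposal is correct and takes essentially the same route as the paper's proof: interpolate linearly between the two cylindrical Morse data, perturb the family (extending a perturbation of the restriction to $K$ cylindrically over $Y$) so that $f_t|_K$ is a generic one-parameter family in the sense of Cerf, identify the decompositions at nearby non-exceptional parameters by flows preserving $K$ and the level sets, and read off birth-death times as cancellation moves and crossing times as reversal moves. Your additional bookkeeping --- normalizing so each slab carries one critical value, and the monotone-arc argument guaranteeing the disjoint-strand hypothesis at a crossing --- only makes explicit steps the paper treats implicitly via the choice of separating values $b_i(s)$ and the local cusp model.
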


See Figures \ref{minmax1} and \ref{switch} for depictions of the first
two moves. 

\begin{figure}[h]
\setlength{\unitlength}{0.00037489in}
\begingroup\makeatletter\ifx\SetFigFont\undefined
\def\x#1#2#3#4#5#6#7\relax{\def\x{#1#2#3#4#5#6}}%
\expandafter\x\fmtname xxxxxx\relax \def\y{splain}%
\ifx\x\y   
\gdef\SetFigFont#1#2#3{%
  \ifnum #1<17\tiny\else \ifnum #1<20\small\else
  \ifnum #1<24\normalsize\else \ifnum #1<29\large\else
  \ifnum #1<34\Large\else \ifnum #1<41\LARGE\else
     \huge\fi\fi\fi\fi\fi\fi
  \csname #3\endcsname}%
\else
\gdef\SetFigFont#1#2#3{\begingroup
  \count@#1\relax \ifnum 25<\count@\count@25\fi
  \def\x{\endgroup\@setsize\SetFigFont{#2pt}}%
  \expandafter\x
    \csname \romannumeral\the\count@ pt\expandafter\endcsname
    \csname @\romannumeral\the\count@ pt\endcsname
  \csname #3\endcsname}%
\fi
\fi\endgroup
{\renewcommand{\dashlinestretch}{30}
\begin{picture}(6774,2829)(0,-10)
\path(4512,2667)(4512,2665)(4513,2659)
	(4514,2650)(4515,2635)(4517,2614)
	(4520,2588)(4524,2556)(4529,2520)
	(4536,2480)(4543,2438)(4551,2395)
	(4560,2351)(4570,2307)(4581,2265)
	(4593,2225)(4607,2186)(4621,2150)
	(4636,2116)(4653,2084)(4671,2054)
	(4691,2026)(4712,2000)(4735,1975)
	(4761,1952)(4788,1929)(4818,1908)
	(4850,1887)(4875,1872)(4901,1857)
	(4929,1842)(4957,1828)(4988,1813)
	(5019,1799)(5052,1784)(5086,1769)
	(5122,1754)(5158,1739)(5196,1724)
	(5235,1709)(5274,1694)(5315,1678)
	(5356,1662)(5397,1646)(5440,1630)
	(5482,1614)(5525,1597)(5567,1580)
	(5609,1563)(5652,1546)(5693,1529)
	(5734,1511)(5775,1493)(5814,1475)
	(5853,1457)(5891,1439)(5927,1421)
	(5963,1402)(5997,1383)(6030,1364)
	(6061,1344)(6092,1325)(6120,1305)
	(6148,1284)(6174,1263)(6200,1242)
	(6226,1217)(6251,1192)(6274,1165)
	(6296,1137)(6317,1108)(6336,1077)
	(6353,1044)(6370,1010)(6385,973)
	(6400,934)(6414,892)(6426,848)
	(6438,802)(6449,753)(6460,701)
	(6470,647)(6479,592)(6487,534)
	(6495,477)(6502,419)(6509,361)
	(6514,306)(6519,253)(6524,205)
	(6527,161)(6530,122)(6532,89)
	(6534,63)(6535,43)(6536,28)
	(6537,19)(6537,14)(6537,12)
\dashline{60.000}(4062,2307)(6762,2307)
\dashline{60.000}(4062,732)(6762,732)
\path(3162,1632)(3612,1632)
\path(3492.000,1602.000)(3612.000,1632.000)(3492.000,1662.000)
\path(462,2802)(462,2800)(462,2795)
	(462,2786)(462,2771)(463,2751)
	(463,2725)(464,2692)(464,2653)
	(465,2609)(466,2560)(467,2507)
	(468,2451)(470,2394)(471,2336)
	(473,2277)(475,2220)(477,2164)
	(479,2110)(481,2058)(484,2008)
	(487,1961)(489,1916)(492,1874)
	(496,1834)(499,1797)(503,1761)
	(507,1728)(511,1696)(516,1666)
	(520,1637)(526,1610)(531,1583)
	(537,1557)(545,1524)(554,1492)
	(564,1462)(574,1432)(585,1402)
	(597,1374)(609,1346)(622,1320)
	(636,1294)(651,1269)(666,1246)
	(681,1224)(697,1203)(714,1184)
	(730,1166)(747,1150)(764,1136)
	(781,1123)(798,1112)(815,1102)
	(831,1094)(848,1088)(864,1083)
	(880,1080)(896,1078)(912,1077)
	(929,1078)(947,1080)(964,1084)
	(982,1089)(1000,1096)(1018,1104)
	(1036,1114)(1054,1125)(1073,1137)
	(1091,1151)(1109,1166)(1127,1181)
	(1144,1198)(1161,1215)(1177,1233)
	(1193,1252)(1207,1270)(1221,1289)
	(1234,1308)(1246,1326)(1258,1345)
	(1268,1363)(1278,1381)(1287,1399)
	(1297,1421)(1306,1443)(1314,1465)
	(1322,1488)(1329,1510)(1336,1534)
	(1343,1558)(1349,1582)(1356,1606)
	(1362,1631)(1368,1655)(1375,1680)
	(1381,1704)(1388,1727)(1395,1750)
	(1402,1772)(1410,1794)(1418,1815)
	(1427,1836)(1437,1857)(1447,1876)
	(1458,1895)(1469,1914)(1482,1933)
	(1496,1952)(1510,1971)(1526,1990)
	(1543,2009)(1560,2027)(1578,2044)
	(1597,2061)(1617,2077)(1637,2092)
	(1656,2105)(1676,2117)(1696,2128)
	(1716,2137)(1736,2144)(1755,2150)
	(1774,2154)(1793,2156)(1812,2157)
	(1827,2156)(1842,2155)(1856,2152)
	(1872,2148)(1887,2143)(1902,2137)
	(1918,2129)(1933,2119)(1949,2109)
	(1965,2096)(1980,2083)(1996,2067)
	(2011,2050)(2027,2032)(2042,2012)
	(2056,1991)(2070,1969)(2084,1945)
	(2097,1920)(2109,1894)(2121,1866)
	(2132,1838)(2143,1809)(2153,1779)
	(2162,1747)(2171,1715)(2179,1682)
	(2187,1647)(2192,1621)(2197,1593)
	(2202,1565)(2206,1535)(2211,1504)
	(2215,1472)(2218,1438)(2222,1402)
	(2225,1364)(2228,1324)(2231,1282)
	(2234,1237)(2236,1190)(2239,1140)
	(2241,1088)(2243,1033)(2245,976)
	(2247,917)(2249,856)(2251,794)
	(2252,730)(2254,667)(2255,604)
	(2256,542)(2257,483)(2258,426)
	(2259,374)(2260,326)(2260,284)
	(2261,247)(2261,217)(2262,193)
	(2262,174)(2262,162)(2262,153)
	(2262,149)(2262,147)
\dashline{60.000}(12,2307)(2712,2307)
\dashline{60.000}(12,1632)(2667,1632)
\dashline{60.000}(12,732)(2712,732)
\end{picture}
}
\caption{Critical point cancellation}
\label{minmax1}
\end{figure}

\begin{proof} 
The proof follows from an examination of a generic homotopy between
cylindrical Morse functions defining the two Cerf decompositions.  Let
$(f_j,\ul{b}_j), j = 0,1$ be cylindrical Morse data for a cylindrical
tangle $(Y,K,\phi)$.  Let
$$f_s = (1-s)f_0 + sf_1, s \in [0,1]$$ 
be the linear interpolation between $f_0$ and $f_1$.  Since
$\partial_t f_0 > 0$ and $\partial_t f_1 > 0$ we also have $\partial_t
f_s > 0$ for all $s \in [0,1]$.  Consider the restrictions $f_s |_K$.
Since $K \subset [b_-,b_+] \times X$ is a submanifold with boundary,
$f_s |_K$ has positive resp. negative normal derivative at $\ul{x}_-$
resp. $\ul{x}_+$.  Hence $(f_s|_K)$ has singularities or critical
points only on a compact set in the interior of $K$.

Next we apply Cerf theory to the restriction of the homotopy to the
tangle.  After replacing $f_s |_K$ with a perturbation we may assume
that $f_s |_K$ is Morse except at finitely many values of $s \in
[0,1]$ where a birth/death singularity occurs by \cite[Theorem
  2.4]{gg:stable}.  Furthermore, after another perturbation we may
assume that $f_s|_{K}$ is a Morse function injective on its critical
set for all but finitely many values of $s \in [0,1]$, as in Cerf
\cite[top of p. 11]{ce:st}.  Since $K$ is a submanifold of $Y$, any
such perturbation has an extension to a smooth family of functions
$f_s$ on $Y$ with the property that $\partial_t f_s > 0$ for all
$(t,x) \in Y$ and $s \in [0,1]$.  So this homotopy has only finite
many values $c_1<\ldots<c_m$ for which $f_{c_j}$ does not satisfy
(a-c) in Definition \ref{cerftangle}.

Away from the critical values the Cerf decompositions are equivalent
by diffeomorphisms.  Indeed, choose $\eps$ small and smoothly varying
$b_1(s),\ldots, b_{m-1}(s)$ separating the critical values of $f_s$
for $s \in [c_i + \eps, c_{i+1}-\eps]$.  
Let $\ti{f}(s,t) = f_s(t)$.
The inverse images of the
level sets $f_s^{-1}(b_i(s))$ form smooth submanifolds of $Y \times
[0,1]$ denoted $\ti{f}^{-1}(b_i)$. Indeed, the differential of $f_s$ is
already transverse to $b_i(s)$, so smoothness follows from the
implicit function theorem.  The required diffeomorphism will be given
by the flow of a vector field satisfying
$$ v \in \Vect( Y \times [c_i + \eps, c_{i+1}-\eps]), \quad
(D_{y,s}\pi_2)_* v =\partial_s, \forall (y,s)\in Y \times [c_i + \eps,
  c_{i+1}-\eps] $$
where $\pi_2$ is projection onto the second factor, and tangent to the 
boundary components and tangles:
$$ v(K) \subset TK , \quad v(\ti{f}^{-1}(b_i)) \subset T(\ti{f}^{-1}(b_i)) .$$
The construction of the required vector field proceeds in stages.
Such a vector field $v$ exists on each level set $\ti{f}^{-1}(b_i)$ since
the $b_i(s)$ are regular values:
$$ T_{y,s} \ti{f}^{-1}(b_i) \cap (T_y Y \times \{ 0 \}) = T_y
f^{-1}(b_i(s)), \quad D\pi_2 | T_{y,s} \ti{f}^{-1}(b_i) = \R .$$
Furthermore since $K \cap \ti{f}^{-1}(b_i)$ is a transverse
intersection, we may choose $v$ preserving $K \cap \ti{f}^{-1}(b_i)$.
Next $v$ extends to a vector field $v|_{U_i}$ on a neighborhood $U_i$
of each level set $\ti{f}^{-1}(b_i)$ by the tubular neighborhood
theorem.  One may then extend $v$ to a vector field on $Y \times [c_i
  + \eps, c_{i+1}-\eps])$ using interpolation with the vector field
$\partial_s \in \Vect( Y \times [c_i + \eps, c_{i+1}-\eps])$. That is,
let $\rho \in C^\infty(Y \times [c_i + \eps, c_{i+1} - \eps])$ be a
bump function equal to one on a neighborhood of $\ti{f}^{-1}(b_i)$.
Set
\begin{equation} \label{patch} v = \rho v |_{U_i} + (1 - \rho) \partial_s \in \on{Vect}(Y \times
[c_i + \eps, c_{i+1}-\eps]) .\end{equation}
The flow of $v$ preserves the level sets $\ti{f}^{-1}(b_i)$ as well as the
tangles $K$ and so defines diffeomorphisms of the pieces of the Cerf
decomposition of $(Y,K)$ for $f_s$.  Hence the functions ${f}_s$
for $s \in [c_i + \eps, c_{i+1} - \eps]$ define equivalent Cerf
decompositions of $[(Y,K,\phi)]$.

It remains to consider the relationship between the Cerf
decompositions for small values on either side of time at which a
crossing or birth-death occurs.  The Cerf decompositions are
equivalent for all but one or two pieces by the same argument in the
previous paragraph.  For those pieces, one either has a critical point
switch move or critical point cancellation by the local model for the
cusp singularities \cite[p.157]{ma:st5} for the restriction of $\ti{f}$
to $K$.
 \end{proof}

\begin{figure}[h]
\setlength{\unitlength}{0.00047489in}
\begingroup\makeatletter\ifx\SetFigFont\undefined
\def\x#1#2#3#4#5#6#7\relax{\def\x{#1#2#3#4#5#6}}%
\expandafter\x\fmtname xxxxxx\relax \def\y{splain}%
\ifx\x\y   
\gdef\SetFigFont#1#2#3{%
  \ifnum #1<17\tiny\else \ifnum #1<20\small\else
  \ifnum #1<24\normalsize\else \ifnum #1<29\large\else
  \ifnum #1<34\Large\else \ifnum #1<41\LARGE\else
     \huge\fi\fi\fi\fi\fi\fi
  \csname #3\endcsname}%
\else
\gdef\SetFigFont#1#2#3{\begingroup
  \count@#1\relax \ifnum 25<\count@\count@25\fi
  \def\x{\endgroup\@setsize\SetFigFont{#2pt}}%
  \expandafter\x
    \csname \romannumeral\the\count@ pt\expandafter\endcsname
    \csname @\romannumeral\the\count@ pt\endcsname
  \csname #3\endcsname}%
\fi
\fi\endgroup
{\renewcommand{\dashlinestretch}{30}
\begin{picture}(7269,2064)(0,-10)
\path(3162,912)(4062,912)
\path(3942.000,882.000)(4062.000,912.000)(3942.000,942.000)
\dashline{60.000}(12,1587)(2982,1587)
\dashline{60.000}(12,237)(2937,237)
\dashline{60.000}(12,912)(2937,912)
\dashline{60.000}(4242,912)(7212,912)
\dashline{60.000}(4287,237)(7257,237)
\dashline{60.000}(4287,1587)(7257,1587)
\path(462,2037)(462,2034)(462,2029)
	(462,2018)(463,2002)(463,1979)
	(464,1950)(464,1914)(465,1873)
	(466,1827)(468,1778)(469,1726)
	(471,1672)(473,1619)(475,1566)
	(477,1515)(480,1465)(482,1418)
	(485,1373)(488,1331)(492,1292)
	(495,1255)(499,1220)(504,1188)
	(508,1157)(513,1128)(518,1100)
	(524,1074)(530,1049)(537,1024)
	(546,995)(556,966)(566,938)
	(578,911)(590,884)(603,859)
	(617,833)(631,809)(647,786)
	(663,764)(680,743)(697,723)
	(715,705)(733,689)(751,674)
	(770,660)(788,649)(806,639)
	(824,630)(842,624)(860,618)
	(877,615)(895,613)(912,612)
	(929,613)(947,615)(964,618)
	(982,624)(1000,630)(1018,639)
	(1036,649)(1054,660)(1073,674)
	(1091,689)(1109,705)(1127,723)
	(1144,743)(1161,764)(1177,786)
	(1193,809)(1207,833)(1221,859)
	(1234,884)(1246,911)(1258,938)
	(1268,966)(1278,995)(1287,1024)
	(1294,1049)(1300,1074)(1306,1100)
	(1311,1128)(1316,1157)(1320,1188)
	(1325,1220)(1329,1255)(1332,1292)
	(1336,1331)(1339,1373)(1342,1418)
	(1344,1465)(1347,1515)(1349,1566)
	(1351,1619)(1353,1672)(1355,1726)
	(1356,1778)(1358,1827)(1359,1873)
	(1360,1914)(1360,1950)(1361,1979)
	(1361,2002)(1362,2018)(1362,2029)
	(1362,2034)(1362,2037)
\path(1812,12)(1812,15)(1812,20)
	(1812,31)(1813,47)(1813,70)
	(1814,99)(1814,135)(1815,176)
	(1816,222)(1818,271)(1819,323)
	(1821,377)(1823,430)(1825,483)
	(1827,534)(1830,584)(1832,631)
	(1835,676)(1838,718)(1842,757)
	(1845,794)(1849,829)(1854,861)
	(1858,892)(1863,921)(1868,949)
	(1874,975)(1880,1000)(1887,1024)
	(1896,1054)(1906,1083)(1916,1111)
	(1928,1138)(1940,1165)(1953,1190)
	(1967,1216)(1981,1240)(1997,1263)
	(2013,1285)(2030,1306)(2047,1326)
	(2065,1344)(2083,1360)(2101,1375)
	(2120,1389)(2138,1400)(2156,1410)
	(2174,1419)(2192,1425)(2210,1431)
	(2227,1434)(2245,1436)(2262,1437)
	(2279,1436)(2297,1434)(2314,1431)
	(2332,1425)(2350,1419)(2368,1410)
	(2386,1400)(2404,1389)(2423,1375)
	(2441,1360)(2459,1344)(2477,1326)
	(2494,1306)(2511,1285)(2527,1263)
	(2543,1240)(2557,1216)(2571,1190)
	(2584,1165)(2596,1138)(2608,1111)
	(2618,1083)(2628,1054)(2637,1024)
	(2644,1000)(2650,975)(2656,949)
	(2661,921)(2666,892)(2670,861)
	(2675,829)(2679,794)(2682,757)
	(2686,718)(2689,676)(2692,631)
	(2694,584)(2697,534)(2699,483)
	(2701,430)(2703,377)(2705,323)
	(2706,271)(2708,222)(2709,176)
	(2710,135)(2710,99)(2711,70)
	(2711,47)(2712,31)(2712,20)
	(2712,15)(2712,12)
\path(4467,2037)(4467,2034)(4467,2026)
	(4468,2013)(4469,1994)(4470,1970)
	(4472,1940)(4474,1908)(4477,1875)
	(4480,1841)(4483,1808)(4487,1777)
	(4492,1748)(4497,1721)(4502,1696)
	(4509,1672)(4516,1650)(4523,1629)
	(4532,1608)(4542,1587)(4552,1568)
	(4563,1549)(4574,1530)(4587,1511)
	(4601,1492)(4615,1473)(4631,1454)
	(4648,1435)(4665,1417)(4683,1400)
	(4702,1383)(4722,1367)(4742,1352)
	(4761,1339)(4781,1327)(4801,1316)
	(4821,1307)(4841,1300)(4860,1294)
	(4879,1290)(4898,1288)(4917,1287)
	(4934,1288)(4952,1290)(4969,1293)
	(4987,1298)(5005,1304)(5023,1312)
	(5041,1321)(5059,1331)(5078,1342)
	(5096,1355)(5115,1368)(5133,1383)
	(5150,1398)(5167,1414)(5184,1431)
	(5200,1448)(5215,1465)(5230,1483)
	(5243,1500)(5256,1518)(5268,1535)
	(5279,1553)(5290,1570)(5300,1587)
	(5310,1608)(5320,1629)(5330,1650)
	(5338,1672)(5346,1696)(5354,1721)
	(5361,1748)(5368,1777)(5374,1808)
	(5381,1841)(5387,1875)(5393,1908)
	(5398,1940)(5402,1970)(5406,1994)
	(5409,2013)(5411,2026)(5412,2034)(5412,2037)
\path(5862,12)(5862,15)(5862,23)
	(5863,36)(5864,55)(5865,79)
	(5867,109)(5869,141)(5872,174)
	(5875,208)(5878,241)(5882,272)
	(5887,301)(5892,328)(5897,353)
	(5904,377)(5911,399)(5918,420)
	(5927,441)(5937,462)(5947,481)
	(5958,500)(5969,519)(5982,538)
	(5996,557)(6010,576)(6026,595)
	(6043,614)(6060,632)(6078,649)
	(6097,666)(6117,682)(6137,697)
	(6156,710)(6176,722)(6196,733)
	(6216,742)(6236,749)(6255,755)
	(6274,759)(6293,761)(6312,762)
	(6331,761)(6350,759)(6369,755)
	(6388,749)(6408,742)(6428,733)
	(6448,722)(6468,710)(6487,697)
	(6507,682)(6527,666)(6546,649)
	(6564,632)(6581,614)(6598,595)
	(6614,576)(6628,557)(6642,538)
	(6655,519)(6666,500)(6677,481)
	(6687,462)(6697,441)(6706,420)
	(6713,399)(6720,377)(6727,353)
	(6732,328)(6737,301)(6742,272)
	(6746,241)(6749,208)(6752,174)
	(6755,141)(6757,109)(6759,79)
	(6760,55)(6761,36)(6762,23)
	(6762,15)(6762,12)
\end{picture}
} 
\caption{Critical point reversal}
\label{switch}
\end{figure}

By Theorem \ref{cerftangle}, in order to construct field theories for
cylindrical tangles it suffices to construct the theory on elementary
tangles and check that the Cerf moves are satisfied.

\begin{theorem} \label{suffices} {\rm (Field theories for tangles via elementary tangles)}  
Let $\cC$ be a category and $X$ a compact oriented surface.  Suppose
there is given a partially defined functor $\Phi$ from $\Tan(X)$ to
$\cC$ that associates
\begin{enumerate}
\item to each marking $\ul{x}$ of $X$, an object $\Phi(\ul{x})$ of
  $\cC$;
\item to each equivalence class of elementary cylindrical tangles
  $(Y,K,\phi)$ from $(X,\ul{x}_-)$ to $(X,\ul{x}_+)$, a morphism
  $\Phi([(Y,K,\phi)])$ from $\Phi(\ul{x}_-)$ to $\Phi(\ul{x}_+)$;
\end{enumerate} 
and satisfies the following {\em Cerf relations}:
\begin{enumerate}
\item If $(Y,K,\phi) = ([-1,1] \times X,[-1,1] \times \ul{x},\phi )$
  is a trivial tangle, then $\Phi([(Y,K,\phi)])$ is the identity.
\item If $(Y_1,K_{1},\phi_1)$ from $\ul{x}_0$ to $\ul{x}_1$ and
  $(Y_2,K_{2},\phi_2)$ from $\ul{x}_1$ to $\ul{x}_2$ are composable
  elementary cylindrical tangles such that $[(Y_1,K_{1},\phi_1)] \circ
  [(Y_2,K_{2},\phi_2)]$ is equivalent to a cylindrical tangle via
  critical point cancellation, then
$$\Phi([(Y_1,K_{1},\phi_1)]) \circ \Phi([(Y_2,K_{2},\phi_2)]) =
  \Phi([(Y_1,K_{1},\phi_1) \circ (Y_2, K_{2},\phi_2)]) ;$$
\item 
If $(Y_1,K_{1},\phi_1),(Y_2,K_{2},\phi_2)$ and
$(Y_1',K_{1}',\phi_1'),(Y_2',K_{2}',\phi_2')$ are elementary
cylindrical tangles related by critical point reversal, then
$$\Phi([(Y_1,K_{1},\phi_1)]) \circ \Phi([(Y_2,K_{2},\phi_2)]) =
\Phi([(Y_1',K_{1}',\phi_1')]) \circ \Phi([(Y_2',K_{2}',\phi_2')]) ;$$
\item
If $(Y_1,K_{1},\phi_1),(Y_2,K_{2},\phi_2)$ are composable elementary
tangles, one of which is cylindrical, then
$$\Phi([(Y_1,K_{1},\phi_1)]) \circ \Phi([(Y_2,K_{2},\phi_2)]) =
\Phi([(Y_1,K_{1},\phi_1)] \circ [ (Y_2, K_{2},\phi_2)])
$$
\end{enumerate} 
then there is a unique $\cC$-valued field theory extending $\Phi$.
\end{theorem}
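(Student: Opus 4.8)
The plan is to extend $\Phi$ by cutting an arbitrary cylindrical tangle into elementary cylindrical pieces, applying the given data piecewise, and composing in $\cC$; the work then lies in checking that this is independent of all choices, functorial, and forced. For the existence of a decomposition I would note that any cylindrical tangle $(Y = [-1,1]\times X, K,\phi)$ carries a cylindrical Morse datum: starting from the projection $Y \to [-1,1]$, whose restriction to $K$ is a function on a compact $1$-manifold-with-boundary with nonzero normal derivative along $\partial K$, a small generic perturbation rel boundary makes this restriction Morse and injective on its critical set, and the perturbation extends to a function $f$ on $Y$ still with $\partial_t f > 0$; picking regular values $b_0 < \cdots < b_m$ that separate the critical values of $f|_K$ then produces, as in Definition~\ref{handletangle}, a cylindrical Cerf decomposition into elementary cylindrical tangles $(Y_i,K_i,\phi_i)$, where the normalized gradient flow of $f$ identifies each level set $f^{-1}(b_i)$, together with its marking, with a marked copy of $X$. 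One then \emph{defines}
$$ \Phi([(Y,K,\phi)]) := \Phi([(Y_1,K_1,\phi_1)]) \circ \cdots \circ \Phi([(Y_m,K_m,\phi_m)]) , $$
a morphism from $\Phi(\ul{x}_-)$ to $\Phi(\ul{x}_+)$, each factor supplied by the hypothesis since the pieces are elementary cylindrical tangles.

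The heart of the proof is that this value is independent of the chosen decomposition. By Theorem~\ref{cerftangle}, any two cylindrical Cerf decompositions of $[(Y,K,\phi)]$ are joined by a finite chain of equivalences of Cerf decompositions together with the three moves: critical point cancellation, critical point reversal, and cylinder gluing. Each of the three moves alters only one or two consecutive elementary pieces and leaves the rest untouched, so, the extended $\Phi$ being by construction the composite of the images of the pieces, invariance under a move reduces to an identity between the images of the affected pair of elementary tangles --- which is precisely Cerf relations~(2), (3), and (4), respectively. Invariance under an equivalence of Cerf decompositions holds because such an equivalence consists of piecewise equivalences of the elementary pieces, with the relabelling diffeomorphisms $\delta_i$ of the intermediate marked surfaces absorbed into the equivalence classes on which $\Phi$ is already given, and with relation~(1) handling the trivial identifications $\delta_0 = \Id$, $\delta_m = \Id$ at the two ends. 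Hence $\Phi([(Y,K,\phi)])$ is well-defined.

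It then remains to check the functor axioms and uniqueness. The extension recovers the given data on elementary cylindrical tangles by using the one-piece decomposition, and it sends the identity $[\,[-1,1]\times X,\,[-1,1]\times\ul{x},\,\phi\,]$ to $\Id_{\Phi(\ul{x})}$ by Cerf relation~(1), since the trivial cylinder is itself an elementary cylindrical tangle. For compatibility with composition, given $[(Y,K,\phi)]$, $[(Y',K',\phi')]$ and cylindrical Cerf decompositions of each, I would rescale the base intervals and concatenate, inserting the collar used in the definition of composition in $\Tan(X)$; the outcome is a cylindrical Cerf decomposition of $[(Y,K,\phi)]\circ[(Y',K',\phi')]$ whose pieces are those of the two decompositions plus at most one trivial cylinder at the seam, so applying $\Phi$ and cancelling that seam cylinder via relation~(1) (or relation~(4)) yields $\Phi([(Y,K,\phi)])\circ\Phi([(Y',K',\phi')])$. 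Uniqueness follows at once, since any field theory agreeing with the given data on elementary cylindrical tangles must, by functoriality, take the prescribed value on every cylindrical tangle, each of which is a composition of elementary ones.

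I expect the main obstacle to be the bookkeeping hidden in the well-definedness step: one must keep careful track of how each intermediate level set $f^{-1}(b_i)$ is identified with $X$ --- equivalently, of the diffeomorphisms $\delta_i$ in the notion of equivalence of Cerf decompositions --- so that a move performed locally in one or two pieces really does leave the full composite in $\cC$ unchanged, and so that the concatenation used for composition is legitimate. Given Theorem~\ref{cerftangle} this is a matter of organizing the data rather than a genuinely new difficulty, but it is where essentially all the care is needed.
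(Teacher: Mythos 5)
Your proposal is correct and is essentially the argument the paper intends: Theorem \ref{suffices} is stated there without a separate proof, as the formal consequence of Theorem \ref{cerftangle} that you spell out — decompose by a cylindrical Morse datum, define $\Phi$ as the composite over the elementary pieces, use the three Cerf moves together with relations (b), (c), (d) (and (a) for identities) for well-definedness, concatenate decompositions for compatibility with composition, and deduce uniqueness since every cylindrical tangle is a composite of elementary ones. The only point needing the extra care you already flag is invariance under equivalences of Cerf decompositions with nontrivial intermediate diffeomorphisms $\delta_i$, which is handled exactly as you indicate, by absorbing the $\delta_i$ into twisted trivial cylinders and invoking relations (a) and (d).
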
 

In other words, to define a field theory for tangles it suffices to
define the morphisms for elementary bordisms and prove the Cerf
relations.

\subsection{Symplectic-valued field theories} \label{ss:symp}

In this section we specialize to field theories with values in the
{\em symplectic category}.  A symplectic-valued field theory for
tangles in particular assigns to any tangle a sequence of Lagrangian
correspondences, up to equivalence, as in \cite{we:co}. 

\begin{definition} {\rm (Geometric composition of Lagrangian correspondences)}  
Let $M_j$ be symplectic manifolds with symplectic forms $\omega_{M_j}$
for $j = 0 ,1,2$.
\begin{enumerate} 
\item 
A {\em Lagrangian correspondence} from $M_1$ to $M_2$ is a Lagrangian
submanifold $L\subset M_1^-\times M_2$ with respect to the symplectic
structure $-\omega_{M_1} \oplus \omega_{M_2}$.
\item 
The {\em geometric composition} of Lagrangian correspondences
$$L_{01}\subset M_0^-\times M_1, \quad L_{12}\subset M_1^-\times M_2$$
is the point set
\begin{equation} \label{compose} 
L_{01}\circ L_{12} := \pi_{M_0\times M_2} \bigl( ( L_{01} \times
L_{12} ) \cap ( M_0 \times \Delta_{M_1} \times M_2 ) \bigr) \subset
M_0\times M_2 .\end{equation} 
\item A geometric composition is called {\em transverse} if the
  intersection in \eqref{compose} is transverse (and hence smooth).
  The geometric composition is {\em embedded} if, in addition, the
  restriction of the projection $\pi_{M_0\times M_2}$ is an injection
  of the smooth intersection, hence an embedding.  In that case the
  image is a smooth Lagrangian correspondence $L_{01} \circ L_{12}
  \subset M_0^- \times M_2$.
\end{enumerate}  
\end{definition}

\begin{definition}  
\begin{enumerate} 
\item {\rm (Generalized correspondences)} Let $M_-,M_+$ be symplectic
  manifolds.  A {\em generalized Lagrangian correspondence} $\ul{L}$
  from $M_-$ to $M_+$ consists of
\begin{enumerate}
\item a sequence $N_0,\ldots,N_r$ of any length $r\geq 0$ of
  symplectic manifolds with $N_0 = M_-$ and $N_r = M_+$, and
\item a sequence $L_{01},\ldots, L_{(r-1)r}$ of compact Lagrangian
  correspondences with $L_{(j-1)j} \subset N_{j-1}^-\times N_{j}$ for
  $j=1,\ldots,r$.
\end{enumerate}
\item {\rm (Algebraic composition)} Let $M,M',M''$ be symplectic
  manifolds.  The {\em algebraic composition} of generalized
  Lagrangian correspondences $\ul{L}$ from $M$ to $M'$ and $\ul{L}'$
  from $M'$ to $M''$ is given by concatenation
$$\ul{L}{\sharp}\ul{L}' :=(L_{01},\ldots, L_{(m-1)m},L_{1}',\ldots, L_{(m'-1)m'}')
  .
$$
\item {\rm (Symplectic category)} The {\em symplectic category}
  $\Symp$ is the category defined as follows.
\begin{enumerate} 
\item Objects are smooth compact symplectic manifolds.
\item Morphisms from an object $M_-$ to an object $M_+$ are
  generalized Lagrangian correspondences from $M_-$ to $M_+$ modulo
  the {\em composition equivalence} relation $\sim$ generated by
\begin{equation} \label{compequiv} 
 \bigl(\ldots,L_{(j-1)j},L_{j(j+1)},\ldots \bigr) \sim
 \bigl(\ldots,L_{(j-1)j}\circ L_{j(j+1)},\ldots\bigr)
\end{equation} 
for all sequences and $j$ such that $L_{(j-1)j} \circ L_{j(j+1)}$ is
transverse and embedded.  We also set the empty sequence $\emptyset$
to be equivalent to the diagonal $\Delta_M \subset M^-\times M$.
\item Composition of morphisms
$$[\ul{L}]\in\Hom(M,M'), \quad [\ul{L}']\in\Hom(M',M'')$$ 
for symplectic manifolds $M,M',M''$ is defined by
$$ [\ul{L}]\circ[\ul{L}'] := [\ul{L}{\sharp}\ul{L}'] \;\in\Hom(M,M'')
  ;$$
\item The identity $1_M\in\Hom(M,M)$ is the equivalence class of the
  empty sequence $1_M = \emptyset$ of length zero.  The identity $1_M$
  is also the equivalence class $1_M:=[\Delta_M]$ of the diagonal.
  Indeed, the sequence of any number of diagonals $\Delta_M \subset
  M^- \times M$ is equivalent to the empty set:
$$ \emptyset  \sim (\Delta_M) \sim (\Delta_M,\Delta_M) \sim \ldots .$$
\end{enumerate} 
\item {\rm (Monotone symplectic manifolds and correspondences)} A
  symplectic manifold $(M,\omega)$ is {\em monotone} with monotonicity
  constant $\tau > 0 $ if the symplectic class is positively
  proportional to the first Chern class: $\tau c_1(M) = [\omega]$ in
  $H^2(M)$.  A Lagrangian submanifold $L \subset M$ is monotone if
$$2 \int u^*\omega = \tau I(u), \quad \forall u: (D,\partial D) \to
  (M,L) $$
where $I(u)$ is the Maslov index.  A generalized Lagrangian
correspondence $\ul{L} = (L_{01},\ldots, L_{(r-1)r})$ is monotone if
every components $L_{j(j-1)}$ is a Lagrangian correspondence.
\item {\rm (Monotone symplectic category)} For $\tau > 0$ denote by
  $\Symp_\tau$ the category whose objects are monotone symplectic
  manifolds $M$ with monotonicity constant $\tau$ and morphisms from
  $M_-$ to $M_+$ are equivalence classes of
  simply-connected\footnote{For convenience; alternatively one can
    impose further monotonicity conditions.}  generalized Lagrangian
  correspondences $\ul{L}$ from $M_-$ to $M_+$ whose components are
  compact oriented monotone equipped with relative spin structures.
\item 
A {\em symplectic-valued field theory for cylindrical tangles} resp.
{\em monotone symplectic-valued field theory for cylindrical tangles}
for a compact oriented surface $X$ is a functor $\Phi: \Tan(X) \to
\Symp \ \text{resp.} \quad \Phi: \Tan(X) \to \Symp_\tau .$
\end{enumerate} 
\end{definition} 

\section{Flat bundles on complements of tangles} 

In this section we construct a symplectic-valued field theory for a
particular class of labelled tangle categories.  For suitable choices
of the labels, this field theory will be monotone.  The basic
construction is well-known: associated to any tangle there is a moduli
space of flat bundles with fixed holonomies around the components.  If
smooth and embedded this moduli space defines a Lagrangian
correspondence in the moduli spaces of flat bundles with fixed
holonomies on the boundary.  For elementary tangles, the
correspondences are smooth and embedded, and we check that the Cerf
relations hold.

\subsection{Moduli spaces via holonomy}
\label{moduli of flat G bundles}

We choose to describe the moduli spaces via representations of the
fundamental group, rather than gauge theory as in \cite{field}.  We
begin with some Lie-theoretic notation for the special unitary group.

\begin{definition}  Let $r \ge 2$ and $G = SU(r)$ the group of special
unitary $r \times r$ matrices.  We identify the Lie algebra $\g = \su(r)$
with traceless skew-Hermitian $r \times r$ matrices. 
\begin{enumerate} 
\item {\rm (Weyl alcove)} The {\em Weyl alcove} for $SU(r)$ is the
  subset
$$ \Alc = \left\{ (\lambda_1 \leq \ldots \leq \lambda_r) \in \R^r
  \ \left| \ \sum_{i=1}^r \lambda_i = 0, \lambda_r - \lambda_1 \leq 1
  \right.\right\} .$$
A point $\mu \in \Alc$ will be called a {\em label}.  The alcove
$\Alc$ embeds as a subset of the Lie algebra $\g$ via the diagonal
map,
$$ \Alc \to \g, \quad (\mu_1,\ldots,\mu_r) \to
\diag(\mu_1,\ldots,\mu_r) .$$
\item 
{ \rm (Conjugacy classes for the special unitary group)} Conjugacy
classes in $SU(r)$ are parametrized by the Weyl alcove via
$$ \cC_\mu = \{ g \exp(\diag( 2\pi i \mu)) g^{-1} \ | \ g \in SU(r)
\}, \quad \mu \in \Alc .$$
Each conjugacy class 
\begin{equation} \label{quotient} \cC_\mu \cong SU(r) / S(U(m_1) \times
\ldots U(m_k)) \end{equation}
is diffeomorphic to the quotient of $SU(r)$ by a centralizer subgroup
isomorphic to $ S(U(m_1) \times \ldots U(m_k))$ where $m_i, i
=1,\dots, k$ are the multiplicities of the eigenvalues.  Thus each
$\cC_\mu$ is diffeomorphic to a partial flag variety.  This implies
that $\cC_\mu$ is simply connected.
\item {\rm (Involution)} Taking inverses defines a (possibly trivial)
  involution of the alcove
\begin{equation} \label{inv}
 *: \Alc \to \Alc, \ (\lambda_1,\ldots, \lambda_r) \mapsto
 (-\lambda_r,\ldots, -\lambda_1), \quad \cC_{*\mu} = \cC_\mu^{-1}
 .\end{equation}
\item {\rm (Vertices)} Let
$$\omega_k = (\underbrace{(r-k)/r, \ldots, (r-k)/r}_k, \underbrace{
  -k/r,\ldots, -k/r}_{r-k}), \quad 1 \leq k \leq r-1, \quad \omega_0 =
  0 $$
denote the vertices of $\Alc$.
\item {\rm (Barycenter)} Let
$$\rho = ( -r+1,-r-3, \ldots,r-3, r-1)/2 $$
denote the barycenter of the re-scaled alcove $r\Alc$.  The vector
$\rho$ is the unique vector with components $\rho_i$ satisfying
$$ \rho_{i+1} - \rho_i = 1, \ i = 0,\ldots,r-1, \quad \rho_r - \rho_1
= r-1 .$$
The element $\rho/r$ is the barycenter of $\Alc$.
\end{enumerate} 
\end{definition} 

Next we introduce notation for manifolds of flat bundles with fixed
holonomies.  We define these via representations of the fundamental
group.

\begin{definition}  
\begin{enumerate}
\item {\rm (Loops around strands)} Let $X$ be a compact, connected,
  oriented manifold, possibly with boundary.  Let $K \subset X$ be an
  oriented, embedded submanifold of codimension $2$.  Let
  $K_1,\ldots,K_n$ denote the connected components of $K$.  Let
$$\gamma_j: S^1 \to X\setminus K, \quad j = 1,\ldots, n$$
be small loops around $K_j$, so that the induced orientation on the
normal bundle of $K_j$ agrees with that induced by the orientations of
$K_j$ and $X$.  Each $\gamma_j$ defines a conjugacy class $[\gamma_j]
\subset \pi_1(X\setminus K)$ of loops obtained by joining $\gamma_j$
to a base point.  We implicitly fix a base point in the definition of
the fundamental group $\pi_1(X\setminus K)$.
\item {\rm (Moduli of flat bundles with fixed holonomies)} For labels
  $\ul{\mu} = (\mu_1,\ldots,\mu_n) \in \Alc^n$ let $M(X,K)$
  denote the moduli space of flat $G$-bundles on $X \ssm K$ whose
  holonomy around $\gamma_j$ lies in the conjugacy class
  $\cC_{\mu_j}$.  We call the element $\mu_j$ the {\em label} of the
  component $K_j$.  The moduli space $M(X,K)$ of connections
  with fixed holonomy has a description in terms of representations of
  the fundamental group, that we take as a definition:
\begin{equation} \label{holmarked}
 M(X,K) := \bigl\{ \varphi\in \Hom(\pi_1(X \ssm K), G)) \, \big|\,
\varphi([\gamma_j]) \subset \cC_{\mu_j} 
\forall j \bigr\}/G .\end{equation}
Here $G$ acts by conjugation so that $ (g \varphi)([\gamma]) = g
\varphi([\gamma]) g^{-1} .$ In case $X$ is not connected, say the
union of connected components $X_1,\ldots, X_k$, this definition is
replaced by the product of moduli spaces for the connected components
of $X$,
$$ M(X,K) = M(X_1, K \cap X_1) \times \ldots \times M(X_k, K \cap X_k ) .$$
\end{enumerate} 
\end{definition} 

\begin{remark}  {\rm (Effect of orientation change of the tangle)}  
Changing the orientation of a component $K_j$ (i.e.\ of $\gamma_j$)
corresponds to changing the label $\mu_j$ by the involution $*$ of the
alcove $\Alc$ in \eqref{inv}.  That is, if $\ti{K}$ denotes the tangle
obtained by changing the orientation on $K_j$ and $\ti{\ul{\mu}}$ is
the set of labels obtained by replacing $\mu_j$ with $*\mu_j$ then
there is a canonical homeomorphism $M(X,K) \to M(X,\ti{K})$.
\end{remark} 

\begin{remark} {\rm (Alternative description in the finite-order case)}  
Suppose that the conjugacy classes $ \cC_{\ul{\mu}} = \cC_{\mu_1}
\times \ldots \times \cC_{\mu_n}$ each have finite order as in all our
examples, so that 
$$ \exists k_1,\ldots,k_n \in \Z, \quad 
g_i^{k_i} = e, \quad \forall g_i \in \cC_{\mu_i}, i = 1,\ldots,
n .$$ 
In this case one can identify the moduli space $M(X,K)$ with the
moduli space of flat bundles on an orbibundle over $X$, see
\cite{ms:pb,km:em,me:lo} for two- and four-dimensional cases.
However, we will avoid using the equivariant description via gauge
theory.  Instead we check explicitly, in specific presentations, the
smoothness of those moduli spaces that enter our constructions.
\end{remark}

\subsection{Moduli spaces of bundles for surfaces}

The key feature of moduli spaces of bundles on compact, oriented
surfaces is their symplectic nature.  Below we review the description
of the symplectic structure in the holonomy description.

\begin{remark} 
Let $X$ be a compact, connected, oriented surface of genus $g$, and
let $\ul{x} = \{x_1,\ldots,x_n \}$ be a marking.
\begin{enumerate}
\item {\rm (Presentation of the fundamental group)} Recall that
  $\eps_j = \pm 1$ depending on whether the orientation of $x_j$
  agrees with the standard orientation of a point.  The fundamental
  group $\pi_1(X \ssm \ul{x})$ has standard presentation
$$ \pi_1(X \ssm \ul{x}) \cong \big\langle \; \alpha_1,\ldots,\alpha_{2g},\gamma_1,\ldots,\gamma_n 
 \ | \ \Pi_{j=1}^g [\alpha_{2j},\alpha_{2j+1}]  \Pi_{j=1}^n \gamma_j^{\eps_j} = 1 \; \big\rangle , $$
where $\gamma_j$ is a loop around $x_j$, oriented corresponding to $\eps_j$.  
\item {\rm (Presentation of the moduli space of flat bundles)} Let
  $\ul{\mu}\in\Alc^n$ be a set of labels for $\ul{x}$.  The moduli
  space of flat $G$-bundles with fixed holonomy can be described in
  terms of a standard presentation of $\pi_1(X\setminus\ul{x})$ by
\begin{align} \label{cs}
 M(X,\ul{x}) 
 & = 
\bigl\{ \varphi\in \Hom(\pi_1(X \ssm \ul{x}), G)) \, \big| \,
\varphi([\gamma_j]) \subset \cC_{\mu_j} \forall j \bigr\}/G  \nonumber \\
&\cong \bigl\{ (\ul{a},\ul{b}) \in
G^{2g} \times \cC_{\ul{\mu}} \  | \ 
\Phi(\ul{a},\ul{b}) = 1  \bigr\} / G ,
\end{align} 
where $G$ acts on $G^{2g} \times \cC_{\ul{\mu}}$ diagonally by conjugation and
\begin{equation} \label{phi} 
\Phi((a_1,\ldots,a_{2g}),(b_1,\ldots,b_n)) = 
\prod_{j=1}^{g} [a_{2j},a_{2j+1}] 
\prod_{j=1}^n b_j^{\eps_j} .
\end{equation} 
\item {\rm (Symplectic form on the moduli space)} For any
  $X,\ul{x},\ul{\mu}$ the space $M(X,\ul{x})$ can be realized as the
  moduli space of flat connections on the trivial $G$-bundle over $X
  \ssm \ul{x}$ with fixed holonomies around $\ul{x}$ (see
  \cite{ms:pb,me:lo}) and as such has a symplectic form.  The form can
  be described explicitly in the holonomy description \cite{al:mom} as
  follows.  

First we recall the symplectic structure on the moduli space in the
case of a surface without markings.  Let $\theta,\ol{\theta} \in
\Omega^1(G,\g)$ be the left and right-invariant Maurer-Cartan forms,
defined so that
$$ \theta_e(\xi) = \xi, \quad \ol{\theta}_e(\xi) = \xi, \quad \forall \xi \in \g \cong T_e G.$$  
Define a form $\omega_1$ on $G^2$ by
$$ \omega_1 \in \Omega^2(G^2), \ \ \ \ \omega_1 = 
 \langle l^* \theta \wedge r^* \ol{\theta}\rangle/2 + 
 \langle l^* \ol{\theta} \wedge r^* \theta\rangle/2 $$
where $l,r: G^2 \to G$ are the projections on the first and second
factor.  For $g \ge 1$ define two-forms $\omega_g \in
\Omega^2(G^{2g})$ inductively by
\begin{equation} \label{fus}  \omega_g = 
\omega_{g_1} + \omega_{g_2} + \langle \Phi_{g_1}^* \theta \wedge
\Phi_{g_2}^* \ol{\theta} \rangle/2 \end{equation}
where $g = g_1 + g_2$ is any splitting with $g_1,g_2 \ge 1$,
$$\Phi_{g_j}: (G^{2})^{g_j} \to G, \quad 
(a_1,\ldots,a_{2g_j}) \mapsto \prod_{j=1}^{g_j} [a_{2j},a_{2j+1}] $$
is the product of commutators.  We omit pull-backs to the factors of
$G^{2g} \cong G^{2g_1} \times G^{2g_2}$ from the notation to save
space.  A theorem of Alekseev-Malkin-Meinrenken \cite[Theorem
  9.3]{al:mom}, extending earlier work of Weinstein, Jeffrey, and
Karshon, states that the restriction of $\omega_g$ to the identity
level set of $\Phi_g$ descends to the symplectic form on the locus of
irreducible representations in $M(X,\ul{x})$.

Next we define the symplectic structure for a marked surface.  For any
label $\mu \in \Alc$, define a $2$-form $\omega_\mu$ on the conjugacy
class $\cC_\mu$ by
$$
 \omega_{\mu}(v_\xi(g),v_\eta(g))
= \langle \theta(v_\eta(g))+ \ol{\theta}(v_\eta(g)), \xi \rangle , \ \ \ g \in
\cC_\mu, \ \ \ \ \xi,\eta \in \g$$
where $v_\xi,v_\eta \in \Vect(\cC_\mu)$ are the generating vector
fields for $\xi,\eta$.  Define two-forms $\omega_{g,\ul{\mu}} \in
\Omega^2(G^{2g} \times \cC_{\ul{\mu}})$ inductively as follows.
First, set 
$$\omega_{g,\emptyset} = \omega_g, \quad  \omega_{0,\{\mu\}} =
\omega_\mu .$$ 
For any splitting $g = g_1 + g_2, \ul{\mu} = \ul{\mu}_1 \cup
\ul{\mu}_2$, where for each $j = 1,2$ either $g_j > 0$ or $\ul{\mu}_j$
is non-empty, set
\begin{equation} \label{fus2}  \omega_{g,\ul{\mu}} = 
\omega_{g_1,\ul{\mu}_1} + \omega_{g_2,\ul{\mu}_2} + \langle
\Phi_{g_1,\ul{\mu}_1}^* \theta \wedge \Phi_{g_2,\ul{\mu}_2}^*
\ol{\theta} \rangle/2 \end{equation}
where 
$$\Phi_{g_j,\ul{\mu}_j}: G^{2g_j} \times \cC_{\ul{\mu}_j} \to G,
\ \ \ \ (\ul{h},(c_\mu)_{\mu\in\ul{\mu}_j}) \mapsto \Phi_{g_j}(\ul{h})
\prod_{\mu \in \ul{\mu}_j} c_\mu .$$
By \cite[page 27]{al:mom}, the restriction of $\omega_{g,\ul{\mu}}$ to
the identity level set of $\Phi_{g,\ul{\mu}}$ descends to the
symplectic form on the locus of irreducible representations in
$M(X,\ul{x})$.
\end{enumerate}
In case $X$ is not connected, we define $M(X,\ul{x})$ to be the
product of moduli spaces for its connected components.
\end{remark} 

In order to construct Floer theory we wish for our moduli spaces to be
smooth.  The next result is a sufficient condition.

\begin{proposition}
\label{smooth}    {\rm (Sufficient conditions for smoothness)} 
Let $(X,\ul{x})$ be a marked surface of genus $g$ with $n$ labels
$\ul{\mu}$.  Suppose that each label is half of a vertex, and the sum
of labels satisfies a parity condition:
\begin{equation} 
\{ \mu_i, i \in \{ 1,\ldots, n\}  \}  \subset \{\omega_j/2, j
  \in \{ 1,\ldots,r \} \}, \quad \label{smooth3} 2 \sum_{i=1}^n \mu_i =
  \omega_d \ \ \text{mod} \ \Lambda \end{equation}
for some $d$ coprime to $r$.  Then $M(X,\ul{x})$ is a smooth compact
symplectic manifold and we call the labels $\ul{\mu}$ {\em
  admissible}.
\end{proposition}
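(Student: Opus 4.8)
The plan is to reduce the assertion to a single irreducibility property of the relevant representations, and then to verify that property by a determinant computation adapted to the hypothesis on the labels. By the holonomy presentation \eqref{cs}, $M(X,\ul x)=\Phi^{-1}(1)/G$ is the quotient of a closed --- hence compact --- subset of the compact manifold $G^{2g}\times\cC_{\ul\mu}$ by the compact group $G$, so $M(X,\ul x)$ is compact and Hausdorff irrespective of the labels. The excerpt already recalls, following \cite[Theorem 9.3]{al:mom}, that the subset of classes of irreducible representations is a smooth manifold on which $\omega_{g,\ul\mu}$ descends to a symplectic form: at an irreducible $\varphi$ the stabilizer in $G$ is the centre $Z(SU(r))$ (by Schur), the centralizer $\mathfrak z_\varphi$ of $\varphi$ in $\g$ therefore vanishes, $1$ is a regular value of $\Phi$ at that point, and $PU(r)=G/Z(SU(r))$ acts freely and (being compact) properly. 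Consequently it suffices to prove that, under the admissibility hypothesis \eqref{smooth3}, \emph{every} $\varphi$ with $\varphi([\gamma_j])\subset\cC_{\mu_j}$ for all $j$ acts irreducibly on $\C^r$; then $M(X,\ul x)$ equals its open irreducible locus and is a smooth compact symplectic manifold.

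To prove the irreducibility I would argue by contradiction. A reducible unitary $\varphi$ admits a $\varphi$-invariant orthogonal splitting $\C^r=V\oplus V^\perp$ with $r':=\dim_\C V$ satisfying $0<r'<r$. Taking determinants on $V$ defines a homomorphism $\det_V\colon\pi_1(X\ssm\ul x)\to U(1)$, $\gamma\mapsto\det(\varphi(\gamma)|_V)$; as it annihilates commutators, evaluating the defining relation $\prod_k[\alpha_{2k},\alpha_{2k+1}]\prod_j\gamma_j^{\eps_j}=1$ yields $\prod_j\det(b_j|_V)^{\eps_j}=1$ with $b_j:=\varphi(\gamma_j)$. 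Here the hypothesis that each label is half a vertex enters crucially: writing $\mu_j=\omega_{k_j}/2$, the element $b_j$ is conjugate to $\exp(\pi i\,\diag(\omega_{k_j}))$, which has only the two eigenvalues $-\zeta_j$ with multiplicity $k_j$ and $\zeta_j$ with multiplicity $r-k_j$, where $\zeta_j:=e^{-\pi i k_j/r}$. Since $V$ is $b_j$-invariant and $b_j$ is diagonalisable, $V$ decomposes into its two $b_j$-eigenspaces, so $b_j|_V$ has eigenvalue $-\zeta_j$ with some multiplicity $p_j\in\{0,\dots,k_j\}$ and eigenvalue $\zeta_j$ with multiplicity $r'-p_j$, whence $\det(b_j|_V)=(-1)^{p_j}\zeta_j^{\,r'}$.

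Substituting and pulling out the sign, $\bigl(\prod_j\zeta_j^{\eps_j}\bigr)^{r'}=(-1)^{\sum_j\eps_j p_j}=\pm1$; since $\prod_j\zeta_j^{\eps_j}=\exp(-\pi i D/r)$ with $D:=\sum_j\eps_j k_j$, this forces $\exp(-\pi i D r'/r)=\pm1$, i.e.\ $r$ divides $D r'$. The parity part of \eqref{smooth3} is exactly the condition that $D$ is coprime to $r$: using $2\mu_j=\omega_{k_j}$ and that the vertex $\omega_k$ represents $k$ in $Z(SU(r))\cong\Z/r$ (the quotient of the coweight lattice by the coroot lattice $\Lambda$), the condition $2\sum_i\mu_i=\omega_d\bmod\Lambda$ with $d$ coprime to $r$ unwinds --- after accounting for the strand orientations $\eps_i$ --- to $\gcd(D,r)=1$. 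Hence $r$ divides $r'$, contradicting $0<r'<r$, which proves the irreducibility claim and with it the Proposition.

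The step I expect to demand the most care is not the combinatorics above but the reduction in the first paragraph: one must invoke the appropriate quasi-Hamiltonian (group-valued moment map) reduction theorem of Alekseev-Malkin-Meinrenken for $\Phi\colon G^{2g}\times\cC_{\ul\mu}\to G$ with the conjugacy-class constraints in force, in order to know that the irreducible locus is genuinely a smooth symplectic manifold, and to confirm that $\omega_{g,\ul\mu}$ degenerates exactly along the reducible locus (equivalently, that $\mathfrak z_\varphi\neq0$ there). This is the one ingredient that reaches beyond the explicit presentation; once it is in place the admissibility hypothesis does exactly one job --- it empties the reducible locus --- and the smoothness, compactness and symplectic assertions follow together.
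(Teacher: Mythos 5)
Your proof is correct, and it takes a genuinely different route to the key point than the paper does. Both arguments sit inside the same quasi-Hamiltonian framework of \cite{al:mom} (smoothness of $\Phi^{-1}(1)/G$ reduces, via condition (B3), to showing no point of the constrained level set has positive-dimensional stabilizer, and the symplectic form comes from reduction), but you verify the stabilizer condition by proving outright that every representation with holonomies in the prescribed classes is irreducible, via the determinant character on an invariant subspace: the two-eigenvalue structure $\{-\zeta_j,\zeta_j\}$ of each $b_j$ turns the surface relation into $\exp(-\pi i D r'/r)=\pm 1$, i.e.\ $r\mid Dr'$, which coprimality forbids for $0<r'<r$. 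The paper instead argues stratum-by-stratum: any orbit-type stratum with infinite stabilizer has a $T$-fixed point in its closure (equivariant formality for the conjugacy classes), whose moment-map value lies on a wall $\exp(\sum_j w_j\mu_j+\on{span}(\alpha_i)_{i\in I})$, and the admissibility hypothesis keeps $1$ off all walls via the pairing $\lan\omega_d/2,\omega_j\ran=jd/2r\notin\Z$; this only yields an orbifold, and a separate argument (connectedness of centralizers in the unitary group) upgrades it to a manifold. Note that the final arithmetic is literally the same in both proofs (your $r'$ plays the role of the paper's $j$); what your route buys is elementarity and economy — Schur's lemma gives a free $PU(r)$-action at once, so the orbifold-to-manifold step disappears — while the paper's route yields the more general wall criterion \eqref{presmooth}, valid for arbitrary labels rather than only half-vertices.

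One point deserves to be made explicit rather than parenthetical: your argument needs $\gcd(D,r)=1$ with $D=\sum_i\eps_i k_i$, whereas a literal reading of \eqref{smooth3} gives $\gcd\bigl(\sum_i k_i,r\bigr)=1$, and for $r>2$ these differ when some $\eps_i=-1$ (they agree after replacing $\mu_i$ by $*\mu_i=\omega_{r-k_i}/2$ for negatively oriented points, which is how the relation $\prod_j b_j^{\eps_j}=1$ in \eqref{cs} forces the labels to enter). Since the paper's own proof also suppresses the exponents $\eps_j$ when evaluating $\Phi$ at fixed points, this is a matter of stating the orientation convention for the sum in \eqref{smooth3}, not a gap in your reasoning — but you should spell out the unwinding $\sum_i\eps_i k_i\equiv d\pmod r$ rather than leave it as an aside, since that congruence is exactly where the hypothesis is consumed.
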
 

\begin{proof}   Recall the description of the moduli space of flat bundles as a group-valued symplectic
quotient from \cite{al:mom}.  Let $\cC_{\ul{\mu}}$ denote the
corresponding product of conjugacy classes $\cC_{\mu_j}, j = 1,\ldots,
n$.  The space $M(X,\ul{x})$ can be realized as a symplectic quotient
of the group-valued Hamiltonian $G$-manifold $\widetilde{M}(X,\ul{x})
:= G^{2g} \times \cC_{{\ul{\mu}}}$ with group-valued moment map $\Phi:
\widetilde{M}(X,\ul{x}) \to G$ given by the product \eqref{cs}.  The
identity level set of the moment map is cut out transversally if and
only if all stabilizers are discrete, by \cite[Definition
  2.2,Condition B3]{al:mom}.  For each tuple $w = (w_1,\ldots, w_n)
\in W^n$ and subset $I\subset\{1,\ldots,l\}$ such that the span of $(
\alpha_i)_{i \in I}$ is not all of $\t^\dual \cong \t$, define the
      {\em wall} corresponding to $w$ by
$$\Theta_{w,I} := \exp\Bigl( \textstyle{\sum_{j=1}^n} w_j \mu_j + \on{span }(\alpha_i)_{i \in I} \Bigr) .$$  
Let $T^{X,\on{sing}}$ denote the singular values of $\Phi$ contained
in $T$.  We claim that the set of singular values is contained in the
union of walls:
$$T^{X,\on{sing}} \subseteq \cup_{w,I} \Theta_{w,I} .$$
Any orbit stratum in $\cC_{\mu_j}$ with infinite stabilizer group
contains a $T$-fixed point in its closure, since the same is true for
coadjoint orbits by equivariant formality of Hamiltonian actions
\cite[Appendix C]{gu:moc}.  Similarly, the closure of any orbit
stratum in $G^{2g}$ with infinite stabilizer is equal to $H^{2g}$, for
some subgroup $H$ containing $T$ up to conjugacy.  Now $T^{2g}$ maps
to the identity under the product of commutators $\Phi$.  Putting
everything together, any orbit-type stratum $Y$ in
$\widetilde{M}(X,\ul{x})$ contains a $T$-fixed point $y$ in its
closure.  The image of such a fixed point $y$ under the group-valued
moment map satisfies
$$ \Phi(y) \in \exp \left( \sum_{j=1}^n w_j \mu_j \right) $$
for some $w \in W^n$, since the commutators of elements in the maximal
torus vanish.  The tangent space $T_y Y$ is a sum of root spaces, and
the assumption that the stabilizer of $Y$ is infinite implies that the
span of the roots appearing in the sum is not all of $\t^\dual$.  It
follows that the moduli space is an orbifold if for each tuple
$w_1,\ldots,w_n \in W$ we have
\begin{equation} \label{presmooth}
 \langle 
{\textstyle \sum_{i=1}^n } w_i \mu_i , \omega_j \rangle \notin
\langle \Lambda , \omega_j \rangle 
\qquad \forall j=1,\ldots,r 
\end{equation}
where $\Lambda = \exp^{-1}(1)$ is the coweight lattice.  Now suppose
that each $\mu_i$ is equal to $\frac{1}{2} \omega_j$ for some $j \in \{
1,\ldots,r \}$, and 
$$ 2 \sum_{i=1}^n \mu_i = \omega_d \ \ \text{mod}
\ \Lambda$$ 
as in \eqref{smooth3}.  Since each $ \lan \mu_j, \alpha_k \ran $ is a
half-integer and the Weyl group is generated by simple reflections we
have
$\sum w_i \mu_i - \sum \mu_i \in \Lambda $.
From the relation 
$$ \omega_d = (d/r) ( \alpha_1 + 2\alpha_2 + \ldots + d \alpha_d) + 
((r-d)/r) ( (r-d) \alpha_{d+1} + \ldots + \alpha_{r-1} ) $$
it follows that the pairing 
$$ \lan \omega_d/2,\omega_j \ran = (d/2r) \lan j \alpha_j, \omega_j
\ran = jd/2r \ \text{mod} \ \Z$$
is never an integer for $r$ coprime to $d$ and $j =1,\ldots, r-1$.
This implies \eqref{presmooth}.

It remains to show that the moduli spaces of flat bundles are in fact
manifolds.  The centralizer subgroups in the unitary group are
connected, being the maximal compact subgroup of the centralizers in
the general linear group.  The latter are the intersections of
subspaces with matrices of non-zero determinant, and so connected.  It
follows that the centralizer subgroups in the projectivized unitary
group are also connected.  Hence the moduli spaces are quotients of
smooth manifolds by the free action of the projectivized unitary
group, and so are manifolds.
\end{proof}

\begin{proposition}  {\rm (Admissibility on one end implies admissibility on the other)} 
Suppose that $K \subset Y= [-1,1] \times X$ is a tangle from $X_- = \{
-1 \} \times X$ to $X_+ = \{ + 1 \} \times X$, such that each label is
half of a vertex of the alcove, and the labels for $X_- \cap K$ are
admissible in the sense of \eqref{smooth3}.  Then the same hold for
the induced labels of $X_+ \cap K$.
\end{proposition}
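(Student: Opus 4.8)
The plan is to attach to each regular level set of the height function on the cylinder $Y=[-1,1]\times X$ a quantity valued in $\t$ (equivalently in $\Z/r$), to show it does not depend on the level, and to read off admissibility from its values at the two ends. The only subtle point will be the orientation bookkeeping; the rest is a one-line verification at each Morse critical point.

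First I would reformulate admissibility combinatorially. When every label is half a vertex, write $\mu_i=\tfrac12\omega_{k_i}$; since $\exp(2\pi i\,\omega_k)=\zeta^k$ for a fixed generator $\zeta$ of the center $Z(G)\cong\Z/r$, one gets $2\sum_i\epsilon_i\mu_i\equiv\omega_e\pmod\Lambda$ with $e\equiv\sum_i\epsilon_i k_i\pmod r$, where $\epsilon_i\in\{\pm1\}$ records the orientation of the $i$-th marked point. Using the canonical identifications of the moduli space under orientation reversal (the Remark after \eqref{holmarked}) together with the analysis of singular values in the proof of Proposition~\ref{smooth} --- where a $T$-fixed point $y$ has $\Phi(y)=\exp(2\pi i\sum_j\epsilon_j w_j\mu_j)$ --- the condition \eqref{smooth3} guaranteeing smoothness is exactly that $\sum_i\epsilon_i k_i$ be coprime to $r$ (for markings all oriented by $+1$ this is \eqref{smooth3} verbatim). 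So it suffices to show the signed sum $\sum_i\epsilon_i k_i$ agrees on the two ends.

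Next I would fix a cylindrical Morse datum: after an isotopy of $K$ --- which changes neither the isotopy class of the tangle nor the labels --- the projection $t\colon Y\to[-1,1]$ restricts to a Morse function $t|_K$ with nondegenerate critical points of index $0$ and $1$, distinct critical values $c_1<\dots<c_N$, and $K$ transverse to the slices near $\partial Y$. For a regular value $b$, each point $q\in K\cap(\{b\}\times X)$ lies on a component with label index $k_{j(q)}$ and carries a well-defined sign $\eta(q)=\pm1$, the sign of $dt$ along $K$ at $q$; set $\sigma(b):=\sum_{q\in K\cap(\{b\}\times X)}\eta(q)\,k_{j(q)}\in\Z$. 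The main step is that $\sigma$ is constant. On each interval $(c_\ell,c_{\ell+1})$ the set $K\cap(\{b\}\times X)$ moves by an isotopy, the signs $\eta(q)$ are constant on each strand (strands are monotone in $t$), and labels are constant along components, so $\sigma$ is constant there. Crossing a critical value, a local maximum (resp.\ minimum) of $t|_K$ deletes (resp.\ creates) exactly two level-set points, lying on the same component $K_j$ and with opposite signs $\eta$, whose joint contribution $(+1)k_j+(-1)k_j$ to $\sigma$ vanishes; hence $\sigma$ does not jump, and $\sigma(b)$ is the same for all regular $b$.

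Finally I would match the ends. The orientation conventions --- the normal-bundle convention orienting the meridians $\gamma_j$, together with the identification $\partial Y\cong\overline{X}_-\cup X_+$ (outward normal $-\partial_t$ at $t=-1$ and $+\partial_t$ at $t=+1$) --- give $\epsilon^{X_\pm}_x=\eta(x)$ for every boundary marked point $x$; in both cases the induced orientation of a meridian around $x$ in $X_\pm$ is $\eta(x)$ times the orientation of $X_\pm$. Therefore $\sigma(-1^+)=\sum_{x\in\ul{x}_-}\epsilon^{X_-}_x k_{j(x)}$ and $\sigma(1^-)=\sum_{x\in\ul{x}_+}\epsilon^{X_+}_x k_{j(x)}$, so constancy of $\sigma$ gives $\sum_{x\in\ul{x}_-}\epsilon^{X_-}_x k_{j(x)}=\sum_{x\in\ul{x}_+}\epsilon^{X_+}_x k_{j(x)}$. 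By the reformulation of admissibility, admissibility of the labels on $X_-$ says the left-hand side is coprime to $r$; hence so is the right-hand side, which is admissibility of the induced labels on $X_+$. The routine parts are the constancy of $\sigma$ and the genericity of the Morse datum; the genuinely delicate part is the orientation accounting --- establishing $\epsilon^{X_\pm}_x=\eta(x)$ at \emph{both} ends with no relative sign, and confirming that admissibility in the sense of \eqref{smooth3} is governed by the signed sum $\sum_i\epsilon_i k_i$ --- which is where the normal-bundle convention, the boundary-orientation convention, and the orientation-reversal identifications must all be handled carefully.
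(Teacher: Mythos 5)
Your proof is correct and takes essentially the same route as the paper: the paper's own (two-line) argument observes that the labels on the two ends differ only by the pairs of boundary points created or destroyed at index $0$ and index $1$ critical points of the tangle, and that each such pair carries labels $\mu$ and $*\mu$ whose doubled sum lies in $\Lambda$, so the sum in \eqref{smooth3} is unchanged modulo $\Lambda$. Your level-set quantity $\sigma$, its constancy across critical values, and the explicit reduction to coprimality of the signed sum in $\Z/r$ are just a more carefully sign-checked packaging of that same cancellation.
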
 

\begin{proof}  The labels on $X_\pm$ are the same except 
for labels that have disappeared due to critical points of index $1$
and those that have appeared due to critical points of index $0$.  By
assumption the labels of two strands meeting in this way are opposite
up to conjugacy and so do not contribute to the sum in
\eqref{smooth3}.
\end{proof} 

Sufficient conditions for monotonicity of the moduli space are
provided by Meinrenken-Woodward \cite{me:can}.  The condition is a
discrete condition on the set of labels, although in special cases the
set of monotone labels may be larger and non-discrete.

\begin{definition}  \label{admissible} {\rm (Monotone labels)} 
A label $\mu\in\Alc$ is {\em monotone} if $\mu$ is a projection of the
Coxeter element onto a face $\sigma$ of the Weyl alcove, that is,
$$ \exists \sigma \subset \Alc, \quad \mu =
\on{proj}_\sigma(\rho/r)$$ 
where $\on{proj}_\sigma$ denotes orthogonal projection onto $\sigma$.
\end{definition}  

\begin{example} {\rm (Examples of monotone labels)} 
\begin{enumerate}
\item {\rm (Rank two case)} If $G = SU(2)$, then $c = 2$ then we
  identify $\Alc \cong [0,{1/2}]$ by the map $(\lambda,-\lambda)\mapsto
  \lambda$.  The conjugacy class $\cC_\mu$ consists of matrices with
  eigenvalues $\exp( \pm 2\pi i \mu)$.  We have $\rho = {1/2}$.  If $\mu
  = {1/4}$, $\cC_\mu$ consists of all traceless matrices.  The monotone
  elements are $0$, ${1/2}$, ${1/4} \in \Alc$.
\item {\rm (Rank three case)} If $G = SU(3)$, the alcove $\Alc$ is the
  convex hull of the vectors $(0,0,0), (2/3,-1/3,-1/3)$ and
  $(1/3,1/3,-2/3)$.  We have $\rho = (1,0,-1)$.  See Figure
  \ref{rankthreemonotone} for the monotone labels.
\end{enumerate}
\end{example}

\begin{figure}[h]
\includegraphics[height=1.5in]{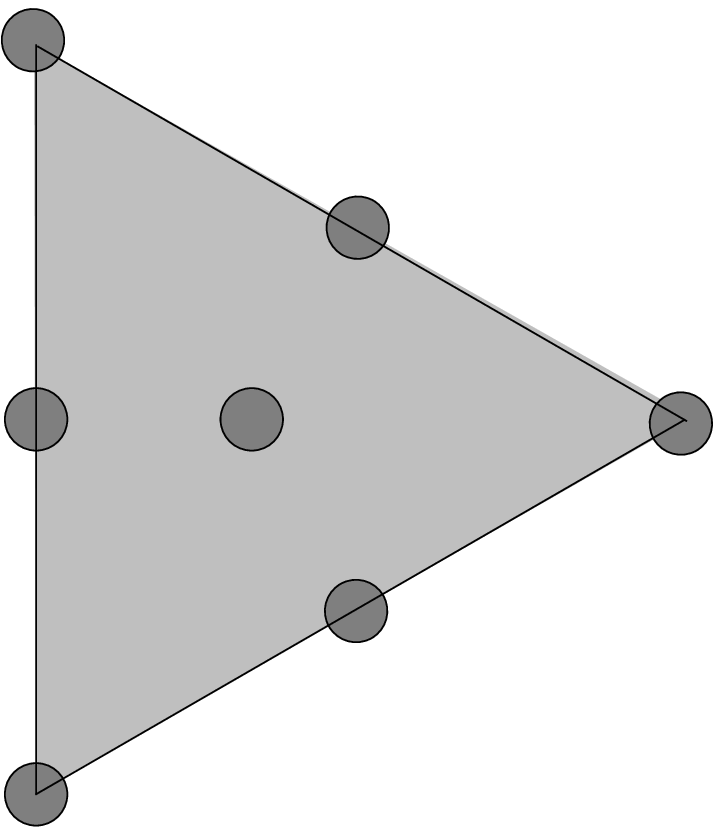}
\caption{Monotone conjugacy classes for $SU(3)$}
\label{rankthreemonotone}
\end{figure}

\begin{theorem} 
 \label{monthm} {\rm (Sufficient conditions for monotonicity, 
\cite[Theorem 4.2]{me:can})} Let $(X,\ul{x})$ be a marked surface with
 labels $\ul{\mu}$.  If each label $\mu_j$ is monotone and
 $M(X,\ul{x})$ is smooth then $M(X,\ul{x})$ is monotone with
 monotonicity constant $\tau^{-1} = 2r$.
\end{theorem}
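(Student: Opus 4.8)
This is quoted as \cite[Theorem~4.2]{me:can}; here is the approach I would take. The plan is to verify the defining relation $c_1(M(X,\ul{x})) = 2r\,[\omega]$ in $H^2(M(X,\ul{x});\R)$ by computing both classes from the group-valued symplectic quotient description and then reducing the whole comparison to a single conjugacy-class factor, where the assertion becomes exactly the linear-algebraic content of Definition~\ref{admissible}.

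Recall from the proof of Proposition~\ref{smooth} that $M(X,\ul{x})$ is the quasi-Hamiltonian reduction at the identity of $\widetilde M(X,\ul{x}) = G^{2g}\times\cC_{\ul{\mu}}$, assembled by internal fusion from $g$ copies of the double $D(G) = G\times G$ and the conjugacy classes $\cC_{\mu_1},\ldots,\cC_{\mu_n}$, with group-valued moment map $\Phi$ of \eqref{phi}; by \cite{al:mom} the symplectic form on $M(X,\ul{x})$ is the descent of the (non-closed) two-form $\omega_{g,\ul{\mu}}$ of \eqref{fus2} to $\Phi^{-1}(1)/G$, on which it is closed. Since smoothness of $M(X,\ul{x})$ is a hypothesis, the reduction is a genuine symplectic manifold. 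The first step is to strip away everything but the orbit factors. Since $G = SU(r)$ acts through the projective unitary group, which is simple and so has no nontrivial characters, every bundle over $M(X,\ul{x})$ associated to a representation of the structure group has trivial determinant line, so in the (descended) normal-form decomposition $T M(X,\ul{x})\oplus\mathrm{ad}(P)\oplus\mathrm{ad}(P)^{*}\cong T\widetilde M(X,\ul{x})|_{\Phi^{-1}(1)}$ the correction contributes nothing to $c_1$. Moreover $TG^{2g}$ is trivial and $D(G)=G\times G$ is $2$-connected (because $G$ is), so the genus factor contributes nothing to $c_1$ and carries no spherical homology. Finally, on a sphere lying in a single factor $\cC_{\mu_j}$ the forms $\omega_g$ and $\omega_{\mu_k}$ for $k\ne j$, as well as all the fusion cross-terms of \eqref{fus2}, restrict to zero, since each such cross-term is a wedge of one-forms pulled back via two partial moment maps, at least one of which is constant along that sphere. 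Hence $\omega$ pulls back to $\omega_{\mu_j}$ and $c_1(M(X,\ul{x}))$ pulls back to $c_1(\cC_{\mu_j})$ on such spheres.

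It then suffices to treat one conjugacy class. Each $\cC_{\mu_j}\cong G/G_{\mu_j}$ is a partial flag variety (see \eqref{quotient}), so $c_1(\cC_{\mu_j})$ corresponds, under the identification of $H^2$ with the character group of the center of $G_{\mu_j}$, to the sum of the positive roots not vanishing on $\mu_j$, i.e.\ to $2\,\on{proj}_{\sigma_j}(\rho)$, where $\sigma_j$ is the open face of $\Alc$ containing $\mu_j$; and $[\omega_{\mu_j}]$ corresponds to $\mu_j$ itself in the normalization fixed by the description of $\Alc$. By Definition~\ref{admissible} a monotone label satisfies $\mu_j = \on{proj}_{\sigma_j}(\rho/r)$, whence $2\,\on{proj}_{\sigma_j}(\rho) = 2r\,\mu_j$, i.e.\ $c_1(\cC_{\mu_j}) = 2r\,[\omega_{\mu_j}]$; in other words each $\cC_{\mu_j}$ is itself monotone with monotonicity constant $2r$, and Definition~\ref{admissible} is precisely calibrated so that this holds. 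Combined with the previous paragraph, this forces $c_1(M(X,\ul{x})) = 2r\,[\omega]$, i.e.\ monotonicity with $\tau^{-1}=2r$.

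The step I expect to be the main obstacle is making the reduction to orbit factors rigorous: unlike in ordinary symplectic reduction there is no off-the-shelf Kirwan surjectivity and normal-form statement for quasi-Hamiltonian quotients, so one must develop the version needed here and, crucially, check that $H^2(M(X,\ul{x}))$ (equivalently, the spherical classes) really is detected by the factors $\cC_{\mu_j}$ after restriction to $\Phi^{-1}(1)$ and descent, keeping track of the non-closedness of $\omega_{g,\ul{\mu}}$. A cleaner alternative, and presumably the route actually taken in \cite{me:can}, is the algebro-geometric model: identify $M(X,\ul{x})$ with a component of the moduli space of semistable parabolic $G$-bundles (\cite{ms:pb,me:lo}), whose Picard group is generated by the theta line bundle and the line bundles pulled back from the flag varieties at the marked points, express $[\omega]$ and the anticanonical class as explicit combinations of these via an adjoint-type formula, and again read off the ratio $2r$ from the identity $2\,\on{proj}_{\sigma_j}(\rho) = 2r\,\mu_j$. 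Either way the genuinely combinatorial input is the elementary root-system fact above, and everything else is bookkeeping together with Definition~\ref{admissible}.
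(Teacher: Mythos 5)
The paper gives no proof of Theorem~\ref{monthm}: it is quoted verbatim from \cite[Theorem 4.2]{me:can}, so your sketch can only be measured against that reference, whose general strategy (compute the canonical class from the quasi-Hamiltonian description and use the $\rho$-projection condition at the markings) you do anticipate in outline. However, the step you defer --- reducing the identity $c_1(M(X,\ul{x}))=2r[\omega]$ in $H^2(M(X,\ul{x});\R)$ to spheres lying in single factors $\cC_{\mu_j}\subset \widetilde{M}(X,\ul{x})$ --- is not bookkeeping to be filled in later; it is the entire content of the theorem, and as formulated it fails. Such spheres do not lie in $\Phi^{-1}(1)$ and do not map to $M(X,\ul{x})$ at all, so they detect nothing about classes on the quotient; the forms $\omega_{g,\ul{\mu}}$ and $\omega_{\mu_j}$ are in general not closed on $\widetilde{M}(X,\ul{x})$ (their differentials are pullbacks of the Cartan three-form), so ``$[\omega_{\mu_j}]$'' is not a cohomology class there; and even after restricting to the level set and descending, proportionality must be checked on all of $H^2(M(X,\ul{x});\R)$, which contains the theta/determinant direction that is invisible to the marked-point factors (it already accounts for $H^2$ of the unmarked moduli spaces, and here $b_2(M_5(\qq))=6$ with only five markings). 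Controlling that direction --- identifying the canonical bundle with the $(-2h^\dual)$-th power of the level-one class --- is precisely what \cite{me:can} does, via canonical bundles of Hamiltonian loop group manifolds and their behavior under fusion and reduction; it is not the algebro-geometric Picard-group route you guess, although that route is also viable.

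The local computation at a marked point is also incorrect as written. Faces $\sigma\subset\Alc$ are affine and generally do not contain the origin, so orthogonal projection onto $\sigma$ is not linear and $2\,\on{proj}_\sigma(\rho)\neq 2r\,\on{proj}_\sigma(\rho/r)$ in general: for $SU(3)$ and $\sigma$ the edge on the affine wall, both $\rho$ and $\rho/3$ project to the midpoint $\mu=(1/2,0,-1/2)$, a monotone label in the sense of Definition~\ref{admissible}. Moreover the centralizer of $\exp(2\pi i \mu)$ is generated by the roots with $\langle\alpha,\mu\rangle\in\Z$ (Borel--de Siebenthal \cite{bs:sg}), not $\langle\alpha,\mu\rangle=0$; for that same $\mu$ one finds $\cC_\mu\cong\C P^2$ rather than the coadjoint orbit $SU(3)/T$, so ``the sum of the positive roots not vanishing on $\mu_j$'' is not the anticanonical weight of $\cC_{\mu_j}$. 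The correct combinatorics involves the affine Weyl vector at level $h^\dual=r$, which is exactly where the constant $2r=2h^\dual$ comes from in \cite{me:can}; your identity is valid only for labels whose face passes through the origin (such as the half-vertex labels $\omega_k/2$ used elsewhere in the paper), and even there the non-closedness of the quasi-Hamiltonian two-forms means the phrase ``each $\cC_{\mu_j}$ is itself monotone'' has no literal meaning outside the loop-group completion.
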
  

Finally we note that the natural action of the diffeomorphism group
acts by symplectomorphisms:

\begin{definition} \label{mcg}  {\rm (Marking-preserving mapping class group)}  
Let $\on{Diff}_+(X,\ul{x})$ be the subgroup of orientation-preserving
diffeomorphisms $\varphi \in \on{Diff}_+(X)$ that preserve the marked
points, orientations, and labels:
\begin{equation} \label{diffplus} 
 \on{Diff}_+(X,\ul{x}) = \left\{ \varphi \in \on{Diff}_+(X) \,\big|\,
 \varphi(\ul{x} ) = \ul{x}, \ \varphi^*\ul{\eps} =\ul{\eps},
 \ \varphi^* \ul{\mu} = \ul{\mu} \right\} .\end{equation}
Here we denote the maps 
$$ \ul{\eps}: \ul{x} \to \{\pm 1\}, x_i \mapsto \eps_i, \quad
\ul{\mu}: \ul{x} \to \Alc, x_i \mapsto \mu_i .$$
So the conditions in \eqref{diffplus} are 
$$ ( \varphi(x_i) = x_j) \implies ( \eps_i = \eps_j \ \text{and} \  \mu_i
= \mu_j) , \quad \forall i,j  = 1,\ldots, n.$$  
Let $\Map_+(X,\ul{x})$ be the quotient of $\on{Diff}_+(X,\ul{x})$ by
isotopy.
\end{definition} 

\begin{remark} \label{sbraid} {\rm (Spherical braid group action)} 
The action of $\Map_+(X,\ul{x})$ on $\pi_1(X \ssm \ul{x})$ induces an
action on $M(X,\ul{x})$ by symplectomorphisms on the smooth stratum.
See for example \cite[Section 9.4]{al:mom} for a proof from the
holonomy point of view.  In particular, if $X$ is a sphere and all
labels are equal, $\ul{\mu}=(\mu,\ldots,\mu)$, then the spherical
braid group $\Map_+(X,\ul{x})$ acts on $M(X,\ul{x})$.  Explicitly let
$\sigma_{i(i+1)} \in \Map_+(X,\ul{x})$ is the half-twist of $x_i$ and
$x_{i+1}$.  For suitable choice of presentation of $\pi_1(X \ssm
\ul{x})$ we have
\begin{multline}  \sigma_{i(i+1)}: M(X,\ul{x}) \to M(X,\ul{x}), \\ \quad
     [b_1,\ldots,b_n] \mapsto [b_1,\ldots,b_{i-1},b_{i+1},b_{i+1}^{-1}
       b_i b_{i+1}, b_{i+2}, \ldots,b_n ]. \end{multline}
\end{remark} 

\subsection{Moduli spaces for tangles}

Given a tangle we construct Lagrangian correspondences as follows.

\begin{definition}  Let $(Y,K,\phi)$ be a labelled tangle
from $(X_-,\ul{x}_-)$ to $(X_+,\ul{x}_+)$.
\begin{enumerate} 
\item {\rm (Restriction to the boundary)} Let $K_1,\ldots,K_p$ be the
  connected components of $K$ and fix labels $\ul{\nu} =
  (\nu_1,\ldots,\nu_p) \in \Alc^p$ for $K$.  In Section \ref{moduli of
    flat G bundles} we defined the moduli space $ M(Y,K)$ of flat
  $G$-bundles on $Y\ssm K$ with holonomy around $K_j$ in
  $\cC_{\nu_j}$.  On the boundary the labels $\ul{\nu}$ induce labels
  $\ul{\mu}_\pm \in \Alc^{n_\pm}$ defined by $\nu_j$ for $\partial
  K_j$.  Restriction to the boundary and pull-back under $\phi$ define
  a map
\begin{equation} \label{restr} M(Y,K) \to M(X_-,\ul{x}_-)^-
\times M(X_+,\ul{x}_+) .\end{equation} 
More precisely, the inclusion of the boundary and a choice of paths
between base points induces a map of fundamental groups 
$$ \pi_1(X_-) \sqcup \pi_1(X_+) \to \pi_1(Y) $$
that is well-defined up to conjugacy.  The dual maps the
representation variety of the bordism to the product of representation
varieties of its boundary components, independent of the choice of
path.
\item 
{\rm (Correspondences for tangles)} For any labelled
tangle $(Y,K,\phi)$ we denote the image of \eqref{restr} by
$$L(Y,K,\phi)\subset M(X_-,\ul{x}_-)^-
\times M(X_+,\ul{x}_+) .$$ 
\end{enumerate} 
\end{definition}  

\begin{lemma} {\rm (Correspondences for compositions)}  
Let $(X_i,\ul{x}_i)$ be marked surfaces for $i=0,1,2$, and let
$(Y_{01},K_{01},\phi_{01})$ resp.\ $(Y_{12},K_{12},\phi_{12})$ be
tangle from $(X_0,\ul{x}_0)$ to $(X_1,\ul{x}_1)$ resp.\ from
$(X_1,\ul{x}_1)$ to $(X_2,\ul{x}_2)$.  Let $\ul{\nu}_{01}$ and
$\ul{\nu}_{12}$ be labels for the bordisms with tangles such that they
induce the same label $\ul{\mu}_1$ for $(X_1,\ul{x}_1)$.  Gluing
provides a bordism with tangle $(Y_{01}\circ Y_{12},K_{01}\circ
K_{12})$ from $(X_0,\ul{x}_0)$ to $(X_2,\ul{x}_2)$ with labels
$\ul{\nu}_{01}\circ \ul{\nu}_{12}$. The induced labels $\ul{\mu}_0$
for $\ul{x}_0$ and $\ul{\mu}_2$ for $\ul{x}_2$ are the same as the
ones induced from $\ul{\nu}_{01}$ and $\ul{\nu}_{12}$, and we have the
equality of subsets of $M(X_0,\ul{x}_0,\ul{\mu}_0) \times
M(X_2,\ul{x}_2,\ul{\mu}_2)$
$$ L( (Y_{01},K_{01},\phi_{01}) \circ (Y_{12}, K_{12}, \phi_{12})) =
L(Y_{01},K_{01},\phi_{01}) \circ L(Y_{12},K_{12},\phi_{12}) .
$$
\end{lemma}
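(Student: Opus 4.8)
The plan is to reduce the statement to the van Kampen theorem for the complement of the glued tangle, the one technical point being to fix base points so that the restriction maps \eqref{restr} are defined honestly rather than only up to conjugacy. First I would reduce to the case in which all the manifolds involved are connected: since $M(-,-)$ is by definition a product over connected components, and \eqref{restr} respects this decomposition, the general case follows from the connected one. In the connected case, choose a base point $\ast \in X_1 \ssm \ul{x}_1$ and, using the collar embeddings $\kappa_1,\kappa_2$ of \eqref{composeY}, regard $\ast$ as a base point of $Y_{01}\ssm K_{01}$, of $Y_{12}\ssm K_{12}$ and of $(Y_{01}\circ Y_{12})\ssm(K_{01}\circ K_{12})$ simultaneously. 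With this choice the maps $\pi_1(X_1\ssm\ul{x}_1)\to\pi_1(Y_{01}\ssm K_{01})$ and $\pi_1(X_1\ssm\ul{x}_1)\to\pi_1(Y_{12}\ssm K_{12})$ are genuine homomorphisms, and the two restriction maps appearing in \eqref{restr} (for $Y_{01}$ and $Y_{12}$, to the $X_1$-factor) are genuinely defined already on the level of representation varieties, before conjugation.

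Next I would apply the van Kampen theorem to the cover of $(Y_{01}\circ Y_{12})\ssm(K_{01}\circ K_{12})$ by open neighborhoods of $Y_{01}\ssm K_{01}$ and of $Y_{12}\ssm K_{12}$, whose overlap deformation retracts onto $(X_1\ssm\ul{x}_1)\times(-\eps,\eps)$ and hence onto the connected space $X_1\ssm\ul{x}_1$. This yields
$$ \pi_1\bigl((Y_{01}\circ Y_{12})\ssm(K_{01}\circ K_{12})\bigr) \;\cong\; \pi_1(Y_{01}\ssm K_{01}) \ast_{\pi_1(X_1\ssm\ul{x}_1)} \pi_1(Y_{12}\ssm K_{12}) . $$
A small loop around a component of $K_{01}\circ K_{12}$ can be pushed into whichever of the two pieces that component meets, and the hypothesis that $\ul{\nu}_{01}$ and $\ul{\nu}_{12}$ induce the same label $\ul{\mu}_1$ on $X_1$ guarantees that the labels of strands that are glued together agree; hence the holonomy constraints defining $M(Y_{01}\circ Y_{12},K_{01}\circ K_{12})$ via \eqref{holmarked} correspond exactly to the pair of constraints defining $M(Y_{01},K_{01})$ and $M(Y_{12},K_{12})$. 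The universal property of the amalgamated product then identifies the subset of $\Hom(\pi_1((Y_{01}\circ Y_{12})\ssm(K_{01}\circ K_{12})),G)$ cut out by these constraints with the fiber product, over the two restriction maps to the corresponding subset of $\Hom(\pi_1(X_1\ssm\ul{x}_1),G)$, of the analogous subsets for $Y_{01}$ and $Y_{12}$.

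It remains to divide by $G$ and restrict to the $X_0$- and $X_2$-factors. For the inclusion ``$\subseteq$'', a point of $M(Y_{01}\circ Y_{12},K_{01}\circ K_{12})$ is the $G$-orbit of a pair $(\varphi_{01},\varphi_{12})$ agreeing on $\pi_1(X_1\ssm\ul{x}_1)$; its image under \eqref{restr} is $([\varphi_{01}|_{\pi_1(X_0)}],[\varphi_{12}|_{\pi_1(X_2)}])$, and $[\varphi_{01}]$, $[\varphi_{12}]$ have a common image in $M(X_1,\ul{x}_1)$, so this pair lies in $L(Y_{01},K_{01},\phi_{01})\circ L(Y_{12},K_{12},\phi_{12})$ in the sense of \eqref{compose}. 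For ``$\supseteq$'', a point of the geometric composition is a pair $([\psi_0],[\psi_2])$ for which there exist $[\varphi_{01}]\in M(Y_{01},K_{01})$ and $[\varphi_{12}]\in M(Y_{12},K_{12})$ restricting to $[\psi_0]$ on $X_0$, to $[\psi_2]$ on $X_2$, and to a common class $[\rho_1]\in M(X_1,\ul{x}_1)$; after replacing $\varphi_{12}$ by $g\varphi_{12}g^{-1}$ for a suitable $g\in G$ we may assume $\varphi_{01}$ and $\varphi_{12}$ literally agree on $\pi_1(X_1\ssm\ul{x}_1)$, and then the amalgamated-product description of the previous paragraph produces a representation of $\pi_1((Y_{01}\circ Y_{12})\ssm(K_{01}\circ K_{12}))$ restricting to $[\psi_0]$ and $[\psi_2]$, as required. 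Finally, the asserted equality of the induced labels $\ul{\mu}_0,\ul{\mu}_2$ with those coming from $\ul{\nu}_{01},\ul{\nu}_{12}$ is immediate, since every component of $K_{01}\circ K_{12}$ meeting $X_0$ (resp.\ $X_2$) contains a strand of $K_{01}$ (resp.\ $K_{12}$) carrying the same label. The only step that genuinely requires care is the base-point bookkeeping of the first paragraph, which is what lets van Kampen produce a literal fiber-product description of the representation varieties; the conjugation step in the converse inclusion is then routine.
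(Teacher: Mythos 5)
Your argument is correct and is essentially the paper's own proof: both rest on the Seifert--van Kampen decomposition $\pi_1(Y_{01}\circ Y_{12}\setminus K_{01}\circ K_{12}) \cong \pi_1(Y_{01}\setminus K_{01}) \star_{\pi_1(X_1\setminus\ul{x}_1)} \pi_1(Y_{12}\setminus K_{12})$ with a base point on $X_1$, with the forward inclusion given by restriction to the two pieces and the reverse inclusion by conjugating one side so the restrictions to $X_1\setminus\ul{x}_1$ agree literally and then patching. Your additional bookkeeping (reduction to connected components, explicit base-point choices via the collars, and matching of labels) merely makes explicit what the paper leaves implicit.
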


\begin{proof}   By the Seifert-van Kampen theorem we have an isomorphism (using a base
point on $X_1$)
$$
\pi_1(Y_{01}\circ Y_{12}\setminus K_{01}\circ K_{12})
\cong
\pi_1(Y_{01}\setminus K_{01})
\star_{\pi_1(X_1\setminus\ul{x}_1)} \pi_1(Y_{12}\setminus K_{12}) .
$$ In particular, any representation $\varphi: \pi_1(Y_{01}\circ
Y_{12}\setminus K_{01}\circ K_{12}) \to G$ induces representations on
both sides $\varphi_{j(j+1)} : {\pi_1(Y_{j(j+1)} \setminus
  K_{j(j+1)})} \to G, j = 0,1$, whose restriction to
$X_1\setminus\ul{x}_1$ agree.  Conversely, any pair of representations
on the two sides, whose restrictions to $X_1\setminus\ul{x}_1$ are
conjugate, we can conjugate one of the sides so that the restrictions
agree.  Patching induces a representation on the glued space.
\end{proof}

\begin{lemma}
\label{Lex} {\rm (Correspondences for elementary tangles)} 
\noindent
\ben
\item \label{cbord} {\rm (Cylindrical bordisms)} Suppose that
  $(Y,K,\phi)$ admits a surjective Morse function $f: Y \to [-1,1]$
  with no critical points on $Y$ or $K$.  The map $f: Y \to [-1,1]$ is
  a fiber bundle containing $K$ that admits a simultaneous
  trivialization
$${\cT}:([-1,1] \times X_-,[-1,1] \times \ul{x}_-)\to (Y,K) ,$$
where 
$${\cT}|_{f^{-1}(-1)}={\rm Id}_{X_-}, \quad \psi:= \phi \circ
{\cT}|_{f^{-1}(1)}:(X_-,\ul{x}_-)\to (X_+,\ul{x}_+)$$
are isomorphisms of marked surfaces.  The correspondence associated to
$(Y,K,\phi)$ is then the graph:
$$L(Y,K,\phi) = \on{graph}((\psi^{-1})^*) \subset M(X_-,\ul{x}_-)^-
\times M(X_+,\ul{x}_+) .$$
\item \label{etangle} {\rm (Elementary tangles)} Let $X$ be a compact
  oriented surface, $Y = [-1,1] \times X$ the trivial bordism and
  suppose that $Y$ admits a cylindrical Morse function such that $K$
  contains a single critical point that is a maximum, and so consists
  of $n-2$ strands meeting both the incoming and outgoing boundary,
  and one strand that connects two incoming markings $x_i,x_j$, as in
  Figure \ref{capfig}.  The map $L(Y,K,\phi) \to M(X,\ul{x}_+)$
  induced by pullback is a coisotropic embedding, and $ L(Y,K,\phi)
  \to M(X,\ul{x}_-)$ is a fiber bundle with fiber $\cC_{\mu_i} \cong
  \cC_{\mu_{j}}$.
\item \label{qcups} {\rm (Quilted cups and caps)} More generally,
  suppose that $X$ be a compact oriented surface and $Y = [-1,1]
  \times X$ a product bordism.  Suppose that $Y$ admits a cylindrical
  Morse function such that all critical points in $K$ have the same
  index.  Then the pull-back map $L(Y,K,\phi) \to M(X,\ul{x}_+)$ is a
  coisotropic embedding and $ L(Y,K,\phi) \to M(X_-,\ul{x}_-)$ is a
  fiber bundle with fiber a product of conjugacy classes.
\item \label{ebord} {\rm (Elementary bordisms)} Suppose that $Y$
  admits a Morse function with a single critical point of index $1$.
  The map $\pi_+$ from $L(Y,K,\phi)$ to $M(X_+,\ul{x}_+)$ is a
  coisotropic embedding, and the map $\pi_-$
from $L(Y,K,\phi)$ to $M(X_-,\ul{x}_-)$ is a fiber bundle with
fiber $G$.  \een
\noindent Furthermore, in each case $L(Y,K,\phi)$ is a smooth Lagrangian
correspondence in $M(X_-,\ul{x}_-)^- \times
M(X_+,\ul{x}_+) .$
\begin{figure}[ht]
\setlength{\unitlength}{0.00047489in}
\begingroup\makeatletter\ifx\SetFigFont\undefined
\def\x#1#2#3#4#5#6#7\relax{\def\x{#1#2#3#4#5#6}}%
\expandafter\x\fmtname xxxxxx\relax \def\y{splain}%
\ifx\x\y   
\gdef\SetFigFont#1#2#3{%
  \ifnum #1<17\tiny\else \ifnum #1<20\small\else
  \ifnum #1<24\normalsize\else \ifnum #1<29\large\else
  \ifnum #1<34\Large\else \ifnum #1<41\LARGE\else
     \huge\fi\fi\fi\fi\fi\fi
  \csname #3\endcsname}%
\else
\gdef\SetFigFont#1#2#3{\begingroup
  \count@#1\relax \ifnum 25<\count@\count@25\fi
  \def\x{\endgroup\@setsize\SetFigFont{#2pt}}%
  \expandafter\x
    \csname \romannumeral\the\count@ pt\expandafter\endcsname
    \csname @\romannumeral\the\count@ pt\endcsname
  \csname #3\endcsname}%
\fi
\fi\endgroup
{\renewcommand{\dashlinestretch}{30}
\begin{picture}(3264,1839)(0,-10)
\dashline{60.000}(57,1812)(3252,1812)
\dashline{60.000}(12,12)(3207,12)
\path(507,1812)(507,12)
\path(957,1812)(957,12)
\path(1407,1812)(1407,1809)(1407,1802)
	(1408,1791)(1408,1773)(1409,1749)
	(1410,1719)(1412,1684)(1414,1646)
	(1416,1606)(1418,1565)(1421,1524)
	(1424,1485)(1428,1447)(1432,1411)
	(1436,1378)(1441,1347)(1446,1318)
	(1452,1291)(1458,1266)(1465,1241)
	(1472,1218)(1480,1196)(1490,1174)
	(1499,1153)(1510,1132)(1522,1111)
	(1534,1090)(1547,1070)(1562,1050)
	(1577,1030)(1593,1010)(1610,991)
	(1627,972)(1645,955)(1664,938)
	(1683,922)(1702,908)(1721,894)
	(1741,882)(1760,872)(1779,862)
	(1798,855)(1816,848)(1834,843)
	(1852,840)(1870,838)(1887,837)
	(1904,838)(1921,840)(1938,843)
	(1956,848)(1973,855)(1990,862)
	(2008,872)(2025,882)(2042,894)
	(2060,908)(2077,922)(2093,938)
	(2109,955)(2125,972)(2140,991)
	(2154,1010)(2167,1030)(2180,1050)
	(2192,1070)(2203,1090)(2213,1111)
	(2223,1132)(2231,1153)(2240,1174)
	(2247,1196)(2254,1218)(2260,1241)
	(2265,1266)(2271,1291)(2275,1318)
	(2279,1347)(2283,1378)(2287,1411)
	(2290,1447)(2293,1485)(2295,1524)
	(2298,1565)(2300,1606)(2302,1646)
	(2303,1684)(2304,1719)(2305,1749)
	(2306,1773)(2307,1791)(2307,1802)
	(2307,1809)(2307,1812)
\path(2757,1812)(2757,12)
\end{picture}
}
\caption{A cup}
\label{capfig}
\end{figure}
\end{lemma}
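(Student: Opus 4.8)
The plan is to treat the four parts in the order listed: the first as a warm-up, the second as the main case, the third as the second iterated, and the fourth in parallel with the second but with the group $G$ replacing a conjugacy class. In every case the method is the same four steps: (i) use the Morse function $f$ to extract a handle decomposition of the pair $(Y,K)$; (ii) apply Seifert--van Kampen to read off a presentation of $\pi_1(Y \ssm K)$ together with the two restriction homomorphisms $\pi_1(X_\mp \ssm \ul{x}_\mp) \to \pi_1(Y \ssm K)$; (iii) translate this into the representation-variety model \eqref{holmarked}, obtaining explicit descriptions of $M(Y,K)$ and of the two projections in \eqref{restr}; (iv) check that the resulting subset $L(Y,K,\phi)$ is a smooth Lagrangian correspondence. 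Smoothness of $M(Y,K)$ and of $L(Y,K,\phi)$ will follow from the transversality-and-free-action argument already used in the proof of Proposition~\ref{smooth} (discreteness of stabilizers forces the relevant level sets to be cut out cleanly, and connectedness of the projectivized centralizers makes the quotients into manifolds), and the Lagrangian property will be a formal consequence of identifying $L(Y,K,\phi)$ with a symplectic reduction, using the level-set/reduction lemmas of \cite{we:co}.

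For (\ref{cbord}): since $f : Y \to [-1,1]$ is proper with no critical points it is a fiber bundle by Ehresmann, and since $f|_K$ also has no critical points this is a bundle of pairs, so a normalized gradient-like vector field integrates to the stated simultaneous trivialization $\cT$ with $\cT|_{f^{-1}(-1)} = \on{Id}_{X_-}$, and $\psi := \phi \circ \cT|_{f^{-1}(1)}$ is then an isomorphism of marked surfaces. The trivialization identifies $\pi_1(Y \ssm K)$ with $\pi_1(X_- \ssm \ul{x}_-)$ compatibly with the inclusion of $f^{-1}(-1)$ and realizes the inclusion of $f^{-1}(1)$ as $\psi_*$ up to an inner automorphism, so \eqref{restr} identifies $M(Y,K)$ with $M(X_-, \ul{x}_-)$ and its image $L(Y,K,\phi)$ with the graph of the pullback $(\psi^{-1})^*$. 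By naturality of the holonomy description of the symplectic form (Remark~\ref{sbraid} and \cite[Section 9.4]{al:mom}) this pullback is a symplectomorphism, hence its graph is a smooth Lagrangian correspondence.

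For (\ref{etangle}) and (\ref{qcups}): here $f|_K$ has critical points, all of the same index, so along $K$ a collection of cups is born (dually, a collection of caps dies). For a single cup whose two feet $x_i, x_j$ lie on $X_+$, van Kampen at the critical level should yield $\pi_1(Y \ssm K) \cong \pi_1(X_- \ssm \ul{x}_-) \ast \Z$, the new $\Z$-factor generated by the meridian $\delta$ of the cup component; the restriction from $\pi_1(X_- \ssm \ul{x}_-)$ is the inclusion of the free factor, while the restriction from $\pi_1(X_+ \ssm \ul{x}_+)$ is the surjection sending $\gamma_i, \gamma_j$ to conjugates of $\delta^{\pm 1}$ and otherwise merging the two feet. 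In the model \eqref{holmarked} a point of $M(Y,K)$ is then a point of $M(X_-, \ul{x}_-)$ together with a choice of holonomy $\varphi(\delta) \in \cC_{\mu_i}$, so the projection $L(Y,K,\phi) \to M(X_-, \ul{x}_-)$ is a fiber bundle with fiber $\cC_{\mu_i} \cong \cC_{\mu_j}$; and the projection $L(Y,K,\phi) \to M(X_+, \ul{x}_+)$ is injective (precomposition with a surjection) with image the locus where the fused holonomy $\varphi(\gamma_i \gamma_j)$ is trivial, i.e.\ the transverse zero level of the associated group-valued moment map, hence a coisotropic embedding. That $L(Y,K,\phi)$ is thereby a smooth Lagrangian correspondence will follow from the description of such level-set correspondences in \cite{we:co} together with the transversality supplied by the stabilizer computation in Proposition~\ref{smooth}. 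Part (\ref{qcups}) is then the iteration: each critical level contributes one more cup and one more free $\Z$-factor, handled exactly as above --- equivalently, (\ref{qcups}) is the composition of the elementary-cup correspondences, all intermediate geometric compositions being embedded since the cup meridians are pairwise disjoint.

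For (\ref{ebord}): a single index-one critical point of $f$ on $Y$, with $K$ disjoint from it, realizes $X_+$ as surgery on $X_-$ along an embedded circle, and $Y \ssm K$ deformation-retracts onto $(X_- \ssm \ul{x}_-) \vee S^1$, so $\pi_1(Y \ssm K) \cong \pi_1(X_- \ssm \ul{x}_-) \ast \Z$ with the new generator the holonomy around the handle, which carries no constraint; hence the projection $\pi_-$ to $M(X_-, \ul{x}_-)$ is a fiber bundle with fiber $G$, while the projection $\pi_+$ to $M(X_+, \ul{x}_+)$ is injective with image the transverse zero level of the group-valued moment map of the belt circle, a coisotropic embedding, and smoothness and the Lagrangian property follow as in (\ref{etangle}). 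I expect the main obstacle to be the van Kampen bookkeeping in step (ii) --- pinning down the two restriction homomorphisms sharply enough that one can see, case by case, which projection is the coisotropic embedding and which is the fiber bundle --- together with checking that the elementary correspondences are genuinely \emph{smooth} submanifolds of the product, rather than merely singular subsets, under the admissibility hypothesis; once $L(Y,K,\phi)$ has been identified with an explicit reduction, its being Lagrangian is formal from \cite{we:co}.
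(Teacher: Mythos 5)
Your proposal is correct and follows essentially the same route as the paper: an explicit flow trivialization for the cylindrical case, holonomy presentations of the elementary correspondences (your Seifert--van Kampen free-product bookkeeping is the paper's choice of adapted generators near the critical level), smoothness via the transversality-plus-free-projectivized-action argument of Proposition~\ref{smooth}, and identification of one projection as a coisotropic embedding and the other as a fibration by conjugacy classes resp.\ $G$. The only real divergence is in the last symplectic step, where you appeal to the reduction/level-set formalism of \cite{we:co} and a group-valued moment map picture (and offer embedded composition of elementary cups for the multi-critical case), while the paper verifies isotropy of the fibers directly from the Alekseev--Malkin--Meinrenken fusion two-forms \eqref{fus2} and dismisses the remaining cases as similar; to make your version rigorous you would still need essentially that fusion-form computation, carried out upstairs on the quasi-Hamiltonian space, since there is no residual $G$-action on the reduced moduli space itself.
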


\begin{proof}  
\eqref{cbord} First we construct a simultaneous trivialization.
Suppose that $(Y,K,\phi)$ admits a Morse function $f: Y \to [-1,1]$
with no critical points on $Y$ or $K$.  Choose a vector field $v$ on
$K$ with flow $\psi_t$ so that that $\ddt f \circ \psi_t = 1$; for
example, a normalized gradient vector field.  Via the patching
procedure in \eqref{patch}, the vector field $v$ extends to all of $Y$
to a vector field $v \in \Vect(Y)$ with $L_v f > 0$.  After
normalizing, we may assume $L_vf = 1$ in which case the flow
${\cT}(x_-,-1 + t)=\psi_t(x_-)$ gives the claimed trivialization.

Next we identify the correspondence associated to the bordism as a
graph of a symplectomorphism.  With notation from the previous
paragraph any representation $\rho'$ of $\pi_1(Y\setminus K)$ is the
pullback $({\cT}^{-1})^*\rho$ of a representation $\rho$ of the
trivial cylinder over $X_-\setminus\ul{x}_-$.  Since the restrictions
of $\rho$ to the two boundary components are conjugate, the boundary
restriction of $({\cT}^{-1})^*\rho$ will be the graph of pullback
under $\psi^{-1}=( \phi \circ {\cT})^{-1}|_{X_+}:X_+\to X_-$.
Moreover, $\psi^{-1}$ preserves the orientations and labels and hence
induces a symplectomorphism of moduli spaces $M(X_-,\ul{x}_-) \to
M(X_+,\ul{x}_+)$.  So $L(Y,K,\phi)$ is the graph of a
symplectomorphism and hence a smooth Lagrangian correspondence.

Next we consider the case \eqref{etangle} of an elementary tangle.
Choose a system of generators of the fundamental groups of the
punctured surfaces such that in the holonomy description
\begin{align*}
L(Y,K,\phi) &= \bigl\{ \bigl( [a_1,\ldots,a_{2g},h_1,\ldots, h_{n+2}],
\\ &\qquad\quad [a_1,\ldots,a_{2g},h_1,\ldots,\hat{h}_i,\ldots,
  \hat{h}_j, \ldots, h_{n+2}] \bigr) \,\big|\, h_i = h_j \bigr\} .
\end{align*}
\noindent Such a system of generators can be found as follows.  Let
$k_0 \in K$ denote the unique critical point, by assumption a maximum.
Choose
$$b_+:=f(X_+)> c_+ > f(k_0) > c_-> f(X_-)=:b_-$$
such that the bordisms $f^{-1}([b_-,c_-])$ and $f^{-1}([b_+,c_+])$ are
cylindrical.  Any choice of generators for the fundamental groups of
$f^{-1}(c_\pm)$ induces generators for the fundamental groups of
$f^{-1}(b_\pm)$.  Consider the bordism $f^{-1}([c_-,c_+])$.  For
$c_\pm$ sufficiently close to $f(k_0)$, the level set $f^{-1}(c_+)$ is
obtained from $f^{-1}(c_-)$ by replacing a twice punctured disk $D_-$
by a disk $D_+$ without punctures: 
$$ f^{-1}(c_+) = f^{-1}(c_-) - D_- \cup D_+ . $$
The two punctures labelled $i,j$ in $D_-$ are connected in
$f^{-1}([c_-,c_+])$ by a small cap.  Choose a system of
representatives for generators for $\pi_1(f^{-1}(c_-))$ that except
for the generators $\gamma_i,\gamma_j$ around the $i$-th and $j$-th
markings do not meet $D_-$.  Removing the two generators
$\gamma_i,\gamma_j$ produces a system of generators for $f^{-1}(c_+)$.
Since the $i$-th and $j$-st strands are connected by a cap, the
holonomies around the punctures $x_i,x_{j}\in X_+$ are inverses, up to
conjugacy.  By \eqref{inv} we have $\mu_i = *\mu_{j}$. 

The presentation above implies that the Lagrangian correspondence is
smooth.  Indeed, since $M(X_-,\ul{x}_-)$ is a smooth quotient, the
level sets 
$$\prod_{j=1}^{g} [a_{2j},a_{2j+1}] \prod_{k \neq i,j}
h_k^{\eps_k} = 1, \quad h_i
 = h_j $$  
are transversally cut out.  Thus $L(Y,K)$ is the free quotient of a
smooth manifold by the projectivized unitary group, and so smooth.
The map $\pi_-:L(Y,K,\phi) \to M(X_-,\ul{x}_-)$ is the identity on the
holonomies not around $x_i,x_j$, and so is a smooth fibration.  Using
the identification $\cC_{\mu_j} \to \cC_{*\mu_i}, g \mapsto g^{-1}$
the fiber may be identified with the antidiagonal
$$\Delta_i:=\{(h,h^{-1}) \,|\, h \in \cC_{\mu_i}\} \subseteq
\cC_{\mu_i} \times \cC_{*\mu_i}.$$
The symplectic form on $M(X_+,\ul{x}_+)$ is given by reduction
from the 2-form \eqref{fus2} on $G^{2g} \times \cC_{\ul{\mu}_+} \cong
G^{2g} \times \cC_{\ul{\mu}_-} \times \cC_{\mu_i} \times
\cC_{*\mu_i}$.  In the latter splitting the 2-form is
\begin{equation}
\omega_{g,\ul{\mu}_-} + \omega_{0, \{ \mu_i, *\mu_i \}} + \langle
\Phi_{g,\ul{\mu}_-}^* \theta \wedge \Phi_{0, \{ \mu_i, *\mu_i \}}^*
\ol{\theta} \rangle/2 .
\end{equation}
The second and third term vanish on $G^{2g} \times\cC_{\ul{\mu}_-}
\times \Delta_i$ .  The same holds after taking quotients.  Hence the
fibers of the projection $L(Y,K,\phi)$ to $ M(X_-,\ul{x}_-)$ are
isotropic, and the projection to $M(X_-,\ul{x}_-)$ is a coisotropic
embedding.  Cases \eqref{qcups} and \eqref{ebord} are similar.
\end{proof} 

\begin{corollary}  \label{LYsmooth} {\rm (Lagrangian correspondences for elementary tangles)} 
If $(Y,K,\phi)$ is a elementary tangle as in Definition
\ref{handletangle} from $(X_-,\ul{x}_-)$ to $(X_+,\ul{x}_+)$ and the
labels $\ul{\nu}$ for the components of $K$ are such that the moduli
spaces $M(X_\pm,\ul{x}_\pm)$ are smooth manifolds (see Proposition
\ref{smooth}) then the moduli space $L(Y,K,\phi)$ is a smooth
Lagrangian correspondence from $M(X_-, \ul{x}_-)$ to $M(X_+,
\ul{x}_+)$.
\end{corollary}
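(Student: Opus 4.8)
The plan is to derive the corollary directly from Lemma~\ref{Lex} by a short case analysis on the type of critical point of a Morse function realizing $(Y,K,\phi)$ as an elementary tangle. No new analytic input is required: the substantive transversality statements are already contained in Lemma~\ref{Lex}, and the hypothesis that $M(X_\pm,\ul{x}_\pm)$ be smooth is exactly what those statements use.

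By Definition~\ref{handletangle}, an elementary tangle $(Y,K,\phi)$ carries a Morse datum $(f,\ul{b})$ with $\ul{b}=(b_0<b_1)$ and at most one critical point of $f$ together with $f|_K$ in the interior of $Y$. I would split into three cases. If there is no such critical point, then $f\colon Y\to[-1,1]$ is a submersion, so $Y$ is (after trivializing by normalized gradient flow) a product bordism, and Lemma~\ref{Lex}\eqref{cbord} identifies $L(Y,K,\phi)$ with the graph of a symplectomorphism $M(X_-,\ul{x}_-)\to M(X_+,\ul{x}_+)$, hence a smooth Lagrangian correspondence. If the unique critical point lies on $K$, then, $K$ being one-dimensional, it is a local maximum or minimum of $f|_K$; in the maximum case $Y$ is again a product bordism with cylindrical Morse function, so Lemma~\ref{Lex}\eqref{etangle} (or more generally~\eqref{qcups}) applies, and the minimum case is the symmetric one. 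If the unique critical point lies on $Y$, then the requirement in Definition~\ref{handletangle} that all level sets of $f$ be connected --- equivalently, that $f$ have no critical points of index $0$ or $3$ --- forces its index to be $1$ or $2$; for index~$1$ this is Lemma~\ref{Lex}\eqref{ebord}, and index~$2$ is again the symmetric case. In each appeal to Lemma~\ref{Lex}, the smoothness of $M(X_\pm,\ul{x}_\pm)$ enters inside its proof, for instance to conclude that the relevant holonomy level sets in $G^{2g}\times\cC_{\ul{\mu}}$ are cut out transversally and descend to a smooth quotient by the projectivized unitary group.

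To make the phrase ``symmetric case'' precise, I would observe that replacing $f$ by $-f$ and relabelling the boundary turns an elementary tangle from $(X_-,\ul{x}_-)$ to $(X_+,\ul{x}_+)$ into one from $(X_+,\ul{x}_+)$ to $(X_-,\ul{x}_-)$, interchanges maxima and minima of $f|_K$ and indices $1$ and $2$ of critical points on $Y$, and replaces $L(Y,K,\phi)$ by its image under the coordinate swap $M(X_-,\ul{x}_-)^-\times M(X_+,\ul{x}_+)\to M(X_+,\ul{x}_+)^-\times M(X_-,\ul{x}_-)$. This swap intertwines $-\omega_{M_-}\oplus\omega_{M_+}$ with the negative of $-\omega_{M_+}\oplus\omega_{M_-}$, and since being Lagrangian is insensitive to an overall sign of the ambient symplectic form, the image is a smooth Lagrangian correspondence if and only if $L(Y,K,\phi)$ is; hence the maximum and index-$1$ cases of Lemma~\ref{Lex} suffice. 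The only obstacle in the corollary itself --- as opposed to in Lemma~\ref{Lex}, where the real work of checking transversality of the defining equations $h_i=h_j$ or $h=1$ in an explicit holonomy presentation is carried out --- is the mild bookkeeping that an interior critical point of an elementary tangle has index $1$ or $2$ and that the orientation-reversal reduction above is harmless; both are immediate.
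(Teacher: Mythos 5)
Your proposal is correct and follows essentially the same route as the paper's own proof: a three-way case split (no critical point, critical point on $K$, critical point on $Y$) reducing each case to Lemma~\ref{Lex}, with the remaining cases handled ``up to symmetry.'' Your only addition is to spell out explicitly why replacing $f$ by $-f$ and swapping the factors of $M(X_-,\ul{x}_-)^-\times M(X_+,\ul{x}_+)$ is harmless, a point the paper leaves implicit.
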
  

\begin{proof}
There are three cases to consider, depending on whether a critical
point occurs in the tangle, so that $ \# \Crit(f|K) \ge 1 $; in the
ambient bordism, so that $ \# \Crit(f) \ge 1 $; or not at all, so that
$ \# \Crit(f) = \# \Crit(f_K) = 0 .$ In the first case, the critical
point must be a maximum or minimum.  Up to symmetry, this is exactly
the setting of Lemma~\ref{Lex}~(b), so the claim follows.  For a
critical point in the ambient bordism the index of the critical point
must be either one or two; up to symmetry this is Lemma~\ref{Lex}~(c).
The third case follows from Lemma~\ref{Lex}~(a).
\end{proof}

\begin{remark} 
 Relative spin structures will be needed later to provide orientations
 on moduli spaces of holomorphic quilts.  Recall from \cite{fooo},
 \cite{orient} that a relative spin structure with background class $b
 \in H^2(M;\Z_2)$ is a trivialization of $TL \oplus E_b|L$ over the
 $2$-skeleton of $L$ with respect to some triangulation, where $E_b
 \to M$ is an orientable rank two bundle with $w_2(E_b) =b$.  Two
 relative spin structures are {\em equivalent mod $w_2(TM)$} if their
 background classes are related by $b - b' = w_2(TM)$, that is, by
 adding Stiefel-Whitney classes $w_2(TM) \in H^2(M,\Z_2)$, and the
 corresponding trivializations of $L \oplus E_b$ are related by adding
 the canonical trivialization of $TM|L$ on the $2$-skeleton.
\end{remark}

\begin{remark}  \label{relspin} 
{\rm (Background classes for moduli of bundles)} Suppose that
$\ul{x}_i$ consists of $n_i^\pm$ markings with positive resp. negative
orientation, and $\mu_i^\pm$ are the labels of the points with
positive resp. negative orientation.  We take as background classes
for $M(X_i,\ul{x}_i)$ the Stiefel-Whitney classes for conjugacy
classes associated to the positively or negatively oriented markings
\begin{equation} \label{bpm}
 b_\pm(X_i,\ul{x}_i) := w_2 (
T (\Pi_{j=1}^{n_i^\pm} \cC_{\mu_i^\pm})
 \qu G ) \in H^2( M(X_i,\ul{x}_i),\Z_2) .\end{equation}
Here $ T (\Pi_{j=1}^{n_i^\pm} \cC_{\mu_i^\pm}) \qu G $ denotes the
bundle obtained by pulling back $T (\Pi_{j=1}^{n_i^\pm}
\cC_{\mu_i^\pm})$ to $G^{2g} \times \cC_{\ul{\mu}}$, restricting to
$\Phinv(e)$, and quotienting by the action of $G$.  The classes $
b_\pm(X_i,\ul{x}_i)$ are equivalent modulo $w_2 ( TM(X_i,\ul{x}_i))$,
since $G$ is equivariantly spin.
\end{remark}

\begin{lemma} \label{Kbrane} {\rm (Relative spin structures)} 
Let $(Y,K,\phi)$ be an oriented elementary tangle from
$(X_-,\ul{x}_-)$ to $(X_+, \ul{x}_+)$ so that $X_+ \cong X_-$ and
$\ul{x}_+$ has at least as many elements as $\ul{x}_-$.  Let
$\ul{\nu}$ be a labelling of $K$ such that the moduli spaces
$M(X_\pm,\ul{x}_\pm)$ are smooth manifolds.  Then $L(Y,K,\phi)$ is
simply-connected, so oriented.  There is a unique relative spin
structure on $L(Y,K,\phi)$ with background class
$(b_\pm(X_-,\ul{x}_-),b_\mp(X_+,\ul{x}_+))$ of \eqref{bpm}.  These
relative spin structures are compatible under composition in the sense
that the relative spin structures on $L(Y_{01},K_{01}) \circ
L(Y_{12},K_{12})$ and $L(Y_{02},K_{02})$ agree up to shifts by $w_2 (
TM(X_i,\ul{x}_i))$.
\end{lemma}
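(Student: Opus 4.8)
The plan is to prove the three assertions in turn, each time reducing to the group‑valued symplectic quotient (``holonomy'') description of $L(Y,K,\phi)$ supplied by Lemma~\ref{Lex}. First I would settle simple connectivity. In the cylindrical case $L(Y,K,\phi)$ is the graph of a symplectomorphism, hence diffeomorphic to $M(X_-,\ul x_-)$; otherwise Lemma~\ref{Lex} presents it as a fiber bundle over one of $M(X_\mp,\ul x_\mp)$ whose fiber is a point, a product of conjugacy classes, or a copy of $G=SU(r)$. Each $M(X_\mp,\ul x_\mp)$ is simply connected (classical for moduli of flat $SU(r)$‑bundles with admissible labels, and also visible from their presentation as free quotients of $\Phi^{-1}(e)\subset G^{2g}\times\cC_{\ul\mu}$ by the projectivized conjugation action, cf.\ \cite{al:mom}), each conjugacy class $\cC_\mu$ is a partial flag variety and hence simply connected, and $G=SU(r)$ is simply connected. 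The homotopy exact sequence of the fibration then gives $\pi_1(L(Y,K,\phi))=1$, so $H^1(L(Y,K,\phi);\Z_2)=0$; in particular $w_1(TL(Y,K,\phi))=0$ and $L(Y,K,\phi)$ is orientable.

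Next, by the description of relative spin structures recalled just before Remark~\ref{relspin}, such a structure on $L:=L(Y,K,\phi)$ with background class $b$ is a trivialization of $TL\oplus E_b|_L$ over the $2$‑skeleton; since $TL$ and $E_b$ are orientable the only obstruction to existence is $w_2(TL)+b|_L\in H^2(L;\Z_2)$, and the set of such trivializations up to homotopy is, when nonempty, a torsor over $H^1(L;\Z_2)$. As $H^1(L;\Z_2)=0$, uniqueness is automatic and the content is the Stiefel--Whitney identity
$$
w_2\bigl(TL(Y,K,\phi)\bigr)=\pi_-^*\,b_\mp(X_-,\ul x_-)\big|_L+\pi_+^*\,b_\pm(X_+,\ul x_+)\big|_L .
$$
Writing $L(Y,K,\phi)=Z\qu G$ for the level set $Z\subset G^{2g}\times\cC_{\ul\nu}$ appearing in the proof of Lemma~\ref{Lex}, the conormal of $Z$ is trivialized by the differential of the group‑valued moment map, the orbit‑tangent subbundle is trivialized by the free infinitesimal action, $TG^{2g}$ is trivial, and the adjoint bundle of the projectivized $G$‑action is spin because $SU(r)$ is simply connected (equivalently, $G$ is equivariantly spin, as invoked in Remark~\ref{relspin}). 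Hence in degrees $\le 2$ one obtains $w(TL(Y,K,\phi))=w\bigl(T(\textstyle\prod_l\cC_{\nu_l})\qu G\bigr)$, the total Stiefel--Whitney class of the descent of $T\cC_{\ul\nu}|_Z$, assembled from the conjugacy classes of the components $K_l$ of $K$. It then remains to match this against the right‑hand side: a strand meeting both boundary surfaces contributes a single factor $T\cC_{\nu_l}$ which, because $\phi|_{\partial K}$ reverses the orientation of $\ul x_-$, is counted exactly once among the $b_\mp(X_-,\ul x_-)$ and $b_\pm(X_+,\ul x_+)$ products (this is exactly why the statement pairs $b_\mp$ on one end with $b_\pm$ on the other); and a strand carrying a critical point of $f|_K$ joins two markings on the same boundary surface with opposite labels $\nu,*\nu$, and is seen by $L(Y,K,\phi)$ only through the antidiagonal $\Delta\cong\cC_\nu$ of $\cC_\nu\times\cC_{*\nu}$, again contributing a single factor, matched by the $\nu$‑term of the background class on the end carrying those markings. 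Since all bundles in sight are orientable, $w_2$ is additive over these pieces, and the identity follows.

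Finally, for compatibility under composition, the lemma on correspondences for compositions gives $L(Y_{02},K_{02})=L(Y_{01},K_{01})\circ L(Y_{12},K_{12})$, and for the compositions arising from Cerf decompositions this geometric composition is transverse and embedded; the arguments above then show $L(Y_{02},K_{02})$ is again smooth and simply connected, hence carries a unique relative spin structure with its prescribed background class. By \cite{fooo,orient,we:co}, a transverse embedded composition of relatively spin Lagrangian correspondences carries a canonical induced relative spin structure, whose background class on $M(X_0,\ul x_0)^-\times M(X_2,\ul x_2)$ is the evident combination of those of the two factors, up to shifts by $w_2(TM(X_1,\ul x_1))$ coming from the discarded middle diagonal. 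Both this induced structure and the canonical one from the first part therefore have the prescribed background class modulo $w_2(TM(X_i,\ul x_i))$, so by the uniqueness just established they agree up to those shifts.

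I expect the Stiefel--Whitney computation in the middle step to be the main point of the proof: one must keep the orientation and gluing conventions of $\phi|_{\partial K}$ straight so that the classes $b_\mp$ at one end and $b_\pm$ at the other fit together without over‑ or under‑counting, and confirm that each cap/cup strand contributes its conjugacy class exactly once via the antidiagonal.
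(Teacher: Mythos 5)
Your overall conclusions are right, and the skeleton (simple connectivity via the fiber-bundle description of Lemma~\ref{Lex}, uniqueness from $H^1(L;\Z_2)=0$, compatibility under composition from uniqueness plus the composition lemma) matches the paper. Where you genuinely diverge is the existence step: the paper does not verify a Stiefel--Whitney identity downstairs. Instead it uses that $\ul{x}_+$ is large enough that $L(Y,K,\phi)\to M(X_+,\ul{x}_+)$ is an embedding, exhibits the correspondence upstairs as the inclusion $G^{2g}\times\cC_{\ul{\nu}}\to(G^{2g}\times\cC_{\ul{\mu}_-})\times(G^{2g}\times\cC_{\ul{\mu}_+})$ in which each conjugacy class sits diagonally and exactly one member of each pair of conjugacy classes occurs in the background class \eqref{bpm}, equips this inclusion with a $G$-\emph{equivariant} relative spin structure (using that $SU(r)$ is simply connected, so equivariant lifts exist), and then descends it to $L(Y,K,\phi)$. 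Your route instead computes $w_2(TL)$ and invokes obstruction theory; this buys a somewhat more conceptual existence-and-uniqueness statement at the cost of a careful Whitney-class bookkeeping in the quotient, while the paper's equivariant construction sidesteps exactly the place where your bookkeeping is delicate.

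That delicate place is a genuine flaw in your sketch as written. The trivializations you invoke (of the conormal by $d\Phi$, of the orbit directions by the infinitesimal action, of $TG^{2g}$) are trivializations over the level set $Z$, but they are not equivariant for the conjugation action: each of these summands is equivariantly a copy of $\g$ with the adjoint action, so after passing to $L=Z\qu G$ they contribute copies of the adjoint bundle $Z\times_{PU(r)}\su(r)$ of the projectivized bundle. Your claim that this adjoint bundle is spin ``because $SU(r)$ is simply connected'' is false in general: already for $r=2$ one has $PU(2)=SO(3)$ and $w_2$ of the adjoint bundle is the obstruction to lifting the $SO(3)$-bundle $Z\to L$ to $SU(2)$, which is typically nonzero precisely in the smooth (reducible-free) situations considered here; simple connectivity of $SU(r)$ gives equivariant spin structures upstairs (the paper's device), not spin-ness of the descended bundle. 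Your computation survives nonetheless, but for a different reason: the adjoint bundle enters $TL\oplus\ad^{\oplus 2}\cong\ad^{\oplus 2g}\oplus\bigoplus_l\bigl(T\cC_{\nu_l}\qu G\bigr)$ an even number of times ($2g$ from $TG^{2g}$, one from the orbit directions, one from the normal direction cut out by $\Phi$), and since it is orientable its $w_2$-contributions cancel mod $2$. With that repair, and your (correct) matching of strand contributions against $b_\pm(X_-,\ul{x}_-)$ and $b_\mp(X_+,\ul{x}_+)$, the argument goes through.
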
 

\begin{proof}  It suffices to consider the case of a single critical point, since
the case of no critical points is trivial.  Suppose first that $K$
contains a critical point of index $0$, with a strand connecting the
markings $x_i,x_j \in X_+$. Suppose that the orientation of $x_j$
resp. $x_i$ is the same resp. opposite of the standard orientation of
a point.  By Lemma \ref{Lex}, $L(Y,K,\phi)$ is diffeomorphic to a
$\cC_{\mu_j}$-bundle over $ M(X_-,\ul{x}_-).$ The base
$M(X_-,\ul{x}_-)$ is simply-connected because $M(X_-,\ul{x}_-)$ is
homeomorphic to the moduli space of parabolic bundles \cite{ms:pb} and
the moduli space of parabolic bundles is simply-connected
\cite{ni:co}.  The conjugacy classes of $G$ are simply-connected,
since they are partial flag varieties.  Hence $L(Y,K,\phi)$ is simply
connected as well.

An orientation on the Lagrangian correspondence is defined as follows.
Since the base $M(X_-,\ul{x}_-)$ is simply-connected and the structure
group of the bundle $SU(r)$ is connected, an orientation $L(Y,K,\phi)$
is induced by the symplectic orientation on the base $M(X_-,\ul{x}_-)$
and the orientation on the fiber $\cC_{\mu_j}$.

Relative spin structures are defined as follows.  By the assumption on
the size of $\ul{x}_+$ the map $L(Y,K,\phi) \to M(X_+,\ul{x}_+)$ is an
embedding.  The Lagrangian $L(Y,K,\phi)$ is a quotient of the diagonal
$h_i = h_j$ for some $i,j$ with the same labels $\ul{\nu}$ except for
the singe label $\nu_k := \mu_{+,i} = \mu_{+,j}$ labelling the unique
strand that connects $X_+$ to itself.  The inclusion
$$G^{2g} \times \cC_{\ul{\nu}} \to (G^{2g} \times \cC_{\ul{\mu}_-})
\times (G^{2g} \times \cC_{\ul{\mu}_+}) $$
has an equivariant relative spin structure with background class
\eqref{bpm}, since each conjugacy class embeds into a diagonal and
exactly one of each pair of conjugacy classes appears in \eqref{bpm}.
Taking quotients one obtains a relative spin structure on $L(Y,K,\phi)$.
Since $L(Y,K,\phi)$ is simply connected, this relative spin structure
is unique for this background class.  Compatibility under composition
follows from uniqueness.
\end{proof} 

\subsection{Symplectic-valued field theory}
\label{invariance2}

Putting everything together we construct a functor from the tangle
category to the category of (symplectic manifolds, equivalence classes
of generalized Lagrangian correspondences.)  We denote by 
$$\B = \{ \omega_1/2,\ldots , \omega_r/2 \}$$ 
the set of midpoints on the edges connecting the origin with the
vertices in the Weyl alcove, and restrict to marked surfaces with
labels in this set.  For example, in the rank two case this assumption
means that all labels are contained in the set $\B = \{ {1/4} \} \subset
[0,{1/2}] \cong \Alc $ corresponding to traceless holonomies.

\begin{definition} {\rm (Admissible tangle category)} 
Fix coprime integers $r,d >0$.  Let $X$ be a compact oriented surface.
Denote by $\Tan(X,r,d)$ the category of cylindrical whose objects are
markings in $X$ with labels in $\B$ such that the labels are
admissible as in Proposition \ref{smooth}, and morphisms are
cylindrical tangles with labels in $\B$.
\end{definition} 

\begin{example}
In the simplest case $r =2,
d= 1$ the category $\Tan(X,r,d)$ is the category of tangles in $X$
whose objects are markings of odd order.
\end{example} 

\begin{theorem} 
\label{extendt} {\rm (Symplectic-valued field theory for admissible tangles)}  
For $r,d > 0$ coprime, partially define a functor $\Phi:\Tan(X,r,d)
\to \Symp_{1/2r}$ by mapping
\begin{itemize}
\item an object $(\ul{x},\ul{\mu})$ to the moduli space $M(X,\ul{x})$
  of flat $SU(r)$-bundles with fixed holonomies;
\item an elementary morphism $[(Y,K,\phi)]$ to the Lagrangian
  correspondence $L(Y,K,\phi)$.
\end{itemize} 
Then $\Phi$ extends to a $\Symp_{1/2r}$-valued field theory.
\end{theorem}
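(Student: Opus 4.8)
The plan is to deduce the statement from Theorem~\ref{suffices}, applied with $\cC = \Symp_{1/2r}$. The assignment on objects and on elementary morphisms is already fixed in the statement, so two things remain to be checked: (1) that these values genuinely lie in $\Symp_{1/2r}$, i.e.\ that each $M(X,\ul{x})$ is a monotone symplectic manifold with constant $1/2r$ and each $L(Y,K,\phi)$ is a simply connected, oriented, monotone Lagrangian correspondence carrying the appropriate relative spin structure; and (2) that the four Cerf relations of Theorem~\ref{suffices} hold. Granting these, Theorem~\ref{suffices} furnishes the unique extension of $\Phi$ to a $\Symp_{1/2r}$-valued field theory.

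For (1) I would argue as follows. Proposition~\ref{smooth} shows that the admissibility hypothesis on labels in $\B$ makes $M(X,\ul{x})$ a smooth compact symplectic manifold, and the proposition that admissibility propagates from one end of a cylindrical tangle to the other guarantees that all the moduli spaces appearing in a Cerf decomposition are smooth. Each label $\omega_j/2 \in \B$ is monotone in the sense of Definition~\ref{admissible}, being the orthogonal projection of the barycenter $\rho/r$ onto the edge of $\Alc$ joining $0$ to $\omega_j$ (equivalently $\langle\rho/r,\omega_j\rangle = \tfrac{1}{2}\langle\omega_j,\omega_j\rangle$), so Theorem~\ref{monthm} applies and yields monotonicity with $\tau^{-1}=2r$. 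For an elementary tangle, Corollary~\ref{LYsmooth}, via the holonomy descriptions of Lemma~\ref{Lex}, shows $L(Y,K,\phi)$ is a smooth Lagrangian correspondence, and Lemma~\ref{Kbrane} shows it is simply connected, hence orientable, with a canonical relative spin structure of background class~\eqref{bpm}; a simply connected Lagrangian in a monotone symplectic manifold is automatically monotone with the same constant, since the long exact sequence of $(M,L)$ represents every relative disk class by a sphere and $2\int u^*\omega = \tau I(u)$ then follows from $[\omega]=\tau c_1$.

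For (2) the organizing tool is the lemma on correspondences for compositions, identifying $L$ of a composed tangle with the geometric composition of the $L$'s of the pieces. For the trivial tangle, Lemma~\ref{Lex}(a) gives $L = \Delta_{M(X,\ul{x})}$, the identity of $\Symp_{1/2r}$. For cylinder gluing one piece has correspondence a graph of a symplectomorphism, so its geometric composition with the other piece is automatically transverse and embedded and equals $L$ of the composed tangle; hence the length-two sequence collapses to a single correspondence in $\Symp_{1/2r}$, which is the required identity. For critical point cancellation the composed tangle is cylindrical, so $L$ of it is again a graph; working in the holonomy presentations of Lemma~\ref{Lex} one checks that the fibre conjugacy class of one of the two elementary correspondences is matched against the corresponding conjugacy-class direction of the other, so the intersection defining the geometric composition is transverse and the projection injective, and the identity follows as before. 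For critical point reversal the two critical points lie over disjoint parts of the intermediate surface, so the two correspondences are cut out by disjoint sets of holonomy generators and the geometric composition is an essentially independent, hence transverse and embedded, intersection whose value is manifestly independent of the order in which the two critical points are crossed; both sides of the relation then equal $L$ of the (equivalent) composed tangles. In each case compatibility of the relative spin structures is the last clause of Lemma~\ref{Kbrane}, and the monotonicity constant $1/2r$ is unchanged.

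The hard part, I expect, will be the transversality-and-embeddedness verification in the cancellation and reversal moves: for the composition equivalence relation defining $\Symp$ to apply it is not enough that the geometric compositions be smooth of the expected dimension --- the fibered-product intersections in the products of moduli spaces must genuinely be transverse and the projections injective --- and establishing this forces one into the explicit holonomy descriptions of Lemma~\ref{Lex}. A more routine but still necessary point is tracking orientations and relative spin structures through these compositions and confirming they agree with those of Lemma~\ref{Kbrane}, so that the functor really lands in $\Symp_{1/2r}$ within the framework of \cite{we:co}.
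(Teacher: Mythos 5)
Your proposal is correct and follows essentially the same route as the paper: reduce to Theorem~\ref{suffices}, verify the Cerf relations by exhibiting the geometric compositions as transverse and embedded in the explicit holonomy presentations of Lemma~\ref{Lex} (the paper carries this out in detail for critical point cancellation via a tangent-space computation and treats the other moves as similar), and invoke Lemma~\ref{Kbrane} together with Proposition~\ref{smooth}, Corollary~\ref{LYsmooth} and Theorem~\ref{monthm} for smoothness, monotonicity and relative spin structures. Your added checks that the labels $\omega_j/2$ are monotone projections of $\rho/r$ and that simply connected Lagrangians inherit monotonicity are correct and merely make explicit what the paper leaves implicit in its definition of $\B$ and $\Symp_{1/2r}$; the only small omission is the one-line verification that equivalent tangles give equal correspondences, so that $\Phi$ is well defined on equivalence classes.
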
 

\begin{proof} 
We first check that the partial map is well-defined.  Any equivalence
of bordisms $\psi: (Y,K,\phi) \to (Y',K', \phi')$ induces an equality
of Lagrangians $L(Y,K,\phi) = L(Y',K',\phi')$ by pull-back.  So any
equivalence class of elementary bordisms gives a Lagrangian
correspondence.  

It remains to check the Cerf relations in Theorem \ref{suffices}.
These relations follow from suitable equivariant versions of the
results of \cite{field}.  However, we prefer to give an explicit
computation.  To simplify notation we restrict to the case that $X$
has genus zero.  Consider first the case of critical point
cancellation.  We may suppose that $K_{1}$ is a cup connecting the
strands $j$,$j-1$, and $K_{2}$ is a cap connecting the strands
$j,j+1$; let $n$ denote the number of markings on $X_0$.  In terms of
the holonomies around the strands $a_1,\ldots,a_{n_0}$ for $\ul{x}_0$,
$b_1,\ldots,b_{n_1}$ for $\ul{x}_1$ and $c_1,\ldots,c_{n_2}$ for
$\ul{x}_{2}$ we suppose that the markings $x_{j \pm 1}$ are positively
oriented and $x_j$ is negatively oriented on $X_1$.  Then
\begin{eqnarray*}  
L(Y_1,K_1,\phi_1) &\cong& \left\{ \begin{array}{ll} b_{j-1} =
  b_{j} & \\ \ b_k = a_k & \text{for}\ k < j-1, \\ b_k = a_{k-2}
  & \text{for}\ k > j \end{array} \right\} \subset G^{ n_0 + n_1}/G,
\\
L(Y_{2},K_{2},\phi_{2}) &\cong& \left\{ \begin{array}{ll} b_{j} =
  b_{j+1} & \\ b_k = c_k & \text{for}\ k < j \\ b_k = c_{k-2} &
  \text{for}\ k > j+1 \end{array} \right\} \subset G^{n_1 +
  n_2}/G.\end{eqnarray*}
Their composition is set-theoretically the diagonal:
$$ L(Y_1,K_1,\phi_1) \circ L(Y_{2},K_{2},\phi_{2}) = \Delta
_{M(X_0,\ul{x}_0)} \subset M(X_0,\ul{x}_0) \times
        M(X_{2},\ul{x}_{2}) $$
using $M(X_0,\ul{x}_0)= M(X_{2},\ul{x}_{2})$.  To check
transversality of the composition we write
\begin{multline}  T_{[a,b]} ( M(X_0,\ul{x}_0) \times M(X_{1},\ul{x}_{1})) \cong \{
(\xi_0,\xi_1) \in T_aG^{n_0} \times T_bG^{n_1} \} \\ T_{[b,c]}
  (M(X_{1},\ul{x}_{1}) \times M(X_{2},\ul{x}_{2})) \cong \{
  (\xi_1',\xi_{2}') \in T_bG^{n_1} \times T_cG^{n_{2}} \}
  .\end{multline}
Then the tangent space to the product of correspondences is
\begin{equation} \label{tspace}
 T_{[a,b,b,c]} (L(Y_1,K_1,\phi_1) \times L(Y_{2},K_{2},\phi_{2})) =
 \left\{ \begin{array}{lll} \xi_{j-1} &=& \xi_j \\ \xi'_j &=&
   \xi_{j+1}' \end{array} \right\}.
\end{equation}
The tangent space \eqref{tspace} intersects $T_{[a,b,b,c]}
(M(X_0,\ul{x}_0) \times \Delta_{M(X_1,\ul{x}_1)} \times
M(X_{2},\ul{x}_{2}))$ transversally, by inspection.  Hence the
composition $L(Y_1,K_1,\phi_1) \circ L(Y_{2},K_{2},\phi_{2})$ is
smooth and embedded, and equal to the diagonal.  This shows invariance
of the partially defined functor $\Phi$ under the Cerf move of
critical point cancellation.  Invariance under critical point switches
is similar.  Relative spin structures were constructed in Lemma
\ref{Kbrane}.
\end{proof} 

It seems likely that, by a more detailed examination of Cerf theory,
one can allow simultaneously tangles and non-trivial bordisms, but we
have not checked the details.

\section{Functors for Lagrangian correspondences}
\label{derived fh}

In this section we recall results of Oh \cite{oh:fl1} on Floer theory
in the presence of Maslov index two disks, and their quilted versions.
We also recall results from \cite{Ainfty} on \ainfty functors for
generalized Lagrangian correspondences.

\subsection{Monotone Floer theory}

We begin by recalled the definition of quilted Floer cohomology from
\cite{ww:quilts}.

\begin{definition} {\rm (Moduli of Maslov-index-two pseudoholomorphic disks)}   
Let $D\subset\C$ be the unit disk and fix the base point $1\in\pd D$.
Let $(M,\omega)$ be a compact monotone symplectic manifold and
$L\subset M$ an oriented monotone Lagrangian submanifold.  For any $J
\in \J(M,\omega)$ and submanifold $X\subset L$, let $\M_1^2(L,J,X)$
denote the moduli space of $J$-holomorphic disks $u:(D,\partial D) \to
(M,L)$ with Maslov number $2$ and one marked point satisfying $u(1)\in
X$, modulo the action of the group $\Aut(D,\partial D,1)$ of
automorphisms of the disk fixing $1 \in \partial D$.
\end{definition} 

Oh \cite{oh:fl1} proves that the moduli space of disks above gives
rise to a well-defined number:

\begin{proposition}  \label{prop:disk number} {\rm (Disk invariant of a Lagrangian)}  
For any $\ell \in L$ there exists a comeager subset $\J^{\rm
  reg}(\ell) \subset \J(M,\omega)$ such that $\M_1^2(L,J,\{\ell\})$ is
a finite set.  Any relative spin structure on $L$ induces an
orientation on $\M_1^2(L,J,\{ \ell \})$.  Letting $\eps: \M_1^2(L,J,\{
\ell \}) \to \{ \pm 1 \} $ denote the map comparing the given
orientation to the canonical orientation of a point, the disk number
of $L$,
$$ w(L) := \sum_{[u] \in \M_1^2(L,J,\{ \ell \})} \eps([u]) ,$$
is independent of $J \in J^{\reg}(\ell)$ and $\ell \in L$.
\end{proposition}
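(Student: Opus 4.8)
The plan is to reproduce Oh's argument from \cite{oh:fl1} in four steps: transversality for the moduli problem with the point constraint built in, exclusion of bubbling by a dimension count, coherent orientations from the relative spin structure, and invariance in $J$ and in $\ell$ via one-dimensional cobordisms. Write $n=\dim L$. The structural input used throughout is monotonicity: any non-constant $J$-holomorphic disk $u\colon(D,\partial D)\to(M,L)$ satisfies $2\int u^*\omega=\tau I(u)>0$, and since the monotone Lagrangians we consider are simply connected the Maslov index is even on disks, so in fact $I(u)\geq 2$. Consequently every Maslov-two disk is simple (an $m$-fold cover would have underlying Maslov number $2/m<2$) and has the fixed energy $\tau$.

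For transversality I would work with the universal moduli space of simple Maslov-two disks carrying one boundary marked point constrained to land on the fixed point $\ell$, fibered over $\J(M,\omega)$. Somewhere injectivity of simple disks, combined with variations of $J$ supported near an injective point, makes this a Banach manifold on which the marked-point evaluation is submersive, so the constraint $\ev(1)=\ell$ is transverse. A dimension count then gives $\dim\M_1^2(L,J)=n$ — the linearized Cauchy--Riemann operator of a Maslov-two disk has index $n+2$, and one quotients by the two-dimensional group $\Aut(D,\partial D,1)$ — and the constraint removes a further $n$, so the constrained space has expected dimension zero. Sard--Smale then produces a comeager set $\J^{\reg}(\ell)\subset\J(M,\omega)$ for which $\M_1^2(L,J,\{\ell\})$ is a smooth zero-manifold.

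The step I expect to be the main obstacle is compactness: showing that for $J\in\J^{\reg}(\ell)$ the zero-manifold $\M_1^2(L,J,\{\ell\})$ is already compact, hence finite. Here one runs Gromov compactness on a sequence of such disks, all of energy $\tau$, obtaining in the limit a stable configuration of disk and sphere components of total Maslov number two with the marked point on $\ell$. I would then rule out every genuine bubbling: a non-constant sphere bubble forces the disk component to be Maslov zero, hence constant, and for generic $J$ the resulting configurations (a $J$-sphere of Chern number one through $\ell$) form a space of negative dimension; a non-constant disk bubble has Maslov number $\geq 2$, which forces the complementary disk to be constant and thus unstable, so it is contracted and no breaking remains. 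As every bubbling stratum has positive codimension in the zero-dimensional moduli space, the Gromov limit lies back in $\M_1^2(L,J,\{\ell\})$, so the space is compact. This is exactly where monotonicity and the bound $I\geq 2$ are indispensable, turning the energy bound into a Maslov bound and then into a codimension estimate.

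For the orientations and the final invariance statement I would argue as follows. A relative spin structure on $L$ coherently orients all of these moduli spaces \cite{orient,fooo}; in particular it equips the finite set $\M_1^2(L,J,\{\ell\})$ with a sign $\eps([u])\in\{\pm1\}$ at each point, so $w(L)=\sum_{[u]}\eps([u])$ is defined. To see independence of $\ell$, connect $\ell_0$ to $\ell_1$ by a path $\gamma$ in $L$, generic so that $\ev$ is transverse to $\gamma$; for generic $J$ the space $\ev^{-1}(\gamma)$ is a compact oriented one-manifold, and the same codimension count shows its only boundary points are those of $\M_1^2(L,J,\{\ell_0\})$ and $\M_1^2(L,J,\{\ell_1\})$, appearing with opposite signs, whence the two signed counts agree. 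To see independence of $J$, take $J_0,J_1\in\J^{\reg}(\ell)$, join them by a generic path $(J_t)_{t\in[0,1]}$, and apply the same reasoning to the parametrized moduli space $\bigcup_t\M_1^2(L,J_t,\{\ell\})$, whose boundary is $\M_1^2(L,J_0,\{\ell\})\sqcup\M_1^2(L,J_1,\{\ell\})$. Intersecting the countably many comeager sets that enter these arguments gives the claim.
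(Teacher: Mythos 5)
Your write-up is essentially a reconstruction of the argument the paper does not actually give: Proposition \ref{prop:disk number} is quoted as a result of Oh \cite{oh:fl1}, with the orientation statement supplied by the relative spin structure theory of \cite{fooo,orient}, and no proof appears in the text. Your four steps (transversality with the point constraint, exclusion of bubbling by monotonicity and a codimension count, coherent orientations, and one-parameter cobordisms in $\ell$ and in $J$) are exactly the standard proof, and each step is sound. One caveat: your justification that Maslov-two disks are simple --- ``an $m$-fold cover would have underlying Maslov number $2/m$'' --- is too quick, because a non-simple holomorphic disk need not be a multiple cover of another disk. The correct input is the structure theorem for holomorphic disks of Kwon--Oh \cite{ohkw:st} and Lazzarini \cite{la:ex}: the image of a non-simple disk is covered by simple disks whose classes sum, with positive multiplicities, to the original class, and monotonicity together with orientability of $L$ forces each simple piece to have Maslov index at least two, so any non-simple disk has Maslov index at least four. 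With that substitution your transversality step is complete; note also that the genericity needed to exclude Chern-number-one sphere bubbles through $\ell$, through the path $\gamma$, and in the $J$-homotopy should be folded into the comeager sets, as you indicate at the end.
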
  

We will now extend the definition of quilted Floer cohomology, using
the setup of \cite{we:co} but dropping the assumption on minimal
Maslov number at least three.

\begin{definition} 
\begin{enumerate} 
\item {\rm (Symplectic backgrounds)} Fix a monotonicity constant $\tau
  > 0$ and an even integer $N > 0$.  A {\em symplectic background} is
  a tuple $(M,\omega,b,\Lag^N(M))$ as follows.
\begin{enumerate}
\item {\rm (Bounded geometry)} $M$ is a smooth compact manifold;
\item {\rm (Monotonicity)} $\omega$ is a symplectic form on $M$ that
  is monotone, i.e.\ $[\omega] = \tau c_1(TM)$;
\item {\rm (Background class)} $b \in H^2(M,\Z_2)$ is a {\em
  background class}, which will be used for the construction of
  orientations; and
\item {\rm (Maslov cover)} $\Lag^N(M) \to \Lag(M)$ is an $N$-fold
  Maslov cover such that the induced $2$-fold Maslov covering
  $\Lag^2(M)$ is the oriented double cover.
\end{enumerate}
We often refer to a symplectic background $(M,\omega,b,\Lag^N(M))$ as
$M$.
\item {\rm (Lagrangian branes)} A {\em brane structure} on a compact
  Lagrangian $L$ consists of an orientation, a relative spin
  structure, and a grading.  An {\em admissible Lagrangian brane} is a
  compact oriented Lagrangian with brane structure with torsion
  fundamental group.  (One can also assume other conditions that give
  monotonicity for pseudoholomorphic curves with boundary in these
  Lagrangians, or work with Novikov rings etc.)
\end{enumerate} 
\end{definition}

We recall the definition of quilted Floer cohomology from
Wehrheim-Woodward \cite{ww:quilts} and Ma'u-Wehrheim-Woodward
\cite{Ainfty}.  Let $\J_t(\ul{L})$ denote the space of {\em quilted
  time-dependent almost complex structures}
$$ \J_t(\ul{L}) = \prod_{j=0}^r
C^\infty([0,\delta_j],\J(M_j,\omega_j)) .$$
Fix a {\em quilted Hamiltonian perturbation}
$$\ul{H}\in \prod_{j=0}^r
 C^\infty([0,\delta_j] \times M_j) .$$
The space of {\em quilted Floer cochains} is generated by 
generalized trajectories of $\ul{H}$, 
\begin{equation*} \label{cI}
\cI(\ul{L}) := \left\{ \ul{x}=\bigl(x_j: [0,\delta_j] \to M_j\bigr)_{j=0,\ldots,r} \, \left|
\begin{aligned}
\dot x_j(t) = X_{H_j}(x_j(t)), \\
(x_{j}(\delta_j),x_{j+1}(0)) \in L_{j(j+1)} 
\end{aligned} \right.\right\} .
\end{equation*}
Define the {\em Floer operator}
$$ \ul{u} \mapsto \overline{\partial}_{\ul{J},\ul{H}} \ul{u} =(
\partial_{J_j,H_j} u_j = \partial_s u_j + J_j \bigl(\partial_t u_j -
X_{H_j}(u_j) \bigr) )_{j=0}^ r .$$
Counting solutions to the equation
$\overline{\partial}_{\ul{J},\ul{H}} \ul{u} = 0 $ with boundary and
seam conditions in $\ul{L}$ defines a {\em quilted Floer operator}
$$ \partial: CF(\ul{L}) \to CF(\ul{L}), \quad CF(\ul{L}) =
\bigoplus_{\ul{x} \in \cI(\ul{L})} \Z x .$$

\begin{theorem} \label{thm:d2} 
{\rm (Quilted Floer cohomology)} Let
$\ul{L}=(L_{j(j+1)})_{j=0,\ldots,r}$ be a cyclic generalized
Lagrangian brane between symplectic backgrounds $M_j, j = 0,\ldots, r$
with the same monotonicity constant $\tau\geq 0$. Then, for any
collection $\ul{H}$ of Hamiltonian perturbations, widths
$\ul{\delta}=(\delta_j>0)_{j=0,\ldots,r}$, and for $\ul{J}$ in a
comeager subset $\J^{\reg}_t(\ul{L},\ul{H}) \subset \J_t(\ul{L})$, the
Floer differential $\partial: CF(\ul{L}) \to CF(\ul{L})$ satisfies
$$\partial^2= w(\ul{L}) \Id , \qquad w(\ul{L})= \sum_{j=0}^r
w(L_{j(j+1)}) .$$
The pair $( CF(\ul{L}),\partial)$ is independent of the choice of
$\ul{H}$ and $\ul{J}$, up to cochain homotopy.
\end{theorem}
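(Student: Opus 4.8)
The plan is to run Oh's analysis of the Floer boundary operator in the presence of Maslov index two disks \cite{oh:fl1}, carried out in the quilted setting of \cite{ww:quilts,Ainfty}, keeping track of the extra boundary contributions that arise once the minimal Maslov number three hypothesis of \cite{we:co} is dropped. The basic reason the argument closes is that each $M_j$ is monotone with the common constant $\tau$ and each $L_{j(j+1)}$ is oriented, so every nonconstant pseudoholomorphic disk with boundary on $L_{j(j+1)}$ has even, hence at least two, Maslov number; the only new bubbling phenomenon relative to \cite{we:co} is that of a single Maslov two disk, whose count is governed by Proposition~\ref{prop:disk number}.

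\textbf{Transversality and compactness.} First I would recall from \cite{ww:quilts,Ainfty} that for generic $\ul H$ the generalized intersection set $\cI(\ul L)$ is finite, so $CF(\ul L)$ is finitely generated, and that for $\ul J$ in a comeager subset $\J^{\reg}_t(\ul L,\ul H)\subset\J_t(\ul L)$ all moduli spaces $\M_k(\ul x^-,\ul x^+)$ of solutions of $\overline{\partial}_{\ul J,\ul H}\ul u = 0$ of index $k$ asymptotic to $\ul x^\pm\in\cI(\ul L)$ are smooth of dimension $k$; having a common monotonicity constant is exactly what permits a generic $\ul J$ in place of abstract perturbations. Monotonicity also gives the action--index identity $2\int u^*\omega = \tau I(u)$, so trajectories of bounded index have bounded energy and Gromov--Floer compactness applies: $\M_0(\ul x^-,\ul x^+)/\R$ is finite, defining $\partial$, and $\M_1(\ul x^-,\ul x^+)/\R$ is a compact $1$--manifold with boundary.

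\textbf{The square of the differential.} The Gromov--Floer boundary of $\M_1(\ul x^-,\ul x^+)/\R$ consists of: (i) broken trajectories $\M_0(\ul x^-,\ul y)/\R\times\M_0(\ul y,\ul x^+)/\R$, whose signed count is $\langle\partial^2\ul x^-,\ul x^+\rangle$; (ii) configurations with an interior sphere bubble in some $M_j$, excluded for generic $\ul J$ since a sphere costs $2c_1\ge 2$ in dimension; and (iii) configurations with a disk bubble on a seam $L_{j(j+1)}$ or on a boundary component. By the remark above such a bubble has Maslov number exactly two, and gluing theory forces the remaining quilted trajectory to be the constant one at $\ul x^-=\ul x^+$, with the bubble a Maslov two disk in the folded product $M_j^-\times M_{j+1}$ with boundary on $L_{j(j+1)}$ through the relevant component of that point; counting these with the orientation induced by the relative spin structures gives precisely $\sum_{j=0}^r w(L_{j(j+1)}) = w(\ul L)$, independent of the point and of $\ul J$ by Proposition~\ref{prop:disk number} (independence of $\ul H$ follows since Maslov two disks are unperturbed, or by a further deformation of $\ul H$). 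Since the signed count of the boundary of a compact oriented $1$--manifold vanishes, summing (i)--(iii) and checking that the sign conventions of \cite{Ainfty} place the contribution of (iii) on the correct side yields $\partial^2 = w(\ul L)\Id$.

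\textbf{Independence up to cochain homotopy, and the main difficulty.} Given two choices $(\ul H^0,\ul J^0)$ and $(\ul H^1,\ul J^1)$ I would build continuation maps by counting index zero solutions of the $s$--dependent Floer equation interpolating between the two data, as in \cite{ww:quilts,Ainfty}; the same bubbling analysis shows each continuation map is a genuine cochain map between the two complexes, because the only obstruction is again a Maslov two disk forcing a constant main component and contributing $w(\ul L)$, which is the \emph{same} number for both complexes since $w$ is independent of all auxiliary data by Proposition~\ref{prop:disk number}. A standard two--parameter argument (homotopy of homotopies) then produces the cochain homotopies exhibiting the two continuation maps as mutually inverse up to homotopy, so $(CF(\ul L),\partial)$ is well defined up to cochain homotopy equivalence. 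The main obstacle throughout is the codimension one bubbling and orientation bookkeeping of step (iii): one must verify in the quilted setting that no configuration with several bubbles, or with a bubble failing to force the main component constant, occurs in codimension one, that a seam bubble is correctly read as a disk in $M_j^-\times M_{j+1}$ with boundary on $L_{j(j+1)}$ so that its Maslov two condition is the relevant one, and that the relative spin structures assemble the local disk counts into exactly $w(\ul L)\Id$ with the correct overall sign --- all of which is a reexamination of the arguments of \cite{ww:quilts,Ainfty} with the Maslov hypothesis weakened from three to two.
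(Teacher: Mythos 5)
Your argument is correct and is essentially the intended one: the paper states this theorem without a separate proof, presenting it as the extension of Oh's Maslov-two analysis \cite{oh:fl1} to the quilted setting of \cite{ww:quilts,Ainfty} with the minimal-Maslov-three hypothesis of \cite{we:co} dropped, which is exactly the bubbling/orientation bookkeeping you carry out (including reading a seam bubble as a Maslov two disk in $M_j^-\times M_{j+1}$ with boundary on $L_{j(j+1)}$, so that the boundary of the one-dimensional moduli spaces contributes $w(\ul{L})\Id$, and the continuation-map argument for independence). No discrepancy with the paper's approach.
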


\begin{remark} \label{cancel} {\rm (Floer theory of a pair of Lagrangians)} 
In the special case $\ul{L}=(L_0,L_1)$ of a cyclic correspondence
consisting of two Lagrangian submanifolds $L_0,L_1\subset M$ we have
$w(\ul{L})=w(L_0)-w(L_1)$.  Indeed the $-J_1$-holomorphic discs with
boundary on $L_1\subset M^-\times \{ \on{pt} \}$ are identified with
$J_1$-holomorphic discs with boundary on $L_1\subset M$ via an
anti-holomorphic involution of the domain, which is orientation
reversing for the moduli spaces of Maslov index two disks.  In
particular, the differential for a monotone pair $\ul{L}=(L,\psi(L))$
with any symplectomorphism $\psi\in\Symp(M)$ always squares to zero,
since $w(L) = w(\psi(L))$.
\end{remark}

\begin{theorem} \label{derived main} {\rm (Behavior of Floer theory under
geometric composition)} Let $\ul{L}= (L_{01},\ldots,L_{r(r+1)})$ be a
  cyclic generalized Lagrangian correspondence with admissible brane
  structure.  Suppose that for some $1\leq j\leq r$ the composition
  $L_{(j-1)j} \circ L_{j(j+1)}$ is embedded and the modified sequence
  $\ul{L}':=(L_{01},\ldots,L_{(j-1)j} \circ
  L_{j(j+1)},\ldots,L_{r(r+1)})$ is monotone.  Then, with respect to
  the induced brane structure, if $CF(\ul{L})$ and $CF(\ul{L}')$ are
  non-zero then we have $w(\ul{L})=w(\ul{L}')$ and if these disk
  invariants vanish then there exists a canonical isomorphism between
  $HF(\ul{L})$ and $HF(\ul{L}')$, induced by the canonical
  identification of intersection points.  If one of $CF(\ul{L}),
  CF(\ul{L}')$ vanish then both are trivial up to homotopy
  equivalence.
\end{theorem}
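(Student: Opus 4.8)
The plan is to run the \emph{strip shrinking} argument of \cite{we:co}, adapted to the present monotone situation in which Maslov index two disks are permitted, so that the disk invariant $w$ enters as a bookkeeping device. First I would reduce to a single composition: it suffices to treat the case in which only the seam between $L_{(j-1)j}$ and $L_{j(j+1)}$ is merged, all other patches and seams being held fixed, with the relative spin structure and grading on $L_{(j-1)j}\circ L_{j(j+1)}$ the ones induced from the factors (so that the signs in the Floer counts are compatible). By Theorem~\ref{thm:d2} the complex $CF(\ul L)$ is independent, up to cochain homotopy, of the tuple of widths $\ul\delta=(\delta_0,\ldots,\delta_r)$, and the idea is to compute it in the limit $\delta_j\to 0$, where the patch mapping to $M_j$ is crushed and the two seams $L_{(j-1)j},L_{j(j+1)}$ collapse onto the single seam $L_{(j-1)j}\circ L_{j(j+1)}\subset M_{j-1}^-\times M_{j+1}$ of $\ul L'$.

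The first step is the identification of generators. Since $L_{(j-1)j}\circ L_{j(j+1)}$ is embedded, the fiber product $(L_{(j-1)j}\times L_{j(j+1)})\cap(M_{j-1}\times\Delta_{M_j}\times M_{j+1})$ is cut out cleanly and projects diffeomorphically onto its image. Hence for $\delta_j$ small a generalized trajectory $\ul x\in\cI(\ul L)$ is determined by, and determines, a generalized trajectory in $\cI(\ul L')$: the short strip $x_j\colon[0,\delta_j]\to M_j$ is forced to remain in an arbitrarily small neighborhood of the point of $M_j$ prescribed by the seam conditions. This produces a canonical bijection $\cI(\ul L)\cong\cI(\ul L')$, hence a $\Z$-module isomorphism $CF(\ul L)\cong CF(\ul L')$ for $\delta_j$ small; in particular $CF(\ul L)$ vanishes exactly when $CF(\ul L')$ does, so both are then literally the zero complex, which already yields the final assertion.

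The heart of the matter is to show that, for $\delta_j$ small, this bijection intertwines the Floer differentials. Fix generic data and take a sequence $\delta_j^\nu\to 0$ with index one quilted $\ul J^\nu$-holomorphic trajectories for $\ul L$. Monotonicity of $\ul L$ and of $\ul L'$ gives uniform energy bounds, so a Gromov-type compactness theorem for strip shrinking applies. The limiting configuration is a quilted trajectory for $\ul L'$ together with possible bubbles: (i) pseudoholomorphic spheres in some $M_i$, of energy at least $2\tau$, occurring in codimension at least two and hence absent from the zero- and one-dimensional moduli spaces for generic almost complex structures; (ii) Maslov index two disks on some seam $L_{i(i+1)}$, which are exactly the configurations counted by $w(L_{i(i+1)})$ in Proposition~\ref{prop:disk number} and which enter only $\partial^2$, not the count of zero-dimensional trajectory spaces; and (iii) the \emph{figure eight bubble} forming at the collapsing $j$-th seam. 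Excluding (iii) from the relevant moduli spaces is the crucial point, and it is here that embeddedness is indispensable: an index and monotonicity computation as in \cite{we:co} shows that a nonconstant figure eight bubble over an embedded composition has codimension at least two, so it does not occur in the zero- and one-dimensional moduli spaces. It follows that $\partial_{\ul L}=\partial_{\ul L'}$ under the identification of generators for $\delta_j$ small.

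From the chain-level identification one then reads off both remaining conclusions. Squaring and invoking Theorem~\ref{thm:d2} gives $w(\ul L)\,\Id=\partial_{\ul L}^2=\partial_{\ul L'}^2=w(\ul L')\,\Id$ on the common module $CF(\ul L)\cong CF(\ul L')$, so $w(\ul L)=w(\ul L')$ whenever this module is nonzero. When these disk invariants vanish, the two differentials are honest differentials on canonically identified cochain complexes, so passing to cohomology yields the canonical isomorphism $HF(\ul L)\cong HF(\ul L')$ induced by the identification of intersection points. I expect the main obstacle to be step (iii): making the compactness statement near the collapsing seam rigorous — controlling energy concentration there and carrying out the Fredholm and index bookkeeping for the figure eight bubble — is the delicate analytic input, and it is precisely what forces the hypothesis that the composition be embedded rather than merely transverse.
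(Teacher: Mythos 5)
Your proposal is correct and takes essentially the same route as the paper: the paper's proof simply cites \cite{we:co} for the strip-shrinking bijection of trajectory spaces (noting that minimal Maslov number two, automatic in the monotone orientable case, suffices) and \cite{orient} for orientations, whereas you sketch that analysis yourself, and both arguments conclude identically by squaring the identified differentials to get $w(\ul{L})=w(\ul{L}')$ and passing to cohomology when these vanish. The only minor discrepancy is in the degenerate case: since the identification is only up to cochain homotopy (widths and perturbations vary), the paper asserts that vanishing of one of $CF(\ul{L}),CF(\ul{L}')$ makes the other trivial up to homotopy equivalence, rather than your claim that both are literally the zero complex.
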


\begin{proof}
The bijection between the trajectory spaces for small widths and for
the composed Lagrangian correspondence in \cite{we:co} only requires
that the minimal Maslov number of the Lagrangians is at least two
(which is automatic in the monotone orientable case).  The comparison
of orientations in \cite{orient} is also independent of Maslov
indices.  Hence the morphism given by the canonical identification of
intersection points is a cochain map:
$$f:CF(\ul{L})\to CF(\ul{L}'), \quad f\partial = \partial' 
f ,$$
where $\partial$ and $\partial'$ are the Floer differentials on
$CF(\ul{L})$ resp.\ $CF(\ul{L}')$.  Similarly, the inverse is a
cochain map 
$$f^{-1}:CF(\ul{L}')\to CF(\ul{L}), \quad \partial'  f^{-1} = f^{-1}
 \partial. $$
So $f$ defines an isomorphism from $CF(\ul{L})$ to $CF(\ul{L}')$, up
to cochain homotopy.  Since 
$$\partial^2 = w(\ul{L}) \on{Id}, \quad 
(\partial')^2 = w(\ul{L}') \on{Id}$$ 
if both are non-zero it follows that $w(\ul{L})=w(\ul{L}')=:w$.
Otherwise, one is trivial and the other is homotopy equivalent to
zero.
\end{proof}

\subsection{Quilted Fukaya categories}

\begin{definition} 
{\rm (Fukaya category)} Let $(M,\omega)$ be a monotone symplectic
background.  For any $w \in \Z$ let $\Fuk(M,w)$ be the {\em Fukaya
  category} as in Sheridan \cite{sh:hmsfano} whose
\begin{enumerate} 
\item {\rm (Objects)} objects are the set of admissible Lagrangian
  branes as in \cite{Ainfty} with disk invariant $w(L) = w$;
\item {\rm (Morphisms)} for any pair of objects $(L_0,L_1)$, the
  ``space'' of morphisms $\Hom(L_0,L_1) := CF(L_0,L_1)$.
\item {\rm (Composition maps)} the higher composition maps
$$ \Hom(L_0,L_1) \times \ldots \times \Hom(L_{n-1},L_n) \to
  \Hom(L_0,L_n)[2-n], \quad n \ge 1 $$
are defined by counting holomorphic polygons with boundary on
$L_0,\ldots, L_n$.
\end{enumerate}
In \cite{Ainfty} the quilted Fukaya category $\Fuk^{\sharp}(M,w)$ is
defined similarly, by allowing generalized branes with total disk
invariant $w$.
\end{definition} 

\begin{theorem} \label{composethm}  {\rm (Functor
for a geometric composition of Lagrangian correspondences)} Let
  $M_0,M_1,M_2$ be symplectic backgrounds with the same monotonicity
  constant and
$$L_{01} \subset M_0^- \times M_1, \quad L_{12} \subset M_1^- \times
  M_2$$ 
compact, oriented, simply-connected Lagrangian correspondences
equipped with admissible brane structures, with disk invariant
zero. If $L_{01} \circ L_{12}$ is transverse and embedded into $M_0^-
\times M_2$ then 
$$\Phi(L_{01} \circ L_{12}), \Phi(L_{01}) \circ \Phi(L_{12}) :
\ \Fuk^{\sharp}(M_0,w) \to \Fuk^{\sharp}(M_2,w) \cong
\Fuk^{\sharp}(M_2,w + b_2(M_2))
$$
are homotopic \ainfty functors.
  \end{theorem}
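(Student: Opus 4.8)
The plan is to run the width-shrinking argument of Ma'u-Wehrheim-Woodward \cite{Ainfty}, which proves exactly this statement under the stronger hypothesis that all minimal Maslov numbers are at least three, and to upgrade its analytic inputs to the present monotone setting in which Maslov index two disks may occur. First I would recall from \cite{Ainfty} the construction of the functor $\Phi(L_{01})$ attached to a Lagrangian correspondence: on objects it sends a generalized Lagrangian brane $\ul{L}$ in $M_0$ to the concatenation $\ul{L}\,{\sharp}\,L_{01}$, regarded as an object of $\Fuk^{\sharp}(M_1,w)$ -- the disk invariant is unchanged because $w(L_{01}) = 0$ -- and on the higher products it counts quilted pseudoholomorphic polygons carrying one extra seam labelled by $L_{01}$. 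Hence $\Phi(L_{01}) \circ \Phi(L_{12})$ sends $\ul{L}$ to the concatenation $\ul{L}\,{\sharp}\,L_{01}\,{\sharp}\,L_{12}$ with two extra parallel seams, whereas $\Phi(L_{01} \circ L_{12})$ sends $\ul{L}$ to $\ul{L}\,{\sharp}\,(L_{01} \circ L_{12})$ with a single extra seam. On objects these two generalized branes agree as objects of $\Fuk^{\sharp}(M_2,w)$ after the background-class identification $\Fuk^{\sharp}(M_2,w) \cong \Fuk^{\sharp}(M_2,w + b_2(M_2))$, which merely records the effect on relative spin structures of composing a brane with $L_{12}$, as in Lemma \ref{Kbrane} and \cite{orient}.

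Second, I would construct the $A_\infty$ homotopy between the two functors by letting the width of the middle $M_1$-strip vary. One uses moduli spaces of quilted polygons with seam data of the concatenated type in which the width $\delta \in [0,\infty)$ of the strip between the $L_{01}$-seam and the $L_{12}$-seam is an additional parameter; at $\delta = 0$ this strip collapses and the two seams fuse into a single $(L_{01} \circ L_{12})$-seam. The homotopy coefficients count the zero-dimensional components of these width-parametrized moduli, and the $A_\infty$ homotopy equations come from the one-dimensional components, whose codimension one boundary consists of: (i) a disk or polygon bubbling off at one of the inputs or at the output, assembling into the terms of the $A_\infty$ relation for a homotopy of functors; (ii) the width degenerating to zero, which by the strip-shrinking bijection of \cite{we:co} reproduces the configurations counted by $\Phi(L_{01} \circ L_{12})$; and (iii) the width remaining bounded away from zero, reproducing the configurations counted by $\Phi(L_{01}) \circ \Phi(L_{12})$. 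As already noted in the proof of Theorem \ref{derived main}, the strip-shrinking bijection of \cite{we:co} and the orientation comparison of \cite{orient} require only minimal Maslov number at least two, which is automatic in the monotone orientable case, and the embeddedness of $L_{01} \circ L_{12}$ forces any figure eight bubble to be constant, so no such bubbling occurs.

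Third, one must check that the presence of Maslov index two disks does not obstruct these relations. This is handled exactly as in the construction of $\Fuk^{\sharp}(M,w)$: a bubbled Maslov index two disk contributes a term proportional to the disk invariant $w(L_{j(j+1)})$ on each side of the homotopy identity, and these cancel because the disk invariant is additive under concatenation -- $w(L_{01} \circ L_{12}) = w(L_{01}) + w(L_{12})$ by Theorem \ref{derived main} -- and both functors have target $\Fuk^{\sharp}(M_2,w)$; this is the quilted analogue of Remark \ref{cancel}. The main obstacle is the analytic input to the second step: one must extend the Gromov compactness, gluing, and transversality statements of \cite{we:co} and \cite{Ainfty} to the width-parametrized Maslov-index-two setting, verifying in particular that the extra width parameter produces no codimension zero degeneration, that figure eight bubbling is excluded by embeddedness, and that transversality holds for a comeager set of quilted almost complex structures compatible with the chosen family of widths. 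Granting these, assembling the homotopy and verifying the $A_\infty$ homotopy equations is the same bookkeeping as in \cite{Ainfty}.
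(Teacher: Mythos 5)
The paper does not actually contain a proof of Theorem \ref{composethm}: it is stated without proof, as an import of the machinery of \cite{Ainfty} and \cite{we:co}, the only point specific to the present setting being the observation already recorded in the proof of Theorem \ref{derived main} that the strip-shrinking bijection of \cite{we:co} and the orientation comparison of \cite{orient} need only minimal Maslov number two (automatic for monotone orientable Lagrangians), together with the disk-invariant-zero hypothesis. Your outline reconstructs essentially that intended argument (quilted polygon counts defining the functors, a homotopy governed by the width of the middle $M_1$-strip, exclusion of figure-eight bubbling, cancellation of Maslov-index-two contributions), so it is consistent with the route the paper relies on. Two caveats. First, in \cite{we:co} and \cite{Ainfty} the degenerate width $\delta=0$ is not treated as a boundary stratum of a compactified width-parametrized moduli space; instead one proves a bijection between the solution sets for all sufficiently small widths and the solutions with a single $(L_{01}\circ L_{12})$-seam, and then compares functors for different widths by continuation-type homotopies. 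Setting up $\delta\to 0$ as a literal boundary point, as in your step (ii), would require additional gluing and removal-of-singularity analysis at zero width that the cited works deliberately avoid; your argument should be phrased via the small-width bijection, which you in any case invoke. Second, your appeal to Theorem \ref{derived main} for additivity of disk invariants is conditional (it compares totals only when both cochain groups are nonzero); the cleaner statement is that the strip-shrinking bijection applied to Maslov-index-two disks directly identifies the disk counts for $L_{01}\circ L_{12}$ with the corresponding quilted counts, so $w(L_{01}\circ L_{12})=0$ under the hypotheses and the composed correspondence defines a functor between the same categories.
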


  \begin{corollary} \label{categorify} {\rm (Categorification
      functor)} For any $\tau > 0$ and $w\in\Z$, the assignments $M
    \mapsto \Fuk^{\sharp}(M,w)$ for symplectic backgrounds $M$ with
    monotonicity constant $\tau$ and $\ul{L} \mapsto \Phi(\ul{L})$ for
    generalized Lagrangian correspondences $\ul{L}$ with admissible
    brane structures define a {\em categorification functor} from
    $\Symp_\tau$ to the category of (small \ainfty categories,
    homotopy classes of \ainfty functors).
\end{corollary}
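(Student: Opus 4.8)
The plan is to check that the stated assignments satisfy the axioms of a functor into the category whose objects are small $A_\infty$ categories and whose morphisms are homotopy classes of $A_\infty$ functors, using Theorem~\ref{composethm} as the single substantial input. On objects there is nothing to prove beyond recalling from \cite{Ainfty} that $\Fuk^{\sharp}(M,w)$ is a small $A_\infty$ category and that the target is a genuine category. On morphisms, to a generalized Lagrangian correspondence $\ul{L}=(L_{01},\ldots,L_{(r-1)r})$ with admissible brane structures one assigns, as in \cite{Ainfty}, the composite $\Phi(\ul{L}):=\Phi(L_{01})\circ\Phi(L_{12})\circ\cdots\circ\Phi(L_{(r-1)r})$ of the $A_\infty$ functors attached to the individual correspondences; since each factor is defined only up to homotopy, this determines a well-defined homotopy class, which is the morphism we assign. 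With this definition, compatibility with composition in $\Symp_\tau$ is immediate: composition of morphisms in $\Symp_\tau$ is concatenation $\ul{L}\sharp\ul{L}'$, and $\Phi(\ul{L}\sharp\ul{L}')=\Phi(\ul{L})\circ\Phi(\ul{L}')$ literally, before passing to homotopy classes, so no coherence data is needed.

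The first real point is well-definedness of the morphism assignment modulo the composition-equivalence relation \eqref{compequiv}. It suffices to treat a single generating move: suppose $\ul{L}'$ is obtained from $\ul{L}$ by replacing an adjacent pair $(L_{(j-1)j},L_{j(j+1)})$ by a transverse, embedded geometric composition $L_{(j-1)j}\circ L_{j(j+1)}$, with $\ul{L}'$ again monotone with admissible branes; we must show $\Phi(\ul{L})\simeq\Phi(\ul{L}')$. Writing both sides as composites over the unchanged factors with, respectively, $\Phi(L_{(j-1)j})\circ\Phi(L_{j(j+1)})$ and $\Phi(L_{(j-1)j}\circ L_{j(j+1)})$ inserted in the $j$-th slot, and using that composition of homotopy classes of $A_\infty$ functors is well defined, the claim reduces to the assertion that these two insertions are homotopic as $A_\infty$ functors $\Fuk^{\sharp}(N_{j-1},w)\to\Fuk^{\sharp}(N_{j+1},w)$. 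This is precisely Theorem~\ref{composethm}; iterating it over the finitely many moves relating two equivalent representatives completes the step. Here one carries along the disk-invariant bookkeeping of Theorem~\ref{composethm} (automatic, since all disk invariants vanish in our applications) and remembers that the target is specified only up to the canonical identification $\Fuk^{\sharp}(M,w)\cong\Fuk^{\sharp}(M,w+b)$ with $b=w_2(TM)$.

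It remains to check that identities map to identities. The identity of $M$ in $\Symp_\tau$ is the empty sequence, which we send to $\Id_{\Fuk^{\sharp}(M,w)}$ by the convention that an empty composite of $A_\infty$ functors is the identity. Since in $\Symp_\tau$ one also has $\emptyset\sim(\Delta_M)\sim(\Delta_M,\Delta_M)\sim\cdots$, consistency forces $\Phi(\Delta_M)\simeq\Id$; this is the diagonal computation of \cite{Ainfty}, and it is at least compatible with the previous paragraph applied to $\Delta_M\circ\Delta_M=\Delta_M$, which shows $\Phi(\Delta_M)$ is idempotent up to homotopy. The canonical identification $\Fuk^{\sharp}(M,w)\cong\Fuk^{\sharp}(M,w+w_2(TM))$ invoked above is the one coming from the relative spin structure formalism of \cite{orient,fooo}: shifting the background class by $w_2(TM)$ changes the orientations of all relevant moduli spaces coherently and hence induces an isomorphism of $A_\infty$ categories, so nothing above depends on this choice.

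The one genuinely substantial ingredient is Theorem~\ref{composethm} itself --- homotopy invariance of the generalized-correspondence functor under embedded geometric composition, resting on the strip-shrinking analysis of \cite{we:co} together with the $A_\infty$ machinery of \cite{Ainfty} --- which we take as given. Granting it, the point that still requires care is that the homotopy class of $\Phi(\ul{L})$ be genuinely independent of the chosen representative of a $\Symp_\tau$-morphism; this works precisely because we land in homotopy classes of functors rather than functors on the nose, so for each of the finitely many Cerf-type moves only the existence of a homotopy is needed, with no higher coherences to verify.
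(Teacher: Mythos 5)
Your proposal is correct and follows the route the paper intends: the corollary is stated as an immediate consequence of Theorem~\ref{composethm} (together with the diagonal computation and functor constructions of \cite{Ainfty}), and your verification — concatenation goes to composition tautologically, well-definedness modulo the relation \eqref{compequiv} is exactly the homotopy of Theorem~\ref{composethm} applied to each generating move, and identities are handled by $\emptyset\mapsto\Id$ together with $\Phi(\Delta_M)\simeq\Id$ from \cite{Ainfty} — is precisely the formal argument the paper leaves implicit. Your remarks on the disk-invariant bookkeeping and the background-class shift $\Fuk^{\sharp}(M,w)\cong\Fuk^{\sharp}(M,w+w_2(TM))$ are consistent with the paper's conventions.
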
 

\section{Floer field theory for tangles and graphs}
\label{compute}

\subsection{Floer field theory for tangles} 
\label{floer for links}

In this section we combine the results of the previous two sections to
obtain a field theory.  Combining Theorem \ref{extendt} and the
functor of Corollary \ref{categorify} we obtain the following more
precise version of Theorem \ref{main}:

\begin{theorem} \label{extendtc} {\rm (Category-valued field theory
    for tangles)} For any coprime positive integers $r,d$ and positive
  integer $w$, the partially defined functor $\Phi$ from $\Tan(X,r,d)$
  to (small categories, isomorphism classes of functors) that assigns
\begin{itemize} 
\item to any object $(\ul{x},\ul{\mu})$ the Fukaya \ainfty category
  $\GFuk(M(X,\ul{x}),w)$ and
\item to any morphism $[(Y,K,\phi)]$ the homotopy class of the \ainfty
  functor $\Phi(L(Y,K,\phi))$
\end{itemize} 
extend to a field theory for tangles from $\Tan(X,r,d)$ to the
category of (small \ainfty categories, homotopy classes of \ainfty
functors).
\end{theorem}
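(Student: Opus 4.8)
The plan is to realize the asserted field theory as the composite of two functors that are in essence already constructed: the $\Symp_{1/2r}$-valued field theory $\Phi$ of Theorem~\ref{extendt}, and the categorification functor $M\mapsto\GFuk(M,w)$ of Corollary~\ref{categorify}. First I would record the structural inputs that make both of these available. By Proposition~\ref{smooth} the admissibility of the labels (which is built into the definition of $\Tan(X,r,d)$) makes every $M(X,\ul{x})$ a smooth compact symplectic manifold, and by Theorem~\ref{monthm} it is monotone with monotonicity constant $1/2r$; by Corollary~\ref{LYsmooth} each Lagrangian correspondence $L(Y,K,\phi)$ attached to an elementary tangle is smooth. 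Hence Theorem~\ref{extendt} indeed delivers a $\Symp_{1/2r}$-valued field theory sending $(\ul{x},\ul{\mu})$ to $M(X,\ul{x})$ and $[(Y,K,\phi)]$ to the equivalence class of $L(Y,K,\phi)$, the extension from elementary morphisms being governed by the Cerf relations of Theorem~\ref{suffices}.

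Next I would promote each moduli space to a symplectic background: take the background class $b_+(X,\ul{x})$ of Remark~\ref{relspin}, and an $N$-fold Maslov cover of $\Lag(M(X,\ul{x}))$ lifting the oriented double cover --- for instance $N=2$, so that a grading is simply an orientation and the choice of cover is automatic. I would then invoke Lemma~\ref{Kbrane} to equip each elementary correspondence $L(Y,K,\phi)$ with an admissible brane structure: it is simply connected, the fiber-bundle description of Lemma~\ref{Lex} supplies an orientation (hence, with $N=2$, a grading), and Lemma~\ref{Kbrane} produces the unique relative spin structure with the prescribed background class together with the compatibility of these brane structures under geometric composition up to shifts by $w_2(TM)$. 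Since by Remark~\ref{relspin} the classes $b_+$ and $b_-$ are interchanged precisely by such shifts, the generalized Lagrangian correspondence associated to a tangle via any Cerf decomposition carries a brane structure well defined up to the composition-equivalence relation \eqref{compequiv}.

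With these inputs assembled I would apply Corollary~\ref{categorify} with $\tau=1/2r$ and the given $w$: it sends each symplectic background $M$ to $\GFuk(M,w)$ and each generalized Lagrangian correspondence with admissible brane structure to the homotopy class of the \ainfty functor $\Phi(\ul{L})$, and by Theorem~\ref{composethm} this takes a transverse embedded geometric composition to the composite of the corresponding \ainfty functors up to homotopy and up to the identification $\GFuk(M,w)\cong\GFuk(M,w+b(M))$ absorbing a background-class shift; so it descends to composition-equivalence classes in $\Symp_{1/2r}$. The composite $\GFuk(-,w)\circ\Phi$ is then a functor from $\Tan(X,r,d)$ to (small \ainfty categories, homotopy classes of \ainfty functors) assigning $\GFuk(M(X,\ul{x}),w)$ to $(\ul{x},\ul{\mu})$ and the homotopy class of $\Phi(L(Y,K,\phi))$ to $[(Y,K,\phi)]$. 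To close, I would check the four Cerf relations of Theorem~\ref{suffices} for this composite: the trivial tangle maps to the graph of a symplectomorphism by Lemma~\ref{Lex}, hence to the identity functor up to homotopy; critical-point cancellation and critical-point reversal reduce to the equalities of generalized Lagrangian correspondences established in the proof of Theorem~\ref{extendt}, together with homotopy invariance under embedded composition (Theorem~\ref{composethm}); and cylinder gluing follows from composition in $\Symp_{1/2r}$, since composing with the graph of a symplectomorphism is always an embedded composition.

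I expect the genuine work here to be bookkeeping rather than geometry, since the two functors being composed are already established; the delicate point is to thread the brane structures, background classes, and the shifts $w\mapsto w+b(M)$ consistently through the chain of identifications, so that \eqref{compequiv} in $\Symp_{1/2r}$ is really sent to homotopies of \ainfty functors --- that is, so that the categorification functor is well defined on equivalence classes and not merely on representatives. This is exactly what Theorem~\ref{composethm} and the compatibility clause of Lemma~\ref{Kbrane} are designed to supply, so once they are invoked the argument is essentially formal. A secondary point I would confirm is that the elementary correspondences have vanishing disk invariant, which is what keeps $\Phi(L(Y,K,\phi))$ compatible with a single fixed value of $w$; this follows from their explicit monotone, simply-connected, fiber-bundle structure as in Lemma~\ref{Lex}.
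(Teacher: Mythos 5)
Your proposal is correct and follows essentially the same route as the paper: the paper's entire argument for this theorem is the one-line observation that it is obtained by combining the symplectic-valued field theory of Theorem~\ref{extendt} with the categorification functor of Corollary~\ref{categorify}, which is exactly the composite you construct. Your extra bookkeeping (brane structures via Lemma~\ref{Kbrane}, background classes via Remark~\ref{relspin}, disk invariants, and the Cerf relations of Theorem~\ref{suffices}) simply makes explicit what the paper leaves implicit in that combination.
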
 

In order to define group-valued invariants of manifolds containing
links, we apply the following device suggested to us by Seidel.  The
problem is that an empty set of markings is not admissible, since the
moduli space of flat bundles in this case contains reducibles.  We add
markings and components to the tangles in order to obtain well-defined
link invariants.  Let
$$\vert^{r+1} \subset [-1,1] \times S^2$$ 
be the tangle with $r+1$ trivial strands labelled by $\omega_1/2 $.
Now suppose $K$ is a link in $S^3$ with components labelled by the
element $\omega_1/2$.  Let
$$ (Y,\ti{K}) = (S^3,K) {\sharp} ([-1,1] \times S^2,\vert^{r+1})$$
denote the connect sum of $(S^3,K)$ with $([-1,1] \times
S^2,\vert^{r+1})$, as in Figure \ref{addthree} for the case $r = 2$.
Equipped with the identity identifications $\phi$ on the boundary, we
obtain a bordism $(Y,\ti{K},\phi)$ from $r+1$ points to itself.
Denote by $\ul{L}(K) := \ul{L}(Y,\ti{K},\phi)$ the corresponding
generalized Lagrangian correspondence from Theorem \ref{extendt}.

\begin{figure}[h]
\includegraphics[height=1in]{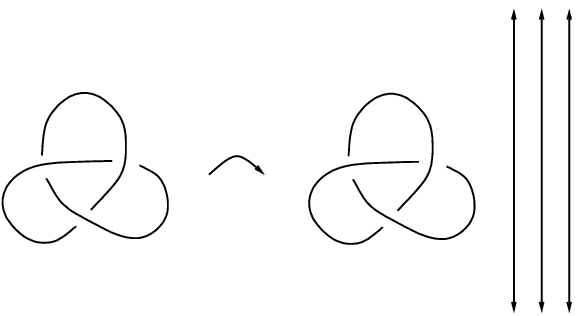}
\caption{Adding three trivial strands}
\label{addthree}
\end{figure}

Knot invariants can now be defined by a device analogous to that used
in Seidel-Smith \cite{ss:li}, in which one obtains a knot invariant by
taking the Floer cohomology of a Lagrangian pair associated to a braid
presentation.  The tangle of the previous paragraph gives rise to a
Lagrangian correspondence $\ul{L}(K)$ from $M(X,\ul{x})$ to itself,
where $X = S^2$ and $\ul{x}$ is a set of order $r+1$.  We claim that
the quilted Floer cohomology of $\ul{L}(K)$ is well-defined.  To see
this, let $\Cap \subset [-1,1] \times X$ denote the tangle from $2n$
markings to $0$ markings that matches the $n-i$-th marking with the
$n+i-1$-th marking, for $i = 1,\ldots, n$.  Let $L(\Cap)$ denote the
corresponding Lagrangian and $L(\Cup)^T$ its transpose, identified
with a fibered coisotropic
$$ L(\Cup) \cong \{ [a_1,\ldots,a_{2n+r+1}] \in M(S^2,\ul{x}), \ a_j
  a_{2n+1-j} = 1,\ j = 1,\ldots, n \} \cong L(\Cup)^T .$$
Since $L(\Cap)$ and $L(\Cup)$ are simply-connected, the disk
invariants $w(L(\Cap)), w(L(\Cup))$ vanish and the Lagrangian Floer
cohomology for these Lagrangians and their images under
symplectomorphisms is well-defined.

\begin{proposition} \label{braidprop}    
Suppose $K$ is a knot in $[-1,1] \times X$ given as the braid closure
of a braid $\beta$ in the spherical braid group $B_{n}$, obtained by
composing the braid element $\beta \times 1_n \in B_{2n}$ with the cup
and cap above.  Then the total disk invariant of $L(K)$ vanishes and
there is an isomorphism of Floer cohomology groups
$$HF(K) := HF(L(K)) := HF(L(\Cup),(\beta \times 1_n) L(\Cap)) .$$
\end{proposition}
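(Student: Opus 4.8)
The plan is to present $K$ as a plat (braid) closure of $\beta\times 1_n$, read off the Cerf decomposition of the resulting cylindrical tangle, and then collapse the middle mapping cylinder into the cap by geometric compositions, using the invariance results of Section~\ref{derived fh}. First I would fix a Morse function on the cylindrical tangle $(Y,K,\phi)=([-1,1]\times S^2,K,\phi)$ realizing this presentation: its Cerf decomposition consists of $n$ elementary cups raising the number of markings from $r+1$ to $2n+r+1$, followed by the mapping cylinder of $\beta\times 1_n$ (which carries no critical points on the tangle), followed by $n$ elementary caps lowering the count back to $r+1$. Since the connect-sum bordism defining $\ul L(K)$ is equivalent as a tangle to this plat presentation, well-definedness of the partial functor (Theorems~\ref{extendt} and~\ref{extendtc}) identifies $\ul L(K)$, up to the composition-equivalence relation, with the concatenation
$$ \ul L(K)\;\sim\;L(\Cup)\,{\sharp}\,\on{graph}(\psi)\,{\sharp}\,L(\Cap), $$
where $\psi$ is the symplectomorphism of $M(S^2,\ul x')$ induced by $\beta\times 1_n$ ($\ul x'$ a marking of order $2n+r+1$): the middle term is a graph by Lemma~\ref{Lex}(a) and Remark~\ref{sbraid}, each elementary cup/cap contributes a fibered coisotropic by Lemma~\ref{Lex}(b), and their composites are $L(\Cup)$ resp.\ $L(\Cap)$ by Lemma~\ref{Lex}(c).

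For the first assertion, every Lagrangian occurring in this decomposition is simply connected: $L(\Cup)$ and $L(\Cap)$ by Lemma~\ref{Kbrane}, and $\on{graph}(\psi)$ because it is diffeomorphic to $M(S^2,\ul x')$, which is simply connected (moduli of parabolic bundles). Hence each $w(L_{j(j+1)})$ vanishes, and therefore $w(\ul L(K))=\sum_j w(L_{j(j+1)})=0$. In particular $HF(\ul L(K))$ is well-defined with $\partial^2=0$ by Theorem~\ref{thm:d2}, and since composition equivalence preserves the total disk invariant (Theorem~\ref{derived main}) the value $0$ is independent of the chosen presentation.

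For the isomorphism I would apply Theorem~\ref{derived main} repeatedly to perform the embedded geometric compositions inside this sequence: fuse the $n$ elementary cups into the single coisotropic $L(\Cup)$ and the $n$ elementary caps into $L(\Cap)$ (the required transversality being a routine inspection of tangent spaces as in the proof of Theorem~\ref{extendt}, with smoothness furnished by Lemma~\ref{Lex}(c)), and then absorb $\on{graph}(\psi)$ into the cap, noting that composition with the graph of a symplectomorphism is automatically transverse and embedded and yields the image $\psi_*L(\Cap)$. At each stage the modified sequence is again, up to diffeomorphism, a moduli space of flat bundles with labels in $\B$, hence smooth by Corollary~\ref{LYsmooth} and monotone with the common constant $\tau=1/2r$ by Theorem~\ref{monthm}, so the hypotheses of Theorem~\ref{derived main} are met; and since all disk invariants vanish, the Floer cohomology is unchanged at every step. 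One arrives at the two-term cyclic correspondence $(L(\Cup),(\beta\times 1_n)L(\Cap))$, whose quilted Floer cohomology is by definition $HF(K)$, so that $HF(L(K))\cong HF(L(\Cup),(\beta\times 1_n)L(\Cap))=HF(K)$, with the direction of the braid and the transpose of $L(\Cup)$ matched up according to the conventions of the preceding paragraph.

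The hard part will be the bookkeeping around the hypotheses of Theorem~\ref{derived main}: one must check that every collapsed composition is not merely transverse but genuinely embedded, and that every intermediate sequence is monotone. Both follow in principle from the coisotropic-fibration descriptions of Lemma~\ref{Lex} together with the monotonicity of moduli of flat bundles with admissible monotone labels (Proposition~\ref{smooth}, Theorem~\ref{monthm}), but the embeddedness of the cup-into-cup and graph-into-cap compositions is where the geometric content lies; a secondary subtlety is confirming that the connect-sum definition of $\ul L(K)$ indeed agrees, as a tangle, with the plat presentation used here.
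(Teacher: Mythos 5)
Your overall route --- the plat presentation (cups, then the mapping cylinder of $\beta\times 1_n$, then caps), fusing the pieces by embedded geometric composition via Lemma \ref{Lex}~\eqref{qcups} and Theorems \ref{composethm}/\ref{derived main}, and landing on the pair $(L(\Cup),(\beta\times 1_n)L(\Cap))$ --- is the same as the paper's. The genuine gap is your argument that the total disk invariant vanishes: you assert that each Lagrangian in the sequence is simply connected and that simple connectivity forces $w(L_{j(j+1)})=0$. No result in the paper gives this implication, and the index argument behind it only shows that disks with boundary on a simply connected Lagrangian have Maslov index in $2c_1(\pi_2(M))\,\Z$; this excludes Maslov-two disks only when the ambient minimal Chern number is at least two, whereas the moduli spaces occurring here can have minimal Chern number one (e.g.\ $M_5(1/4)$ is a del Pezzo surface with primitive first Chern class). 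The weakest link is the middle piece: for $\on{graph}(\psi)$ (or the diagonal) the Maslov-two disks correspond to Chern-number-one spheres, and nothing in the paper asserts that this count vanishes; the paper is careful never to claim $w(\on{graph}(\psi))=0$.

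The paper's proof sidesteps individual vanishing altogether: after composing down to the two-term cyclic correspondence it invokes Remark \ref{cancel}, which gives $w(\ul{L})=w(L(\Cup))-w((\beta\times 1_n)L(\Cap))=0$, because $L(\Cap)=L(\Cup)^T$, disk invariants are unchanged under symplectomorphisms, and the relative sign comes from the $M^-$ factor (the antiholomorphic reparametrization reverses orientations on the Maslov-two moduli space). So the vanishing is a cancellation for the pair, not a piece-by-piece vanishing; if you replace your ``simply connected $\Rightarrow w=0$'' step by this cancellation (or supply an actual minimal-Maslov/Chern bound, which you have not), the rest of your collapsing argument goes through exactly as in the paper. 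A minor further point: simple connectivity and the brane data for the fused $L(\Cup)$, $L(\Cap)$ come from the fibered-coisotropic description in Lemma \ref{Lex}~\eqref{qcups} (simply connected base and fibers), not literally from Lemma \ref{Kbrane}, which treats a single critical point.
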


\begin{proof} 
To show that the disk invariant vanishes, it suffices by Theorem
\ref{derived main} to find a Cerf presentation so that the sum of the
disk invariants in the pieces vanish.  Choose a cylindrical Cerf
decomposition of $(S^3,K) {\sharp} ([-1,1] \times S^2,\vert^{r+1})$
given by a Morse function with the property that all index $0$
critical points have smaller values than the index $1$ critical
points:
$$ ( I(y_0) < I(y_1) )  \implies ( f(y_0) < f(y_1) ), \quad \forall y_0, y_1 \in \Crit(f_K) .$$
The composition of the corresponding Lagrangian correspondences is
smooth and embedded by Lemma \ref{Lex} \eqref{qcups}.  So we can use
it to compute the disk invariant and Floer cohomology by Theorem
\ref{composethm}.  By Remark \ref{cancel}, the disk invariants cancel,
so the Floer cohomology is well-defined.
\end{proof}

\begin{example}
\label{unknot} {\rm (Sphere-summed Floer homology of the unknot)}  
Take the cylindrical Cerf decomposition of the unknot $\bigcirc$
consisting of a cup $\cup$ and cap $\cap$, so that
$$ HF(\bigcirc) = HF( L(\cup), L(\cap)) $$
where 
$$ L(\cup) = \{ [g_1,\ldots,g_{r+3}] \in M(S^2, \{ x_1,\ldots,x_{r+3}
\}), \ g_1g_2 = 1 \}.$$
and $L(\cap)= L(\cup)^T$.  The map from 
$$M(S^2, \{ x_1,\ldots,x_{r+3} \}) \to M(S^2, \{ x_3,\ldots,x_{r+3}
\})$$
forgetting $g_{1},g_{2}$ is a fiber bundle with fiber the conjugacy
class $\cC$ labelling the $1$-st and $2$-nd strands.  The conjugacy
class $\cC$ is diffeomorphic to a partial flag variety, as in
\eqref{quotient}.  Now $\cC$ admits a Morse function with only even
indices. For example, $\cC$ admits the structure of a Hamiltonian
$SU(r)$ manifold with only finitely many torus fixed points, and such
a Morse function is given for example by a generic component of a
moment map. By Pozniak \cite{po:fl} the Floer cohomology is isomorphic
to the Morse cohomology:
$$ HF(\bigcirc) = HF(L,L) = H(\cC,\Z).  $$
For example, if the label is $\omega_1/2$ then $\cC \cong \C P^{r-1}$
and $HF(\bigcirc) = H(\C P^{r-1},\Z) .$
\end{example}

\noindent Kronheimer-Mrowka \cite{km:kh} investigate the similarity
with Khovanov-Rozansky homology \cite{kr:ma} in greater detail, in the
setting of instanton knot homology.

\subsection{Application to symplectic mapping classes} 
\label{nontriv}
\label{sec:app}

In this section we give an application of the functors described above
to the symplectic topology of representation varieties.  Recall from
Remark \ref{sbraid} that orientation-preserving diffeomorphisms of a
compact, oriented surface induce symplectomorphisms of the moduli
spaces of flat bundles.  In this section we study the case of marked
spheres and show that certain of these symplectomorphisms are
non-trivial in the symplectic mapping class group.

We introduce the following notation.  For $\mu \in \Alc$ let
$M_n(\mu)$ be the moduli space of flat bundles on the sphere $X$ with
a set of markings $\ul{x}$ of order $n$, $G = SU(2)$, and all labels
equal to the label $\mu$.  Recall that any smooth projective
complex-algebraic Fano surface is isomorphic to one of the {\em del
  Pezzo surfaces} $D_b$, obtained by blowing up $\P^2$ at $b <9$
generic points.

\begin{proposition}  {\rm (Identification of the first non-trivial moduli space as a del Pezzo)} For $\mu \in \Alc \cong [0,1/2]$ the moduli
  space $M_{5}(\mu) $ is diffeomorphic to the smooth manifold
  underlying
\begin{enumerate} 
\item {\rm (First Chamber)} the del Pezzo $D_4$ for $\mu \in (0,
  \ff)$;
\item {\rm (Second Chamber)} the del Pezzo $D_5$, for $\mu \in
  (\ff,\frac{2}{5})$; and
\item {\rm (Third Chamber)} the empty manifold, for $\mu \in (
  \frac{2}{5},1/2]$.
\end{enumerate} 
For $\mu = 0,1/5,2/5$ the moduli space contains reducibles.
\end{proposition}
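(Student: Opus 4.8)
The plan is to set the computation up as a variation of geometric invariant theory (GIT) across the walls in $\mu$, using the Mehta--Seshadri correspondence to locate the walls and the geometry of polygon spaces to anchor the first chamber.

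First I would record, via Mehta--Seshadri \cite{ms:pb}, that $M_5(\mu)$ is homeomorphic to the moduli space $\mathcal N(\mu)$ of semistable rank-two parabolic bundles on $(\P^1;p_1,\dots,p_5)$ whose parabolic weights are determined by $\mu$ and whose determinant is the fixed line bundle of the parity making the five boundary holonomies multiply to the identity; on the locus where $\mathcal N(\mu)$ is smooth this may be upgraded to a diffeomorphism. As $\mu$ varies, $\mathcal N(\mu)$ is a projective GIT quotient, constant on the interior of each chamber of the space of admissible weights and smooth on a chamber exactly when no strictly semistable parabolic bundle occurs there; by Mehta--Seshadri again, the latter locus is the set of $\mu$ carrying a reducible flat $SU(2)$-connection. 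A reducible representation has all five puncture holonomies in a common maximal torus $T$, hence each equal to one of the two points $t_\mu^{\pm1}$ of $\cC_\mu\cap T$ with $t_\mu=\diag(e^{2\pi i\mu},e^{-2\pi i\mu})$, and the relation $\prod_i t_\mu^{\epsilon_i}=\mathrm{Id}$ forces $t_\mu^{\,m}=\mathrm{Id}$ with $m=\sum_i\epsilon_i\in\{\pm1,\pm3,\pm5\}$; thus the candidate walls in $(0,1/2)$ are $\mu\in\{1/5,1/3,2/5\}$. I would then check that $\mu=1/3$ is not a genuine wall --- the semistable object arising there has a balanced (symmetric) reduced deformation space, so the GIT quotients on its two sides are canonically identified --- leaving precisely the three chambers $(0,1/5)$, $(1/5,2/5)$, $(2/5,1/2]$ of the statement, with reducibles exactly at $\mu\in\{0,1/5,2/5\}$.

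For $\mu$ in the first chamber the weights are small and the group-valued symplectic quotient degenerates to the linear symplectic quotient defining the space of closed equilateral spatial pentagons; the topology is constant on the chamber, so $M_5(\mu)$ is diffeomorphic to this pentagon space, which (being free of collinear configurations since $5$ is odd, hence smooth) is classically the degree-five del Pezzo surface, namely $\P^2$ blown up at four general points, i.e. $D_4$. Crossing the wall $\mu=1/5$, the strictly semistable locus is up to isomorphism a single direct sum of two parabolic line bundles of equal parabolic slope, with stabilizer $\C^*$; computing its two parabolic extension groups one finds dimensions $1$ and $2$, so the local VGIT model is $\C^3/\!\!/\C^*$ with weights $(1,1,-1)$, i.e. the transition is a blow-up at one reduced point. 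Hence $\mathcal N(\mu)\cong\mathrm{Bl}_{\mathrm{pt}}D_4\cong D_5$ for $\mu\in(1/5,2/5)$. Finally, for $\mu>2/5$ I would argue directly that $M_5(\mu)=\emptyset$: writing each holonomy $b_i=-c_i$ with $c_i\in\cC_{1/2-\mu}$, so that $c_i$ lies within geodesic distance $\pi(1-2\mu)$ of $\mathrm{Id}$ in $SU(2)\cong S^3$, the bi-invariant triangle inequality gives $d(\mathrm{Id},c_1\cdots c_5)\le 5\pi(1-2\mu)<\pi=d(\mathrm{Id},-\mathrm{Id})$, so $b_1\cdots b_5=-c_1\cdots c_5$ can never equal $\mathrm{Id}$.

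The step I expect to be the main obstacle is the behaviour at the wall $\mu=1/5$: one must verify that the two parabolic extension groups of the unique strictly semistable bundle there really have dimensions $1$ and $2$ --- so that the VGIT transition is an honest blow-up at a single reduced point, producing the degree-four del Pezzo $D_5$ rather than some other rational surface --- and, hand in hand with this, one must match the linearization of the VGIT problem with the normalization under which the first chamber is the standard equilateral pentagon space $D_4$. By comparison, the emptiness statement beyond $\mu=2/5$ is soft, and excluding $\mu=1/3$ as a wall reduces to the observation that the semistable object there has balanced deformations.
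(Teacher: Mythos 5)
Your wall-finding step is actually more careful than the paper's proof, which only exhibits the reducibles with all five holonomies equal and then quotes Boden--Hu; your enumeration correctly flags $\mu=1/3$ as a candidate wall, since e.g.\ the holonomies $(t,t,t,t,t^{-1})$ with $t=\diag(e^{2\pi i/3},e^{-2\pi i/3})$ give a reducible point of $M_5(1/3)$. The genuine gap is the step where you dismiss this wall. The strictly semistable objects at $\mu=1/3$ are the five direct sums $L_1\oplus L_2$ with $L_1\cong\mathcal{O}(-1)$ passing through the parabolic flags at four of the five points and $L_2\cong\mathcal{O}(1)$ through the remaining one; the two parabolic extension groups are $H^1(\P^1,\mathcal{O}(-3))\cong\C^2$ and $H^1(\P^1,\mathcal{O}(-2))\cong\C$. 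So the reduced deformation space is \emph{not} balanced: the local VGIT model has weight-space dimensions $(2,1)$, exactly as at $\mu=1/5$, and the crossing replaces an exceptional curve by a point at each of the five semistable classes rather than inducing a canonical identification of the two sides. Your claim that ``the GIT quotients on its two sides are canonically identified'' is therefore false, and with it the conclusion that $M_5(\mu)\cong D_5$ on all of $(1/5,2/5)$.

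Moreover the failure is not repairable within your scheme: applying the same computation at the $2/5$ wall (extension groups $H^1(\mathcal{O}(-4))\cong\C^3$ and $H^1(\mathcal{O}(-1))=0$) shows that for $\mu$ just below $2/5$ every stable bundle is a nonsplit extension of $\mathcal{O}(2)$ (through no flags) by $\mathcal{O}(-2)$ (through all flags), so the moduli space there is $\P(\C^3)=\P^2$, with $b_2=1$; it cannot be diffeomorphic to $D_5$, so the diffeomorphism type is not constant on $(1/5,2/5)$ and the VGIT route forces the second chamber to stop at $1/3$, with a further chamber $(1/3,2/5)$ on which the space is $\P^2$. For comparison, the paper's proof proceeds quite differently -- Fano-ness plus Poincar\'e-polynomial computations (Kirwan's formula for $\mu<1/5$, the Atiyah--Bott/Nitsure/Street formula at the monotone value $\mu=1/4$, which lies in $(1/5,1/3)$) to pin down the del Pezzo type, and the same antipodal triangle-inequality argument you give for emptiness -- and it never confronts $\mu=1/3$ because it only lists the ``all holonomies equal'' reducibles. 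Your first-chamber identification (equilateral pentagon space, i.e.\ the symmetric GIT quotient of $(\P^1)^5$) and your blow-up analysis at $\mu=1/5$ are fine, but as written your argument for the interval $(1/3,2/5)$ does not go through, and carrying out your own proposed extension-group check would reveal this.
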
  

\begin{proof} 
By Boden-Hu \cite[Lemma 2.7]{boden:var}, the set of holonomies has a chamber
structure, so that within each chamber the diffeomorphism type of the
moduli space is constant.  To determine the chamber structure, it
suffices to find the moduli spaces $M_5(\mu)$ containing reducibles.
These are $M_5(\mu)$ with $\mu = 0,1/5,2/5$ corresponding to elements
$$g_1 = \ldots = g_5 = \diag(\exp (\pm 2\pi i \mu)), \quad g_1 \ldots
g_5 = 1 .$$

We first show that the moduli spaces are all Fano surface, that is,
have positive first Chern classes.  The moduli space $M_5(\mu)$ is
Fano in the first chamber $\mu < 1/5$, since $M_5(\mu)$ is a quotient
of $(S^2)^5$ by the diagonal action.  Indeed as explained in
\cite{ag:ei} bundles that are Mehta-Seshadri semistable \cite{ms:pb}
for weights in this range must have underlying bundle semistable, and
so trivial.  It follows that $M_5(\mu)$ is simply a git quotient of a
product of projective lines.  The moduli space $M_5(\mu)$ is also Fano
in the second chamber $\mu\in (1/5,2/5)$ by Theorem \ref{monthm} since
for $\mu = 1/4$ it is monotone.

To identify which del Pezzo surfaces appear as moduli spaces of flat
bundles it suffices to determine the second Betti number.  The Betti
numbers of $M_5(\mu)$ in the first chamber $\mu < 1/5$ can be computed
by the method of Kirwan \cite{ki:coh}.  In this case the moduli space
is the geometric invariant theory quotient of $(\P^1)^5$ by the
diagonal action of $SL(2,\C)$:
$$ M_5(\mu)  = (\P^1)^5 \qu SL(2,\C) , \quad \mu < 1/5  .$$
Indeed in this chamber we adopt the Mehta-Seshadri description
\cite{ms:pb} and note that the underlying bundle is automatically
stable, hence trivial since the curve is genus zero.  For $n \ge 1$
let
$$P_n(\mu,t) = \sum_{j = 0}^\infty \rank(H^j(M_n(\mu))) t^j $$
denote the Poincar\'e polynomial of $M_n(\mu), \mu < 1/n$.  By
\cite[p.193]{ki:coh}
$$ P_n(\mu,t) = (1 + t^2)^n ( 1 - t^4)^{-1} - 
\sum_{\frac{n}{2} < r \leq n} \left( \begin{array}{c} n \\ r
\end{array} \right)
t^{2(r-1)} (1-t^2)^{-1} .$$
In particular, $ P_5(\mu,t) = 1 + 5t^2 + t^4 \ \text{if} \ \mu < 1/5 $
which identifies the moduli space in the first chamber as the blow-up
of the plane at four points.

The Poincar\'e polynomial for the second chamber can be computed by
two techniques: the original approach of Atiyah-Bott \cite{at:mo},
extended to the parabolic case by Nitsure \cite{ni:co}, and the
recursive approach of Thaddeus \cite{th:pe}.  In the special case $\mu
= {1/4}$, the Atiyah-Bott approach gives
$$ P_n(\mu,t) = (1 + t^2)^n(1 - t^2)^{-1} (1- t^4)^{-1}  
- 2^{n-1} t^{n-1} (1-t^2)^{-2} $$
where the first term is the contribution from the equivariant
cohomology of the affine space of connections and the second is the
contribution from the unstable strata corresponding to abelian
orbifold connections, c.f. Street \cite[Theorem 3.8]{st:bu}.  Hence
$$ P_5(\mu,t) = 1 + 6t^2 + t^4 \ \text{if} \ 1/5 < \mu < 2/5 $$
which identifies the moduli space in the second chamber with the blow-up of the plane 
at five points. 

To see that the moduli space is empty in the third chamber, we use the
identification of the moduli space as the moduli space of piecewise
geodesics on the three-sphere.  Any solution $g_1 g_2 g_3 g_4 g_5 = 1$
with each $g_i$ having eigenvalues $\exp(\pm 2\pi i \mu)$ gives rise
to a geodesic pentagon in $SU(2) \cong S^3$ with edge lengths $2 \pi
\mu$.  Replacing each $g_i$ with $-g_i$ gives rise to a non-closed
geodesic $5$-gon with edge lengths $2\pi \mu - \pi$ connecting
antipodes in $S^3$.  For $\mu > 2/5$, these edge lengths are less than
$\pi/5$ and so cannot connect antipodal points.  The latter have
distance $\pi$, which is greater than the sum of the edge lengths.
This contradicts the triangle inequality for the spherical metric, 
no solutions exist. 
\end{proof}  

\noindent Note that the moduli spaces in the first chamber are all
monotone; in general one expects only monotonicity for discrete values
of the holonomy parameters by Theorem \ref{monthm}.

We now study the squares of Dehn twists in moduli spaces of flat
bundles with fixed holonomies, following \cite[Section 3]{wo:ex}.  Let
$\sigma_{ij}^{(n)} \in \Diff_+(X)$ be the half-twist exchanging
markings $x_i$ and $x_j$ in Remark \ref{sbraid}.  As long as the
labels $\mu_i$ and $\mu_j$ are equal, the diffeomorphism
$\sigma_{ij}^{(n)}$ induces a symplectomorphism of $M(X,\ul{\mu})$ by
pull-back of representations of the fundamental group under
$(\sigma_{ij}^{(n)})^{-1}$.

\begin{theorem} \label{sq5}
 {\rm (Graph of the square of a Dehn twist is not the diagonal up to
   isomorphism)} Let $\sigma_{12}^{(5)}$ be the half twist around the
 first two markings in the spherical braid group $B_5$, and
 $(\sigma_{12}^{(5)} )^2$ its square.  Let
$$\Gamma((\sigma_{12}^{(5)})^2) \subset M_5(\qq)^- \times
M_5(\qq), \quad \Delta_5 \subset M_5(\qq)^- \times M_5(\qq)$$
be the graph of the action of $\sigma_{12}^{(5)}$ on the moduli space
of bundles $M_5(\qq)$ resp.  be the diagonal.  Then 
$$\Delta_5 \ncong \Gamma((\sigma_{12}^{(5)})^2) \in \Fuk^{\sharp}(M_5(\qq),M_5(\qq)) $$
where $\cong$ denotes quasiisomorphism. 
\end{theorem}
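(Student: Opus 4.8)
The plan is to deduce this from Seidel's Floer‑cohomology computation via the field‑theory functor just constructed. First I would pass from the symplectomorphism to the correspondence: viewing the half twist $\sigma_{12}^{(5)}$ as a cylindrical tangle in $[-1,1]\times S^2$ whose strands are monotone for the height function, one has $\Crit(f)=\Crit(f|_K)=\emptyset$, so by Lemma~\ref{Lex}(a) the associated Lagrangian correspondence is the graph of the induced symplectomorphism of $M_5(\qq)$, which by Remark~\ref{sbraid} is $\sigma_{12}^{(5)}$ itself; composing twice, $\Gamma((\sigma_{12}^{(5)})^2)$ is the graph of $\psi:=(\sigma_{12}^{(5)})^2$. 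Suppose, for contradiction, that $\Gamma(\psi)\cong\Delta_5$ in $\Fuk^{\sharp}(M_5(\qq),M_5(\qq))$. Then for any admissible Lagrangian brane $L\subset M_5(\qq)$, quilted Floer cohomology (being a quasi‑isomorphism invariant) gives $HF\bigl((L,\Gamma(\psi),L^{T})\bigr)\cong HF\bigl((L,\Delta_5,L^{T})\bigr)=HF(L,L)=H^{*}(L)$; and since the composition $\Gamma(\psi)\circ L^{T}\cong \psi(L)$ is transverse and embedded, Theorem~\ref{derived main} identifies the left‑hand side with $HF(\psi(L),L)$. Hence it suffices to exhibit one admissible Lagrangian brane $L$ with $HF(\psi(L),L)\ncong H^{*}(L)$ as graded groups.

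Next I would identify $\psi$ with the square of a generalized Dehn twist along a Lagrangian $2$‑sphere in the moduli space. By the chamber identification established above, $M_5(\qq)\cong D_5$ is a monotone symplectic four‑manifold. Let $C\subset S^2$ be a simple closed curve separating $\{x_1,x_2\}$ from $\{x_3,x_4,x_5\}$; the conjugacy class of the holonomy around $C$ defines a map $p\colon M_5(\qq)\to[0,1/2]$ which over the interior is the bending (Goldman) fibration with fiber the $2$‑sphere $M_4(\qq,\qq,\qq,\nu)$, the pair‑of‑pants side carrying $x_1,x_2$ contributing only a point. The half twist $\sigma_{12}^{(5)}$ preserves $p$ and acts on fibers through the bending flow, and near a wall at which the fiber develops a node it is modeled on the Dehn twist $\tau_S$ along the vanishing Lagrangian sphere $S\subset M_5(\qq)$. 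Following \cite[Section~3]{wo:ex} one checks that $\sigma_{12}^{(5)}$ is Hamiltonian isotopic to $\tau_S$, so $\psi=\tau_S^2$; this reconfirms the smooth triviality asserted in Theorem~\ref{braid}, since $\tau_S^2$ is smoothly isotopic to the identity in real dimension four. One equips $S$ and the other Lagrangian spheres of $M_5(\qq)$ with the brane structures of Lemma~\ref{Kbrane}, which are monotone by Theorem~\ref{monthm}.

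Finally I would invoke Seidel's computation \cite{se:le} for the five‑punctured two‑sphere with equal labels. Choosing a second Lagrangian sphere $S'\subset M_5(\qq)$ with $[S]\cdot[S']\neq 0$ and applying the Dehn‑twist exact triangle
$$ HF(S,L')\otimes HF(L,S)\ \longrightarrow\ HF(L,L')\ \longrightarrow\ HF(\tau_S(L),L')\ \longrightarrow $$
twice, beginning with $L=L'=S'$ and using $\tau_S(S)=S$, Seidel's computation shows that $HF(\tau_S^2(S'),S')$ is not isomorphic to $H^{*}(S^2)=HF(S',S')$. Therefore $\psi(S')=\tau_S^2(S')\ncong S'$ in $\Fuk^{\sharp}(M_5(\qq))$, contradicting the assumption; hence $\Delta_5\ncong\Gamma((\sigma_{12}^{(5)})^2)$ in $\Fuk^{\sharp}(M_5(\qq),M_5(\qq))$, as claimed.

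The main obstacle is the second step: rigorously identifying the surface half twist, acting on the singular bending fibration $p$ of $M_5(\qq)$, with a generalized Dehn twist along a Lagrangian $2$‑sphere — i.e.\ pinning down the local model of $p$ at its walls and matching the bending‑flow monodromy with the Picard--Lefschetz monodromy. A secondary but necessary point is the monotonicity and disk‑invariant bookkeeping (via Lemma~\ref{Kbrane}, Theorem~\ref{monthm}, and Remark~\ref{cancel}): one must check that the total disk invariants vanish along the compositions above, so that all the Floer groups and exact triangles invoked are in fact defined for the brane structures at hand.
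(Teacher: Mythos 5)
Your opening reduction (a quasiisomorphism $\Gamma(\psi)\cong\Delta_5$ would force $\psi(L)\cong L$, hence $HF(\psi(L),L)\cong HF(L,L)$, for every admissible brane $L$) is reasonable, but the argument then hinges on a computation that you attribute to \cite{se:le} and which is not what Seidel proves there, and your sketch does not supply it. Seidel's Example 1.13 distinguishes the twist from its inverse by means of the Floer cohomology of the graph (i.e.\ the analogue of fixed-point Floer cohomology) together with its module structure over quantum cohomology; it does not contain the Lagrangian-intersection computation $HF(\tau_S^2(S'),S')\ncong H^*(S^2)$ for an auxiliary sphere $S'$. Your proposed route to that computation, two applications of a Dehn-twist exact triangle, does not close the gap on its own: with $HF(S,S')$ of rank one, the first triangle gives $HF(\tau_S S',S')$ of rank $1$ or $3$, and the second then gives $HF(\tau_S^2 S',S')$ of rank $0$ or $2$; in the latter (geometrically expected) case the rank coincides with that of $H^*(S^2)$, so the distinction must come from connecting maps, gradings or module structure — and in $M_5(\qq)\cong D_5$ the minimal Chern number is one, so only a $\Z_2$-grading survives, making exactly this the nontrivial part. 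In addition, the exact triangle for Dehn twists in this closed monotone (non-exact) setting is itself a substantial input — it is the subject of \cite{wo:ex}, not something you may quote as standard — and even $HF(S',S')\cong H^*(S^2)$ needs justification when the minimal Maslov number is two. Finally, your second step (that the full twist is Hamiltonian isotopic to $\tau_S^2$ for a Lagrangian sphere $S$), which you flag yourself, is left unproven.

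The paper's actual proof sidesteps all of this: assuming a quasiisomorphism $f\in\Hom(\Delta_5,\Gamma((\sigma_{12}^{(5)})^2))$, it composes with $\Gamma((\sigma_{12}^{(5)})^{-1})$ to produce a quasiisomorphism $\Gamma((\sigma_{12}^{(5)})^{-1})\to\Gamma(\sigma_{12}^{(5)})$, observes that any such map is automatically compatible with the module structure over $H(\Hom(\Delta_5,\Delta_5))=QH(M_5(\qq))$, and notes that this is precisely the situation excluded by Seidel's argument in \cite[Example 1.13]{se:le}. In other words, the paper reduces the statement to the exact comparison (twist versus inverse twist, as quantum-cohomology modules) that Seidel actually carries out, rather than to a new Lagrangian Floer computation. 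If you wish to keep your test-object strategy, you would need to prove the sphere computation independently (monotone exact triangle plus grading/connecting-map analysis); the more economical fix is to test against the graphs of the half twist and its inverse, where Seidel's computation applies verbatim.
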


\begin{proof}
This is essentially a result of Seidel \cite[Example 1.13]{se:le}.
Let $\Gamma(\sigma_{12}^{(5)})$ denote the Lagrangian associated to
the half-twist, and $\Gamma((\sigma_{12}^{(5)})^{-1})$ the Lagrangian
associated to the half-twist inverse.  If $f \in
\Hom(\Delta_5,\Gamma((\sigma_{12}^{(5)})^2))$ were a quasiisomorphism,
the composition with $\Gamma((\sigma_{12}^{(5)})^{-1})$ would induce a
quasiisomorphism
$$ \Gamma((\sigma_{12}^{(5)})^{-1}) \cong \Delta_5 \circ
\Gamma((\sigma_{12}^{(5)})^{-1}) \to \Gamma((\sigma_{12}^{(5)})^2)
\circ \Gamma((\sigma_{12}^{(5)})^{-1}) \cong \Gamma(\sigma_{12}^{(5)})
.$$
Any such quasiisomorphism is automatically compatible with the module
structure over $H(\Hom(\Delta_5,\Delta_5)) = QH(M_5(\qq))$.  The rest
of Seidel's argument is the same.
\end{proof}  

 \begin{theorem} \label{nontrivthm} {\rm (Non-triviality of squares of Dehn twists)} For $n \ge 1$ let
$$(\sigma_{12}^{(2n+3)})^2 \in
\on{Diff}(M_{2n+3}(\qq))$$ 
denote the symplectomorphism associated to the square of the
half-twist of strands $1,2$, 
$$\Gamma((\sigma_{12}^{(2n+3)})^2) \subset
M_{2n+3}(\qq)^- \times M_{2n+3}(\qq)$$ 
the corresponding Lagrangian, and $\Delta_{2n+3} \subset
M_{2n+3}(\qq)^- \times M_{2n+3}(\qq)$ the diagonal.  Then
$$\Gamma((\sigma_{12}^{(2n+3)})^2) \ncong \Delta_{2n+3} \in
\Fuk^{\sharp}(M_{2n+3}(\qq), M_{2n+3}(\qq)) $$
where $\cong$ denotes quasiisomorphism.
\end{theorem}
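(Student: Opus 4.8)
The plan is to reduce to the case $n=1$ of Theorem~\ref{sq5}, which is Seidel's computation, by capping off all but five of the markings and using that the half-twist of $x_1,x_2$ does not interact with the remaining strands.

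First I would set up the reduction and record the relevant tangle identities. Take $X=S^2$, group the markings $x_6,\dots,x_{2n+3}$ into the $n-1$ pairs $(x_6,x_7),\dots,(x_{2n+2},x_{2n+3})$, and let $\Cap_n\in\Tan(X,2,1)$ be the elementary cylindrical tangle from the marking of order $2n+3$ to its sub-marking of order $5$ whose tangle $K$ consists of $n-1$ caps joining these pairs, together with five trivial strands; all critical points of $K$ then have the same index. By Lemma~\ref{Lex}\,\eqref{qcups} the Lagrangian $L(\Cap_n)\subset M_{2n+3}(\qq)^-\times M_5(\qq)$ is a fibered coisotropic, embedding into $M_5(\qq)$ and fibering over $M_{2n+3}(\qq)$ with fibre $\cC_\qq^{\,n-1}\cong(\C P^1)^{n-1}$; arguing as in Proposition~\ref{braidprop} and Lemma~\ref{Kbrane} it is compact, simply connected, monotone, carries a canonical relative spin structure, and has vanishing disk invariant. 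Write $\Cup_n=\Cap_n^T$ and $L(\Cup_n)=L(\Cap_n)^T$. In $\Tan(X,2,1)$ the half-twist of $x_1,x_2$ can be isotoped past the births and deaths of the strands $x_6,\dots,x_{2n+3}$, so
$$[\Cup_n]\circ[(\sigma_{12}^{(2n+3)})^2]\circ[\Cap_n]=[(\sigma_{12}^{(5)})^2],\qquad [\Cup_n]\circ[\mathrm{id}_{2n+3}]\circ[\Cap_n]=[\mathrm{id}_5],$$
the second being a critical point cancellation, and likewise with $(\sigma_{12})^2$ replaced by $(\sigma_{12})^{\pm1}$. Applying the symplectic-valued field theory $\Phi\colon\Tan(X,2,1)\to\Symp_{1/2r}$ of Theorem~\ref{extendt} turns these into identities of morphisms in $\Symp_{1/2r}$: the generalized correspondence obtained by concatenating $L(\Cup_n)$, $\Gamma((\sigma_{12}^{(2n+3)})^2)$, $L(\Cap_n)$ and the single correspondence $\Gamma((\sigma_{12}^{(5)})^2)$ are connected by a chain of transverse embedded geometric compositions, and similarly with $\Delta_{2n+3}$ in place of $\Gamma((\sigma_{12}^{(2n+3)})^2)$ and $\Delta_5$ in place of $\Gamma((\sigma_{12}^{(5)})^2)$.

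Regarding $\Delta_{2n+3}$ and $\Gamma((\sigma_{12}^{(2n+3)})^2)$ as Lagrangian branes in $M_{2n+3}(\qq)^-\times M_{2n+3}(\qq)$, consider the product Lagrangian correspondence built from $L(\Cup_n)$ and $L(\Cap_n)$: a compact, simply connected, monotone correspondence from $M_{2n+3}(\qq)^-\times M_{2n+3}(\qq)$ to $M_5(\qq)^-\times M_5(\qq)$ with disk invariant zero and admissible brane structure. By Corollary~\ref{categorify} it induces an $A_\infty$ functor $\mathcal R\colon\Fuk^{\sharp}(M_{2n+3}(\qq),M_{2n+3}(\qq))\to\Fuk^{\sharp}(M_5(\qq),M_5(\qq))$, and by Theorem~\ref{composethm} together with the identities just obtained (every geometric composition in the chain being embedded, so that Theorems~\ref{derived main} and \ref{composethm} apply) we get $\mathcal R(\Delta_{2n+3})\simeq\Delta_5$ and $\mathcal R(\Gamma((\sigma_{12}^{(2n+3)})^2))\simeq\Gamma((\sigma_{12}^{(5)})^2)$. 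Since an $A_\infty$ functor carries quasiisomorphic objects to quasiisomorphic objects, a quasiisomorphism $\Gamma((\sigma_{12}^{(2n+3)})^2)\cong\Delta_{2n+3}$ would produce $\Gamma((\sigma_{12}^{(5)})^2)\cong\Delta_5$, contradicting Theorem~\ref{sq5}; hence $\Gamma((\sigma_{12}^{(2n+3)})^2)\ncong\Delta_{2n+3}$. Equivalently one could repeat the module-theoretic step in the proof of Theorem~\ref{sq5} one level up: composing a hypothetical quasiisomorphism $\Delta_{2n+3}\to\Gamma((\sigma_{12}^{(2n+3)})^2)$ with $\Gamma((\sigma_{12}^{(2n+3)})^{-1})$ gives a $QH(M_{2n+3}(\qq))$-module quasiisomorphism $\Gamma((\sigma_{12}^{(2n+3)})^{-1})\to\Gamma(\sigma_{12}^{(2n+3)})$, which by reduction along $L(\Cap_n)$ and Theorem~\ref{derived main} descends to a $QH(M_5(\qq))$-module quasiisomorphism $\Gamma((\sigma_{12}^{(5)})^{-1})\to\Gamma(\sigma_{12}^{(5)})$, ruled out by Seidel's computation.

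The main obstacle is the bookkeeping in the last two steps: checking that every geometric composition appearing in the chain produced by Theorem~\ref{extendt} is transverse and embedded, that the monotonicity, disk-invariant and relative-spin hypotheses of Theorems~\ref{derived main}, \ref{composethm} and Corollary~\ref{categorify} hold throughout, that one may legitimately identify $\Fuk^{\sharp}(M,M)$ with the Fukaya category of $M^-\times M$ so that the product correspondence acts on objects, and that the resulting identifications are compatible with composition and with the $QH$-module structures, so that a genuine quasiisomorphism really descends. The geometric input itself — that capping off the unused strands commutes with the twist of $x_1,x_2$ — is elementary; the work is in packaging it through the quilt formalism.
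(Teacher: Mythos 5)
There is a genuine gap at the very first step: your tangle identity $[\Cup_n]\circ[(\sigma_{12}^{(2n+3)})^2]\circ[\Cap_n]=[(\sigma_{12}^{(5)})^2]$ is false. Since $\Cup_n=\Cap_n^T$ creates exactly the pairs $(x_6,x_7),\dots,(x_{2n+2},x_{2n+3})$ that $\Cap_n$ then annihilates, and the full twist does not touch those strands, each cup glues to the corresponding cap to form a closed circle: the composite tangle is $(\sigma_{12}^{(5)})^2$ together with $n-1$ split unknot components, not $(\sigma_{12}^{(5)})^2$. This is not a critical point cancellation in the sense of Theorem \ref{cerftangle}: that move applies only when the minimum and maximum lie on a strand meeting the boundary (the snake move), and a closed component can never be removed by it. The same defect reappears Floer-theoretically: at each repeated pair the geometric composition of $L(\Cup_n)$ with $L(\Cap_n)$ is not embedded --- the relevant projection has fibers diffeomorphic to $(\C P^1)^{n-1}$ --- so Theorems \ref{derived main} and \ref{composethm} do not apply to give $\mathcal{R}(\Delta_{2n+3})\simeq\Delta_5$ and $\mathcal{R}(\Gamma((\sigma_{12}^{(2n+3)})^2))\simeq\Gamma((\sigma_{12}^{(5)})^2)$. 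What the closed circles actually contribute is an extra tensor factor of $H(\C P^1)^{\otimes(n-1)}$ (compare Example \ref{unknot}), and a quasiisomorphism between objects tensored with such a factor does not formally descend to the quasiisomorphism $\Gamma((\sigma_{12}^{(5)})^2)\cong\Delta_5$ that you need in order to contradict Theorem \ref{sq5}.

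The repair is to offset the cup and the cap so that no closed component forms, i.e.\ to use the snake identity, and this is exactly what the paper's proof does: with $K_{\vert\cup}$ a cup at strands $3,4$ and $K_{\cap\vert}$ a cap at strands $4,5$ one has $(\sigma_{12}^{(2n+1)})^2=K_{\vert\cup}\circ(\sigma_{12}^{(2n+3)})^2\circ K_{\cap\vert}$, the compositions involved are embedded, and \cite[Theorem 8.6]{we:co} transports a hypothetical quasiisomorphism $\Delta_{2n+3}\to\Gamma((\sigma_{12}^{(2n+3)})^2)$ to one between $\Delta_{2n+1}$ and $\Gamma((\sigma_{12}^{(2n+1)})^2)$; induction on $n$ then reduces to Seidel's case $n=1$. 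Your overall strategy --- cap off the unused strands and invoke the base case --- is the right idea, but each snake only removes one pair of strands, so the reduction is an induction rather than the single-step collapse you propose, and the same-pair capping you chose is precisely what makes the crucial composition non-embedded.
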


\begin{proof} The argument is an induction on the positive integer $n$.  
The case $n = 1$ is Seidel's Theorem \ref{sq5}.  Suppose that the
statement in the Theorem holds for integers less than $n$ and suppose
that there exists a quasiisomorphism
$$ f \in \Hom(\Delta_{2n+3},\Gamma((\sigma_{12}^{(2n+3)})^2)) .$$
Let $K_{\vert \cup} $ resp. $K_{\cap \vert}$ denote a cup resp. cap at
the $3,4$ strands resp. $4,5$ strands. Then, thinking of a braid as a
special case of equivalence class of tangles, we have a Cerf
decomposition expressing the square $(\sigma_{12}^{(2n+1)})^2$ in
terms of $(\sigma_{12}^{(2n+3)})^2$
$$ (\sigma_{12}^{(2n+1)})^2 = K_{\vert \cup} \circ
(\sigma_{12}^{(2n+3)})^2 \circ K_{\cap \vert} $$
as in Figure \ref{cupcap}.
\begin{figure}[h]
\includegraphics[height=2in]{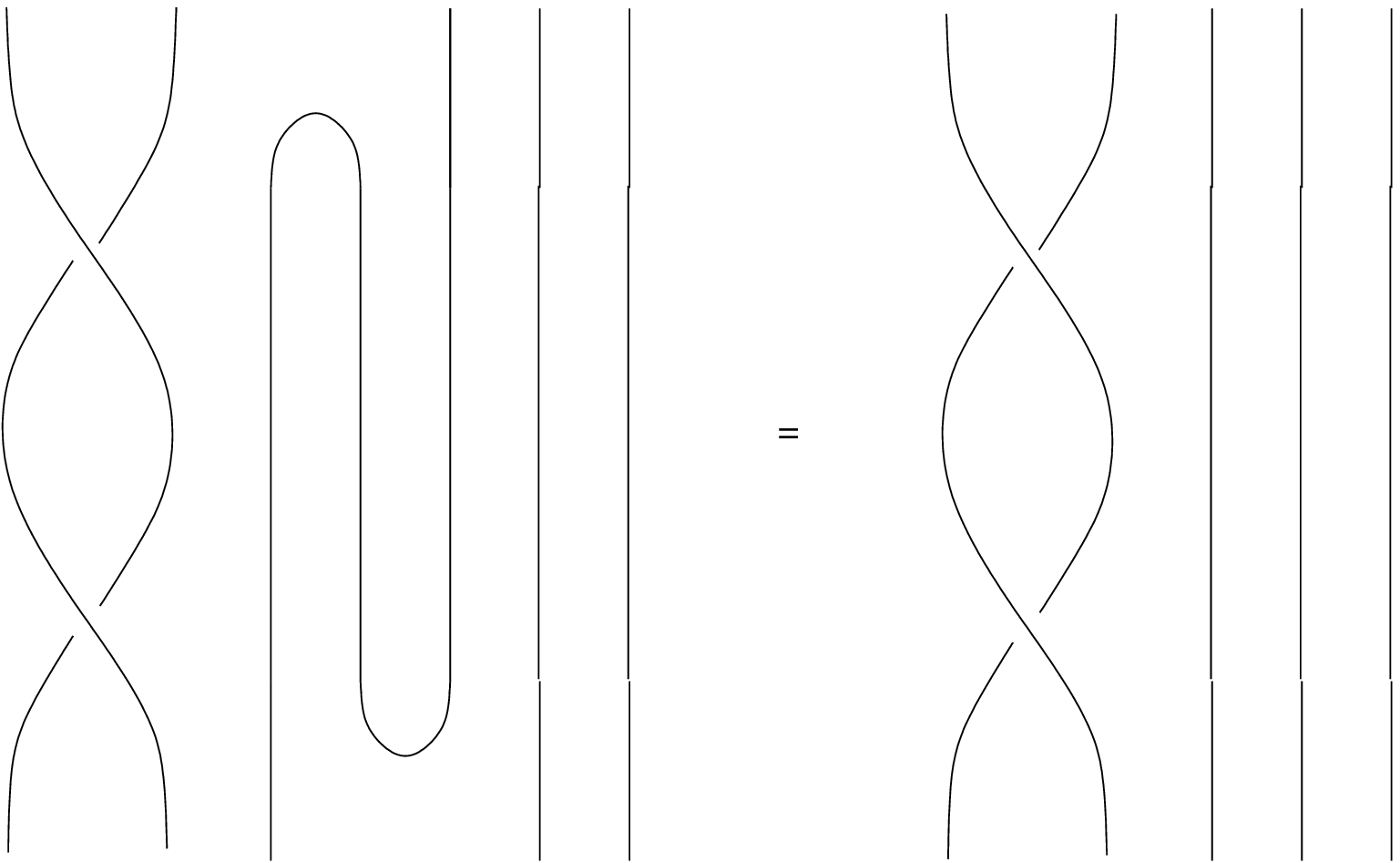}
\caption{Equivalence of full twists}
\label{cupcap}
\end{figure}
Any isomorphism in
$\Hom(\Delta_{2n+3},\Gamma((\sigma_{12}^{(2n+3)})^2))$ would therefore
induce a quasiisomorphism in the Fukaya category of correspondences
$$ L(K_{\vert \cup}) \circ \Gamma((\sigma_{12}^{(2n+3)})^2) \circ
    L(K_{\cap \vert}) \to L(K_{\vert \cup}) \circ \Delta_{2n+3} \circ
    L(K_{\cap \vert}) $$
by \cite[Theorem 8.6]{we:co}.  One would thus obtain a
quasiisomorphism
$ \Gamma((\sigma_{12}^{(2n+1)})^2) \to \Delta_{2n+1} $
which is impossible by the inductive hypothesis.  
\end{proof}

We now show that the symplectomorphisms induced by square of Dehn
twists are smoothly isotopic to the identity.  We first recall a
description of the Dehn twist as a Hamiltonian flow.

\begin{remark}  \label{smoothrem}
By \cite[Section 3.4]{wo:ex}, the half-twist $\sigma_{jk}$ of markings
$x_j$ and $x_k$ acts on $M_n(\mu)$ by the Hamiltonian flow given as
follows.  Let $h_{jk}: M_n(\mu) \to [0,1/2]$ denote the {\em Goldman
  function} defined \cite{go:in} by the holonomy around an embedded
path $\gamma_{jk}$ containing only the markings $x_j,x_k$:
$$ [\exp( \diag( \pm 2 \pi h_{jk} ([\varphi] )))] =
[\varphi(\gamma_{jk})] \in G/\Ad(G) .$$
Then $\sigma_{jk}$ acts by the Hamiltonian flow of 
$$h_{jk}(h_{jk} + 1/4): M_n(\mu) \to \R_{\ge 0} $$
on a dense open subset.  Note that the Goldman function $h_{jk}$ is
not smooth, but the quadratic function $h_{jk}^2$ is smooth near
$h_{jk}^{-1}(0)$ and the quadratic function $(h_{jk} - 1/2)^2$ is
smooth near $h_{jk}^{-1}(1/2)$ by considerations involving symplectic
cross-sections.  The square $\sigma_{jk}^2$ acts by the flow of
$2h_{jk}(h_{jk} + 1/4)$.
\end{remark} 

\begin{proposition} \label{sisot}
The square $\sigma_{jk}^2$ of any half-twist $\sigma_{jk}$ acts on
$M_n(\mu)$ by a diffeomorphism that is smoothly isotopic to the
identity.
\end{proposition}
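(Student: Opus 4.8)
The plan is to exploit the Hamiltonian description of $\sigma_{jk}^2$ recalled in Remark \ref{smoothrem}: on a dense open set, $\sigma_{jk}^2$ is the time-one flow of the function $2h_{jk}(h_{jk}+1/4)$, where $h_{jk}:M_n(\mu)\to[0,1/2]$ is the Goldman function for an embedded path $\gamma_{jk}$ enclosing exactly the markings $x_j,x_k$. The key observation is that although $h_{jk}$ itself is not smooth, its failure of smoothness occurs only along the two loci $h_{jk}^{-1}(0)$ and $h_{jk}^{-1}(1/2)$, where instead $h_{jk}^2$ and $(h_{jk}-1/2)^2$ are smooth by the symplectic cross-section argument cited in Remark \ref{smoothrem}. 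First I would rewrite the generating Hamiltonian $F = 2h_{jk}^2 + \tfrac12 h_{jk}$ and note that the only genuinely non-smooth summand is the linear term $\tfrac12 h_{jk}$, which is concentrated (in its non-smoothness) near $h_{jk}=0$; near $h_{jk}=1/2$ the function $h_{jk}$ is already smooth.

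Next I would construct an explicit smooth isotopy of Hamiltonians interpolating between $F$ and a smooth function whose time-one flow is manifestly smooth and isotopic to the identity. The cleanest route: consider the family $F_s = 2h_{jk}^2 + \tfrac{s}{2}h_{jk}$ for $s\in[0,1]$. The flow of $F_1$ is $\sigma_{jk}^2$; the flow of $F_0 = 2h_{jk}^2$ is smooth everywhere, since $h_{jk}^2$ is smooth near $h_{jk}^{-1}(0)$, is smooth near $h_{jk}^{-1}(1/2)$ (where $h_{jk}$ is smooth), and is smooth on the open set where $h_{jk}$ itself is smooth. Thus $\sigma_{jk}^2$ is smoothly isotopic, through symplectomorphisms, to the time-one flow of $2h_{jk}^2$. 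It then remains to show that the flow of $2h_{jk}^2$ is smoothly isotopic to the identity; for this one simply uses the linear isotopy $t\mapsto$ (time-$t$ flow of $2h_{jk}^2$), $t\in[0,1]$, which starts at the identity — this requires knowing that $2h_{jk}^2$ generates a globally defined flow, which follows since $M_n(\mu)$ is compact and $2h_{jk}^2$ is smooth.

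The one point that needs care — and I expect it to be the main obstacle — is the claim that the time-one flow of $2h_{jk}^2$ is genuinely smooth as a diffeomorphism of $M_n(\mu)$, including across the singular loci of $h_{jk}$. The subtlety is that $h_{jk}$ is a moment map for a (densely defined) circle action, and $h_{jk}^2$ being smooth as a function does not immediately guarantee that its Hamiltonian vector field extends smoothly across $h_{jk}^{-1}(0)\cup h_{jk}^{-1}(1/2)$. The resolution is the symplectic cross-section / normal-form description near these loci: near $h_{jk}^{-1}(0)$ the moduli space looks locally like a symplectic fibration with fiber a disk on which the circle acts by rotation, and $h_{jk}^2$ there is (a smooth function of) the squared radial coordinate, whose Hamiltonian flow is smooth rotation — exactly the local model showing $\sigma_{jk}^2$ extends smoothly (this is precisely why one takes the square of the Dehn twist rather than the twist itself). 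An identical local analysis applies near $h_{jk}^{-1}(1/2)$. I would therefore organize the proof as: (i) recall the generating function $F$ from Remark \ref{smoothrem}; (ii) give the local normal forms near the two singular loci and verify smoothness of the flow of $2h_{jk}^2$ there; (iii) deduce via the isotopy $F_s$ that $\sigma_{jk}^2$ is smoothly isotopic through symplectomorphisms to the flow of the smooth function $2h_{jk}^2$; (iv) conclude that the latter is smoothly isotopic to the identity by its own Hamiltonian flow. Step (ii) is the crux and is where the hypothesis that we twist the \emph{square} is genuinely used.
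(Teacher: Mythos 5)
Your starting point (reduce to the flow of $2h_{jk}^2$, worry about the singular loci of $h_{jk}$) is on track, but two steps are genuinely broken. First, your claim that $h_{jk}$ is smooth near $h_{jk}^{-1}(1/2)$ contradicts Remark \ref{smoothrem}: only $(h_{jk}-1/2)^2$ is smooth there, exactly as only $h_{jk}^2$ (not $h_{jk}$) is smooth near $h_{jk}^{-1}(0)$. Hence $2h_{jk}^2=2(h_{jk}-1/2)^2+2h_{jk}-1/2$ is \emph{not} smooth along $h_{jk}^{-1}(1/2)$, and that locus is nonempty precisely in the case needed for Theorem \ref{braid}: for $\mu=1/4$ the image of $h_{jk}$ is all of $[0,2\mu]=[0,1/2]$. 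So your step (iv), the linear isotopy $t\mapsto(\text{time-}t\text{ flow of }2h_{jk}^2)$, is not a path in $\Diff(M_n(1/4))$: near $h_{jk}^{-1}(1/2)$ the time-$t$ flow twists the holonomies $g_j,g_k$ by a non-central element whose axis depends on the direction of approach, so it has no continuous extension for intermediate $t$; only $t=1$ extends, because there the difference $2h_{jk}-1/2$ from the smooth function $2(h_{jk}-1/2)^2$ has trivial time-one flow. Second, your interpolation $F_s=2h_{jk}^2+\tfrac{s}{2}h_{jk}$ has the same defect at $h_{jk}^{-1}(0)$ for every $\mu$: for $0<s<1$ the time-one flow of $\tfrac{s}{2}h_{jk}$ is, in the limit at the locus $g_jg_k=1$, conjugation of $(g_j,g_k)$ by a non-central element $\exp(\tfrac{s}{2}\Xi)$ with $\Xi$ depending on the direction of approach, so it does not extend even continuously; only $s=0,1$ give central twists ($1$ resp.\ $-\on{Id}$). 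Thus the family of time-one flows of $F_s$ is not an isotopy of diffeomorphisms, and step (iii) fails.

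What is missing is the paper's first move: deform the holonomy label within its smoothness chamber. The spaces $M_n(\mu-t)$ form a smooth family on which $\sigma_{jk}^2$ acts, giving diffeomorphisms $\varphi_t$ with $\varphi_t^{-1}\circ\sigma_{jk}^2\circ\varphi_t\in\Diff(M_n(\mu))$, so it suffices to treat $\mu'=\mu-t<1/4$. There the image of $h_{jk}$ lies in $[0,2\mu']\subset[0,1/2)$, the locus $h_{jk}^{-1}(1/2)$ is empty, $2h_{jk}^2$ is globally smooth, and one can now isotope through globally smooth Hamiltonians: the paper deforms the generating function through compositions $\phi_t\circ h_{jk}$ with $\phi_t$ quadratic near $0$ and eventually constant on $[0,2\mu']$, so the final flow is literally the identity (at this point your step (iv) would also work). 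Without the chamber deformation, and with the erroneous smoothness claim at $h_{jk}^{-1}(1/2)$, your argument does not establish Proposition \ref{sisot} in the case $\mu=1/4$ that Theorem \ref{braid} requires.
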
 

\begin{proof}   First we introduce an isotopy corresponding 
to the deformation of the holonomy parameter.  The moduli spaces
$M_n(\mu)$ and the diffeomorphism $\sigma_{jk}^2$ fit into a smooth
family as the labels $\mu$ are varied with the chamber of values for
which the moduli spaces $M_n(\mu)$ are smooth.  That is, for $\eps$
small the union
$$ \ti{M}_n(\mu- \eps, \mu + \eps) := \cup_{|t| < \eps} M_n(\mu -
t) $$
is a smooth manifold and $\sigma_{jk}^2$ acts on $\ti{M}_n(\mu -
\eps,\mu + \eps)$ preserving each moduli space $M_n(\mu - t)$.  In
particular, we have a family of diffeomorphisms
$$ \varphi_t: M_n(\mu) \to M_n(\mu - t), \quad \varphi_t^{-1} \circ
\sigma_{jk}^2 \circ \varphi_t: M_n(\mu) \to M_n(\mu) .$$
Thus it suffices to show that $\sigma_{jk}^2 | M_n(\mu - t)$ is
smoothly isotopic to the identity for $t> 0 $ small.

In order to construct an isotopy for smaller values of the holonomy
parameter we deform the function describing the squared twist as a
Hamiltonian flow.  First note that the flow of $h_{jk}/2$ is equal to
the action of $-\on{Id}$ by conjugation on the holonomies $g_j,g_k$,
and so trivial, see \cite[Section 3.4]{wo:ex}.  Hence $\sigma_{jk}^2$
acts by the flow of $2h_{jk}^2$.  Consider a family of functions
$\phi_t: [0,1/2] \to [0, 2\pi]$ for $t \in (0,1]$ such that $\phi_1(s)
  = 2s^2$ and satisfying
$$\on{supp}(\phi_t) \subset [1/2 -t, 0], \quad 1/t \gg 0  $$
and there exists an $\eps > 0$ and smooth function $c(t)$ such that
$$\phi_t(s) = c(t)s^2, \quad 1/s \gg 0, \quad \phi_t(s) = \phi_1(s), |
  s - 1/2| < \eps .$$
That is, we deform the function so that the derivative is supported in
a small neighborhood of $s = 1/2$.  The second condition and Remark
\ref{smoothrem} imply that $\phi_t \circ h_{jk}$ is smooth.

We claim that in fact the range of the Goldman function does not
contain a neighborhood of the right endpoint of the alcove.  More
precisely the image of the function $h_{jk}$ is contained in
$[0,2\mu]$.  Indeed the holonomy around $\gamma_{jk}$ is equal to the
product of the holonomy around $x_j$ resp. $x_k$ is conjugate to
$\exp(\diag( \pm 2 \pi i \mu))$ resp.  $\exp(\diag( \pm2 \pi i \mu))$,
and the claim is a special case of the more general description of
products of conjugacy classes in \cite{ag:ei}.  For $\mu < 1/4$ and $t
< 1/4 - \mu$, the flow of $\phi_t \circ h_{jk}$ is the identity.
Combining this isotopy with the isotopy in the first paragraph gives a
smooth isotopy for $\sigma_{jk}^2$ to the identity in
$\Diff(M_n(\mu))$.
\end{proof}

\begin{proof}[Proof of Theorem \ref{braid}]
The inequality $[\varphi] \neq [\on{Id}] \in \Map(M(X,\ul{x}),\omega)$
follows immediately from Theorem \ref{nontrivthm} since Hamiltonian
isotopy implies quasiisomorphism in the Fukaya category.  The second
equality $[\varphi] \neq [\on{Id}] \in \Map(M(X,\ul{x}),\omega)$
follows from Proposition \ref{sisot}.
\end{proof} 

Theorem \ref{braid} shows that the homomorphism from the braid group
to the symplectic mapping class group of the moduli space of bundles
does not factor through the symmetric group.  It would be interesting
to know for which labels this homomorphism factors through the
symmetric group and to identify the kernel and image.  In the case
without labels, M. Callahan (unpublished) announced a similar result
for a separating Dehn twist of a genus two surface, in the moduli
space of fixed-determinant bundles of rank two and degree one.
Callahan's result together with the results of this paper would imply
that a separating Dehn twist is not symplectically isotopic to the
identity in any genus.  Analogous results for surfaces without
markings are proved in I. Smith \cite{sm:qu}.  See Keating
\cite{keating:dehn} for related results.

\subsection{Field theory for graphs}

In this section we describe an extension to functors for graphs in
trivial bordisms.  Graphs naturally arise in the {\em surgery exact
  triangle} for higher-rank tangle functors, see \cite{wo:ex}.

First we explain what we mean by a bordism with graph. Let $Y$ be a
bordism.  By a {\em graph} in $Y$ we mean a union $\Gamma$ of closed
{\em edges} $\Edge(\Gamma)$ meeting only at endpoints, the {\em
  vertices} $\Ver(\Gamma)$ of the graph, so that
\begin{itemize} 
\item the valence one vertices $\Gamma$ are contained in the boundary
  of $Y$:
$$ ( v \in \Ver(\Gamma), |v| = 1) \implies v \in \partial Y .$$
\item the valence greater-than-one vertices of $\Gamma$ to the
  interior of $Y$:
$$ ( v \in \Ver(\Gamma), |v| > 1) \implies v \in Y \ssm \partial Y
  .$$ 
\item the interior of each edge is contained in the interior of $Y$:
$$ e \in \Edge(\Gamma) \implies (e \ssm \partial e \subset Y \ssm
  \partial Y) .$$
\end{itemize} 
By a {\em graph bordism} from $(X_-,\ul{x}_-)$ to $(X_+,\ul{x}_+)$ we
mean a bordism $(Y,\phi)$ equipped with a graph $\Gamma$ so that
$\phi$ restricts to an identification
$$\phi |_{ \Gamma \cap \partial Y}: \Gamma \cap \partial Y \to
\ul{x}_- \cup \ul{x}_+ $$ 
with orientation on the first factor reversed.  An {\em equivalence}
of graph bordisms is an orientation-preserving diffeomorphism
$(Y_1,\Gamma_1) \to (Y_2,\Gamma_2)$ inducing the identity on the
incoming and outgoing boundary components $(X_\pm,\ul{x}_\pm)$.

Next we define Cerf decompositions for graphs.  Let $(Y,\Gamma,\phi)$
consist of a trivial bordism $Y \cong [b_-,b_+] \times X$ of a closed,
connected, oriented surface $X$ and an oriented graph $\Gamma$ in $Y$.
A {\em cylindrical Morse datum} for $(Y,\Gamma,\phi)$ consists of a
pair $(f,\ul{b})$ consisting of a smooth function $f: Y \to \R$ and a
collection
$$\ul{b} = ( b_- = b_0 < \ldots < b_m = b_+) $$ 
of real numbers such that the following hold:
\begin{enumerate} 
\item each $f^{-1}(b_j)$ contains no critical points of $f |_\Gamma$
  or interior vertices:
$$ f^{-1}(b_j) \cap \Crit(f) = f^{-1}(b_j) \cap \Ver(\Gamma) =
  \emptyset ;$$
\item each $f^{-1}(b_{k-1},b_k)$ contains at most one critical point
  of $f |_\Gamma$ or vertex of $\Gamma$:
$$ f^{-1}(b_{k-1},b_k) \cap ( \Crit(f |_\Gamma) \cup \Ver(\Gamma))
  \leq 1 ;$$
\item the sets $\{ b_+ \} \times X$ resp. $\{ b_- \} \times X$ are the
  set of maxima resp. minima of $f$;
\item the Morse function $f$ is cylindrical in the sense that
  $\partial_t f(t,x)>0$ for all $(t,x)\in Y$;
\item the function $f$ restricts to a Morse function on each edge $e$
  of $\Gamma$:
$$ (\d_y(f| e) = 0) \implies (\d^2_y(f|e) \neq 0), \quad \forall y \in
  \on{int}(e) ;$$
\item the restriction of $f$ to any edge has critical points only on
  the interior of the edge:
$$ \Crit(f |e) \cap \partial e = \emptyset, \quad \forall e \subset
  \Gamma ;$$
and
\item the restriction $f |_\Gamma$ is injective on the union of the
  critical set of $f|_\Gamma$ and the set of valence-greater-than-one
  vertices of $\Gamma$:
$$ f|_\Gamma : \Crit(f_\Gamma) \cup \Ver(\Gamma)  \hookrightarrow \R .$$
\end{enumerate} 
Any cylindrical Morse datum $(f,\ul{b})$ of $(Y,\Gamma,\phi)$ gives
rise to a {\em cylindrical Cerf decomposition} of $(Y,\Gamma,\phi)$
into {\em elementary bordisms-with-graphs}
$$(Y_j := f^{-1}([b_{j-1},b_j]), \Gamma_j:= Y_j \cap\Gamma, \phi_j), \quad j =
  1,\ldots, m .$$
That is, each $Y_j$ is cylindrical and $\Gamma_j$ has at most one
critical point or vertex.

\begin{figure}[h]
\setlength{\unitlength}{0.00027489in}
\begingroup\makeatletter\ifx\SetFigFont\undefined%
\gdef\SetFigFont#1#2#3#4#5{%
  \reset@font\fontsize{#1}{#2pt}%
  \fontfamily{#3}\fontseries{#4}\fontshape{#5}%
  \selectfont}%
\fi\endgroup%
{\renewcommand{\dashlinestretch}{30}
\begin{picture}(4524,6789)(0,-10)
\path(2262,5412)(2261,5412)(2256,5412)
	(2244,5412)(2226,5412)(2204,5412)
	(2180,5412)(2158,5412)(2138,5412)
	(2120,5412)(2104,5412)(2089,5412)
	(2075,5412)(2062,5412)(2050,5412)
	(2036,5412)(2022,5411)(2007,5410)
	(1991,5408)(1974,5406)(1957,5402)
	(1939,5398)(1920,5394)(1901,5388)
	(1881,5381)(1861,5374)(1840,5365)
	(1825,5358)(1809,5351)(1792,5343)
	(1774,5334)(1755,5324)(1735,5313)
	(1715,5301)(1694,5288)(1672,5275)
	(1650,5261)(1628,5246)(1606,5231)
	(1585,5215)(1563,5199)(1543,5183)
	(1523,5166)(1503,5150)(1485,5133)
	(1467,5117)(1449,5100)(1434,5084)
	(1419,5068)(1404,5051)(1389,5033)
	(1374,5014)(1359,4995)(1344,4975)
	(1330,4953)(1316,4932)(1302,4909)
	(1288,4886)(1275,4862)(1263,4838)
	(1251,4813)(1241,4789)(1230,4764)
	(1221,4739)(1212,4715)(1204,4690)
	(1197,4665)(1190,4640)(1184,4615)
	(1179,4592)(1174,4568)(1170,4543)
	(1166,4517)(1162,4491)(1159,4463)
	(1156,4435)(1154,4407)(1152,4377)
	(1151,4348)(1150,4317)(1150,4287)
	(1150,4257)(1151,4226)(1152,4197)
	(1154,4167)(1156,4139)(1159,4111)
	(1162,4083)(1166,4057)(1170,4031)
	(1174,4006)(1179,3982)(1184,3959)
	(1190,3934)(1197,3909)(1204,3884)
	(1212,3859)(1221,3835)(1230,3810)
	(1241,3785)(1251,3761)(1263,3736)
	(1275,3712)(1288,3688)(1302,3665)
	(1316,3642)(1330,3621)(1344,3599)
	(1359,3579)(1374,3560)(1389,3541)
	(1404,3523)(1419,3506)(1434,3490)
	(1449,3474)(1465,3459)(1481,3444)
	(1498,3429)(1516,3414)(1535,3399)
	(1554,3384)(1574,3369)(1596,3355)
	(1617,3341)(1640,3327)(1663,3313)
	(1687,3300)(1711,3288)(1736,3276)
	(1760,3266)(1785,3255)(1810,3246)
	(1834,3237)(1859,3229)(1884,3222)
	(1909,3215)(1934,3209)(1957,3204)
	(1981,3199)(2006,3195)(2032,3191)
	(2058,3187)(2086,3184)(2114,3181)
	(2142,3179)(2172,3177)(2201,3176)
	(2232,3175)(2262,3175)(2292,3175)
	(2323,3176)(2352,3177)(2382,3179)
	(2410,3181)(2438,3184)(2466,3187)
	(2492,3191)(2518,3195)(2543,3199)
	(2567,3204)(2590,3209)(2615,3215)
	(2640,3222)(2665,3229)(2690,3237)
	(2714,3246)(2739,3255)(2764,3266)
	(2788,3276)(2813,3288)(2837,3300)
	(2861,3313)(2884,3327)(2907,3341)
	(2928,3355)(2950,3369)(2970,3384)
	(2989,3399)(3008,3414)(3026,3429)
	(3043,3444)(3059,3459)(3075,3474)
	(3090,3490)(3105,3506)(3120,3523)
	(3135,3541)(3150,3560)(3165,3579)
	(3180,3599)(3194,3621)(3208,3642)
	(3222,3665)(3236,3688)(3249,3712)
	(3261,3736)(3273,3761)(3283,3785)
	(3294,3810)(3303,3835)(3312,3859)
	(3320,3884)(3327,3909)(3334,3934)
	(3340,3959)(3345,3982)(3350,4006)
	(3354,4031)(3358,4057)(3362,4083)
	(3365,4111)(3368,4139)(3370,4167)
	(3372,4197)(3373,4226)(3374,4257)
	(3374,4287)(3374,4317)(3373,4348)
	(3372,4377)(3370,4407)(3368,4435)
	(3365,4463)(3362,4491)(3358,4517)
	(3354,4543)(3350,4568)(3345,4592)
	(3340,4615)(3334,4640)(3327,4665)
	(3320,4690)(3312,4715)(3303,4739)
	(3294,4764)(3283,4789)(3273,4813)
	(3261,4838)(3249,4862)(3236,4886)
	(3222,4909)(3208,4932)(3194,4953)
	(3180,4975)(3165,4995)(3150,5014)
	(3135,5033)(3120,5051)(3105,5068)
	(3090,5084)(3075,5100)(3057,5117)
	(3039,5133)(3021,5150)(3001,5166)
	(2981,5183)(2961,5199)(2939,5215)
	(2918,5231)(2896,5246)(2874,5261)
	(2852,5275)(2830,5288)(2809,5301)
	(2789,5313)(2769,5324)(2750,5334)
	(2732,5343)(2715,5351)(2699,5358)
	(2684,5365)(2663,5374)(2643,5381)
	(2624,5387)(2604,5392)(2584,5397)
	(2562,5401)(2540,5404)(2517,5407)
	(2495,5409)(2477,5410)(2463,5411)
	(2454,5412)(2451,5412)(2450,5412)
\texture{44555555 55aaaaaa aa555555 55aaaaaa aa555555 55aaaaaa aa555555 55aaaaaa 
	aa555555 55aaaaaa aa555555 55aaaaaa aa555555 55aaaaaa aa555555 55aaaaaa 
	aa555555 55aaaaaa aa555555 55aaaaaa aa555555 55aaaaaa aa555555 55aaaaaa 
	aa555555 55aaaaaa aa555555 55aaaaaa aa555555 55aaaaaa aa555555 55aaaaaa }
\put(1362,4962){\shade\ellipse{180}{180}}
\put(1362,4962){\ellipse{180}{180}}
\put(2262,4062){\shade\ellipse{180}{180}}
\put(2262,4062){\ellipse{180}{180}}
\path(1362,4962)(2262,4062)
\dashline{60.000}(12,5187)(4512,5187)
\dashline{60.000}(12,5637)(4512,5637)
\dashline{60.000}(12,4512)(4512,4512)
\dashline{60.000}(12,1812)(4512,1812)
\dashline{60.000}(12,3612)(4512,3612)
\dashline{60.000}(12,6312)(4512,6312)
\dashline{60.000}(12,462)(4512,462)
\path(3462,4328)(3372,4238)(3282,4328)
\path(2042,4402)(2042,4275)(1915,4275)
\path(492.000,6642.000)(462.000,6762.000)(432.000,6642.000)
\path(462,6762)(462,6744)(462,6732)
	(462,6715)(462,6694)(462,6667)
	(462,6634)(462,6596)(462,6552)
	(462,6501)(462,6445)(462,6382)
	(462,6314)(462,6241)(462,6162)
	(462,6079)(462,5992)(462,5901)
	(462,5807)(462,5710)(462,5611)
	(462,5511)(462,5410)(462,5308)
	(462,5206)(462,5105)(462,5004)
	(462,4904)(462,4806)(462,4710)
	(462,4615)(462,4522)(462,4432)
	(462,4344)(462,4259)(462,4176)
	(462,4096)(462,4018)(462,3942)
	(462,3870)(462,3799)(462,3731)
	(462,3666)(462,3602)(462,3541)
	(462,3482)(462,3425)(462,3370)
	(462,3316)(462,3265)(462,3215)
	(462,3166)(462,3119)(462,3074)
	(462,3029)(462,2986)(462,2944)
	(462,2902)(462,2862)(462,2798)
	(462,2736)(462,2676)(462,2617)
	(462,2560)(463,2504)(463,2450)
	(463,2397)(464,2345)(465,2294)
	(466,2245)(467,2197)(469,2150)
	(470,2104)(472,2060)(474,2017)
	(477,1975)(480,1934)(483,1895)
	(486,1857)(490,1821)(494,1785)
	(499,1751)(503,1718)(509,1687)
	(514,1657)(520,1627)(527,1599)
	(533,1572)(540,1546)(548,1521)
	(556,1497)(564,1473)(573,1450)
	(582,1428)(591,1406)(601,1384)
	(612,1362)(625,1336)(639,1310)
	(654,1284)(670,1258)(687,1232)
	(705,1206)(724,1180)(744,1153)
	(764,1127)(786,1101)(809,1075)
	(832,1050)(856,1025)(881,1000)
	(907,976)(933,954)(959,932)
	(986,911)(1014,891)(1041,872)
	(1068,855)(1096,839)(1123,824)
	(1151,811)(1178,800)(1205,790)
	(1232,781)(1258,774)(1284,769)
	(1310,765)(1336,763)(1362,762)
	(1386,763)(1411,765)(1435,768)
	(1460,773)(1485,780)(1510,787)
	(1535,797)(1560,807)(1586,820)
	(1612,833)(1638,849)(1664,865)
	(1690,883)(1715,903)(1741,924)
	(1766,946)(1791,969)(1816,994)
	(1840,1019)(1863,1046)(1886,1073)
	(1909,1101)(1930,1130)(1951,1159)
	(1971,1189)(1990,1219)(2008,1250)
	(2025,1281)(2042,1313)(2057,1344)
	(2072,1376)(2086,1409)(2100,1441)
	(2112,1474)(2123,1505)(2133,1535)
	(2142,1566)(2151,1598)(2159,1631)
	(2167,1666)(2175,1701)(2182,1738)
	(2188,1776)(2194,1816)(2200,1859)
	(2206,1903)(2211,1949)(2216,1997)
	(2220,2048)(2225,2101)(2229,2155)
	(2233,2212)(2236,2270)(2240,2329)
	(2243,2389)(2246,2449)(2248,2509)
	(2251,2567)(2253,2623)(2255,2676)
	(2256,2725)(2258,2770)(2259,2809)
	(2260,2843)(2261,2872)(2261,2894)
	(2261,2911)(2262,2923)(2262,2931)
	(2262,2935)(2262,2937)
\path(2262,3387)(2262,3389)(2262,3393)
	(2262,3402)(2262,3414)(2262,3432)
	(2262,3454)(2262,3481)(2262,3512)
	(2262,3546)(2262,3583)(2262,3622)
	(2262,3661)(2262,3701)(2262,3740)
	(2262,3778)(2262,3815)(2262,3851)
	(2262,3885)(2262,3919)(2262,3952)
	(2262,3984)(2262,4015)(2262,4047)
	(2262,4078)(2262,4110)(2262,4142)
	(2262,4174)(2262,4202)(2262,4230)
	(2262,4259)(2262,4289)(2262,4319)
	(2263,4350)(2263,4383)(2264,4415)
	(2265,4449)(2266,4484)(2268,4519)
	(2269,4555)(2271,4592)(2274,4629)
	(2276,4666)(2279,4704)(2283,4742)
	(2287,4780)(2291,4819)(2296,4857)
	(2301,4894)(2307,4932)(2313,4969)
	(2319,5006)(2326,5042)(2334,5077)
	(2342,5112)(2350,5146)(2359,5179)
	(2368,5212)(2378,5244)(2389,5276)
	(2400,5306)(2412,5337)(2425,5369)
	(2439,5401)(2454,5432)(2470,5464)
	(2487,5495)(2505,5526)(2524,5556)
	(2544,5587)(2564,5617)(2586,5647)
	(2609,5676)(2632,5704)(2656,5732)
	(2681,5759)(2707,5785)(2733,5810)
	(2759,5834)(2786,5856)(2814,5878)
	(2841,5897)(2868,5916)(2896,5933)
	(2923,5948)(2951,5961)(2978,5973)
	(3005,5984)(3032,5992)(3058,5999)
	(3084,6005)(3110,6009)(3136,6011)
	(3162,6012)(3188,6011)(3214,6009)
	(3240,6005)(3266,6000)(3292,5993)
	(3319,5984)(3346,5974)(3373,5963)
	(3401,5950)(3428,5935)(3456,5919)
	(3483,5902)(3510,5883)(3538,5863)
	(3565,5842)(3591,5820)(3617,5798)
	(3643,5774)(3668,5749)(3692,5724)
	(3715,5699)(3738,5673)(3760,5647)
	(3780,5621)(3800,5594)(3819,5568)
	(3837,5542)(3854,5516)(3870,5490)
	(3885,5464)(3899,5438)(3912,5412)
	(3923,5390)(3933,5368)(3942,5346)
	(3951,5324)(3960,5301)(3968,5277)
	(3976,5253)(3984,5228)(3991,5202)
	(3997,5175)(4004,5147)(4010,5117)
	(4015,5087)(4021,5056)(4025,5023)
	(4030,4989)(4034,4953)(4038,4917)
	(4041,4879)(4044,4840)(4047,4799)
	(4050,4757)(4052,4714)(4054,4670)
	(4055,4624)(4057,4577)(4058,4529)
	(4059,4480)(4060,4429)(4061,4377)
	(4061,4324)(4061,4270)(4062,4214)
	(4062,4157)(4062,4098)(4062,4038)
	(4062,3976)(4062,3912)(4062,3872)
	(4062,3830)(4062,3788)(4062,3745)
	(4062,3700)(4062,3655)(4062,3608)
	(4062,3559)(4062,3509)(4062,3458)
	(4062,3404)(4062,3349)(4062,3292)
	(4062,3233)(4062,3172)(4062,3108)
	(4062,3043)(4062,2975)(4062,2904)
	(4062,2832)(4062,2756)(4062,2678)
	(4062,2598)(4062,2515)(4062,2430)
	(4062,2342)(4062,2252)(4062,2159)
	(4062,2064)(4062,1968)(4062,1870)
	(4062,1770)(4062,1669)(4062,1568)
	(4062,1466)(4062,1364)(4062,1263)
	(4062,1163)(4062,1064)(4062,967)
	(4062,873)(4062,782)(4062,695)
	(4062,612)(4062,533)(4062,460)
	(4062,392)(4062,329)(4062,273)
	(4062,222)(4062,178)(4062,140)
	(4062,107)(4062,80)(4062,59)
	(4062,42)(4062,30)(4062,21)
	(4062,16)(4062,13)(4062,12)
\end{picture}
}
\caption{Cerf decomposition of a graph}
\label{graphhandle} 
\end{figure}

\begin{theorem} \label{cerfgraph} {\rm (Cerf theory for graphs)}  
Any two cylindrical Cerf decompositions of a graph in a cylindrical
bordisms are related by a finite sequence of critical point
cancellations or creations, critical point/ vertex order reversals,
vertex/critical point cancellations, and gluing elementary graphs with
no critical points or vertices to adjacent elementary graphs.
\end{theorem}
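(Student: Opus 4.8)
The plan is to imitate the proof of Theorem~\ref{cerftangle}, replacing the tangle $K$ by the graph $\Gamma$ and carrying along the extra combinatorics contributed by the vertices. Given two cylindrical Morse data $(f_0,\ul b_0)$ and $(f_1,\ul b_1)$ for $(Y=[b_-,b_+]\times X,\Gamma,\phi)$, set $f_s = (1-s)f_0 + sf_1$; since $\partial_t f_0>0$ and $\partial_t f_1>0$ we retain $\partial_t f_s>0$, so every level set $f_s^{-1}(t)$ is a copy of $X$ and all the degeneration is confined to the compact $1$-complex $\Gamma$. One then studies the restricted family $f_s|_\Gamma$. On the interior of each edge this is an ordinary one-parameter family of functions, Morse for all but finitely many birth/death parameters by \cite[Theorem~2.4]{gg:stable}, and injective on its critical set for all but finitely many $s$ as in \cite[top of p.~11]{ce:st}; in addition the vertex values $s\mapsto f_s(v)$ are smooth, and after a further small perturbation one may assume that for all but finitely many $s$ the function $f_s|_\Gamma$ is injective on $\Crit(f_s|_\Gamma)\cup\Ver(\Gamma)$ and has no critical point at an endpoint of an edge. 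As for tangles, any such perturbation of $f_s|_\Gamma$ extends to a smooth family on $Y$ with $\partial_t f_s>0$, using a regular neighborhood of $\Gamma$ together with a cutoff; hence there are only finitely many ``bad'' parameters $c_1<\dots<c_m$ at which $f_{c_j}$ fails one of the conditions (1)--(7) defining a cylindrical Morse datum for graphs.

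Away from the bad parameters the argument is verbatim that of Theorem~\ref{cerftangle}: choosing smoothly varying separating values $b_i(s)$ on $[c_i+\eps,c_{i+1}-\eps]$ and writing $\ti f(s,t)=f_s(t)$, the hypersurfaces $\ti f^{-1}(b_i)\subset Y\times[c_i+\eps,c_{i+1}-\eps]$ are smooth, and one builds a vector field $v$ with $(D\pi_2)_*v=\partial_s$ tangent to each $\ti f^{-1}(b_i)$ and to $\Gamma$, by first constructing $v$ on the level sets $\ti f^{-1}(b_i)$ (which are regular), then extending over tubular neighborhoods, then patching with $\partial_s$ via a bump function as in \eqref{patch}. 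The only new point is that $v$ must be chosen tangent to the $1$-complex $\Gamma$, including at its vertices: along $[c_i+\eps,c_{i+1}-\eps]$ no vertex value crosses any $b_k(s)$ and the combinatorial type of $\Gamma$ is constant, so near each moving vertex one selects $v$ preserving the union of edge-germs, which is possible because $\Gamma$ is one-dimensional. The flow of $v$ then furnishes the required diffeomorphisms, so the Cerf decompositions for $f_s$, $s\in[c_i+\eps,c_{i+1}-\eps]$, are all equivalent.

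It remains to identify the move produced as $s$ crosses a bad parameter $c_j$. Applying the vector-field construction to all pieces whose interiors contain no special point reduces us to the one or two pieces affected by the degeneration, whose local model is a one-parameter family of functions on a $1$-complex. There are exactly three types: (i) a birth/death of two adjacent critical points in the interior of an edge, governed by the cusp normal form \cite[p.~157]{ma:st5}, which is a critical point cancellation/creation; (ii) a transposition of two values among $\Crit(f_s|_\Gamma)\cup\Ver(\Gamma)$ with the underlying manifold-with-graph unchanged, which is a critical point/critical point or critical point/vertex order reversal; and (iii) a critical point of $f_s|_e$ colliding with an endpoint vertex of $e$ (the remaining edge-germs at that vertex staying monotone through it by genericity), which is a vertex/critical point cancellation. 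Together with cylinder-gluing moves used to absorb the pieces carrying neither critical points nor vertices, this yields the claimed list.

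I expect the main obstacle to be item~(iii) together with, more broadly, making precise the genericity statement for one-parameter families of smooth functions on a graph near its vertices --- that is, a version of Cerf theory for $1$-complexes, which is not covered directly by \cite{gg:stable} or \cite{ce:st}. It is, however, elementary: near a vertex $v$ of valence $k$ the graph is modelled on $k$ half-lines joined at $0$, a function is a $k$-tuple of germs on $\R_{\ge 0}$ agreeing at the origin, and one enumerates by hand the codimension-$\le 1$ degenerations of such germs in a one-parameter family, obtaining precisely (ii) and (iii). The extension of perturbations and of the vector field $v$ from $\Gamma$ to $Y$ is the relative isotopy-extension bookkeeping already used for tangles, now applied to a stratified subset; it requires no new idea but has to be stated for the $1$-complex.
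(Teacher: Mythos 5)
Your proposal is correct and follows essentially the same route as the paper: the paper also perturbs the linear interpolation to a ``good'' cylindrical homotopy whose only degenerations are edge-interior birth/deaths, a critical point meeting a vertex (ruled degenerate at endpoints by the same codimension-two count you invoke), and value collisions among critical points and vertices, and then reduces to the vector-field/isotopy argument of Theorem \ref{cerftangle} on subintervals between bad parameters. The only difference is one of emphasis: you spell out the tangency of the vector field to $\Gamma$ at vertices and the local models on $1$-complexes, which the paper leaves implicit by referring back to the tangle case.
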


We leave it to the reader to write out the exact definition of these
moves which are analogous to those in Theorem \ref{cerftangle}.  See
Figures \ref{critvert}, \ref{vertvert}, \ref{vertcritcan} for
graphical representations of the moves.

\begin{figure}[ht]
\setlength{\unitlength}{0.00027489in}
\begingroup\makeatletter\ifx\SetFigFont\undefined%
\gdef\SetFigFont#1#2#3#4#5{%
  \reset@font\fontsize{#1}{#2pt}%
  \fontfamily{#3}\fontseries{#4}\fontshape{#5}%
  \selectfont}%
\fi\endgroup%
{\renewcommand{\dashlinestretch}{30}
\begin{picture}(10374,4089)(0,-10)
\path(12,4062)(12,4060)(12,4055)
	(12,4047)(12,4034)(12,4016)
	(12,3992)(12,3962)(12,3928)
	(12,3889)(12,3846)(12,3800)
	(12,3752)(12,3703)(12,3653)
	(12,3604)(12,3556)(12,3510)
	(12,3465)(12,3421)(12,3380)
	(12,3340)(12,3302)(12,3266)
	(12,3231)(12,3198)(12,3165)
	(12,3134)(12,3103)(12,3072)
	(12,3042)(12,3012)(12,2982)
	(12,2952)(12,2921)(12,2890)
	(13,2859)(13,2827)(14,2795)
	(15,2763)(16,2729)(17,2696)
	(19,2662)(21,2627)(23,2593)
	(26,2558)(29,2522)(33,2487)
	(37,2452)(42,2416)(47,2381)
	(52,2347)(58,2312)(65,2278)
	(72,2245)(80,2211)(88,2179)
	(97,2147)(106,2115)(116,2084)
	(127,2053)(138,2022)(149,1992)
	(162,1962)(174,1934)(188,1905)
	(202,1877)(217,1848)(232,1820)
	(249,1791)(266,1762)(284,1733)
	(303,1704)(323,1675)(344,1647)
	(365,1619)(388,1591)(411,1563)
	(434,1537)(458,1511)(483,1486)
	(508,1461)(533,1438)(559,1416)
	(584,1396)(610,1376)(636,1358)
	(662,1342)(688,1327)(714,1313)
	(739,1301)(764,1291)(789,1282)
	(814,1275)(839,1269)(863,1265)
	(888,1263)(912,1262)(936,1263)
	(961,1265)(985,1269)(1010,1275)
	(1035,1282)(1060,1291)(1085,1301)
	(1110,1313)(1136,1327)(1162,1342)
	(1188,1358)(1214,1376)(1240,1396)
	(1265,1416)(1291,1438)(1316,1461)
	(1341,1486)(1366,1511)(1390,1537)
	(1413,1563)(1436,1591)(1459,1619)
	(1480,1647)(1501,1675)(1521,1704)
	(1540,1733)(1558,1762)(1575,1791)
	(1592,1820)(1607,1848)(1622,1877)
	(1636,1905)(1650,1934)(1662,1962)
	(1675,1992)(1686,2022)(1697,2053)
	(1708,2084)(1718,2115)(1727,2147)
	(1736,2179)(1744,2211)(1752,2245)
	(1759,2278)(1766,2312)(1772,2347)
	(1777,2381)(1782,2416)(1787,2452)
	(1791,2487)(1795,2522)(1798,2558)
	(1801,2593)(1803,2627)(1805,2662)
	(1807,2696)(1808,2729)(1809,2763)
	(1810,2795)(1811,2827)(1811,2859)
	(1812,2890)(1812,2921)(1812,2952)
	(1812,2982)(1812,3012)(1812,3042)
	(1812,3072)(1812,3103)(1812,3134)
	(1812,3165)(1812,3198)(1812,3231)
	(1812,3266)(1812,3302)(1812,3340)
	(1812,3380)(1812,3421)(1812,3465)
	(1812,3510)(1812,3556)(1812,3604)
	(1812,3653)(1812,3703)(1812,3752)
	(1812,3800)(1812,3846)(1812,3889)
	(1812,3928)(1812,3962)(1812,3992)
	(1812,4016)(1812,4034)(1812,4047)
	(1812,4055)(1812,4060)(1812,4062)
\path(5862,4062)(5862,4059)(5862,4053)
	(5862,4043)(5862,4026)(5862,4004)
	(5862,3976)(5862,3943)(5862,3905)
	(5862,3864)(5862,3822)(5862,3778)
	(5862,3735)(5862,3693)(5862,3652)
	(5862,3614)(5862,3577)(5862,3542)
	(5862,3509)(5862,3478)(5862,3448)
	(5862,3420)(5862,3392)(5862,3365)
	(5862,3338)(5862,3312)(5862,3286)
	(5862,3259)(5862,3232)(5863,3205)
	(5863,3177)(5864,3148)(5865,3119)
	(5867,3090)(5869,3060)(5871,3030)
	(5875,2999)(5878,2968)(5883,2937)
	(5888,2906)(5894,2875)(5900,2844)
	(5908,2814)(5916,2784)(5925,2755)
	(5935,2726)(5945,2697)(5957,2669)
	(5969,2642)(5982,2615)(5997,2588)
	(6012,2562)(6025,2541)(6039,2519)
	(6054,2497)(6070,2476)(6087,2454)
	(6105,2432)(6124,2410)(6144,2388)
	(6164,2366)(6186,2344)(6209,2323)
	(6232,2302)(6256,2281)(6281,2261)
	(6307,2241)(6333,2222)(6359,2203)
	(6386,2186)(6414,2169)(6441,2154)
	(6468,2139)(6496,2126)(6523,2114)
	(6551,2103)(6578,2094)(6605,2085)
	(6632,2078)(6658,2072)(6684,2068)
	(6710,2065)(6736,2063)(6762,2062)
	(6788,2063)(6814,2065)(6840,2068)
	(6866,2072)(6892,2078)(6919,2085)
	(6946,2094)(6973,2103)(7001,2114)
	(7028,2126)(7056,2139)(7083,2154)
	(7110,2169)(7138,2186)(7165,2203)
	(7191,2222)(7217,2241)(7243,2261)
	(7268,2281)(7292,2302)(7315,2323)
	(7338,2344)(7360,2366)(7380,2388)
	(7400,2410)(7419,2432)(7437,2454)
	(7454,2476)(7470,2497)(7485,2519)
	(7499,2541)(7512,2562)(7527,2588)
	(7542,2615)(7555,2642)(7567,2669)
	(7579,2697)(7589,2726)(7599,2755)
	(7608,2784)(7616,2814)(7624,2844)
	(7630,2875)(7636,2906)(7641,2937)
	(7646,2968)(7649,2999)(7653,3030)
	(7655,3060)(7657,3090)(7659,3119)
	(7660,3148)(7661,3177)(7661,3205)
	(7662,3232)(7662,3259)(7662,3286)
	(7662,3312)(7662,3338)(7662,3365)
	(7662,3392)(7662,3420)(7662,3448)
	(7662,3478)(7662,3509)(7662,3542)
	(7662,3577)(7662,3614)(7662,3652)
	(7662,3693)(7662,3735)(7662,3778)
	(7662,3822)(7662,3864)(7662,3905)
	(7662,3943)(7662,3976)(7662,4004)
	(7662,4026)(7662,4043)(7662,4053)
	(7662,4059)(7662,4062)
\texture{44555555 55aaaaaa aa555555 55aaaaaa aa555555 55aaaaaa aa555555 55aaaaaa 
	aa555555 55aaaaaa aa555555 55aaaaaa aa555555 55aaaaaa aa555555 55aaaaaa 
	aa555555 55aaaaaa aa555555 55aaaaaa aa555555 55aaaaaa aa555555 55aaaaaa 
	aa555555 55aaaaaa aa555555 55aaaaaa aa555555 55aaaaaa aa555555 55aaaaaa }
\put(3612,2262){\shade\ellipse{128}{128}}
\put(3612,2262){\ellipse{128}{128}}
\put(9462,912){\shade\ellipse{128}{128}}
\put(9462,912){\ellipse{128}{128}}
\path(3612,2262)(3612,12)
\path(9462,912)(9462,12)
\path(4962,1812)(5412,1812)
\path(5292.000,1782.000)(5412.000,1812.000)(5292.000,1842.000)
\path(3612,2262)(3610,2264)(3606,2268)
	(3598,2276)(3586,2288)(3569,2305)
	(3549,2325)(3526,2348)(3500,2374)
	(3472,2402)(3444,2430)(3416,2458)
	(3389,2485)(3362,2512)(3337,2537)
	(3314,2560)(3292,2582)(3271,2603)
	(3251,2623)(3232,2642)(3214,2660)
	(3197,2677)(3179,2695)(3162,2712)
	(3145,2729)(3127,2747)(3110,2764)
	(3092,2782)(3074,2800)(3056,2819)
	(3038,2838)(3020,2857)(3001,2877)
	(2983,2897)(2965,2917)(2947,2937)
	(2930,2957)(2913,2977)(2897,2997)
	(2881,3017)(2867,3036)(2853,3055)
	(2840,3074)(2828,3092)(2816,3110)
	(2806,3127)(2796,3145)(2787,3162)
	(2777,3183)(2768,3204)(2760,3225)
	(2753,3247)(2746,3269)(2740,3292)
	(2735,3315)(2730,3339)(2726,3363)
	(2722,3387)(2720,3411)(2717,3435)
	(2716,3459)(2714,3482)(2713,3505)
	(2713,3527)(2712,3549)(2712,3570)
	(2712,3591)(2712,3612)(2712,3633)
	(2712,3654)(2712,3675)(2712,3697)
	(2712,3721)(2712,3746)(2712,3773)
	(2712,3802)(2712,3833)(2712,3866)
	(2712,3900)(2712,3933)(2712,3965)
	(2712,3995)(2712,4019)(2712,4038)
	(2712,4051)(2712,4059)(2712,4062)
\path(3612,2262)(3614,2264)(3618,2268)
	(3626,2276)(3638,2288)(3655,2305)
	(3675,2325)(3698,2348)(3724,2374)
	(3752,2402)(3780,2430)(3808,2458)
	(3835,2485)(3862,2512)(3887,2537)
	(3910,2560)(3932,2582)(3953,2603)
	(3973,2623)(3992,2642)(4010,2660)
	(4027,2677)(4045,2695)(4062,2712)
	(4079,2729)(4097,2747)(4114,2764)
	(4132,2782)(4150,2800)(4168,2819)
	(4186,2838)(4204,2857)(4223,2877)
	(4241,2897)(4259,2917)(4277,2937)
	(4294,2957)(4311,2977)(4327,2997)
	(4343,3017)(4357,3036)(4371,3055)
	(4384,3074)(4396,3092)(4408,3110)
	(4418,3127)(4428,3145)(4437,3162)
	(4447,3183)(4456,3204)(4464,3225)
	(4471,3247)(4478,3269)(4484,3292)
	(4489,3315)(4494,3339)(4498,3363)
	(4502,3387)(4504,3411)(4507,3435)
	(4508,3459)(4510,3482)(4511,3505)
	(4511,3527)(4512,3549)(4512,3570)
	(4512,3591)(4512,3612)(4512,3633)
	(4512,3654)(4512,3675)(4512,3697)
	(4512,3721)(4512,3746)(4512,3773)
	(4512,3802)(4512,3833)(4512,3866)
	(4512,3900)(4512,3933)(4512,3965)
	(4512,3995)(4512,4019)(4512,4038)
	(4512,4051)(4512,4059)(4512,4062)
\path(9462,912)(9460,914)(9456,918)
	(9448,926)(9436,938)(9419,955)
	(9399,975)(9376,998)(9350,1024)
	(9322,1052)(9294,1080)(9266,1108)
	(9239,1135)(9212,1162)(9187,1187)
	(9164,1210)(9142,1232)(9121,1253)
	(9101,1273)(9082,1292)(9064,1310)
	(9047,1327)(9029,1345)(9012,1362)
	(8995,1379)(8977,1397)(8960,1414)
	(8942,1432)(8924,1450)(8906,1469)
	(8888,1488)(8870,1507)(8851,1527)
	(8833,1547)(8815,1567)(8797,1587)
	(8780,1607)(8763,1627)(8747,1647)
	(8731,1667)(8717,1686)(8703,1705)
	(8690,1724)(8678,1742)(8666,1760)
	(8656,1777)(8646,1795)(8637,1812)
	(8629,1828)(8622,1844)(8616,1860)
	(8609,1877)(8604,1895)(8598,1913)
	(8594,1932)(8589,1952)(8585,1973)
	(8581,1996)(8578,2019)(8575,2043)
	(8572,2068)(8570,2095)(8568,2122)
	(8567,2150)(8565,2180)(8564,2210)
	(8564,2241)(8563,2273)(8563,2306)
	(8562,2340)(8562,2375)(8562,2411)
	(8562,2448)(8562,2487)(8562,2515)
	(8562,2543)(8562,2573)(8562,2604)
	(8562,2636)(8562,2670)(8562,2706)
	(8562,2743)(8562,2783)(8562,2825)
	(8562,2869)(8562,2916)(8562,2966)
	(8562,3018)(8562,3073)(8562,3130)
	(8562,3190)(8562,3252)(8562,3316)
	(8562,3382)(8562,3448)(8562,3515)
	(8562,3581)(8562,3646)(8562,3709)
	(8562,3768)(8562,3823)(8562,3873)
	(8562,3918)(8562,3956)(8562,3988)
	(8562,4014)(8562,4033)(8562,4047)
	(8562,4055)(8562,4060)(8562,4062)
\path(9462,912)(9464,914)(9468,918)
	(9476,926)(9488,938)(9505,955)
	(9525,975)(9548,998)(9574,1024)
	(9602,1052)(9630,1080)(9658,1108)
	(9685,1135)(9712,1162)(9737,1187)
	(9760,1210)(9782,1232)(9803,1253)
	(9823,1273)(9842,1292)(9860,1310)
	(9877,1327)(9895,1345)(9912,1362)
	(9929,1379)(9947,1397)(9964,1414)
	(9982,1432)(10000,1450)(10018,1469)
	(10036,1488)(10054,1507)(10073,1527)
	(10091,1547)(10109,1567)(10127,1587)
	(10144,1607)(10161,1627)(10177,1647)
	(10193,1667)(10207,1686)(10221,1705)
	(10234,1724)(10246,1742)(10258,1760)
	(10268,1777)(10278,1795)(10287,1812)
	(10295,1828)(10302,1844)(10308,1860)
	(10315,1877)(10320,1895)(10326,1913)
	(10330,1932)(10335,1952)(10339,1973)
	(10343,1996)(10346,2019)(10349,2043)
	(10352,2068)(10354,2095)(10356,2122)
	(10357,2150)(10359,2180)(10360,2210)
	(10360,2241)(10361,2273)(10361,2306)
	(10362,2340)(10362,2375)(10362,2411)
	(10362,2448)(10362,2487)(10362,2515)
	(10362,2543)(10362,2573)(10362,2604)
	(10362,2636)(10362,2670)(10362,2706)
	(10362,2743)(10362,2783)(10362,2825)
	(10362,2869)(10362,2916)(10362,2966)
	(10362,3018)(10362,3073)(10362,3130)
	(10362,3190)(10362,3252)(10362,3316)
	(10362,3382)(10362,3448)(10362,3515)
	(10362,3581)(10362,3646)(10362,3709)
	(10362,3768)(10362,3823)(10362,3873)
	(10362,3918)(10362,3956)(10362,3988)
	(10362,4014)(10362,4033)(10362,4047)
	(10362,4055)(10362,4060)(10362,4062)
\end{picture}
}
\caption{Critical point/vertex switch}
\label{critvert}
\end{figure}

\begin{figure}[ht]
\setlength{\unitlength}{0.00027489in}
\begingroup\makeatletter\ifx\SetFigFont\undefined%
\gdef\SetFigFont#1#2#3#4#5{%
  \reset@font\fontsize{#1}{#2pt}%
  \fontfamily{#3}\fontseries{#4}\fontshape{#5}%
  \selectfont}%
\fi\endgroup%
{\renewcommand{\dashlinestretch}{30}
\begin{picture}(10374,4089)(0,-10)
\texture{44555555 55aaaaaa aa555555 55aaaaaa aa555555 55aaaaaa aa555555 55aaaaaa 
	aa555555 55aaaaaa aa555555 55aaaaaa aa555555 55aaaaaa aa555555 55aaaaaa 
	aa555555 55aaaaaa aa555555 55aaaaaa aa555555 55aaaaaa aa555555 55aaaaaa 
	aa555555 55aaaaaa aa555555 55aaaaaa aa555555 55aaaaaa aa555555 55aaaaaa }
\put(3612,2262){\shade\ellipse{128}{128}}
\put(3612,2262){\ellipse{128}{128}}
\path(3612,2262)(3612,12)
\path(3612,2262)(3610,2264)(3606,2268)
	(3598,2276)(3586,2288)(3569,2305)
	(3549,2325)(3526,2348)(3500,2374)
	(3472,2402)(3444,2430)(3416,2458)
	(3389,2485)(3362,2512)(3337,2537)
	(3314,2560)(3292,2582)(3271,2603)
	(3251,2623)(3232,2642)(3214,2660)
	(3197,2677)(3179,2695)(3162,2712)
	(3145,2729)(3127,2747)(3110,2764)
	(3092,2782)(3074,2800)(3056,2819)
	(3038,2838)(3020,2857)(3001,2877)
	(2983,2897)(2965,2917)(2947,2937)
	(2930,2957)(2913,2977)(2897,2997)
	(2881,3017)(2867,3036)(2853,3055)
	(2840,3074)(2828,3092)(2816,3110)
	(2806,3127)(2796,3145)(2787,3162)
	(2777,3183)(2768,3204)(2760,3225)
	(2753,3247)(2746,3269)(2740,3292)
	(2735,3315)(2730,3339)(2726,3363)
	(2722,3387)(2720,3411)(2717,3435)
	(2716,3459)(2714,3482)(2713,3505)
	(2713,3527)(2712,3549)(2712,3570)
	(2712,3591)(2712,3612)(2712,3633)
	(2712,3654)(2712,3675)(2712,3697)
	(2712,3721)(2712,3746)(2712,3773)
	(2712,3802)(2712,3833)(2712,3866)
	(2712,3900)(2712,3933)(2712,3965)
	(2712,3995)(2712,4019)(2712,4038)
	(2712,4051)(2712,4059)(2712,4062)
\path(3612,2262)(3614,2264)(3618,2268)
	(3626,2276)(3638,2288)(3655,2305)
	(3675,2325)(3698,2348)(3724,2374)
	(3752,2402)(3780,2430)(3808,2458)
	(3835,2485)(3862,2512)(3887,2537)
	(3910,2560)(3932,2582)(3953,2603)
	(3973,2623)(3992,2642)(4010,2660)
	(4027,2677)(4045,2695)(4062,2712)
	(4079,2729)(4097,2747)(4114,2764)
	(4132,2782)(4150,2800)(4168,2819)
	(4186,2838)(4204,2857)(4223,2877)
	(4241,2897)(4259,2917)(4277,2937)
	(4294,2957)(4311,2977)(4327,2997)
	(4343,3017)(4357,3036)(4371,3055)
	(4384,3074)(4396,3092)(4408,3110)
	(4418,3127)(4428,3145)(4437,3162)
	(4447,3183)(4456,3204)(4464,3225)
	(4471,3247)(4478,3269)(4484,3292)
	(4489,3315)(4494,3339)(4498,3363)
	(4502,3387)(4504,3411)(4507,3435)
	(4508,3459)(4510,3482)(4511,3505)
	(4511,3527)(4512,3549)(4512,3570)
	(4512,3591)(4512,3612)(4512,3633)
	(4512,3654)(4512,3675)(4512,3697)
	(4512,3721)(4512,3746)(4512,3773)
	(4512,3802)(4512,3833)(4512,3866)
	(4512,3900)(4512,3933)(4512,3965)
	(4512,3995)(4512,4019)(4512,4038)
	(4512,4051)(4512,4059)(4512,4062)
\put(9462,912){\shade\ellipse{128}{128}}
\put(9462,912){\ellipse{128}{128}}
\path(9462,912)(9462,12)
\path(9462,912)(9460,914)(9456,918)
	(9448,926)(9436,938)(9419,955)
	(9399,975)(9376,998)(9350,1024)
	(9322,1052)(9294,1080)(9266,1108)
	(9239,1135)(9212,1162)(9187,1187)
	(9164,1210)(9142,1232)(9121,1253)
	(9101,1273)(9082,1292)(9064,1310)
	(9047,1327)(9029,1345)(9012,1362)
	(8995,1379)(8977,1397)(8960,1414)
	(8942,1432)(8924,1450)(8906,1469)
	(8888,1488)(8870,1507)(8851,1527)
	(8833,1547)(8815,1567)(8797,1587)
	(8780,1607)(8763,1627)(8747,1647)
	(8731,1667)(8717,1686)(8703,1705)
	(8690,1724)(8678,1742)(8666,1760)
	(8656,1777)(8646,1795)(8637,1812)
	(8629,1828)(8622,1844)(8616,1860)
	(8609,1877)(8604,1895)(8598,1913)
	(8594,1932)(8589,1952)(8585,1973)
	(8581,1996)(8578,2019)(8575,2043)
	(8572,2068)(8570,2095)(8568,2122)
	(8567,2150)(8565,2180)(8564,2210)
	(8564,2241)(8563,2273)(8563,2306)
	(8562,2340)(8562,2375)(8562,2411)
	(8562,2448)(8562,2487)(8562,2515)
	(8562,2543)(8562,2573)(8562,2604)
	(8562,2636)(8562,2670)(8562,2706)
	(8562,2743)(8562,2783)(8562,2825)
	(8562,2869)(8562,2916)(8562,2966)
	(8562,3018)(8562,3073)(8562,3130)
	(8562,3190)(8562,3252)(8562,3316)
	(8562,3382)(8562,3448)(8562,3515)
	(8562,3581)(8562,3646)(8562,3709)
	(8562,3768)(8562,3823)(8562,3873)
	(8562,3918)(8562,3956)(8562,3988)
	(8562,4014)(8562,4033)(8562,4047)
	(8562,4055)(8562,4060)(8562,4062)
\path(9462,912)(9464,914)(9468,918)
	(9476,926)(9488,938)(9505,955)
	(9525,975)(9548,998)(9574,1024)
	(9602,1052)(9630,1080)(9658,1108)
	(9685,1135)(9712,1162)(9737,1187)
	(9760,1210)(9782,1232)(9803,1253)
	(9823,1273)(9842,1292)(9860,1310)
	(9877,1327)(9895,1345)(9912,1362)
	(9929,1379)(9947,1397)(9964,1414)
	(9982,1432)(10000,1450)(10018,1469)
	(10036,1488)(10054,1507)(10073,1527)
	(10091,1547)(10109,1567)(10127,1587)
	(10144,1607)(10161,1627)(10177,1647)
	(10193,1667)(10207,1686)(10221,1705)
	(10234,1724)(10246,1742)(10258,1760)
	(10268,1777)(10278,1795)(10287,1812)
	(10295,1828)(10302,1844)(10308,1860)
	(10315,1877)(10320,1895)(10326,1913)
	(10330,1932)(10335,1952)(10339,1973)
	(10343,1996)(10346,2019)(10349,2043)
	(10352,2068)(10354,2095)(10356,2122)
	(10357,2150)(10359,2180)(10360,2210)
	(10360,2241)(10361,2273)(10361,2306)
	(10362,2340)(10362,2375)(10362,2411)
	(10362,2448)(10362,2487)(10362,2515)
	(10362,2543)(10362,2573)(10362,2604)
	(10362,2636)(10362,2670)(10362,2706)
	(10362,2743)(10362,2783)(10362,2825)
	(10362,2869)(10362,2916)(10362,2966)
	(10362,3018)(10362,3073)(10362,3130)
	(10362,3190)(10362,3252)(10362,3316)
	(10362,3382)(10362,3448)(10362,3515)
	(10362,3581)(10362,3646)(10362,3709)
	(10362,3768)(10362,3823)(10362,3873)
	(10362,3918)(10362,3956)(10362,3988)
	(10362,4014)(10362,4033)(10362,4047)
	(10362,4055)(10362,4060)(10362,4062)
\put(912,912){\shade\ellipse{128}{128}}
\put(912,912){\ellipse{128}{128}}
\path(912,912)(912,12)
\path(912,912)(910,914)(906,918)
	(898,926)(886,938)(869,955)
	(849,975)(826,998)(800,1024)
	(772,1052)(744,1080)(716,1108)
	(689,1135)(662,1162)(637,1187)
	(614,1210)(592,1232)(571,1253)
	(551,1273)(532,1292)(514,1310)
	(497,1327)(479,1345)(462,1362)
	(445,1379)(427,1397)(410,1414)
	(392,1432)(374,1450)(356,1469)
	(338,1488)(320,1507)(301,1527)
	(283,1547)(265,1567)(247,1587)
	(230,1607)(213,1627)(197,1647)
	(181,1667)(167,1686)(153,1705)
	(140,1724)(128,1742)(116,1760)
	(106,1777)(96,1795)(87,1812)
	(79,1828)(72,1844)(66,1860)
	(59,1877)(54,1895)(48,1913)
	(44,1932)(39,1952)(35,1973)
	(31,1996)(28,2019)(25,2043)
	(22,2068)(20,2095)(18,2122)
	(17,2150)(15,2180)(14,2210)
	(14,2241)(13,2273)(13,2306)
	(12,2340)(12,2375)(12,2411)
	(12,2448)(12,2487)(12,2515)
	(12,2543)(12,2573)(12,2604)
	(12,2636)(12,2670)(12,2706)
	(12,2743)(12,2783)(12,2825)
	(12,2869)(12,2916)(12,2966)
	(12,3018)(12,3073)(12,3130)
	(12,3190)(12,3252)(12,3316)
	(12,3382)(12,3448)(12,3515)
	(12,3581)(12,3646)(12,3709)
	(12,3768)(12,3823)(12,3873)
	(12,3918)(12,3956)(12,3988)
	(12,4014)(12,4033)(12,4047)
	(12,4055)(12,4060)(12,4062)
\path(912,912)(914,914)(918,918)
	(926,926)(938,938)(955,955)
	(975,975)(998,998)(1024,1024)
	(1052,1052)(1080,1080)(1108,1108)
	(1135,1135)(1162,1162)(1187,1187)
	(1210,1210)(1232,1232)(1253,1253)
	(1273,1273)(1292,1292)(1310,1310)
	(1327,1327)(1345,1345)(1362,1362)
	(1379,1379)(1397,1397)(1414,1414)
	(1432,1432)(1450,1450)(1468,1469)
	(1486,1488)(1504,1507)(1523,1527)
	(1541,1547)(1559,1567)(1577,1587)
	(1594,1607)(1611,1627)(1627,1647)
	(1643,1667)(1657,1686)(1671,1705)
	(1684,1724)(1696,1742)(1708,1760)
	(1718,1777)(1728,1795)(1737,1812)
	(1745,1828)(1752,1844)(1758,1860)
	(1765,1877)(1770,1895)(1776,1913)
	(1780,1932)(1785,1952)(1789,1973)
	(1793,1996)(1796,2019)(1799,2043)
	(1802,2068)(1804,2095)(1806,2122)
	(1807,2150)(1809,2180)(1810,2210)
	(1810,2241)(1811,2273)(1811,2306)
	(1812,2340)(1812,2375)(1812,2411)
	(1812,2448)(1812,2487)(1812,2515)
	(1812,2543)(1812,2573)(1812,2604)
	(1812,2636)(1812,2670)(1812,2706)
	(1812,2743)(1812,2783)(1812,2825)
	(1812,2869)(1812,2916)(1812,2966)
	(1812,3018)(1812,3073)(1812,3130)
	(1812,3190)(1812,3252)(1812,3316)
	(1812,3382)(1812,3448)(1812,3515)
	(1812,3581)(1812,3646)(1812,3709)
	(1812,3768)(1812,3823)(1812,3873)
	(1812,3918)(1812,3956)(1812,3988)
	(1812,4014)(1812,4033)(1812,4047)
	(1812,4055)(1812,4060)(1812,4062)
\put(6762,2262){\shade\ellipse{128}{128}}
\put(6762,2262){\ellipse{128}{128}}
\path(6762,2262)(6762,12)
\path(6762,2262)(6760,2264)(6756,2268)
	(6748,2276)(6736,2288)(6719,2305)
	(6699,2325)(6676,2348)(6650,2374)
	(6622,2402)(6594,2430)(6566,2458)
	(6539,2485)(6512,2512)(6487,2537)
	(6464,2560)(6442,2582)(6421,2603)
	(6401,2623)(6382,2642)(6364,2660)
	(6347,2677)(6329,2695)(6312,2712)
	(6295,2729)(6277,2747)(6260,2764)
	(6242,2782)(6224,2800)(6206,2819)
	(6188,2838)(6170,2857)(6151,2877)
	(6133,2897)(6115,2917)(6097,2937)
	(6080,2957)(6063,2977)(6047,2997)
	(6031,3017)(6017,3036)(6003,3055)
	(5990,3074)(5978,3092)(5966,3110)
	(5956,3127)(5946,3145)(5937,3162)
	(5927,3183)(5918,3204)(5910,3225)
	(5903,3247)(5896,3269)(5890,3292)
	(5885,3315)(5880,3339)(5876,3363)
	(5872,3387)(5870,3411)(5867,3435)
	(5866,3459)(5864,3482)(5863,3505)
	(5863,3527)(5862,3549)(5862,3570)
	(5862,3591)(5862,3612)(5862,3633)
	(5862,3654)(5862,3675)(5862,3697)
	(5862,3721)(5862,3746)(5862,3773)
	(5862,3802)(5862,3833)(5862,3866)
	(5862,3900)(5862,3933)(5862,3965)
	(5862,3995)(5862,4019)(5862,4038)
	(5862,4051)(5862,4059)(5862,4062)
\path(6762,2262)(6764,2264)(6768,2268)
	(6776,2276)(6788,2288)(6805,2305)
	(6825,2325)(6848,2348)(6874,2374)
	(6902,2402)(6930,2430)(6958,2458)
	(6985,2485)(7012,2512)(7037,2537)
	(7060,2560)(7082,2582)(7103,2603)
	(7123,2623)(7142,2642)(7160,2660)
	(7177,2677)(7195,2695)(7212,2712)
	(7229,2729)(7247,2747)(7264,2764)
	(7282,2782)(7300,2800)(7318,2819)
	(7336,2838)(7354,2857)(7373,2877)
	(7391,2897)(7409,2917)(7427,2937)
	(7444,2957)(7461,2977)(7477,2997)
	(7493,3017)(7507,3036)(7521,3055)
	(7534,3074)(7546,3092)(7558,3110)
	(7568,3127)(7578,3145)(7587,3162)
	(7597,3183)(7606,3204)(7614,3225)
	(7621,3247)(7628,3269)(7634,3292)
	(7639,3315)(7644,3339)(7648,3363)
	(7652,3387)(7654,3411)(7657,3435)
	(7658,3459)(7660,3482)(7661,3505)
	(7661,3527)(7662,3549)(7662,3570)
	(7662,3591)(7662,3612)(7662,3633)
	(7662,3654)(7662,3675)(7662,3697)
	(7662,3721)(7662,3746)(7662,3773)
	(7662,3802)(7662,3833)(7662,3866)
	(7662,3900)(7662,3933)(7662,3965)
	(7662,3995)(7662,4019)(7662,4038)
	(7662,4051)(7662,4059)(7662,4062)
\path(4962,1812)(5412,1812)
\path(5292.000,1782.000)(5412.000,1812.000)(5292.000,1842.000)
\end{picture}
}
\caption{Vertex/vertex switch}
\label{vertvert}
\end{figure}

\begin{figure}[ht]
\setlength{\unitlength}{0.00027489in}
\begingroup\makeatletter\ifx\SetFigFont\undefined%
\gdef\SetFigFont#1#2#3#4#5{%
  \reset@font\fontsize{#1}{#2pt}%
  \fontfamily{#3}\fontseries{#4}\fontshape{#5}%
  \selectfont}%
\fi\endgroup%
{\renewcommand{\dashlinestretch}{30}
\begin{picture}(6774,4089)(0,-10)
\texture{44555555 55aaaaaa aa555555 55aaaaaa aa555555 55aaaaaa aa555555 55aaaaaa 
	aa555555 55aaaaaa aa555555 55aaaaaa aa555555 55aaaaaa aa555555 55aaaaaa 
	aa555555 55aaaaaa aa555555 55aaaaaa aa555555 55aaaaaa aa555555 55aaaaaa 
	aa555555 55aaaaaa aa555555 55aaaaaa aa555555 55aaaaaa aa555555 55aaaaaa }
\put(5862,3162){\shade\ellipse{128}{128}}
\put(5862,3162){\ellipse{128}{128}}
\path(5862,3162)(5862,4062)
\path(5862,3162)(5860,3160)(5856,3156)
	(5848,3148)(5836,3136)(5819,3119)
	(5799,3099)(5776,3076)(5750,3050)
	(5722,3022)(5694,2994)(5666,2966)
	(5639,2939)(5612,2912)(5587,2887)
	(5564,2864)(5542,2842)(5521,2821)
	(5501,2801)(5482,2782)(5464,2764)
	(5447,2747)(5429,2729)(5412,2712)
	(5395,2695)(5377,2677)(5360,2660)
	(5342,2642)(5324,2624)(5306,2605)
	(5288,2586)(5270,2567)(5251,2547)
	(5233,2527)(5215,2507)(5197,2487)
	(5180,2467)(5163,2447)(5147,2427)
	(5131,2407)(5117,2388)(5103,2369)
	(5090,2350)(5078,2332)(5066,2314)
	(5056,2297)(5046,2279)(5037,2262)
	(5029,2246)(5022,2230)(5016,2214)
	(5009,2197)(5004,2179)(4998,2161)
	(4994,2142)(4989,2122)(4985,2101)
	(4981,2078)(4978,2055)(4975,2031)
	(4972,2006)(4970,1979)(4968,1952)
	(4967,1924)(4965,1894)(4964,1864)
	(4964,1833)(4963,1801)(4963,1768)
	(4962,1734)(4962,1699)(4962,1663)
	(4962,1626)(4962,1587)(4962,1559)
	(4962,1531)(4962,1501)(4962,1470)
	(4962,1438)(4962,1404)(4962,1368)
	(4962,1331)(4962,1291)(4962,1249)
	(4962,1205)(4962,1158)(4962,1108)
	(4962,1056)(4962,1001)(4962,944)
	(4962,884)(4962,822)(4962,758)
	(4962,692)(4962,626)(4962,559)
	(4962,493)(4962,428)(4962,365)
	(4962,306)(4962,251)(4962,201)
	(4962,156)(4962,118)(4962,86)
	(4962,60)(4962,41)(4962,27)
	(4962,19)(4962,14)(4962,12)
\path(5862,3162)(5864,3160)(5868,3156)
	(5876,3148)(5888,3136)(5905,3119)
	(5925,3099)(5948,3076)(5974,3050)
	(6002,3022)(6030,2994)(6058,2966)
	(6085,2939)(6112,2912)(6137,2887)
	(6160,2864)(6182,2842)(6203,2821)
	(6223,2801)(6242,2782)(6260,2764)
	(6277,2747)(6295,2729)(6312,2712)
	(6329,2695)(6347,2677)(6364,2660)
	(6382,2642)(6400,2624)(6418,2605)
	(6436,2586)(6454,2567)(6473,2547)
	(6491,2527)(6509,2507)(6527,2487)
	(6544,2467)(6561,2447)(6577,2427)
	(6593,2407)(6607,2388)(6621,2369)
	(6634,2350)(6646,2332)(6658,2314)
	(6668,2297)(6678,2279)(6687,2262)
	(6695,2246)(6702,2230)(6708,2214)
	(6715,2197)(6720,2179)(6726,2161)
	(6730,2142)(6735,2122)(6739,2101)
	(6743,2078)(6746,2055)(6749,2031)
	(6752,2006)(6754,1979)(6756,1952)
	(6757,1924)(6759,1894)(6760,1864)
	(6760,1833)(6761,1801)(6761,1768)
	(6762,1734)(6762,1699)(6762,1663)
	(6762,1626)(6762,1587)(6762,1559)
	(6762,1531)(6762,1501)(6762,1470)
	(6762,1438)(6762,1404)(6762,1368)
	(6762,1331)(6762,1291)(6762,1249)
	(6762,1205)(6762,1158)(6762,1108)
	(6762,1056)(6762,1001)(6762,944)
	(6762,884)(6762,822)(6762,758)
	(6762,692)(6762,626)(6762,559)
	(6762,493)(6762,428)(6762,365)
	(6762,306)(6762,251)(6762,201)
	(6762,156)(6762,118)(6762,86)
	(6762,60)(6762,41)(6762,27)
	(6762,19)(6762,14)(6762,12)
\put(2262,2262){\shade\ellipse{128}{128}}
\put(2262,2262){\ellipse{128}{128}}
\path(2262,2262)(2262,12)
\path(3612,1812)(4062,1812)
\path(3942.000,1782.000)(4062.000,1812.000)(3942.000,1842.000)
\path(2262,2262)(2264,2264)(2268,2268)
	(2276,2276)(2288,2288)(2305,2305)
	(2325,2325)(2348,2348)(2374,2374)
	(2402,2402)(2430,2430)(2458,2458)
	(2485,2485)(2512,2512)(2537,2537)
	(2560,2560)(2582,2582)(2603,2603)
	(2623,2623)(2642,2642)(2660,2660)
	(2677,2677)(2695,2695)(2712,2712)
	(2729,2729)(2747,2747)(2764,2764)
	(2782,2782)(2800,2800)(2818,2819)
	(2836,2838)(2854,2857)(2873,2877)
	(2891,2897)(2909,2917)(2927,2937)
	(2944,2957)(2961,2977)(2977,2997)
	(2993,3017)(3007,3036)(3021,3055)
	(3034,3074)(3046,3092)(3058,3110)
	(3068,3127)(3078,3145)(3087,3162)
	(3097,3183)(3106,3204)(3114,3225)
	(3121,3247)(3128,3269)(3134,3292)
	(3139,3315)(3144,3339)(3148,3363)
	(3152,3387)(3154,3411)(3157,3435)
	(3158,3459)(3160,3482)(3161,3505)
	(3161,3527)(3162,3549)(3162,3570)
	(3162,3591)(3162,3612)(3162,3633)
	(3162,3654)(3162,3675)(3162,3697)
	(3162,3721)(3162,3746)(3162,3773)
	(3162,3802)(3162,3833)(3162,3866)
	(3162,3900)(3162,3933)(3162,3965)
	(3162,3995)(3162,4019)(3162,4038)
	(3162,4051)(3162,4059)(3162,4062)
\path(2262,2262)(2260,2264)(2256,2268)
	(2248,2276)(2236,2288)(2219,2305)
	(2199,2325)(2176,2348)(2150,2374)
	(2122,2402)(2094,2430)(2066,2458)
	(2039,2485)(2012,2512)(1987,2537)
	(1964,2560)(1942,2582)(1921,2603)
	(1901,2623)(1882,2642)(1864,2660)
	(1847,2677)(1829,2695)(1812,2712)
	(1795,2729)(1777,2747)(1760,2764)
	(1742,2782)(1724,2800)(1705,2818)
	(1686,2836)(1667,2854)(1647,2873)
	(1627,2891)(1607,2909)(1587,2927)
	(1567,2944)(1547,2961)(1527,2977)
	(1507,2993)(1488,3007)(1469,3021)
	(1450,3034)(1432,3046)(1414,3058)
	(1397,3068)(1379,3078)(1362,3087)
	(1345,3095)(1327,3103)(1310,3110)
	(1292,3116)(1273,3121)(1254,3126)
	(1234,3129)(1214,3132)(1193,3133)
	(1171,3134)(1149,3133)(1127,3131)
	(1104,3128)(1080,3123)(1057,3117)
	(1033,3110)(1009,3102)(985,3093)
	(961,3082)(937,3071)(912,3058)
	(887,3044)(862,3029)(837,3012)
	(817,2999)(797,2985)(777,2970)
	(756,2954)(734,2937)(712,2918)
	(690,2899)(667,2879)(643,2858)
	(619,2836)(595,2812)(571,2788)
	(546,2762)(522,2736)(497,2709)
	(472,2681)(448,2652)(424,2623)
	(401,2593)(378,2563)(355,2532)
	(334,2502)(313,2470)(292,2439)
	(273,2408)(254,2377)(237,2345)
	(220,2314)(204,2282)(189,2251)
	(175,2219)(162,2187)(150,2156)
	(139,2126)(128,2094)(118,2062)
	(109,2029)(100,1996)(92,1961)
	(84,1926)(76,1891)(69,1855)
	(63,1818)(57,1780)(51,1742)
	(46,1704)(41,1665)(37,1626)
	(33,1587)(29,1548)(26,1509)
	(24,1470)(21,1432)(19,1394)
	(18,1356)(16,1319)(15,1283)
	(14,1248)(13,1213)(13,1178)
	(12,1145)(12,1112)(12,1080)
	(12,1048)(12,1018)(12,987)
	(12,953)(12,919)(12,885)
	(12,852)(12,818)(12,783)
	(12,748)(12,711)(12,674)
	(12,635)(12,594)(12,552)
	(12,508)(12,463)(12,417)
	(12,370)(12,323)(12,277)
	(12,232)(12,189)(12,150)
	(12,115)(12,85)(12,60)
	(12,41)(12,28)(12,19)
	(12,14)(12,12)
\end{picture}
}
\caption{Vertex/critical point cancellation}
\label{vertcritcan} 
\end{figure}

\begin{figure}[ht]
\includegraphics{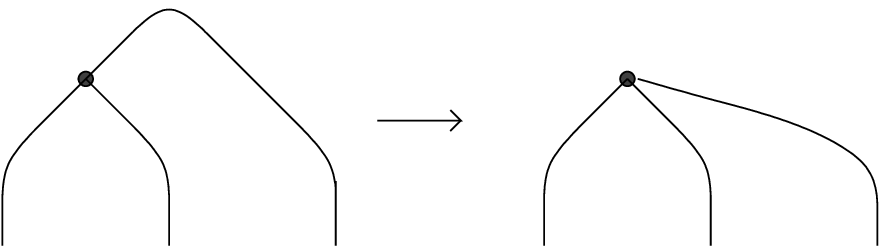}
\caption{Another vertex/critical point cancellation}
\label{another}
\end{figure}

\begin{proof}
The proof is similar to that of Theorem \ref{cerftangle}.  Let
$(f_j,\ul{b}_j)$ be two Morse data for $(Y,\Gamma,\phi)$.  We say that
a homotopy $(f_s)$ between $f_0$ and $f_1$ is {\em good} if except for
a finite number of values of $s$, each $f_s$ is a Morse function
injective on its critical set and the critical set is disjoint from
the vertices,
$$ f_s |_{ \Crit(f_s)} : \Crit(f_s) \hookrightarrow \R,
\quad \Crit(f_s) \cap \Ver(\Gamma) = \emptyset $$
and at the remaining finite number of times $s_1,\ldots s_m \in [0,1]$
at most one of the following occur:
\begin{enumerate} 
\item critical point cancellation occurs in the interior of an edge:
$$ \exists y \in e, \quad \d^3_y f_s|_e = 0 .$$
\item a critical point occurs at an endpoint of an edge:
$$ \exists y \in \partial e, \quad \d^2_y f_s|_e =0 $$
\item two critical points or endpoints have the same value:
$$ \exists y_1,y_2 \in \Crit(f_s|_\Gamma) \cup \Ver(\Gamma), \quad y_1
  \neq y_2, \quad f_s(y_1) = f_s(y_2) .$$

\end{enumerate} 
Indeed, for no critical point cancellation to occur at the endpoints
it suffices that for each endpoint $p$ and time $s$, either $\d f_s
|_e (p)$ or $\d^2 f_s |_e (p)$ is non-zero.  This is always the case
for generic homotopies, since these conditions are codimension two.  A
small generic perturbation $f: Y \times [0,1] \to \R$ of the linear
interpolation $s f_0 + (1-s)f_1$ is a good homotopy, and furthermore
has the cylindrical property
$$\partial_t f(y,s) > 0, \forall (y,s) \in Y \times [0,1] .$$
Breaking up the interval $[0,1]$ into subintervals each containing at
most one critical time proves the theorem in a way similar to that of
Theorem \ref{cerftangle}.
\end{proof}

The next step in the construction is to define a notion of admissible
labellings of graphs.  As in the case of tangles, this notion is
designed to avoid reducibles in the moduli spaces of flat bundles.

\begin{definition} \label{labels}
\begin{enumerate} 
\item {\rm (Labels meeting a vertex)} Let $(Y,\Gamma,\phi)$ be an
  elementary graph with a single vertex $v$.  Denote the boundary of
  $Y$ by $\partial Y = X_- \cup X_+$.  Let $B(v) \subset Y$ be a small
  open ball containing $v$, and
$$S(v)= \partial B(v) ,  \quad 
\ul{x}(v) := S(v) \cap \Gamma $$
denote the sphere around the vertex and the intersections with the
graph. The complement $Y \ssm B(v)$ of $B(v)$ can be viewed as a
three-dimensional bordism from $X_- \cup S(v)$ to $X_+$.  It contains
a tangle 
$$\Gamma \backslash (B(v) \cap \Gamma) \subset Y \ssm B(v) .$$ 
Let $\ul{\mu}_\pm$ denote the labels for $\ul{x}_\pm = \Gamma \cap
X_\pm$, and $\ul{\mu}(v)$ denote the set of labels for $\ul{x}(v)$,
given by the labels of the edges incoming to a vertex and the images
of the labels under the involution $*$ for the outgoing edges.
\item {\rm (Admissible labellings)} 
 \label{veradm}
A set of labels $\ul{\mu}(v)$ at a vertex $v$ is {\em
  vertex-admissible} if the moduli space of flat bundles on the
punctured sphere is either empty or a point:
$$ \# M(S(v),\ul{x}(v),\ul{\mu}(v)) \leq 1 .$$
An {\em vertex-admissible labelling} of $\Gamma$ is a labelling of the
edges of $\Gamma$ by admissible labels, such that at each vertex the
collection of labels is vertex-admissible.  An {\em vertex-admissible
  graph} is a graph equipped with a vertex-admissible labelling.
\item \label{standlab} {\rm (Standard labellings)} Let $\omega_j$
  denote the $j$-th fundamental coweight of $SU(r)$. Denote by ${\bf
    j} = \omega_j/2$.  Suppose that $G = SU(r+1)$.  A {\em standard
    labelling} of $\Gamma$ is a labelling of each edge by ${\bf 1}$ or
  ${\bf 2}$, so that each vertex is trivalent with labels ${\bf 1},
  {\bf 1}, *{\bf 2}$, if all edges are incoming to the vertex.  
\end{enumerate} 
\end{definition} 

\noindent The triple ${\bf 1}, {\bf 1}, {\bf 2}$ is analogous to
Khovanov-Rozansky's ${\bf 1},{\bf 2}$ (or thin, thick) labels
\cite{kr:ma}.

\begin{lemma} \label{point} Let $\ul{x} \subset S^2$ be a triple of distinct points. 
The moduli space $M(S^2, \ul{x}, {\bf 1 }, {\bf 1}, *{\bf 2})$ is a
point.  Hence any standard labelling of a bordism-with-graph is
admissible.
\end{lemma}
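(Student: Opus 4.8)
The plan is to compute $M(S^2,\ul{x},{\bf 1},{\bf 1},*{\bf 2})$ directly from the holonomy presentation for a thrice-punctured sphere, where the fundamental group of $S^2\setminus\{x_1,x_2,x_3\}$ is freely generated by loops $\gamma_1,\gamma_2,\gamma_3$ around the punctures subject to the single relation $\gamma_1\gamma_2\gamma_3=1$ (all punctures taken incoming). Thus
\[
M(S^2,\ul{x}) \cong \bigl\{ (g_1,g_2,g_3)\in \cC_{{\bf 1}}\times\cC_{{\bf 1}}\times\cC_{*{\bf 2}} \ \big|\ g_1g_2g_3 = 1 \bigr\}/G ,
\]
which after eliminating $g_3 = (g_1g_2)^{-1}$ becomes the set of pairs $(g_1,g_2)\in\cC_{{\bf 1}}^2$ with $g_1g_2\in\cC_{{\bf 2}}$, modulo simultaneous conjugation. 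For $G = SU(r+1)$ the conjugacy class $\cC_{{\bf 1}} = \cC_{\omega_1/2}$ consists of matrices with one eigenvalue $e^{2\pi i (r)/2(r+1)}$ wait --- more precisely $\cC_{\omega_1/2}$ consists of elements of $SU(r+1)$ with two distinct eigenvalues, one of multiplicity $1$ and one of multiplicity $r$, while $\cC_{{\bf 2}} = \cC_{\omega_2/2}$ has an eigenvalue of multiplicity $2$ and one of multiplicity $r-1$. The first step is therefore to record these eigenvalue data explicitly and note that each such conjugacy class is the appropriate Grassmannian or partial flag variety via \eqref{quotient}.

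The core of the argument is then to show that the product map realizes $\cC_{{\bf 1}}\times\cC_{{\bf 1}}$, cut down by the condition $g_1g_2\in\cC_{{\bf 2}}$ and modulo conjugation, as a single point. I would argue this representation-theoretically: a pair $(g_1,g_2)$ of elements each with a codimension-one eigenspace picture is the same as a pair of lines (or hyperplanes) $\ell_1,\ell_2\subset \C^{r+1}$ together with the scalar data, and the condition that the product lie in $\cC_{{\bf 2}}$ forces the geometric configuration of $\ell_1,\ell_2$ to be rigid --- there is (up to the $SU(r+1)$-action) only one configuration of an ordered pair of distinct lines in general position, and the eigenvalue arithmetic $2\cdot\tfrac{1}{2}\omega_1 \equiv \omega_2 \pmod\Lambda$ (which is exactly the admissibility condition \eqref{smooth3} with $n=3$, $d=2$) forces the eigenvalues to match up with the target class $\cC_{{\bf 2}}$. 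More concretely, write $g_i = c(\Id - (1-\zeta)P_i)$ where $P_i$ is the rank-one projection onto $\ell_i$, $c,\zeta$ determined by ${\bf 1}$; then $g_1g_2$ has eigenvalues computable from the $2$-dimensional subspace $\ell_1+\ell_2$ and the complementary $(r-1)$-dimensional space, and one checks this lies in $\cC_{{\bf 2}}$ precisely when $\ell_1\ne\ell_2$, with the value of $g_1g_2$ then independent of the choice of such a pair up to conjugation. Hence the moduli space is a single point (and empty --- when? it is never empty here, consistent with $\# M \le 1$). Once this is done, Proposition \ref{smooth} already guarantees the labels ${\bf 1},{\bf 1},*{\bf 2}$ are admissible (they are half-vertices and $2\sum\mu_i = \omega_2$), so by Definition \ref{labels}\eqref{veradm} the standard labelling is vertex-admissible, which combined with Proposition \ref{smooth} gives admissibility of the bordism-with-graph.

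The main obstacle I anticipate is the explicit verification that the product of two elements of $\cC_{\omega_1/2}$ with distinct associated lines lands in $\cC_{\omega_2/2}$ and that the resulting conjugacy class of the product is \emph{uniquely determined} --- i.e. that the intersection $\cC_{\omega_1/2}\times_{g_1g_2\in\cC_{\omega_2/2}}\cC_{\omega_1/2}$ is a single $G$-orbit. This is a statement about the structure constants / fusion for $SU(r+1)$ conjugacy classes, and while it follows from the Horn-type description of products of conjugacy classes (cf. the reference to \cite{ag:ei} used elsewhere in the paper for exactly such product computations), making it clean requires either a short eigenvalue computation in the $2+(r-1)$ block decomposition adapted to $\ell_1+\ell_2$, or invoking that the relevant moduli space for generic small weights is a point by a dimension count: $\dim M(S^2,\ul{x}) = 2\dim\cC_{\omega_1/2} + \dim\cC_{\omega_2/2} - 2\dim G$, and for these small admissible labels one checks this equals zero, so (being smooth and connected by Proposition \ref{smooth} and its proof, and non-empty by exhibiting one pair $(g_1,g_2)$ with distinct lines) it must be a single point. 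I would use the dimension-count route as the primary argument and mention the geometric picture as confirmation.
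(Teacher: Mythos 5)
Your reduction to pairs $(g_1,g_2)\in\cC_{\bf 1}^2$ with $g_1g_2\in\cC_{\bf 2}$ modulo conjugation is the same starting point as the paper, but the key uniqueness step fails in both of the routes you propose. In the geometric route, write $g_i=\zeta(\Id-2P_i)$ with $P_i$ the orthogonal projection onto the line $\ell_i$ (this normal form is correct). Then $(\Id-2P_1)(\Id-2P_2)$ is an involution if and only if $P_1$ and $P_2$ commute, i.e.\ $\ell_1=\ell_2$ or $\ell_1\perp\ell_2$; for distinct non-orthogonal lines its restriction to $\ell_1+\ell_2$ has eigenvalues $e^{\pm i\alpha}$ with $\alpha\neq\pi$, so as the angle varies $g_1g_2$ sweeps out the whole one-parameter family of conjugacy classes in \eqref{union} and lies in $\cC_{\bf 2}$ \emph{only} when $\ell_1\perp\ell_2$, not merely when $\ell_1\neq\ell_2$. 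Relatedly, $SU$ does not act transitively on ordered pairs of distinct lines (the angle is an invariant), so ``only one configuration in general position'' is false as stated. What does hold, and what rescues your picture, is transitivity on ordered pairs of \emph{orthogonal} lines, which immediately gives a single conjugation orbit and hence a single point; this orthogonality computation is the content you need, and it plays the role of the paper's Lie-theoretic argument (fix $g_1$, use that $\cC_{\bf 1}$ is a rank-one symmetric space to move $g_2$ onto a one-parameter subgroup orbit, and read off which parameter value hits $\cC_{\bf 2}$).

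The fallback you designate as your primary argument does not close the gap. For $G=SU(n)$ one has $2\dim\cC_{\omega_1/2}+\dim\cC_{\omega_2/2}-2\dim G=-2\bigl((n-2)^2+1\bigr)<0$ (e.g.\ $-4$ for $SU(3)$), not zero, so the expected-dimension count cannot show the space is a point. This is not a computational slip to be patched: the triple $({\bf 1},{\bf 1},*{\bf 2})$ is \emph{not} admissible in the sense of \eqref{smooth3} --- once you include the third label, $2\sum_i\mu_i\equiv\omega_1+\omega_1+\omega_{n-2}\equiv 0 \bmod \Lambda$, so $d$ is not coprime to the rank --- hence Proposition \ref{smooth} gives you neither smoothness nor, via its proof, connectedness here. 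Indeed the unique point of this moduli space is a reducible-type representation with positive-dimensional stabilizer $S(U(1)\times U(1)\times U(n-2))$ (it preserves the splitting $\ell_1\oplus\ell_2\oplus(\ell_1\oplus\ell_2)^\perp$), which is exactly why the naive dimension formula is negative while the space is nonetheless a point; also your witness of non-emptiness must be an orthogonal pair, not just a distinct pair. Finally, note that vertex-admissibility in Definition \ref{labels} is by definition the condition $\#M(S(v),\ul{x}(v))\leq 1$, so the second sentence of the lemma follows directly from the first; it neither needs nor can use Proposition \ref{smooth}.
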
 

\begin{proof}   The moduli space $M(S^2, \ul{x}, {\bf 1 }, {\bf 1}, *{\bf 2})$
 is the space of equivalence classes of pairs $(g_1,g_2) \in \cC_{\bf
   1}^2$ with $g_1g_2 \in \cC_{\bf 2}$. After conjugation we may
 assume
$$ g_1 = \diag \left(-\exp (\pi i / r), \exp(\pi i /r), \ldots, \exp(\pi i
 /r) \right) .$$
The centralizer of $g_1$ is therefore 
$$Z = S(U(1) \times U(r-1)) \cong U(r-1) .$$
Let $O \subset G$ denote the one-parameter subgroup generated by
rotation in the first two coordinates in $\C^r$.  Since $g_1$ is the
product of $\diag(-1,1\ldots,1)$ with a central element in $U(r)$, the
adjoint action of $g_1$ on $O$ is $g_1 o g_1^{-1} = o^{-1}$.  This
implies that
$$og_1 = \Ad( o^{1/2}) g_1 \in \cC_1, \forall o \in O .$$
Now $\cC_{\bf 1}$ is a symmetric space of rank one.  The group $Z$
acts transitively on the unit sphere in $T_{g_i} \cC_{\bf 1}$.  This
implies that the map $O g_1 \to \cC_{\bf 1}/Z$ is surjective.
Therefore after conjugation by an element of $Z$ we may assume that
$$g_2 = o g_1 = g_1 o^{-1} $$ 
for some $o \in O$.  Also note that since $O$ is conjugate to the
one-parameter subgroup generated by the first coroot $\alpha_1^\dual$
the square of $\cC_1$ in $G$ is
\begin{equation} \label{union} \cC_1^2 = \Ad(G) \{ g_1^2 o , o \in O \} = \bigcup_{\eps \in [0,-{1/2}]}
\cC_{ \omega_1 + \eps \alpha_1^\dual } \end{equation}
the union of conjugacy classes of $\exp( \omega_1 + \eps \alpha_1)$
where $\eps \in [0,-{1/2}]$.  In particular, since
$ \omega_2/2 = \omega_1 - \alpha_1 $
the conjugacy class $\cC_2$ of $\exp(\omega_2/2)$ appears in
$\cC^2_1$.  Hence the moduli space $M(S^2, \ul{x}, {\bf 1 }, {\bf 1},
*{\bf 2})$ is non-empty, and a dimension count shows that it is
dimension zero.  Since the moduli space $M(S^2, \ul{x}, {\bf 1 }, {\bf
  1}, *{\bf 2})$ is connected, it consists of a single point.
\end{proof}  

\begin{lemma}  \label{Gspin} {\rm 
(Correspondence for vertex-admissible graphs is simply-connected and
    relatively spin)} Let $\Gamma$ be an elementary graph containing a
  single vertex with incoming labels ${\bf 1},{\bf 1}$ and outgoing
  label ${\bf 2}$. Then $L(Y,\Gamma,\phi)$ embeds in $M(X_-,\ul{x}_-)$
  with spin normal bundle and fibers over $M(X_+,\ul{x}_+)$ with fiber
  $S^2$.  In particular $L(Y,\Gamma,\phi)$ admits a relative spin
  structure with background class
  $(b_\pm(X_-,\ul{x}_-),b_\mp(X_+,\ul{x}_+))$ for either choice of
  sign.
\end{lemma}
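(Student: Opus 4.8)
The plan is to imitate the proof of Lemma~\ref{Kbrane}, with the single cap there replaced by the trivalent vertex and with Lemma~\ref{point} supplying the local rigidity at the vertex. The case of no vertex is Lemma~\ref{Lex}\eqref{cbord}, so suppose $\Gamma$ has exactly one vertex $v$ and, being elementary, no critical point of $f|_\Gamma$. Choose $c_-<f(v)<c_+$ close to $f(v)$ so that $f^{-1}([b_-,c_-])$ and $f^{-1}([c_+,b_+])$ are cylindrical; then $f^{-1}(c_+)$ is gotten from $f^{-1}(c_-)$ by replacing a twice-punctured disk carrying the two incoming ${\bf 1}$-strands by a once-punctured disk carrying the outgoing ${\bf 2}$-strand, the three strands being joined at $v$ by a small tripod. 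Applying the Seifert--van~Kampen theorem to the splitting of $Y\ssm\Gamma$ at level $c_-$, and using the tripod relation at $v$ (the loop around the outgoing puncture is the product $\gamma_i\gamma_j$ of the loops around the two incoming ones), gives $\pi_1(Y\ssm\Gamma)\cong\pi_1(X_-\ssm\ul x_-)$ together with the extra holonomy constraint $\varphi(\gamma_i\gamma_j)\in\cC_{\bf 2}$, whence
$$ L(Y,\Gamma,\phi)\;=\;\bigl\{\,[\varphi]\in M(X_-,\ul x_-)\ \big|\ \varphi(\gamma_i)\varphi(\gamma_j)\in\cC_{\bf 2}\,\bigr\}, $$
the restriction to $M(X_-,\ul x_-)$ being the tautological inclusion, and the restriction to $M(X_+,\ul x_+)$ replacing the pair $\varphi(\gamma_i),\varphi(\gamma_j)\in\cC_{\bf 1}$ by the single holonomy $\varphi(\gamma_i)\varphi(\gamma_j)$. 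That the displayed locus is a smooth submanifold I would obtain, as in Proposition~\ref{smooth}, from the group-valued moment map transversality criterion of \cite{al:mom} applied to the quasi-Hamiltonian space $\cC_{\bf 1}\times\cC_{\bf 1}$ with moment map $(h_1,h_2)\mapsto h_1h_2$: Lemma~\ref{point} says its reduction at $\cC_{\bf 2}$ is a single point, which forces the relevant level set to be clean, so that the first projection is an embedding.

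Next I would treat the two projections. For $\pi_+\colon L(Y,\Gamma,\phi)\to M(X_+,\ul x_+)$, the fibre over an irreducible class with merged holonomy $g\in\cC_{\bf 2}$ is the set of factorizations $\{(h_1,h_2)\in\cC_{\bf 1}\times\cC_{\bf 1}\mid h_1h_2=g\}$, modulo the (central) residual gauge. The analysis in the proof of Lemma~\ref{point} --- $\cC_{\bf 1}$ is a rank-one symmetric space, $g$ has a two-dimensional eigenspace, and after conjugation $h_1$ is a fixed diagonal element with $h_2=\Ad(o^{1/2})h_1$ running over the orbit of the rotation subgroup $O$ --- identifies this factorization space with a single $\C P^1=S^2$; spread equivariantly over a tubular neighbourhood of the level set, this shows $\pi_+$ is a locally trivial $S^2$-bundle. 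A dimension count (using $2\dim\cC_{\bf 1}-\dim\cC_{\bf 2}=4$) then shows $L(Y,\Gamma,\phi)$ has codimension two in $M(X_-,\ul x_-)$; its rank-two normal bundle is the symplectic normal bundle to $L(Y,\Gamma,\phi)$ regarded as the fixed locus of the circle action generated by the twist around $\gamma_i\gamma_j$ at the extremal conjugacy class $\cC_{\bf 2}$ of the one-parameter family \eqref{union}. Since this bundle is built equivariantly from tangent bundles of conjugacy classes, which sit inside diagonals of $G^{2g}\times\cC_{\ul\mu}$, it inherits a spin structure from the equivariant spin structure on $G$ --- the same mechanism that produces the background classes \eqref{bpm} in Lemma~\ref{Kbrane}.

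The remaining assertions are then formal. Since $M(X_+,\ul x_+)$ is simply connected --- being homeomorphic to a moduli space of parabolic bundles \cite{ms:pb}, which is simply connected \cite{ni:co} --- and $S^2$ is simply connected, the homotopy exact sequence of $\pi_+$ gives $\pi_1(L(Y,\Gamma,\phi))=1$; in particular $L(Y,\Gamma,\phi)$ is orientable, and is oriented by the symplectic orientation of the base together with the complex orientation of the $S^2$-fibre. The spin normal bundle then equips the inclusion into $M(X_-,\ul x_-)$, and symmetrically the one into $M(X_+,\ul x_+)$, with a relative spin structure with background class $(b_\pm(X_-,\ul x_-),b_\mp(X_+,\ul x_+))$ of \eqref{bpm}; it is unique by simple-connectivity, and compatibility under composition follows from uniqueness as in Lemma~\ref{Kbrane}. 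The main obstacle is the second paragraph: one must upgrade the pointwise rigidity of Lemma~\ref{point} (``the reduction is a \emph{single} point'') to the two uniform statements used above --- that the quasi-Hamiltonian level set over $\cC_{\bf 2}$ is clean, so that $L(Y,\Gamma,\phi)$ is a genuine smooth codimension-two submanifold of $M(X_-,\ul x_-)$ with a rank-two normal bundle, and that the factorization fibres of $\pi_+$ assemble into a locally trivial $S^2$-bundle rather than jumping in dimension; carrying this out carefully is where the real work of the lemma lies.
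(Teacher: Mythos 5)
There is a genuine gap, and you have in fact located it yourself: your closing sentence concedes that the two statements on which everything rests --- that the locus $\{\varphi(\gamma_i)\varphi(\gamma_j)\in\cC_{\bf 2}\}$ is a smooth codimension-two submanifold of $M(X_-,\ul{x}_-)$ and that the factorization fibres of $\pi_+$ form a locally trivial $S^2$-bundle --- are left unproved, and the ``cleanness'' you invoke does not follow from the mere fact that the reduction in Lemma \ref{point} is a single point. The paper closes exactly this gap not by a transversality or cleanness argument but by an explicit stabilizer computation: the fibre of $\pi_+$ over a class with merged holonomy $g\in\cC_{\bf 2}$ is the homogeneous space obtained from the transitive action of the centralizer of $g$ on the factorizations (this transitivity is the actual content of the proof of Lemma \ref{point}), namely $S(U(2)\times U(r-2))/\bigl(S(U(1)\times U(r-1))\cap \Ad(\sigma_{12})S(U(1)\times U(r-1))\bigr)\cong S(U(2)\times U(r-2))/S(U(1)\times U(1)\times U(r-2))\cong S^2$, which is uniform in $g$ and yields both the fibration and the embedding at once.

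The more serious defect is your argument that the rank-two normal bundle is spin. You assert that $L(Y,\Gamma,\phi)$ is the fixed locus of the circle action generated by the Goldman twist around $\gamma_i\gamma_j$ at the extremal class of \eqref{union}, and that the normal bundle ``inherits'' a spin structure because it is built from tangent bundles of conjugacy classes. Neither step holds up: $\cC_{\bf 2}$ is not central, so the twist flow does not act trivially on this level set and $L(Y,\Gamma,\phi)$ is not a fixed locus of that circle action; and being built from tangent bundles of conjugacy classes gives no spin conclusion --- $T\cC_{\bf 1}\cong T\C\P^{r-1}$ is itself not spin for odd $r$ (the equivariant spin property of $G$ is used in the paper only to compare background classes modulo $w_2(TM)$, not to produce spin structures on such bundles). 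The paper instead computes the normal bundle directly from the differential of the product map at the level set $g_1g_2\in\cC_{\bf 2}$: since the stabilizer of the reference factorization $(\exp(\omega_1/2),\exp(s_{12}\omega_1/2))$ is $S(U(1)^2\times U(r-2))$, the normal bundle is the associated bundle $SU(r)\times_{S(U(2)\times U(r-2))}\su(2)/\u(1)$, and spin-ness is read off from this explicit description. That identification, which is what actually carries the lemma (and also feeds the descent argument producing the two relative spin structures with background classes \eqref{bpm}), is absent from your proposal, so the proof as written does not go through.
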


\begin{proof}   Let $Y,\Gamma$ be as in the statement of the Lemma.   
By Lemma \ref{point} the correspondence $L(Y,\Gamma,\phi)$ may be
identified with the set of points in the moduli space for the incoming
surface
$$ [a_1,\ldots,a_{2g},b_1,\ldots,b_h] \in (G^{2g} \times
\cC_{\ul{\mu}_- - \{ \bf 1, \bf 1 \}} \times \cC_{\bf 1} \times
\cC_{\bf 1}) \qu G, \quad b_{h-1} b_h \in \cC_{\bf 2} .$$
It follows that the map $L(Y,\Gamma,\phi)$ to $M(X_-,\ul{x}_-)$ is an
embedding and the map to the moduli space for the outgoing surface
$$M(X_+,\ul{x}_+) = (G^{2g} \times \cC_{\ul{\mu}_- - \{ \bf 1, \bf 1
  \}} \times \cC_{\bf 2} ) \qu G$$ 
has fiber equal to the quotient of stabilizers 
\begin{multline}
S(U(2) \times U(r-2))/( S(U(1) \times U(r-1)) \cap
\Ad(\sigma_{12})S(U(1) \times U(r-1)) \\ \cong S(U(2) \times U(r-2))/(
S(U(1) \times U(1) \times U(r-2)) \cong S^2
\end{multline}
where $\sigma_{12}$ is the $(12)$ permutation matrix.  The normal
bundle for the embedding is determined by the image of the
differential at the moment map at the level set $g_1 g_2 =
\omega(\omega_2/2)$.  Since the stabilizer of $(\exp(\omega_1/2),
\exp(s_{12} \omega_1/2)) \in \cC_1^2$ is $S(U(1)^2 \times U(r-2))$ the
normal bundle is the associated bundle
\begin{multline} SU(r) \times_{S(U(2) \times U(r-2))} \left( \s(\u(2) \times
\u(r-2))/\s(\u(1)^2 \times \u(r-2)) \right) \\ \cong SU(r)
\times_{S(U(2) \times U(r-2))} \su(2) / \u(1) \end{multline}
which is spin. 

Relative spin structures with the given background classes exist by
the following argument.  In the case of
$(b_-(X_-,\ul{x}_-),b_+(X_+,\ul{x}_+))$ resp.
$(b_+(X_-,\ul{x}_-),b_-(X_+,\ul{x}_+))$ a bundle with the given
background class is obtained from descent of $T\cC_{\bf 2}$ resp.  $T
\cC_{\bf 1}^2$ to $M(X_+,\ul{x}_+)$ resp. $M(X_-,\ul{x}_-)$, since
tangent bundle to the fiber resp. the normal bundle is spin.
\end{proof}

\begin{definition} {\rm (Correspondence for vertex-admissible labellings)}  
Suppose $(Y,\Gamma)$ is a graph with labelling $\ul{\nu}$ that is
vertex-admissible for each vertex.  Let $M(Y,\Gamma)$ denote the
moduli space of flat bundles on the complement of $\Gamma$ in $Y$ with
holonomies around the edges of $\Gamma$ given by
$\ul{\nu}$. Restriction to the boundary and pullback under the
boundary identification define a map
\begin{equation} \label{restr2} M(Y,\Gamma) \to
M(X_-,\ul{x}_-)^- \times M(X_+,\ul{x}_+).\end{equation}
Denote the image of \eqref{restr2} by $L(Y,\Gamma,\phi)$.
\end{definition}  

\begin{lemma} Let $(Y,\Gamma,\phi)$ be an elementary graph containing
a single vertex and $\nu$ an admissible labelling of the edges of
$\Gamma$.  Then $L(Y,\Gamma,\phi)$ is a smooth Lagrangian
correspondence from $M(X_-,\ul{x}_-)$ to $M(X_+,\ul{x}_+).$
\end{lemma}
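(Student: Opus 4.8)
The plan is to reduce this to the single-vertex structure results already established, namely Lemma~\ref{point}, Lemma~\ref{Gspin}, and the smoothness arguments in the proof of Lemma~\ref{Lex}, together with Proposition~\ref{smooth} guaranteeing that the boundary moduli spaces $M(X_\pm,\ul{x}_\pm)$ are smooth manifolds. First I would fix an elementary graph $(Y,\Gamma,\phi)$ with a single vertex $v$ and split into cases according to the type of the unique ``singular feature'' in the Cerf piece: either (i) the piece contains no vertex and no critical point of $f|_\Gamma$, in which case $Y$ is cylindrical and $L(Y,\Gamma,\phi)$ is the graph of a symplectomorphism by exactly the argument of Lemma~\ref{Lex}~\eqref{cbord}; or (ii) the piece contains a critical point of $f|_\Gamma$ but no vertex, which is literally the cup/cap situation of Lemma~\ref{Lex}~\eqref{etangle}, so $L(Y,\Gamma,\phi)$ is a coisotropic embedding into one side and a conjugacy-class-fiber bundle over the other; or (iii) the piece contains a genuine trivalent vertex, which is the new case and the substance of the lemma.

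For case (iii), by Definition~\ref{labels} the local picture near $v$ is a three-ball $B(v)$ whose boundary sphere $S(v)$ meets $\Gamma$ in three points with labels $\ul{\mu}(v)$, and vertex-admissibility says $\# M(S(v),\ul{x}(v),\ul{\mu}(v)) \le 1$. I would use Seifert--van Kampen applied to $Y = (Y\ssm B(v)) \cup B(v)$ to present $\pi_1(Y\ssm\Gamma)$, and correspondingly present $M(Y,\Gamma)$ as a fiber product: the moduli space on $Y\ssm B(v)$ (a bordism-with-tangle, handled by the previous section) fibered over $M(S(v),\ul{x}(v),\ul{\mu}(v))$. Because the latter is a point (when nonempty), this fiber product is cut out transversally --- the argument is the one already given in Lemma~\ref{Gspin}: the relevant level set condition is $b_{h-1}b_h \in \cC_{\bf 2}$ inside a product of two rank-one symmetric spaces $\cC_{\bf 1}^2$, and the computation there shows $L(Y,\Gamma,\phi)$ is the free quotient by the projectivized unitary group of a transversally cut-out submanifold, hence smooth, with the projection to $M(X_+,\ul{x}_+)$ an $S^2$-bundle and the projection to $M(X_-,\ul{x}_-)$ an embedding. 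Smoothness of $M(X_\pm,\ul{x}_\pm)$ themselves comes from admissibility of the edge labels via Proposition~\ref{smooth}. Finally, to see $L(Y,\Gamma,\phi)$ is Lagrangian, I would run the same vanishing-of-two-form computation as at the end of the proof of Lemma~\ref{Lex}: decompose the symplectic form \eqref{fus2} on $G^{2g}\times\cC_{\ul{\mu}_+}$ according to the splitting that singles out the $\cC_{\bf 1}^2$ factors, observe that on the relevant diagonal-type locus the cross term and one of the summands vanish, and pass to the quotient, so that the fibers of one projection are isotropic of complementary dimension --- forcing $L(Y,\Gamma,\phi)$ to be Lagrangian in $M(X_-,\ul{x}_-)^- \times M(X_+,\ul{x}_+)$.

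The main obstacle I expect is the transversality/smoothness claim in case (iii): one must check that gluing the bordism-with-tangle $Y\ssm B(v)$ to $B(v)$ along $S(v)$ yields a transversally cut-out level set, i.e.\ that the representation variety of $Y\ssm\Gamma$ really is smooth at the relevant representations even though the intermediate moduli space $M(S(v),\ul{x}(v),\ul{\mu}(v))$ could in principle fail to meet the image of the restriction map cleanly. Vertex-admissibility (the ``$\le 1$'' condition) is precisely what rules this out --- it forces the restriction of any representation to $S(v)\ssm\ul{x}(v)$ to be the unique rigid one --- but turning ``rigid'' into ``transverse'' requires the explicit stabilizer computation of Lemma~\ref{Gspin} rather than a soft argument, and this is where the bulk of the work sits. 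Once smoothness is in hand, the Lagrangian property and the fiber-bundle/embedding structure follow by the now-routine two-form computation, and the three cases together establish the lemma.
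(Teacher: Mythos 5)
Your plan reproduces the paper's skeleton in outline (holonomy presentation near the vertex, rigidity from vertex-admissibility, and the fusion decomposition of the two-form for the isotropy statement), but there is a genuine gap in the step that carries the weight: you derive smoothness, the embedding into $M(X_-,\ul{x}_-)$, the $S^2$-fibration over $M(X_+,\ul{x}_+)$, and ultimately the Lagrangian property from the stabilizer computation of Lemma \ref{Gspin}. That lemma is stated and proved only for a trivalent vertex with the standard labels ${\bf 1},{\bf 1},*{\bf 2}$, whereas the lemma you are proving allows an arbitrary vertex-admissible labelling in the sense of Definition \ref{labels} -- any valence, any labels for which the vertex-sphere moduli space $M(S(v),\ul{x}(v),\ul{\mu}(v))$ is empty or a point. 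For such a general vertex neither projection of $L(Y,\Gamma,\phi)$ need be an embedding or a bundle with conjugacy-class (or $S^2$) fiber, so both your smoothness argument and your closing ``coisotropic embedding on one side, isotropic fibers of complementary dimension on the other'' argument presuppose structure that is special to the standard labelling. As written, your proof establishes the standard-label trivalent case but not the statement in its stated generality.

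The fix is softer than the ``rigid implies transverse'' issue you flag, and it is how the paper argues: no transversality at the vertex is needed. Choosing presentations of the fundamental groups of the level sets just below and just above the vertex, the correspondence is cut out, in the conjugacy-class factors attached to $v$, by the single equation \eqref{set} equating the product of holonomies of the incoming edges with that of the outgoing ones (it is diagonal in the remaining factors). Vertex-admissibility says exactly that this solution set, modulo simultaneous conjugation, is a single point; hence the solution set is one $G$-orbit, which is automatically a smooth submanifold -- no stabilizer computation and no transversality statement are required, and smoothness of the quotient comes from admissibility of the boundary labels via Proposition \ref{smooth}, not from the vertex. Moreover, since the reduced space of the level set \eqref{set} is a point, the two-form $\omega_{0,\ul{\mu}(v)}$ restricts to zero there, i.e.\ \eqref{set} is isotropic in $\cC_{\ul{\mu}_-\cap\ul{\mu}(v)}^-\times\cC_{\ul{\mu}_+\cap\ul{\mu}(v)}$; feeding this into the decomposition \eqref{fus4} of the symplectic forms on the two ends (the analogue of the computation at the end of Lemma \ref{Lex}, which is the part of your proposal that does match the paper) shows $L(Y,\Gamma,\phi)$ is isotropic in $M(X_-,\ul{x}_-)^-\times M(X_+,\ul{x}_+)$, and a dimension count upgrades this to Lagrangian. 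Lemma \ref{Gspin} is then only needed later, for relative spin structures in the standard-label case, not for the present lemma.
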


\begin{proof}  We write
$\ul{\mu}_\pm \ssm \ul{\mu}(v)$ resp.\ $\ul{\mu}_\pm \cap \ul{\mu}(v)$
  for the labels of those markings in $\ul{x}_\pm$ that are not
  resp.\ are connected to $v$ by an edge.  By \eqref{fus2}, the
  symplectic forms on the two ends are those obtained by reduction
  from
\begin{equation} \label{fus4}  
\omega_{g,\ul{\mu}_\pm - \ul{\mu}(v)} + 
 \omega_{0,\ul{\mu}_\pm \cap \ul{\mu}(v)}  + (1/2)
\langle \Phi_{g,\ul{\mu}_\pm - \ul{\mu}(v)}^* \theta \wedge 
\Phi_{0, \ul{\mu}_\pm \cap \ul{\mu}(v)}^* \ol{\theta} \rangle.
\end{equation}
Let $d$ be the value of $f$ at the vertex and $\eps$ a small number.
Choose a presentation for the fundamental group of $f^{-1}(d - \eps)$;
then a presentation for the fundamental group of $f^{-1}(d + \eps)$ is
obtained by replacing the generators for the strands incoming to the
vertex with those outgoing.  With respect to this set of generators,
the correspondence defined by the bordism is given by
\begin{equation} \label{set}
 \prod_{\mu \in \ul{\mu}_- \cap \ul{\mu}(v)} c_\mu = \prod_{\mu \in
\ul{\mu}_+ \cap \ul{\mu}(v)} c_\mu \end{equation}  
and descending to the quotient.  The equation \eqref{set} defines an
isotropic submanifold of $ \cC_{\ul{\mu}_- \cap \ul{\mu}(v)}^- \times
\cC_{\ul{\mu}_+ \cap \ul{\mu}(v)} $ since the moduli space for the
sphere around the vertex is a point by Lemma \ref{point}.  It follows
from \eqref{fus4} that the \eqref{set} defines an isotropic, hence
Lagrangian submanifold of the product $M(X_-,\ul{x}_-)^- \times
M(X_+,\ul{x}_+)$.
\end{proof} 

The following associates a generalized Lagrangian 
correspondence to any graph with admissible labelling:

\begin{definition} {\rm (Generalized Lagrangian correspondence for a decorated
graph)} Let $(f,\ul{b})$ be a cylindrical Cerf decomposition of
  $\Gamma$ equipped with vertex-admissible, monotone labels
  $\ul{\nu}$.  Let
$$L(Y_j,\Gamma_j,\phi_j) \subset M(X_{j-1},\ul{x}_{j-1})^- \times
  M(X_j,\ul{x}_j)$$ 
denote the Lagrangian submanifold of representations that extend over
$(Y_j,\Gamma_j)$.  Define
$$ \ul{L}(Y,\Gamma,\phi) := (L(Y_1,\Gamma_1,\phi_1), \ldots ,
  L(Y_m,\Gamma_m,\phi_m)) .$$ 
\end{definition}

\begin{proposition}
  \label{indep3}
{\rm (Independence of the generalized Lagrangians from all choices up
  to equivalence)} Let $(Y,\Gamma,\phi)$ be an admissible decorated
graph from $(X_-,\ul{x}_-)$ to $(X_+,\ul{x}_+)$.  Then the generalized
Lagrangian correspondence $\ul{L}(Y,\Gamma,\phi)$ is independent, up
to equivalence, of the choice of Cerf decomposition.
\end{proposition}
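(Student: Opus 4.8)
The plan is to deduce the proposition from Theorem~\ref{cerfgraph} in exactly the way Theorem~\ref{extendt} was deduced from Theorem~\ref{cerftangle}. Given two cylindrical Morse data $(f_0,\ul b_0)$ and $(f_1,\ul b_1)$ for $(Y,\Gamma,\phi)$, Theorem~\ref{cerfgraph} connects the two Cerf decompositions by a finite chain of elementary moves: (i) critical-point cancellation or creation in the interior of an edge, (ii) critical-point/vertex order reversal, (iii) vertex/critical-point cancellation, and (iv) gluing of a cylindrical elementary graph to an adjacent piece. Since $\ul L(Y,\Gamma,\phi)$ is by definition the concatenation of the $L(Y_j,\Gamma_j,\phi_j)$, it suffices to show that under each move the two concatenations differ only by replacing an adjacent pair $(L_{(j-1)j},L_{j(j+1)})$ by its geometric composition $L_{(j-1)j}\circ L_{j(j+1)}$, \emph{and} that this composition is transverse and embedded, so that the two generalized correspondences are identified by the relation \eqref{compequiv}. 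Brane-structure compatibility along these identifications is then automatic: the relative spin structures are the unique ones with the background classes of \eqref{bpm} by Lemmas~\ref{Kbrane} and~\ref{Gspin}, so they must agree up to the allowed shift by $w_2(TM)$.

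Three of the four moves I expect to be routine. Move (iv) is immediate from Lemma~\ref{Lex}~\eqref{cbord}: the cylindrical piece contributes the graph of a symplectomorphism, composition with a graph is always transverse and embedded, and the Seifert--van Kampen computation used for compositions of tangles identifies the result with the correspondence of the glued bordism-with-graph. Moves (i) and (ii) reduce to the critical-point cancellation and critical-point reversal computations already carried out in the proof of Theorem~\ref{extendt}: in suitable holonomy presentations of $\pi_1$ of the level sets, the two pieces constrain disjoint blocks of the holonomy variables attached to the intermediate surface $X_j$, so the tangent space to $L_{(j-1)j}\times L_{j(j+1)}$ meets $T\bigl(M(X_{j-1})\times\Delta_{M(X_j)}\times M(X_{j+1})\bigr)$ transversally by inspection, the projection is injective, and the composite is again smooth --- and for a cancellation collapses to the graph of a symplectomorphism (the diagonal, in the cup--cap case).

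The essentially new case is move (iii), the vertex/critical-point cancellation of Figures~\ref{vertcritcan} and~\ref{another}, in which an elementary graph carrying a trivalent vertex with incoming labels $\mathbf{1},\mathbf{1}$ and outgoing label $\mathbf{2}$ (or a time-reversed version) is composed with an elementary tangle carrying a cup or cap on two of the strands adjacent to that vertex. Here I would write the vertex correspondence as in Lemma~\ref{Gspin} --- the locus $\{b_{h-1}b_h\in\cC_{\bf 2}\}/G$, embedded in $M(X_-,\ul x_-)$ and fibered over $M(X_+,\ul x_+)$ with $S^2$-fiber --- and the cup/cap correspondence as the antidiagonal locus of Lemma~\ref{Lex}~\eqref{etangle}, choosing presentations of $\pi_1$ of the level sets on both sides of the vertex \emph{and} of the cancelling critical point simultaneously. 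The key input is Lemma~\ref{point}: since $M(S^2,\ul x(v),\ul\mu(v))$ is a single point, the fibre product imposes no extra constraint and pins down the identification uniquely, so the geometric composition is transverse (checked on tangent spaces as in Theorem~\ref{extendt}) and embedded, and equals the graph-of-identification correspondence attached to the simplified bordism-with-graph in which the two strands have been merged to a single edge. The creation moves follow by reversing the direction of the Morse function.

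The hard part will be the bookkeeping in move (iii): one must verify that after the geometric composition the resulting subset of $M(X_{j-1},\ul x_{j-1})^-\times M(X_{j+1},\ul x_{j+1})$ is \emph{exactly} the correspondence of the simplified graph bordism, with no reducible loci or spurious components appearing --- and this is precisely where Lemma~\ref{point}, together with the admissibility hypotheses entering Proposition~\ref{smooth} and the vertex-admissibility of $\ul\nu$, does the real work, guaranteeing that every moduli space in sight is a smooth manifold and that the fibre product is cut out transversally. Once these local identifications are in place, the monotone conclusion is immediate: all pieces are monotone with $\tau^{-1}=2r$ by Theorem~\ref{monthm}, geometric composition of monotone correspondences stays within $\Symp_{1/2r}$, and therefore the two generalized correspondences represent the same morphism, proving independence of the Cerf decomposition up to equivalence.
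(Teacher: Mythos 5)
Your proposal follows essentially the same route as the paper: reduce via Theorem~\ref{cerfgraph} to checking invariance under the Cerf moves, and verify for each move that the relevant geometric composition is transverse and embedded in the holonomy description, with the vertex/critical-point cancellation of Figure~\ref{another} as the essential new case (which the paper checks explicitly, deducing transversality from the fact that the projection of the vertex correspondence to the intermediate moduli space is a submersion, and identifying the composition with the correspondence of the simplified graph, the remaining moves being left to the reader). Your added remarks on brane structures and monotonicity are not needed for this proposition but do no harm.
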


\begin{proof}  By Theorem \ref{cerfgraph} it suffices to check
that the generalized Lagrangian correspondences are invariant up to
composition equivalence under the Cerf moves.  We check invariance in
the case depicted in Figure \ref{another} that two pieces
$L(Y_0,\Gamma_0,\phi_0)$ corresponding to an elementary graph with a
single vertex, with strands say $j,j+1$ meeting at the vertex labelled
${\bf 1}$ and an outgoing strand labelled ${\bf 2}$ and
$L(Y_1,\Gamma_1,\phi_1)$ corresponding to a piece with a single
critical point of index one connecting strands $j,j+1$ both labelled
${\bf 2}$ are replaced by a piece $L(Y_{01},\Gamma_{01},\phi_{01})$
with a single vertex with three strands $j-1,j,j+1$, the last of which
is outgoing but connected to the incoming surface.  The
correspondences may be identified with subsets of the incoming moduli
spaces
$$ L(Y_0,\Gamma_0,\phi_0) \cong \{ [a_1,\ldots,a_{2g},b_1,\ldots,b_n]
\ | \ b_j b_{j+1} \in \cC_2 \} \subset M(X_0,\ul{x}_0) $$
$$ L(Y_1,\Gamma_1,\phi_1) \cong \{
    [a_1,\ldots,a_{2g},b_1,\ldots,b_{n-1}] \ | \ b_j = b_{j+1} \}
    \subset M(X_1,\ul{x}_1) .$$
Their composition is 
$$ L(Y_0,\Gamma_0,\phi_0) \circ L(Y_1,\Gamma_1,\phi_1) =
\{ [a_1,\ldots,a_{2g},b_1,\ldots,b_n] \ | \ b_j b_{j+1} = b_{j+2}  \}
.$$
Since the projection $L(Y_0,\Gamma_0,\phi_0) \to M(X_1,\ul{x}_1)$ is a
submersion, the composition is transverse.  Hence
$$ L(Y_0,\Gamma_0,\phi_0) \circ L(Y_1,\Gamma_1,\phi_1) =
L(Y_{01},\Gamma_{01}, \phi_{01}) $$
as claimed.  Invariance under the other moves is similar and left to
the reader.
\end{proof}

\begin{definition} {\rm (Decorated Graphs)} For coprime integers $r,d >0$ and a compact oriented surface $X$ let $\Graph(X,r,d)$ denote the category of graphs whose
\begin{itemize}
\item objects are collections $\ul{x}$ of distinct oriented points of
  $X$ with admissible labels $\ul{\mu}$; that is, the same objects as
  for $\Tan(X,r,d)$;
\item morphisms are equivalence classes of labelled cylindrical graphs
  $(Y,\Gamma,\phi)$; and
\item composition of morphisms is given by gluing as in
  \eqref{composeY}.
\end{itemize} 
As before, the identity is the equivalence class of the trivial graph.
\end{definition}  

The following extends Theorem \ref{extendt} to graphs.

\begin{theorem} 
 \label{extendg}
{\rm (Symplectic-valued field theory for graphs)} For coprime integers
$r,d > 0 $, the partially defined functor $\Phi: \Graph(X,r,d) \to
\Symp_{1/2r}$ for elementary graphs extend to a field theory
for graphs in $X$.
\end{theorem}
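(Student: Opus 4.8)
The plan is to mimic the proof of Theorem~\ref{extendt}, replacing the appeal to Cerf theory for tangles (Theorem~\ref{cerftangle}) with Cerf theory for graphs (Theorem~\ref{cerfgraph}), and checking that the partially defined functor $\Phi$ sending an object $(\ul{x},\ul{\mu})$ to $M(X,\ul{x})$ and an elementary graph $[(Y_j,\Gamma_j,\phi_j)]$ to $L(Y_j,\Gamma_j,\phi_j)$ satisfies the graph analogue of the Cerf relations of Theorem~\ref{suffices}. First I would observe that $\Phi$ is well-defined on equivalence classes of elementary graphs: any equivalence $\psi\colon(Y,\Gamma,\phi)\to(Y',\Gamma',\phi')$ induces, by pullback of representations of $\pi_1$ of the complement, an equality $L(Y,\Gamma,\phi)=L(Y',\Gamma',\phi')$ of subsets of the product of boundary moduli spaces. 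That $L(Y_j,\Gamma_j,\phi_j)$ is a smooth Lagrangian correspondence in each of the cases (no critical point/vertex, a critical point of index $0$, $1$, or $2$, or a single trivalent vertex with standard labels) is already established: for the first three cases it follows from Corollary~\ref{LYsmooth} and Lemma~\ref{Lex}, and for the vertex case from Lemma~\ref{point} together with the Lemma preceding Proposition~\ref{indep3}. The relative spin structures with the background classes \eqref{bpm} are supplied by Lemma~\ref{Kbrane} for the tangle pieces and by Lemma~\ref{Gspin} for the vertex pieces, and these are compatible under composition by uniqueness; simple-connectivity of each $L(Y_j,\Gamma_j,\phi_j)$ (needed to land in $\Symp_{1/2r}$) follows from the same arguments.

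Next I would verify the Cerf relations corresponding to the four types of moves in Theorem~\ref{cerfgraph}. The gluing move (absorbing a cylindrical piece) and the critical point cancellation and critical point/critical point reversal moves involving only edges of $\Gamma$ are literally the tangle computations already carried out in the proof of Theorem~\ref{extendt}, since an edge of a graph behaves locally exactly like a strand of a tangle. What is genuinely new are the moves involving a vertex: the critical point/vertex switch (Figure~\ref{critvert}), the vertex/vertex switch (Figure~\ref{vertvert}), and the vertex/critical point cancellation (Figures~\ref{vertcritcan}, \ref{another}). For each of these I would write the two relevant elementary correspondences in the holonomy presentation, compute the geometric composition, and check that it is transverse and embedded and agrees with the correspondence assigned to the composed elementary piece. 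The vertex/critical point cancellation of Figure~\ref{another} is already done in the proof of Proposition~\ref{indep3}: there one checks $L(Y_0,\Gamma_0,\phi_0)\circ L(Y_1,\Gamma_1,\phi_1)=L(Y_{01},\Gamma_{01},\phi_{01})$, with transversality coming from the submersivity of the projection $L(Y_0,\Gamma_0,\phi_0)\to M(X_1,\ul{x}_1)$. The remaining vertex moves are handled by the same bookkeeping: for a switch one checks that the product of the two correspondences meets $M_0\times\Delta_{M_1}\times M_2$ transversally with the expected image, exactly as in the critical point reversal computation of Theorem~\ref{extendt}; for the vertex/vertex switch and the second vertex/critical point cancellation one uses Lemma~\ref{point} to identify the local contribution of each vertex as a single point, so that the composed level-set equations define a smooth embedded Lagrangian.

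Having checked all the Cerf relations, I would then invoke the graph analogue of Theorem~\ref{suffices} — whose statement and proof are identical to the tangle case, with Theorem~\ref{cerfgraph} replacing Theorem~\ref{cerftangle} — to conclude that $\Phi$ extends uniquely to a functor $\Phi\colon\Graph(X,r,d)\to\Symp_{1/2r}$. Finally, Proposition~\ref{indep3} guarantees that the resulting generalized Lagrangian correspondence $\ul{L}(Y,\Gamma,\phi)$ is independent up to composition equivalence of the Cerf decomposition, so the extension is well-defined on equivalence classes of graphs; monotonicity of all the pieces follows from Theorem~\ref{monthm} since the labels are standard, hence monotone, and the moduli spaces $M(X,\ul{x})$ are smooth by Lemma~\ref{point} and Proposition~\ref{smooth}. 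The main obstacle is the explicit verification of the vertex Cerf moves: one must set up compatible presentations of the fundamental groups across the relevant level sets so that the holonomy descriptions of the two sides of each move line up, and then confirm transversality and embeddedness of the geometric compositions — this is routine in spirit, following the model computations already in the paper, but requires care in tracking which generators are replaced at each vertex or critical point.
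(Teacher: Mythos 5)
Your proposal is correct and follows essentially the same route as the paper: the paper's proof simply invokes Proposition \ref{indep3} (whose proof is exactly the Cerf-move verification via Theorem \ref{cerfgraph} and the holonomy computations you outline) together with Lemma \ref{Kbrane} and Lemma \ref{Gspin} for the relative spin structures. The only difference is presentational—you re-derive the invariance under the graph Cerf moves through a graph analogue of Theorem \ref{suffices}, whereas the paper delegates that entire step to Proposition \ref{indep3}—so no new mathematical content is needed beyond what you describe.
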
 

\begin{proof} By Proposition \ref{indep3}, it suffices to show 
that the correspondences are equipped with relative spin structures;
these are provided by Lemma \ref{Kbrane} for correspondences involving 
critical points, and Lemma \ref{Gspin} for correspondences involving 
vertices. 
\end{proof} 

\noindent 
Using Corollary \ref{categorify} we obtain a \ainfty-category-valued
field theory for graphs.  In particular for any graph with admissible
we obtain a functor between Fukaya categories
$$ \Phi(\ul{L}(Y,\Gamma,\phi)) : \GFuk(M(X_-,\ul{x}_-),w) \to
\GFuk(M(X_+,\ul{x}_+),w) $$
which is independent of the choice of Cerf decomposition of the graph. 

\def\cprime{$'$} \def\cprime{$'$} \def\cprime{$'$} \def\cprime{$'$}
\def\cprime{$'$} \def\cprime{$'$}
\def\polhk#1{\setbox0=\hbox{#1}{\ooalign{\hidewidth
      \lower1.5ex\hbox{`}\hidewidth\crcr\unhbox0}}} \def\cprime{$'$}
\def\cprime{$'$}


\begin{thebibliography}{10}

\bibitem{ag:ei}
S.~Agnihotri and C.~Woodward.
\newblock Eigenvalues of products of unitary matrices and quantum {S}chubert
  calculus.
\newblock {\em Math. Res. Lett.}, 5(6):817--836, 1998.

\bibitem{al:mom}
A.~Alekseev, A.~Malkin, and E.~Meinrenken.
\newblock Lie group valued moment maps.
\newblock {\em J. Differential Geom.}, 48(3):445--495, 1998.

\bibitem{at:mo} M.~F. Atiyah and R.~Bott.  \newblock The
  {Y}ang-{M}ills equations over {R}iemann surfaces.  \newblock {\em
    Phil. Trans. Roy. Soc. London Ser. A}, 308:523--615, 1982.

\bibitem{boden:var}
H.~U. Boden and Y.~Hu.
\newblock Variations of moduli of parabolic bundles.
\newblock {\em Math. Ann.}, 301(3):539--559, 1995.

\bibitem{bs:sg}
A.~Borel and J.~De~Siebenthal.
\newblock Les sous-groupes ferm\'es de rang maximum des groupes de {L}ie clos.
\newblock {\em Comment. Math. Helv.}, 23:200--221, 1949.

\bibitem{ck:kh}
S. Cautis and J. Kamnitzer.
\newblock Knot homology via derived categories of coherent sheaves. {II}.
  {$\lie{sl}_m$} case.
\newblock {\em Invent. Math.}, 174(1):165--232, 2008.

\bibitem{ce:st}
J.~Cerf.
\newblock La stratification naturelle des espaces de fonctions
  diff\'erentiables r\'eelles et le th\'eor\`eme de la pseudo-isotopie.
\newblock {\em Inst. Hautes \'Etudes Sci. Publ. Math.}, (39):5--173, 1970.

\bibitem{co:in}
O. Collin and B. Steer.
\newblock Instanton {F}loer homology for knots via {$3$}-orbifolds.
\newblock {\em J. Differential Geom.}, 51(1):149--202, 1999.

\bibitem{fuk:bo}
K.~Fukaya.
\newblock {F}loer homology for three manifolds with boundary {I}, 1997.
\newblock Unpublished \href{http://www.math.kyoto-u.ac.jp/~fukaya/fukaya.html}{manuscript}. 

\bibitem{fooo}
K.~Fukaya, Y.-G.~Oh, H.~Ohta, and K.~Ono.
\newblock {\em Lagrangian intersection {F}loer theory: anomaly and
  obstruction.}, volume~46 of {\em AMS/IP Studies in Advanced Mathematics}.
\newblock American Mathematical Society, Providence, RI, 2009.


\bibitem{gk:indef}
D.~T.~Gay and  R.~Kirby.
\newblock {Indefinite Morse 2-functions; broken fibrations and generalizations}.
\newblock \href{http://www.arxiv.org/abs/arxiv:1102.0750}{arxiv:1102.0750}.

\bibitem{ge:meha}
S.~I. Gelfand and Y.~I. Manin.
\newblock {\em Methods of homological algebra}.
\newblock Springer Monographs in Mathematics. Springer-Verlag, Berlin, second
  edition, 2003.

\bibitem{go:in}
W.~M. Goldman.
\newblock Invariant functions on {L}ie groups and {H}amiltonian flows of
  surface group representations.
\newblock {\em Invent. Math.}, 85:263--302, 1986.

\bibitem{gg:stable}
M.~Golubitsky and V.~Guillemin.
\newblock {\em Stable mappings and their singularities}.
\newblock Springer-Verlag, New York, 1973.
\newblock Graduate Texts in Mathematics, Vol. 14.


\bibitem{gu:moc}
V.~Guillemin, V.~Ginzburg, and Y.~Karshon.
\newblock {\em Moment maps, bordisms, and {H}amiltonian group actions},
  volume~98 of {\em Mathematical Surveys and Monographs}.
\newblock American Mathematical Society, Providence, RI, 2002.
\newblock Appendix J by Maxim Braverman.

\bibitem{gu:bi}
V.~Guillemin and S.~Sternberg.
\newblock Birational equivalence in the symplectic category.
\newblock {\em Invent. Math.}, 97(3):485--522, 1989.

\bibitem{ha:rd}
R.~Hartshorne.
\newblock {\em Residues and duality}.
\newblock Lecture notes of a seminar on the work of A. Grothendieck, given at
  Harvard 1963/64. With an appendix by P. Deligne. Lecture Notes in
  Mathematics, No. 20. Springer-Verlag, Berlin, 1966.

\bibitem{hirsch}
Morris~W. Hirsch.
\newblock {\em Differential topology}, volume~33 of {\em Graduate Texts in
  Mathematics}.
\newblock Springer-Verlag, New York, 1994.
\newblock Corrected reprint of the 1976 original.

\bibitem{jac:st}
M.~Jacobsson and R.~L. Rubinsztein.
\newblock Symplectic topology of {S}{U}(2)-representation varieties and link
  homology, {I}: Symplectic braid action and the first {C}hern class.
\newblock \href{http://www.arxiv.org/abs/0806.2902}{arxiv:0806.2902}.

\bibitem{kac:inf}
V.~G. Kac.
\newblock {\em Infinite-dimensional {L}ie algebras}.
\newblock Cambridge University Press, Cambridge, third edition, 1990.

\bibitem{keating:dehn}
A.~{Keating}.
\newblock {Dehn twists and free subgroups of symplectic mapping class groups}.
\newblock \href{http://www.arxiv.org/abs/1204.2851}{arxiv:1204.2851}.

\bibitem{kh:cat}
M.~Khovanov.
\newblock Categorifications of the colored {J}ones polynomial.
\newblock {\em J. Knot Theory Ramifications}, 14(1):111--130, 2005.

\bibitem{kr:ma}
M.~Khovanov and L.~Rozansky.
\newblock Matrix factorizations and link homology.
\newblock {\em Fund. Math.}, 199(1):1--91, 2008.

\bibitem{ki:coh}
F.~C. Kirwan.
\newblock {\em Cohomology of Quotients in Symplectic and Algebraic Geometry},
  volume~31 of {\em Mathematical Notes}.
\newblock Princeton Univ. Press, Princeton, 1984.

\bibitem{km:em}
P.~B. Kronheimer and T.~S. Mrowka.
\newblock Gauge theory for embedded surfaces. {I}.
\newblock {\em Topology}, 32(4):773--826, 1993.

\bibitem{km:kh}
P.~B. Kronheimer and T.~S. Mrowka.
\newblock Knot homology groups from instantons.
\newblock {\em J. Topol.}, 4(4):835--918, 2011.

\bibitem{ohkw:st}
D.~Kwon and Y.-G. Oh.
\newblock Structure of the image of (pseudo)-holomorphic discs with totally
  real boundary condition.
\newblock {\em Comm. Anal. Geom.}, 8(1):31--82, 2000.
\newblock Appendix 1 by Jean-Pierre Rosay.

\bibitem{la:ex}
L.~Lazzarini.
\newblock Existence of a somewhere injective pseudo-holomorphic disc.
\newblock {\em Geom. Funct. Anal.}, 10(4):829--862, 2000.

\bibitem{lurie:class} J.~Lurie.  \newblock On the classification of
  topological field theories.  \newblock In {\em Current developments
    in mathematics, 2008}, pages 129--280.  Int. Press, Somerville,
  MA, 2009.

\bibitem{ma:li}
C.~Manolescu.
\newblock Link homology theories from symplectic geometry.
\newblock {\em Adv. Math.}, 211(1):363--416, 2007.

\bibitem{ms:jh}
D.~McDuff and D.~Salamon.
\newblock {\em {$J$}-holomorphic curves and symplectic topology}, volume~52 of
  {\em American Mathematical Society Colloquium Publications}.
\newblock American Mathematical Society, Providence, RI, 2004.

\bibitem{ms:pb}
V.~B. Mehta and C.~S. Seshadri.
\newblock Moduli of vector bundles on curves with parabolic structure.
\newblock {\em Math. Ann.}, 248:205--239, 1980.

\bibitem{me:lo}
E.~Meinrenken and C.~Woodward.
\newblock {H}amiltonian loop group actions and {V}erlinde factorization.
\newblock {\em Journal of Differential Geometry}, 50:417--470, 1999.

\bibitem{me:can}
E.~Meinrenken and C.~Woodward.
\newblock Canonical bundles for {H}amiltonian loop group manifolds.
\newblock {\em Pacific J. Math.}, 198(2):477--487, 2001.

\bibitem{milnor:hcobord} J.~Milnor.  \newblock {\em Lectures on the
  {$h$}-cobordism theorem}.  \newblock Notes by L. Siebenmann and
  J. Sondow. Princeton University Press, Princeton, N.J., 1965.

\bibitem{ni:co}
N.~Nitsure.
\newblock Cohomology of the moduli of parabolic vector bundles.
\newblock {\em Proc. Indian Acad. Sci. Math. Sci.}, 95(1):61--77, 1986.

\bibitem{oh:fl1}
Y.-G. Oh.
\newblock {F}loer cohomology of {L}agrangian intersections and
  pseudo-holomorphic disks. {I}.
\newblock {\em Comm. Pure Appl. Math.}, 46(7):949--993, 1993.

\bibitem{or:tri}
D.~O. Orlov.
\newblock Triangulated categories of singularities and {D}-branes in
  {L}andau-{G}inzburg models.
\newblock {\em Tr. Mat. Inst. Steklova}, 246(Algebr. Geom. Metody, Svyazi i
  Prilozh.):240--262, 2004.

\bibitem{po:fl}
M.~Po{\'z}niak.
\newblock {F}loer homology, {N}ovikov rings and clean intersections.
\newblock In {\em Northern California Symplectic Geometry Seminar}, volume 196
  of {\em Amer. Math. Soc. Transl. Ser. 2}, pages 119--181. Amer. Math. Soc.,
  Providence, RI, 1999.

\bibitem{ps:lg}
A.~Pressley and G.~Segal.
\newblock {\em Loop groups}.
\newblock Oxford University Press, Oxford, 1988.

\bibitem{se:le}
P.~Seidel.
\newblock Lectures on four-dimensional {D}ehn twists.
\newblock In {\em Symplectic 4-manifolds and algebraic surfaces}, volume 1938
  of {\em Lecture Notes in Math.}, pages 231--267. Springer, Berlin, 2008.

\bibitem{ss:li}
P.~Seidel and I.~Smith.
\newblock A link invariant from the symplectic geometry of nilpotent slices.
\newblock {\em Duke Math. J.}, 134(3):453--514, 2006.

\bibitem{sh:hmsfano}
N.~Sheridan.
\newblock Homological mirror symmetry for {F}ano hypersurfaces.
\newblock \href{http://www.arxiv.org/abs/1306.4143}{arXiv:1306.4143}

\bibitem{sm:qu}
I.~Smith.
\newblock {F}loer cohomology and pencils of quadrics.
\newblock Inventiones mathematicae 189: 149--250, 2012.

\bibitem{st:bu} E.~Street.  \newblock {R}ecursive relations in the
  cohomology rings of moduli spaces of rank 2 parabolic bundles on the
  {R}iemann sphere.  \newblock
  \href{http://www.arxiv.org/abs/arxiv:1205.1730}{arxiv:1205.1730}.


\bibitem{th:fl}
M.~Thaddeus.
\newblock Geometric invariant theory and flips.
\newblock {\em J. Amer. Math. Soc.}, 9(3):691--723, 1996.

\bibitem{th:pe}
M.~Thaddeus.
\newblock A perfect {M}orse function on the moduli space of flat connections.
\newblock {\em Topology}, 39(4):773--787, 2000.

\bibitem{quiltfloer}
K.~Wehrheim and C.~T. Woodward.
\newblock Quilted {F}loer cohomology.
\newblock {\em Geom. Topol.}, 14(2):833--902, 2010.

\bibitem{ww:isom}
K.~Wehrheim and C.~T.~Woodward.
\newblock Floer cohomology and geometric composition of {L}agrangian
  correspondences.
\newblock {\em Adv. Math.}, 230(1):177--228, 2012.  

\bibitem{ww:quilts}
K.~Wehrheim and C.~Woodward.
\newblock Pseudoholomorphic quilts.
\newblock To appear in {\em Jour. Symp. Geom}. 
\newblock  \href{http://www.arxiv.org/abs/arxiv:0905.1369}{arxiv:0905.1369}.

\bibitem{we:co}
K.~Wehrheim and C.~T.~Woodward.
\newblock Functoriality for {L}agrangian correspondences in {F}loer theory.
\newblock {\em Quantum Topol.}, 1(2):129--170, 2010.

\bibitem{wo:ex}
K.~Wehrheim and C.~Woodward.
\newblock Exact triangle for fibered {D}ehn twists.
\newblock  To appear in {\em Res. Math. Sci}. 
\newblock 
\newblock  \href{http://www.arxiv.org/abs/arxiv:1503.07614}{arXiv:1503.07614}.
 	

\bibitem{field}
K.~Wehrheim and C.T. Woodward.
\newblock {F}loer field theory for coprime rank and degree.
\newblock 
\href{http://www.arxiv.org/abs/arxiv:1601.04924}{arXiv:1601.04924}.

\bibitem{orient}
K.~Wehrheim and C.T. Woodward.
\newblock Orientations for pseudoholomorphic quilts.
\newblock 
\href{http://www.arxiv.org/abs/arxiv:1503.07803}{arXiv:1503.07803}.

\bibitem{ma:st5}
J.~N. Mather.
\newblock Stability of {$C\sp{\infty }$} mappings. {VI}: {T}he nice dimensions.
\newblock In {\em Proceedings of Liverpool Singularities-Symposium, I
  (1969/70)}, pages 207--253. Lecture Notes in Math., Vol. 192, Berlin, 1971.
  Springer.


\bibitem{Ainfty} S.~Ma'u, K.~Wehrheim, and C.T. Woodward.  \newblock
  ${A}_\infty$-functors for {L}agrangian correspondences.  \newblock
  \href{http://www.arxiv.org/abs/arxiv:1601.04919}{arXiv:1601.04919}.

\bibitem{wi:jo}
E.~Witten.
\newblock Quantum field theory and the {J}ones polynomial.
\newblock {\em Comm. Math. Phys.}, 121(3):351--399, 1989.

\bibitem{wi:kh}
Edward Witten.
\newblock Khovanov homology and gauge theory.
\newblock In {\em Proceedings of the {F}reedman {F}est}, volume~18 of {\em
  Geom. Topol. Monogr.}, pages 291--308. Geom. Topol. Publ., Coventry, 2012.

\end{thebibliography}
\end{document}